\tikzstyle{bloc} = [rectangle, rounded corners, 
\tikzstyle{decision} = [diamond,
\tikzstyle{arrow} = [thick,->,>=stealth]
\tikzstyle{io} = [trapezium, 
\tikzstyle{process} = [rectangle, 
\definecolor{dkgreen}{rgb}{0,0.6,0}
\definecolor{gray}{rgb}{0.5,0.5,0.5}
\definecolor{mauve}{rgb}{0.58,0,0.82}
\tiny\color{gray},
\newtheorem{thm}{Theorem}[section]
\newtheorem{cor}[thm]{Corollary}
\newtheorem{lem}[thm]{Lemma}
\newtheorem{theorem}[thm]{Theorem}
\newtheorem{corollary}[thm]{Corollary}
\newtheorem{lemma}[thm]{Lemma}
\newtheorem{proposition}[thm]{Proposition}
\newtheorem*{theorem*}{Theorem}
\theoremstyle{definition}
\newtheorem{defn}[thm]{Definition}
\newtheorem{fact}[thm]{Fact}
\theoremstyle{definition}
\newtheorem{definition}[thm]{Definition}
\newtheorem{example}[thm]{Example}
\newtheorem{question}[thm]{Question}
\theoremstyle{remark}
\newtheorem{rem}[thm]{Remark}
\newtheorem{remark}[thm]{Remark}
\newcommand{\brac}[2]{\left[ {#1} , {#2} \right]}
\let\c@equation\c@thm
\numberwithin{equation}{section}
\newcommand\N{\mathbb{N}}
\newcommand\F{\mathbb{F}}
\newcommand\G{\mathbb{G}}
\newcommand\Lbb{\mathbb{L}}
\def\Th{\operatorname{Th}}
\newcommand\CC{\mathscr{C}}
\newcommand\LL{\mathscr{L}}
\newcommand\bfG{\mathbf{G}}
\newcommand\bfL{\mathbf{L}}
\newcommand{\tp}{\mathrm{tp}}
\newcommand{\HS}{\mathrm{HS}}
\newcommand{\lev}{\mathrm{lev}}
\def\seq{\subseteq}
\def\lteq{\trianglelefteq}
\newcommand{\set}[1]{\left\{ {#1} \right\}}
\newcommand{\vect}[1]{\langle {#1} \rangle}
\newcommand{\abs}[1]{\lvert {#1} \rvert}
\DeclareMathOperator{\ad}{ad}
\DeclareMathOperator{\Span}{span}
\DeclareMathOperator{\rk}{rk}
\DeclareMathOperator{\Der}{Der}
\newcommand{\blue}[1]{\textcolor{blue}{#1}}
\definecolor{airforceblue}{rgb}{0.36, 0.54, 0.66}
\def\Ind{\setbox0=\hbox{$x$}\kern\wd0\hbox to 0pt{\hss$\mid$\hss}
\lower.9\ht0\hbox to 0pt{\hss$\smile$\hss}\kern\wd0}
\def\Notind{\setbox0=\hbox{$x$}\kern\wd0\hbox to 0pt{\mathchardef
\nn=12854\hss$\nn$\kern1.4\wd0\hss}\hbox to
0pt{\hss$\mid$\hss}\lower.9\ht0 \hbox to 0pt{\hss$\smile$\hss}\kern\wd0}
\def\ind{\mathop{\mathpalette\Ind{}}}
\def\nind{\mathop{\mathpalette\Notind{}}}
\def\indi#1{\mathop{\ \ \hbox to 0ex{\hss$\vert^{\hbox to 0ex{$\scriptstyle#1$\hss}}$\hss}
\lower1ex\hbox to 0ex{\hss$\smile$\hss}\ \ }}
\def\nindi#1{\mathop{\ \ \hbox to 0ex{\hss$\!\not{\vert}^{\hbox to 0ex{$\scriptstyle\,#1$\hss}}$\hss}
\lower1ex\hbox to 0ex{\hss$\smile$\hss}\ \ }}
\renewcommand{\models}{\vDash}
\title{Model-theoretic properties of nilpotent groups and Lie algebras}
\thanks{$^\dagger$ supported by the UKRI Horizon Europe Guarantee Scheme, grant no EP/Y027833/1. $^\ddagger$ supported by NSF grant DMS-2246992.  }
\author[C. d'Elb\'{e}e]{Christian d\textquoteright Elb\'ee$^\dagger$}
\address{School of Mathematics, University of Leeds\\
Office 10.17f LS2 9JT, Leeds}
\email{C.M.B.J.dElbee@leeds.ac.uk}
\urladdr{\href{http://choum.net/\textasciitilde chris/page\textunderscore perso/}{http://choum.net/\textasciitilde chris/page\textunderscore perso/}}
\author[I. M\"uller]{Isabel M\"uller}
\address{Department of Mathematics and Actuarial Science \\
The American University in Cairo \\ Egypt }
\email{isabel.muller@aucegypt.edu}
\urladdr{\href{https://www.aucegypt.edu/fac/isabel}{https://www.aucegypt.edu/fac/isabel}}
\author[N. Ramsey]{Nicholas Ramsey$^{\ddagger}$}
\address{Department of Mathematics \\
University of Notre Dame\\
 USA}
\email{sramsey5@nd.edu}
\urladdr{\href{https://math.nd.edu/people/faculty/nicholas-ramsey/}{https://math.nd.edu/people/faculty/nicholas-ramsey/}}
\author[D. Siniora]{Daoud Siniora}
\address{Department of Mathematics and Actuarial Science \\The American University in Cairo\\
 Egypt}
\email{daoud.siniora@aucegypt.edu}
\urladdr{\href{https://sites.google.com/view/daoudsiniora/}{https://sites.google.com/view/daoudsiniora/}}
\date{\today}
\begin{document}

\maketitle

\begin{abstract}
We give a systematic study of the model theory of generic nilpotent groups and Lie algebras.  We show that the Fra\"iss\'e limit of $2$-nilpotent groups of exponent $p$ studied by Baudisch is $2$-dependent and NSOP$_{1}$.  We prove that the class of $c$-nilpotent Lie algebras over an arbitrary field, in a language with predicates for a Lazard series, is closed under free amalgamation.  We show that for $2 < c$, the generic $c$-nilpotent Lie algebra over $\mathbb{F}_{p}$ is strictly NSOP$_{4}$ and $c$-dependent.  Via the Lazard correspondence, we obtain the same result for $c$-nilpotent groups of exponent $p$, for an odd prime $p > c$. 
\end{abstract}

\setcounter{tocdepth}{1}
\tableofcontents

\section{Introduction}

A classic theorem in model theory of groups states that every abelian group, viewed as a structure in the language of groups, is stable.  This follows from Szmielew's quantifier elimination for abelian groups down to pp-formulas \cite{szmielew}. This leads naturally to the question of whether analogous results might be proved for \emph{nilpotent} groups, which are in some sense the  least complicated class of groups properly containing the abelian groups.  Constructions of Mekler \cite{Mekler} and related ones by Ershov \cite{Ershov} show, however, that already groups of nilpotence class $2$ and exponent $p$, for an odd prime $p$, are totally wild.  These results give an \emph{ad hoc} construction of a nilpotent group that codes an arbitrary graph into the commutation relation on the group.  Given the nature of these constructions, these results leave open whether or not nilpotent groups might still \emph{generically} exhibit tame model-theoretic behavior. 

This paper studies the neostability-theoretic properties of nilpotent groups and Lie algebras at a generic scale and lays the foundations for a detailed study of definability in such structures.  Within model theory, the study of existentially closed nilpotent groups was initiated by Saracino in \cite{saracino1976existentially}, who showed that the theories of nilpotent groups of class $c$ and torsion-free nilpotent groups of class $c$ do not admit model companions. Saracino \cite{saracino1978existentially} and later Saracino and Wood \cite{saracino1979periodic} extended the theory, focusing on the nilpotence class 2 case. Maier gave an elaborate amalgamation construction for torsion-free groups of nilpotence class $c$, for $c$ possibly greater than $2$ \cite{maier1986amalgame}.  He extended this to nilpotent groups of class $c$ and exponent $p$, for primes $p > c$ \cite{Maierexpp}. Lie rings of Morley rank less than or equal to $4$ have recently be described by Deloro and Tindzogho Ntsiri \cite{deloro2023simple}.

Our starting point is the investigation of Fra\"iss\'e limits of nilpotent groups of class $2$ and exponent $p$, for an odd prime $p$, which we study in detail in Section \ref{nil-2}. In \cite{Baudisch2}, Baudisch considers the class of finite $2$-nilpotent groups $G$ of exponent $p$ and shows that it forms a Fra\"iss\'e class in the language of groups, together with a predicate $P$ for a subgroup such that $[G,G] \subseteq P(G) \subseteq Z(G)$.  These Fra\"iss\'e limits are natural generalizations of the extra-special $p$-groups, which impose the additional requirement that $Z(G)$ be cyclic.  The extra special $p$-groups were studied by Felgner in \cite{Felgner} and were later an important example of quasi-finite theories, studied by Cherlin and Hrushovski, all of which have simple theories. However, in \cite{Baudisch1}, Baudisch shows that his Fra\"iss\'e limits have TP$_{2}$, and therefore have independence property and are not simple.  We build on Baudisch's analysis, showing that the theories of these Fra\"iss\'e limits are NSOP$_{1}$ and $2$-dependent and thus are, roughly speaking, minimally complicated outside of the NIP, simple theories, and NTP$_{2}$ theories. The feature of these groups that makes them particularly tractable is that, in such a group, both $G/Z(G)$ and $Z(G)$ may be viewed as $\mathbb{F}_{p}$-vector spaces, with the commutator inducing an alternating bilinear map $[\cdot,\cdot] : G/Z(G) \times G/Z(G) \to Z(G)$.  Although the interpretation of this bilinear map is not a \emph{bi-interpretation}, we, following Baudisch, observe that many of the model-theoretic features of the Fra\"iss\'e limit are determined by those of the associated bilinear map, reducing the group-theoretic analysis to linear algebra. 

In subsequent sections, we turn our attention to groups of exponent $p$ and nilpotence class greater than $2$. A group $G$ is of nilpotence class $c$ if there is a subnormal series 
$$
G = H_{1} \unrhd H_{2} \unrhd H_{3} \unrhd \ldots 
$$
such that $[H_{i},H_{j}] \subseteq H_{i+j}$ and $H_{k} = 1$ for all $k > c$.  Such a series is called a \emph{Lazard series} of length $c$ for the group; the lower central series of a $c$-nilpotent group is a familiar example.  Lazard series may be analogously defined for Lie algebras, replacing subgroups with subalgebras and commutator with Lie bracket. Our main tool in the study of nilpotent groups is the \emph{Lazard correspondence}, which provides a different way of reducing the study of these groups to linear algebra.  This correspondence, an analogue of the more well-known Malcev correspondence, associates to each group of exponent $p$ and nilpotence class $c$ a Lie algebra of nilpotence class $c$ over $\mathbb{F}_{p}$, assuming the prime $p > c$.  In fact, this correspondence is a uniform bi-interpretation between the group and the Lie algebra which takes a Lazard series for the group to a Lazard series for the Lie algebra.  The details of the Lazard correspondence are outlined in Section \ref{Preliminaries}.  The remainder of the paper considers the model-theoretic properties of nilpotent Lie algebras in a language with predicates for a Lazard series.  The Lazard correspondence allows us, then, to infer properties of certain nilpotent Lie algebras over $\mathbb{F}_{p}$, but, moreover, this analysis turns out to be interesting in its own right and allows us to analyze nilpotent Lie algebras over arbitrary fields.  

The algebraic heart of the paper is contained in Section \ref{sec:freeamalgamation}, where we prove that, for any field $\mathbb{F}$, the class of $c$-nilpotent Lie algebras over $\mathbb{F}$, in a language with predicates for a Lazard series, has the amalgamation property.  In fact, we prove that such Lie algebras can be freely amalgamated, for a notion of free amalgamation introduced by Baudisch.  The existence of \emph{strong} amalgams for $c$-nilpotent Lie algebras, in the special case where the field of scalars is $\mathbb{F}_{p}$ for a prime $p > c$, can be deduced, via the Lazard correspondence, from the amalgamation results of Maier \cite{Maierexpp}. The existence of \emph{free} amalgams for nilpotent Lie algebras over an arbitrary field was sketched by Baudisch in \cite{Baudisch4}.  We are very influenced by Baudisch's proposed construction in \cite[Theorem 4.1]{Baudisch4}, but the details provided there do not suffice for our applications, so we opted to give our own account.  Our construction proceeds in several stages.  In the first stage, we amalgamate two extensions of codimension 1 over a common ideal, presenting the amalgam as a semi-direct product of the ideal and a free nilpotent Lie algebra generated by elements in the complements of the ideal in the respective structures. In the next stages, we use this construction itself as a step in an inductive construction of an amalgam, which draws heavily on ideas of Maier from the group case.  

In Section \ref{sec:neostabilityproperties}, we spell out the consequences of the existence of free amalgams for the model-theoretic properties of Fra\"iss\'e limits of nilpotent groups and Lie algebras. In this section, we naturally restrict attention to Lie algebras over finite fields, and especially over finite prime fields, which correspond to groups via the Lazard correspondence, since these Fra\"iss\'e limits have $\aleph_{0}$-categorical theories. We show that for $c > 2$, Fra\"iss\'e limits of $c$-nilpotent Lie algebras over finite fields have an SOP$_{3}$ and NSOP$_{4}$ theory.  To show that the theories have SOP$_{3}$, we provide a direct construction of a $3$-nilpotent Lie algebra which witnesses SOP$_{3}$ with respect to quantifier-free formulas.  On the other hand, to show that these theories are NSOP$_{4}$, we leverage the stationary independence relation coming from free amalgamation, following the strategy of Patel for bowtie-free graphs \cite{patel2006family}, later systematized by Conant in \cite{conant2017axiomatic} and generalized by Mutchnik \cite{mutchnik2022conant}.  As NSOP$_{4}$ has recently emerged as a class of theories for which there is some hope of a meaningful structure theory, we are optimistic that these Lie algebras (and the associated groups) can serve, alongside the curve-exluding fields of \cite{johnson2023curve}, as illuminating algebraic examples of strictly NSOP$_{4}$ theories, playing a similar role to that played by Lie geometries for simple theories and played by the two-sorted vector spaces equipped with bilinear forms for NSOP$_{1}$ theories. We summarize the analogies in the following table:

\small{
\begin{table}[htbp]
  \centering
  \label{tab:my-table}
  \begin{adjustbox}{center}
  \begin{tabularx}{\textwidth}{|X|X|X|X|}
  \hline
    Stable & Simple & NSOP$_{1}$ & NSOP$_{4}$ \\
    \hline
    ACF & Psf/ACFA & $\omega$-free PAC fields & Curve-excluding fields \\
    \hline
     Vector spaces & $\mathbb{F}_{p}$-vector spaces with a bilinear map & Vector spaces over ACF with a bilinear map & Nilpotent Lie algebras \\
    \hline 
    Equivalence relations & Random graph & Parameterized equivalence relations & Henson graphs  \\
    \hline 
  \end{tabularx}
  \end{adjustbox}
\end{table}
}
\newpage

We also analyze the place of these examples in the $n$-dependence hierarchy introduced by Shelah \cite{shelah2007definable,shelah2014strongly}.  In essence, a theory is called $n$-dependent if there is no interpretable $(n+1)$-ary $(n+1)$-partite hypergraph that contains the random $(n+1)$-ary $(n+1)$-partite hypergraph as an induced subhypergraph. 
 The placement of a theory in this hierarchy, then, gives a way of quantifying the arity of random relations in the theory.  The $n=1$ case corresponds to the much-studied class of NIP theories, and recent work has generalized some aspects of the theory to this broader setting and produced new examples \cite{chernikov2019n,chernikov2021n,hempel2016n}.  The only known examples of pure groups that are $(n+1)$-dependent but not $n$-dependent were constructed by Chernikov and Hempel \cite{chernikov2019mekler}.  These groups are produced by the aforementioned construction of Mekler, which takes a graph and  produces a group of nilpotence class 2 and exponent $p$ that codes the graph into the commutation relation of the group. Due to the somewhat artificial character of the groups produced by this method, it was left open whether these model-theoretic classes were inhabited by groups occurring `in nature.' We show that the Fra\"iss\'e limit of $c$-nilpotent groups of exponent $p$ (for $p > c$), in the language with predicates for a Lazard series, is $c$-dependent and $(c-1)$-independent.  As these predicates are definable (with quantifiers) in the pure group language, these furnish natural examples that exhibit the strictness of the $n$-dependence hierarchy in groups. Our proof makes use of the `Composition Lemma' of Chernikov-Hempel \cite{ChernikovHempel3}, drawing on a similar set of ideas as their work on multilinear forms, which has yet to appear. 

The above results give a fairly exhaustive analysis of the neostability-theoretic complexity of the theories of generic nilpotent groups and Lie algebras we considered. However, genericity for nilpotent groups can be understood within three distinct rubrics:
\begin{enumerate}
    \item \textbf{Model-theoretic}:  Understand existentially closed nilpotent groups, investigate the existence of Fra\"iss\'e limits and/or model companions and describe their definable sets.
    \item \textbf{Descriptive set-theoretic}:  Naturally view the collection of all nilpotent groups with underlying set $\mathbb{N}$ and interesting subclasses of such nilpotent groups as Polish spaces, describe which properties hold on a comeager set of groups.
    \item \textbf{Probabilistic}:  Consider various models of random groups (e.g. random groups in the sense of Gromov), specialized to the case of nilpotent groups. Calculate the probability that group-theoretic properties hold in a randomly sampled nilpotent group and determine the `probability 1' theory.  
\end{enumerate}

There has been a considerable amount of work in each of these directions. 
 In addition to the model-theoretic work mentioned above, the descriptive set-theoretic point of view on generic groups has been taken up in recent work by Elekes, Geh\'er, Kanalas, K\'atay, and Keleti \cite{elekes2021generic} and by Goldbring, Elayavalli, and Lodha \cite{goldbring2023generic}.  Both of these papers considered the space of all countable groups with domain $\mathbb{N}$ and the former additionally studied the generic properties of the subspace of abelian groups, leaving the class of nilpotent groups as an unexplored intermediary case.  The primary treatment of nilpotent groups from a probabilistic point of view was undertaken by Cordes, Duchin, Duong, Ho, and S\'anchez, who adaped the Gromov model for random groups to the nilpotent setting \cite{cordes2018random}.  In a different vein, Diaconis and Malliaris gave a quantitative study of randomness in Heisenberg groups over $\mathbb{F}_{p}$, particularly illustrative examples of $2$-nilpotent groups \cite{diaconis2021complexity}. We view the work done here as a first step in a broad project of determining how these rubrics for the study of generic nilpotent groups fit together.  In this paper, we focus exclusively on the model-theoretic picture but anticipate that the algebraic foundations laid here will be useful in subsequent explorations of these intersecting frameworks.

\section{Preliminaries} \label{Preliminaries}

Our model-theoretic notation is standard.  We do not distinguish between singletons and tuples except when explicitly mentioned, and we use the usual model-theoretic abuse of notation of denoting $A \cup B$ by $AB$.  We write $a \equiv_{A} b$ to indicate that $\mathrm{tp}(a/A) = \mathrm{tp}(b/A)$. We write $\mathrm{acl}(A)$ and $\mathrm{dcl}(A)$ to denote the algebraic and definable closures of $A$, respectively. We write $\omega$ for the set of natural numbers and, given $k \geq 1$, we define $[k] = \{1, \ldots, k\}$.  

\subsection{Fraïssé theory}\label{subsec:generalitiesonfraisseclass}

Let $\LL$ be a first-order language and let $\CC$ be a class of $\LL$-structures. We say that $\CC$ has the \textit{hereditary property} (HP) if it is closed under substructures, i.e., whenever $B\in\CC$ and $A\subseteq B$, then $A\in\CC$. We say that $\CC$ has the \textit{joint embedding property} (JEP) if whenever $A,B$ are in $\CC$, there exists a structure $S\in\CC$ which embeds both $A$ and $B$. Finally, we say that $\CC$ has the \textit{amalgamation property} if for all $A,B,C$ in $\CC$ and embeddings $f_0:C\to A$ and $g_0:C\to B$, there exists a structure $S\in\CC$ together with embeddings $f_1:A\to S$ and $g_1:B\to S$ such that $f_1\circ f_0=g_1\circ g_0$. In this case, we say that $S$ is an \textit{amalgam} of $A$ and $B$ over $C$. When $f_1(A)\cap g_1(B)=f_1(f_0(C))$ we say that $S$ is a \textit{strong amalgam}.

\begin{center}\begin{tikzcd}
& A \ar[dr,"f_1", dashed] 
&
&[1.5em] \\
C \ar[ur,"f_0"] \ar[dr,"g_0"]
&
& S  \\
& B \ar[ur, "g_1", dashed]
&
&
\end{tikzcd}

The amalgamation property\end{center}

\begin{definition}\label{fraisseclass}~
   \begin{itemize}
       \item  We say that a class $\CC$ of finitely generated $\LL$-structures is a \textit{Fraïssé class} if it is closed under isomorphisms, contains countably many isomorphism types, and has the hereditary property, joint embedding property, and amalgamation property.

       \item A countable $\LL$-structure $M$ is called \textit{homogeneous} if every isomorphism between finitely generated substructures of $M$ extends to an automorphism of $M$.
       \item An $\LL$-structure $M$ is called \textit{uniformly locally finite} if there exists a function $f:\N\to\N$ such that any substructure of $M$ which is generated by $n$ elements has cardinality at most $f(n)$.
   \end{itemize}
\end{definition}

\begin{fact}[Fraïssé's Theorem]
Suppose that $\CC$ is a Fraïssé class of finitely generated $\LL$-structures. Then there exists a unique (up to isomorphism) homogeneous structure $\mathbf{M}$ such that the class of all structures isomorphic to finitely generated substructures of $\mathbf{M}$ (known as the age of $\mathbf{M}$) is precisely $\CC$. We call $\mathbf{M}$ the Fraïssé limit of $\CC$. 
Conversely, the age of a homogeneous structure is a Fraïssé class.
\end{fact}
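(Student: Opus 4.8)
\emph{Construction of the limit.} This is the classical Fra\"iss\'e correspondence, which I would prove by the standard chain construction followed by back-and-forth arguments, adapted to finitely generated structures. Since $\CC$ has only countably many isomorphism types, fix an enumeration $\CC = \{D_n : n \in \omega\}$. The plan is to build an increasing chain $A_0 \subseteq A_1 \subseteq \cdots$ of members of $\CC$ and set $\mathbf{M} = \bigcup_n A_n$, arranging that $\mathbf{M}$ has the \textbf{extension property}: for every finitely generated substructure $A \subseteq \mathbf{M}$ and every embedding $e \colon A \to B$ with $B \in \CC$, there is an embedding $e' \colon B \to \mathbf{M}$ with $e' \circ e = \id_{A}$. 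To achieve this I would fix a bookkeeping enumeration, in order type $\omega$, of all \emph{tasks} of two kinds: (i) ``arrange that $D_n$ embeds into $\mathbf{M}$''; and (ii) ``given a finitely generated substructure $A$ of a structure already placed in the chain and an embedding $e \colon A \to B$ with $B \in \CC$, place a copy of $B$ over $A$ into the chain''. There are only countably many tasks of the second kind, since each $A_n$ is finitely generated and $\CC$ has countably many types, so such an enumeration exists. At a stage handling a task of kind (i) we take, using JEP, a structure $A_{n+1} \in \CC$ embedding both $A_n$ and $D_n$, relabelled so that $A_n$ is literally a substructure. At a stage handling a task of kind (ii) we apply the amalgamation property to the span $A_n \hookleftarrow A \xrightarrow{e} B$, obtaining an amalgam $S \in \CC$ with maps $f_1 \colon A_n \to S$, $g_1 \colon B \to S$ agreeing on $A$; we relabel so that $f_1$ is the inclusion of $A_n$ and set $A_{n+1} = S$, and then $g_1$ is the required extension for this task, since $g_1 \circ e = f_1|_{A} = \id_{A}$.

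\emph{Age and homogeneity.} Every finitely generated substructure $A$ of $\mathbf{M}$ has all its generators in some $A_n$, so $A$ is a substructure of $A_n \in \CC$, hence $A \in \CC$ by HP; conversely each $D_n$ embeds into $\mathbf{M}$ by a task of kind (i), so the age of $\mathbf{M}$ is exactly $\CC$. For homogeneity, fix an isomorphism $h \colon P \to Q$ between finitely generated substructures of $\mathbf{M}$ and an enumeration $\mathbf{M} = \{m_k : k \in \omega\}$, and build an increasing chain $h = h_0 \subseteq h_1 \subseteq \cdots$ of isomorphisms between finitely generated substructures of $\mathbf{M}$ with $m_k$ in the domain of $h_{2k+1}$ and in the range of $h_{2k+2}$; then $\bigcup_j h_j \in \Aut(\mathbf{M})$ extends $h$. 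In the forth step, given $h_j \colon P_j \to Q_j$, one sets $P' = \vect{P_j, m_k} \subseteq \mathbf{M}$ (finitely generated, hence in $\CC$) and applies the extension property to the finitely generated substructure $Q_j \subseteq \mathbf{M}$ together with the embedding $h_j^{-1} \colon Q_j \to P'$, obtaining $e \colon P' \to \mathbf{M}$ with $e \circ h_j^{-1} = \id_{Q_j}$; then $e$ extends $h_j$, and we put $h_{j+1} = e$, viewed as an isomorphism onto its finitely generated image. The back step is symmetric.

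\emph{Uniqueness and the converse.} If $\mathbf{N}$ is another countable homogeneous structure of age $\CC$, one first checks that it has the extension property: given finitely generated $A \subseteq \mathbf{N}$ and $e \colon A \to B$ with $B \in \CC$, embed $B$ into $\mathbf{N}$ via some $\phi$, observe that $\phi \circ e$ and the inclusion are two embeddings $A \to \mathbf{N}$, and use homogeneity to get $\psi \in \Aut(\mathbf{N})$ with $\psi(\phi(e(a))) = a$ for all $a \in A$; then $\psi \circ \phi$ is the required extension. A back-and-forth between $\mathbf{M}$ and $\mathbf{N}$, using the extension property on each side and beginning from the empty partial isomorphism, yields an isomorphism, giving uniqueness. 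For the converse, if $\mathbf{M}$ is any countable homogeneous structure and $\CC$ is its age, then $\CC$ has countably many isomorphism types (there are countably many finitely generated substructures of $\mathbf{M}$); it has HP, since a substructure of a finitely generated substructure of $\mathbf{M}$ is again a (finitely generated) substructure of $\mathbf{M}$; it has JEP, by taking for $A, B$ in the age realized inside $\mathbf{M}$ the substructure generated by their union; and it has AP, by realizing $A$ and $B$ inside $\mathbf{M}$, using homogeneity to move their two copies of $C$ onto a common one, and then taking the generated substructure.

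\emph{Main difficulty.} This being a textbook result, there is no essential obstacle. The one point requiring care is the bookkeeping in the construction of $\mathbf{M}$ — making precise the enumeration of the extension tasks and verifying there are only countably many, which is exactly where finite generation of the $A_n$ is used — together with the routine but essential observation, invoked throughout the back-and-forth, that adjoining a single new element to a finitely generated substructure of $\mathbf{M}$ again produces a finitely generated substructure, so that it lies in $\CC$; this is where HP and the finite-generation hypotheses on $\CC$ genuinely enter.
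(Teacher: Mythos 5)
The paper states this result as a \textbf{Fact} (it is Fra\"iss\'e's classical theorem) and gives no proof of its own, so there is no in-paper argument to compare against. Your proposal is a correct and essentially complete rendition of the standard textbook proof: a chain construction with bookkeeping to secure the extension property, back-and-forth using the extension property to get homogeneity and uniqueness, and the routine verification of HP, JEP, AP for the converse. A couple of small points that you gloss over but should be borne in mind: the countability of the language is implicitly used to guarantee that each finitely generated structure in $\CC$ is countable and hence that the task enumeration is countable; and the ``tasks of the second kind'' depend on the chain as it grows, so the bookkeeping is genuinely dynamic (a dovetailing enumeration over pairs $(n,m)$ where the $m$-th task for $A_n$ is only scheduled once $A_n$ has been built), rather than an enumeration fixed in advance. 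Neither of these is a gap in substance; both are standard, and your sketch matches the usual proof in, e.g., Hodges.
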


\begin{fact}\cite[Corollary 6.4.2]{hodgeshorter}
    Let $\LL$ be a finite first-order language and let $M$ be a countably infinite $\LL$-structure. Then the following are equivalent. 
    \begin{enumerate}[label=(\roman*)]
        \item $M$ is homogeneous and uniformly locally finite.
        \item The theory of $M$ is $\omega$-categorical and has quantifier elimination. 
    \end{enumerate}
\end{fact}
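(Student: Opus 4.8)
The plan is to prove the two implications by using the combinatorial notion of homogeneity to control types of finite tuples, the key observation being that since $\LL$ is a language without quantifiers, the quantifier-free type of a finite tuple $\bar a$ over $\emptyset$ and the isomorphism type of the finitely generated substructure $\langle\bar a\rangle$ it generates (together with its distinguished generators) carry exactly the same information. Throughout I would invoke the Ryll--Nardzewski characterisation of $\omega$-categoricity and the standard fact that the countable model of an $\omega$-categorical theory is $\omega$-saturated.

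For $(i)\Rightarrow(ii)$, I would first obtain $\omega$-categoricity. By homogeneity, two $n$-tuples of $M$ have the same type over $\emptyset$ as soon as there is an isomorphism between the substructures they generate matching them up; conversely this happens exactly when they have the same quantifier-free type. Hence the number of $n$-types over $\emptyset$ is bounded by the number of isomorphism types of $\LL$-structures generated by $n$ elements, which by uniform local finiteness have size at most $f(n)$; since $\LL$ is finite there are only finitely many such structures, so $\Th(M)$ is $\omega$-categorical by Ryll--Nardzewski. For quantifier elimination I would use the standard test: it suffices to show that if $\bar a,\bar b$ are finite tuples in models of $\Th(M)$ with the same quantifier-free type then $\tp(\bar a)=\tp(\bar b)$. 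Since every complete type of $\Th(M)$ is realised in $M$ (as $M$ is $\omega$-saturated), I may assume $\bar a,\bar b\in M$; their common quantifier-free type provides an isomorphism $\langle\bar a\rangle\to\langle\bar b\rangle$ sending $\bar a$ to $\bar b$, which by homogeneity extends to an automorphism of $M$, whence $\tp(\bar a)=\tp(\bar b)$.

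For $(ii)\Rightarrow(i)$: being $\omega$-categorical, $M$ is $\omega$-saturated, and in particular $\omega$-homogeneous, so any two finite tuples of the same type over $\emptyset$ are conjugate by an automorphism. Homogeneity of $M$ now follows from quantifier elimination: an isomorphism $g\colon\langle\bar a\rangle\to\langle\bar b\rangle$ between finitely generated substructures with $g(\bar a)=\bar b$ witnesses that $\bar a$ and $\bar b$ have the same quantifier-free, hence (by QE) the same complete, type over $\emptyset$, so $g$ extends to an automorphism of $M$. For uniform local finiteness, fix $n$ and let $k$ be the number of complete $(n+1)$-types over $\emptyset$, which is finite by Ryll--Nardzewski. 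Given $\bar a\in M^{n}$, every element of $\langle\bar a\rangle$ has the form $t(\bar a)$ for an $\LL$-term $t$, and if $t_{1}(\bar a)\neq t_{2}(\bar a)$ then $(\bar a,t_{1}(\bar a))$ and $(\bar a,t_{2}(\bar a))$ have distinct types over $\emptyset$, since the atomic formula $x_{n+1}=t_{1}(x_{1},\dots,x_{n})$ distinguishes them. Hence $|\langle\bar a\rangle|\leq k$; in particular $\langle\bar a\rangle$ is finite, and $f(n)=k$ witnesses uniform local finiteness.

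I expect the main obstacle to be the quantifier-elimination half of $(i)\Rightarrow(ii)$: the hypotheses give homogeneity of $M$ as a purely combinatorial object, and one must transfer this to a statement about arbitrary, possibly uncountable, models of $\Th(M)$ — this is exactly the point where $\omega$-categoricity is needed, to ensure that every type is realised in $M$ so that $M$ may serve as a universal test structure. A secondary point worth tracking is that finiteness of $\LL$ is used only in the counting of isomorphism types of bounded finite structures in the forward direction, whereas the reverse direction needs $\omega$-categoricity and quantifier elimination only, together with the trivial but essential remark that $x_{n+1}=t(x_{1},\dots,x_{n})$ is an atomic formula.
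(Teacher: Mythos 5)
The paper records this statement as a \emph{Fact} with a citation to Hodges and does not give a proof, so there is no in-paper argument to compare against. Your proof is correct and is essentially the standard argument for this result. Both directions are handled properly: in $(i)\Rightarrow(ii)$ you count orbits of $\Aut(M)$ on $M^n$ via isomorphism types of pointed $n$-generated substructures (finite because $\LL$ is finite and each such substructure has size $\leq f(n)$), invoke Ryll--Nardzewski, and then get QE by transferring to $M$ via $\omega$-saturation and using homogeneity; in $(ii)\Rightarrow(i)$ you correctly derive homogeneity from QE together with the strong $\omega$-homogeneity of the countable $\omega$-saturated model, and your observation that the map $t(\bar a)\mapsto\tp(\bar a,t(\bar a))$ is injective into the finite set of $(n+1)$-types cleanly yields uniform local finiteness without even needing QE. Your remark on exactly where finiteness of $\LL$ enters is also accurate.
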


\subsection{Nilpotent groups and Lie algebras}

Let $G$ be a group and let $a,b\in G$. The \textit{commutator} of $a$ and $b$ is the group element $[a,b]=a^{-1}b^{-1}ab$. For two subsets $A,B\subseteq G$ we define $[A,B]$ to be the subgroup generated by all commutators of the form $[a,b]$ where $a\in A$ and $b\in B$. The \textit{derived subgroup} of $G$ is the normal subgroup $G'=[G,G]$. Given a normal subgroup $N$ of $G$, the quotient $G/N$ is abelian if and only if $G'\subseteq N$. We denote the center of the group $G$ by $Z(G)$.  The \textit{lower central series} $(G_n)_{n\geq 1}$ of $G$ is defined as follows: 
    \begin{itemize}
        \item $G_1=G$;
        \item $G_{n+1}=[G_n,G]$ for $n\geq 1$.
    \end{itemize}
The lower central series is a normal series, i.e. each $G_n$ is normal in $G$. Since $G'_n=[G_n,G_n]\subseteq [G_n, G]=G_{n+1}$, it follows that the successive quotients $G_n/G_{n+1}$ are abelian groups. We also have that $G_n/G_{n+1}$ is contained in $Z(G/G_{n+1})$. We have the following containments: 
\[G = G_1 \trianglerighteq G_2 \trianglerighteq G_3\trianglerighteq \ldots\]
\begin{defn}
    A group $G$ is \textit{nilpotent} if its lower central series terminates in the trivial subgroup in finitely many steps. The least integer $c$ such that $G_{c+1}=1$ is called the \textit{nilpotency class} of $G$. For this, we also say $G$ is a $c$\emph{-nilpotent group} or a \emph{nil-}$c$ group for short. 
\end{defn}
Nilpotent groups of class 1 are exactly the abelian groups. Nilpotent groups of class 2 are nonabelian groups where the derived subgroup is contained in the center. In general, if $G$ is a $c\,$-nilpotent group, then $G_c\subseteq Z(G)$.

\begin{defn}
    A \textit{Lie algebra} $L$ over a field $\F$ is a vector space $L$ over $\F$ equipped with a binary operation $[\cdot,\cdot]:L\times L\to L$, called a \textit{Lie bracket}, satisfying the following properties for every $a,b,c\in L$ and $\mu \in \F$:
    \begin{itemize}\itemsep3pt
        \item  $[a,a] = 0$; \hfill (Alternativity)
        \item  $\brac{a+b}{c} = \brac{a}{c}+\brac{b}{c}$,  \hfill (Bilinearity) \\ 
        $\brac{a}{b+c} = \brac{a}{b}+\brac{a}{c}$,\\
        $[\mu a,b]=\mu[a,b]=[a, \mu b]$;
        \item $\brac{a}{\brac{b}{c}}+\brac{b}{\brac{c}{a}}+\brac{c}{\brac{a}{b}}=0$. \hfill (Jacobi identity) 
    \end{itemize}
\end{defn}

It follows that the Lie bracket is antisymmetric, that is, $[a,b]=-[b,a]$ for all $a,b\in L$.  This follows by using alternativity and bilinearity to evaluate $[a+b,a+b]$. Indeed, if the characteristic of $\F$ is not $2$, then alternativity and antisymmetry are equivalent.

We use the following standard notation for iterated brackets: For $n \geq 3$ and elements $x_1, \dots, x_n$ in any Lie algebra $L$, we define $[x_{1}, \ldots, x_{n}]$ inductively by $[x_{1}, \ldots, x_{n}] = [[x_{1}, \ldots, x_{n-1}],x_{n}]$. 

A subspace $U\subseteq L$ is called a \textit{Lie subalgebra} of $L$ if $U$ is closed under the Lie bracket. If $U$ and $V$ are subspaces of $L$, we define $[U,V]$ as the subspace spanned by the elements $[u,v]$ for $u\in U$ and $v\in V$, that is, 
    \[\brac{U}{V} = \{r_1\brac{u_1}{v_1}+\ldots+r_k\brac{u_k}{v_k}\mid k\geq 1, u_i\in U,v_i\in V, r_i\in \F\}.\]
Note that $[U,V]=[V,U]$ by antisymmetry of the bracket. A priori, one does not know whether or not $[U,V]$ is a subalgebra. A subalgebra $I\subseteq L$ is called an \textit{ideal} of $L$ if $\brac{I}{L}\subseteq I$. The bracket in Lie algebras is analogous to the commutator in groups. Consequently, we present the following concepts. The \textit{center} of $L$ is $Z(L) = \{a\in L\mid \brac{a}{b} = 0 \textrm{ for all } b\in L\}$. Also, $L$ is \textit{abelian} if $[a,b]=0$ for all $a,b\in L$. 

The Jacobi identity and antisymmetry further imply the following useful identities. 
\begin{itemize}
    \item[$\circ$] $[a,[b,c]]=[[a,b],c]+[b,[a,c]]$.
    \item[$\circ$] $[[b,c],a]=[[b,a],c]+[b,[c,a]]$.
\end{itemize}

The first bullet point, for example, can be viewed as saying that for $a \in L$, the map $\ad_{a}: L \to L$ defined by $\ad_{a}(x) = [a,x]$ satisfies the Leibniz rule (with respect to `multiplication' given by the Lie bracket). In other words, $\ad_{a}$ is a \textit{derivation} on $L$ in the following sense.

\begin{definition}
    A \textit{derivation} $\delta$ on a Lie algebra $L$ over $\F$ is an $\F$-linear endomorphism $\delta: L\to L$ which satisfies Leibniz' rule: $\delta([a,b]) = [\delta(a),b]+[a,\delta(b)]$. We write $\Der(L)$ for the space of all derivations over $L$.
\end{definition}
 Note that $\Der(L)$ is a vector subspace of the space of $\F$-linear endomorphism of $L$. It is also a Lie algebra for the bracket
\[[\delta,\mu] := \delta\mu - \mu\delta.\]
The map $\ad:L\to \Der(L)$ is a homomorphism of Lie algebras with kernel $Z(L)$, usually called the \textit{adjoint representation}. A derivation of the form $\ad_a$ is called an \textit{inner derivation on $L$}, otherwise it is called an \textit{outer} derivation.

\begin{definition}[Semi-direct product]
    Given two Lie algebras $L_1,L_2$ and a homomorphism $g:L_2\to \Der(L_1)$ we define the semi-direct product $S = L_1\rtimes L_2$ to be the Lie algebra with underlying vector space $L_1\oplus L_2$ and bracket defined as:
     \[[x_1+x_2,y_1+y_2] = [x_1,y_1]_{L_1}+g(x_2)(y_1)-g(y_2)(x_1) + [x_2,y_2]_{L_2}\]
    for $x_1,y_1\in L_1$, $x_2,y_2\in L_2$. 
\end{definition}
 One easily checks that if $S = L_1\rtimes L_2$ then $L_2$ is a subalgebra of $S$ and $L_1$ is an ideal of $S$.

We define inductively the \textit{lower central series} of $L$ as follows:
    \begin{itemize}
        \item $L_1 = L;$
        \item $L_{n+1} = [L_n,L] \textrm{ for } n\geq 1.$
    \end{itemize}
Note that each $L_n$ is an ideal of $L$. A Lie algebra $L$ is \textit{nilpotent of class $c$} if $c$ is the least integer such that
    \[L = L_1\supseteq L_2\supseteq \ldots \supseteq L_c \supseteq L_{c+1} = 0.\]
If $L$ is nilpotent of class $c$, then $L_c \leq Z(L)$. 

\begin{defn}
Let $G$ be any group. A \emph{Lazard series} of length $c$ of $G$ is a sequence of subgroups $G=H_1\unrhd H_2 \unrhd \ldots \unrhd H_{c+1}=1$ such that $[H_i,H_j]\subseteq H_{i+j}$ for all $i,j$. By convention, we set $H_k=1$ for all $k>c$. Accordingly, we say that a sequence of subalgebras $(L_{i})_{1 \leq i \leq c+1}$ is a \emph{Lazard series} of a Lie algebra $L$ if 
$$
L = L_{1} \geq L_{2} \geq \ldots \geq L_{c+1} = 0
$$
and
$$
[L_{i},L_{j}] \leq L_{i+j}
$$
for all $i,j$ (where, as above, we stipulate $L_k = 0$ for all $k > c$).  
\end{defn}

Note that if $G$ is a group with a Lazard series $(H_{i})_{1 \leq i \leq c+1}$, then $G$ must be of nilpotence class at most $c$ (and analogously for Lie algebras).  The lower central series is an example of a Lazard series. 

\begin{defn}
    We define a \emph{Lazard group} $(G, \overline{H})$ to be a group $G$ together with a distinguished Lazard series $\overline{H} = (H_{i})_{1 \leq i \leq c+1}$. Similarly, a \emph{Lazard Lie algebra (LLA)} $(L,\overline{L})$ is a Lie algebra $L$ with a distinguished Lazard series $\overline{L} = (L_{i})_{1 \leq i \leq c+1}$.  We will not always explicitly display the Lazard series $\overline{L}$ when referring to an LLA $(L,\overline{L})$, referring to it instead simply as $L$.  
\end{defn}

\begin{defn}\label{def:levelofelement}
    Let $A$ be an LLA of nilpotency class $\leq c$ with distinguished Lazard series $(A_i)_{1\leq i\leq c+1}$. For any $a\in A$, we define the \textit{level of $a$}, denoted $\lev(a)$ to be the maximal $1\leq i\leq c+1$ such that $a\in A_i$. Equivalently, for $a \neq 0$, $\lev(a)$ is the (unique) $i$ such that $a\in A_i\setminus A_{i+1}$.
\end{defn}
The property $[A_i,A_j]\seq A_{i+j}$ of the Lazard series implies the property $\lev([a,b]) \geq \lev(a)+\lev(b)$. Note that $\lev(a)+\lev(b)$ takes the value $c+1$ as soon as $\lev(a)+\lev(b)\geq c+1$.

\begin{defn}
    Suppose $L$ is an LLA with Lazard series $(L_{i})_{1 \leq i \leq c+1}$. Define $\mathrm{Der}_{\mathrm{Laz}}(L)$ by 
    $$
    \mathrm{Der}_{\mathrm{Laz}}(L) = \{ \delta \in \mathrm{Der}(L) : \delta(L_{i}) \subseteq L_{i+1} \text{ for all } i\}.
    $$
\end{defn}

\begin{lem} \label{Lazard derivations}
    Suppose $L$ is an LLA with Lazard series $(L_{i})_{1 \leq i \leq c+1}$.
\begin{enumerate}
    \item $\mathrm{Der}_{\mathrm{Laz}}(L)$ is a $(c-1)$-nilpotent subalgebra of $\mathrm{Der}(L)$ with Lazard series $(D_{i})_{1 \leq i \leq c}$ defined by 
    $$
    D_{i} = \{\delta \in \mathrm{Der}_{\mathrm{Laz}}(L) : \delta(L_{j}) \subseteq L_{i+j} \text{ for all } j\}.
    $$
    \item If $I \subseteq L$ is an ideal of $L$ (with LLA structure induced from $L$, that is, with Lazard series $(I_{i})_{i}$ defined by $I_{i} = I \cap L_{i}$), then there is an LLA homomorphism $L \to \mathrm{Der}_{\mathrm{Laz}}(I)$ defined by $a \mapsto \mathrm{ad}(a)|_{I}$.
\end{enumerate}
\end{lem}

\begin{proof}
    (1) It is clear from the definitions that we have 
    $$
    \mathrm{Der}_{\mathrm{Laz}}(L) = D_{1} \supseteq D_{2} \supseteq \ldots \supseteq D_{c} = 0.
    $$
    Suppose that $\delta \in D_{i}$ and $\delta' \in D_{j}$.  Let $k$ be arbitrary. Then we have $\delta \delta'(L_{k}) \subseteq \delta(L_{j+k}) \subseteq L_{i+j+k}$ and likewise, $\delta'\delta(L_{k}) \subseteq \delta'(L_{i+k}) \subseteq L_{i+j+k}$, hence 
    $$
    [\delta,\delta'](L_{k}) = (\delta \delta' - \delta' \delta)(L_{k}) \subseteq L_{i+j+k}.
    $$
    This shows that $[D_{i},D_{j}] \subseteq D_{i+j}$. 

    (2) As $\mathrm{ad}: L \to \mathrm{Der}(I)$ is a Lie algebra homomorphism, we only need to show that, for each $i$, $a \in L_{i}$ implies $\mathrm{ad}(a)|_{I} \in D_{i}$.  Fix $i$ and pick $a \in L_{i}$.  Let $j$ and $b \in I_{j}$ be arbitrary.  As $I$ is an ideal, we have $\mathrm{ad}(a)(b) \in I$ and, since $b \in L_{j}$, we have
    $$
    \mathrm{ad}(a)(b) = [a,b] \in L_{i+j}.
    $$
    This shows $\mathrm{ad}(a) \in D_{i}$, as desired. 
\end{proof}

\subsection{Lazard correspondence}\label{subsec:lazardcorrespondence}

Our main technical tool for understanding nilpotent groups of exponent $p$ is the \emph{Lazard correspondence}.  Assuming $c < p$ for an odd prime $p$, this correspondence associates to each nil-$c$ group of exponent $p$ a nil-$c$ Lie algebra over the field $\mathbb{F}_{p}$.\footnote{In fact, the Lazard correspondence is considerably more general than this, associating to every $\mathbb{Q}_{\pi}$-powered nilpotent group a Lie ring over $\mathbb{Q}_{\pi}$ of the same nilpotence class, where $\pi$ is a set of primes $\leq c$.  As this is far more generality than we will need, we refer the interested reader to \cite[Chapter 10]{Khu98}.} From a model-theoretic point of view, this correspondence establishes the uniform bi-definability of nil-$c$ groups of exponent $p$ and of nil-$c$ Lie algebras over $\mathbb{F}_{p}$.  Indeed, this uniform bi-definability applies both to the pure languages of groups and Lie algebras and to their respective expansions to languages with predicates for Lazard series.  This will allow us to conclude that the model-theoretic study of nil-$c$ groups of exponent $p$ reduces entirely to studying Lie algebras, which in turn can be analyzed using the more transparent tools of linear algebra. 

Suppose $c < p$, for $p$ an odd prime.  If $G$ is a group of exponent $p$ and nilpotence class $\leq c$, we define $L_{G}$ to be a structure with same underlying set and operations $+_{L_{G}}$ and $[\cdot,\cdot]_{L_{G}}$ defined by 
$$
g +_{L_{G}} h = h_{1}(g,h) =  gh[g,h]^{-\frac{1}{2}}[g,g,h]^{-\frac{1}{12}}[h,g,h]^{\frac{1}{12}}\ldots 
$$
and 
$$
[g,h]_{L_{G}} = h_{2}(g,h) = [g,h][g,g,h]^{\frac{1}{2}}[h,g,h]^{\frac{1}{2}}\ldots 
$$
where the brackets on the right denote group commutators in $G$. We will usually omit the subscripts. Since the nilpotence class of $G$ is at most $c$, it turns out that both $h_{1}$ and $h_{2}$ are finite products of group commutators in $G$ raised to powers in $\mathbb{Z}_{(p)}$, where $\mathbb{Z}_{(p)}$ denotes the set of $q \in \mathbb{Q}$ such that if $q = \frac{l}{m}$ is in reduced form, then $\mathrm{gcd}(m,p) = 1$. Since the group $G$ is of exponent $p$, it makes sense to raise any element to powers in $\mathbb{Z}_{(p)}$ and these yield well-defined operations on $L_{G}$.   The coefficients of $h_{1}$ and $h_{2}$ are explicitly described in \cite{Cicaloetal}.  

Conversely, given a Lie algebra $L$ over $\mathbb{F}_{p}$ of nilpotence class at most $c$, one defines $G_{L}$ to be the structure with the same underlying set and with a binary operation $*_{G_{L}}$ defined by 
$$
a *_{G_{L}} b = H(a,b) =  a + b + \frac{1}{2} [a,b] + \frac{1}{12}[a,a,b] - \frac{1}{12} [b,a,b] + \ldots 
$$
This is the Baker-Campbell-Hausdorff formula, where the brackets on the right-hand side are the Lie bracket of $L$. This is usually an infinite sum but, since $L$ is of nilpotence class  at most $c$, the function $H$ can be written as a finite linear combination of Lie monomials with coefficients in $\mathbb{Z}_{(p)}$.  This can thus be viewed as an $\mathbb{F}_{p}$-linear combination of Lie monomials.  

The following fact summarizes the Lazard correspondence.

\begin{fact} \cite[Chapter 10]{Khu98}
Suppose $c < p$ for an odd prime $p$.  To every group $G$ of exponent $p$ and nilpotence class $\leq c$, the Lazard correspondence associates a Lie algebra $L_{G}$ over $\mathbb{F}_{p}$ with the same underlying set $L_{G} = G$ and with operations $a+b = h_{1}(a,b)$ and $[a,b] = h_{2}(a,b)$, and $ra = a^{r}$ for every $r \in \mathbb{F}_{p}$.  Conversely, for every Lie algebra $L$ over $\mathbb{F}_{p}$ of nilpotence class $\leq c$, there is a corresponding group $G_{L}$ of exponent $p$ with the same underlying set and group operation defined by 
$$
a * b = H(a,b)
$$
and $a^{r} = ra$ for $r \in \mathbb{F}_{p}$. These operations are inverses to each other:  as Lie algebras over $\mathbb{F}_{p}$, we have $L_{G_{L}} = L$ and additionally $G_{L_{G}} = G$ as groups. 
\end{fact}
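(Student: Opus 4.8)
The plan is to reduce the whole statement to a short list of \emph{universal} identities, proved once over a ground ring in which the Baker--Campbell--Hausdorff (BCH) formula makes sense, and then to specialize to $\F_{p}$. The only genuinely arithmetic input is the classical denominator estimate for the BCH coefficients. Concretely, I would work over $R = \Z[\tfrac12,\ldots,\tfrac1c]$: let $\widehat{T}$ be the completed free associative $R$-algebra on indeterminates $X_{1},X_{2},\ldots$, with $\widehat{\mathfrak f}\subseteq\widehat{T}$ the completed free Lie $R$-algebra. Over $R$ the series $\exp(X_{i})$ and $\log(1+u)$ (for $u$ in the augmentation ideal) are defined, the $\exp$'s are grouplike, $\log$ of a grouplike element is primitive, and hence $H(X,Y):=\log(\exp(X)\exp(Y)) = X+Y+\tfrac12[X,Y]+\cdots$ lies in $\widehat{\mathfrak f}$; the denominator theorem (Dynkin; explicit coefficients in \cite{Cicaloetal}) guarantees that its degree-$n$ homogeneous part has coefficients in $\Z[\tfrac12,\ldots,\tfrac1n]\subseteq R$. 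Since $p>c$ there is a ring map $R\to\F_{p}$, so any identity among Lie series that I prove in $\widehat{\mathfrak f}$ and that involves only brackets of length $\leq c$ specializes to $\F_{p}$; and since an exponent-$p$ group is (elementwise) uniquely $p'$-divisible, it carries a well-defined $\Z_{(p)}$-power operation through $\Z_{(p)}\twoheadrightarrow\F_{p}$, while nil-$c$-ness truncates the a priori infinite products defining $h_{1},h_{2},H$ to finite ones. I would then verify every claim first in the free nil-$c$ Lie algebra $\mathfrak L_{R}$ over $R$ (equivalently, applying the correspondence to free objects, the free $R$-powered nil-$c$ group) and pass to an arbitrary nil-$c$ Lie algebra over $\F_{p}$ by writing it as a quotient of $\mathfrak L_{R}\otimes_{R}\F_{p}$ and noting that the BCH-type operations commute with base change along $R\to\F_{p}$ and with quotients.

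\textbf{From $L$ to $G_{L}$.} Setting $a*b=H(a,b)$, associativity $H(H(X,Y),Z)=H(X,H(Y,Z))$ is immediate from associativity of multiplication in $\widehat{T}$ (apply $\log$ to $\exp(X)\exp(Y)\exp(Z)$); it involves brackets of bounded length, so it descends. The identity element is $0$, the inverse of $a$ is $-a$ since $H(X,-X)=0$, and every bracket in $H(a,\ldots,a)$ vanishes so $a^{*k}=ka$, whence $a^{*p}=0$ and $G_{L}$ has exponent $p$. Finally a short BCH expansion gives $[a,b]_{G_{L}}=[a,b]_{L}+(\text{iterated brackets of length}\geq 3)$, so induction on the lower central series yields $\gamma_{n}(G_{L})\subseteq L_{n}$ and hence $\gamma_{c+1}(G_{L})=0$: $G_{L}$ is nil-$c$.

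\textbf{From $G$ to $L_{G}$, and mutual inversion.} Formally inverting $\exp$ produces Lie series $h_{1},h_{2}$ over $R$ (read off from $\log$, with degree-$n$ parts again over $\Z[\tfrac12,\ldots,\tfrac1n]$) such that $x+'y:=h_{1}(x,y)$ and $[x,y]':=h_{2}(x,y)$ recover on $\widehat{\mathfrak f}$ its original Lie structure from the BCH-group structure. Transporting to a group $G$ of exponent $p$ and class $\leq c$, the operations $a+_{L_{G}}b=h_{1}(a,b)$ and $[a,b]_{L_{G}}=h_{2}(a,b)$ (finite products of group commutators with $\Z_{(p)}$-exponents) satisfy each Lie-algebra axiom, because linearity, $[x,x]=0$, and the Jacobi identity each become such a universal identity, and the $\F_{p}$-scalar action is the $\Z_{(p)}$-power operation; so $L_{G}$ is a nil-$c$ Lie algebra over $\F_{p}$. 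That $L_{G_{L}}=L$ and $G_{L_{G}}=G$ is precisely the mutual inversion $\log\circ\exp=\mathrm{id}$, $\exp\circ\log=\mathrm{id}$, once more a pair of universal identities valid in $\mathfrak L_{R}$ and hence after specialization.

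\textbf{The main obstacle.} The essential non-formal ingredient is the denominator theorem for BCH, together with the equally arithmetic verification that \emph{all} the auxiliary series — three-variable associativity, the inversion series $h_{1},h_{2}$, and each Lie-algebra axiom rewritten via BCH — have coefficients in $\Z[\tfrac12,\ldots,\tfrac1c]$ in degrees $\leq c$; in other words, that the entire universal structure over $R$ really is a Lie algebra and a group, after which base change along $R\to\F_{p}$ does the rest. A secondary, purely bookkeeping point is the well-definedness of $\Z_{(p)}$-powers in exponent-$p$ groups (so the fractional exponents appearing in $h_{1},h_{2}$ are meaningful) and the interplay of the constructions with quotients. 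All of this is carried out in detail in \cite[Chapter 10]{Khu98}, which is why we quote the result rather than reprove it.
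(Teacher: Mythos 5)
The paper gives no proof of this statement---it is presented as a Fact cited from \cite[Chapter 10]{Khu98}, so there is no in-paper argument to compare against. Your proposal is a correct and faithful outline of the standard proof contained in that reference: reducing everything to universal identities in the free nil-$c$ Lie algebra over $\Z[\tfrac12,\ldots,\tfrac1c]$, invoking the BCH denominator bound to justify specialization along $R\to\F_{p}$ when $p>c$, using the inversion series $h_{1},h_{2}$, and handling the $\Z_{(p)}$-power bookkeeping in exponent-$p$ groups are exactly the ingredients of Khukhro's treatment, and your identification of the denominator theorem as the one genuinely arithmetic input is apt.
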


The following summarizes the key facts that we need about the Lazard correspondence.

\begin{fact} \label{Lazard facts} \cite[Chapter 10]{Khu98}
    Suppose $c < p$ for an odd prime $p$. Suppose that $L$ is a Lie algebra over $\mathbb{F}_{p}$ of nilpotence class $\leq c$, that $G$ is a group of nilpotence class $\leq c$ of exponent $p$, and that $L$ and $G$ are in correspondence with one another, i.e. $L = L_{G}$ as Lie algebras and $G = G_{L}$ as groups.
    \begin{enumerate}
        \item For all $a,b \in L$, 
        $$
        [a,b]_{L} = [a,b]_{G} \prod_{j} \chi_{j}^{s_{j}}
        $$
        where $s_{j} \in \mathbb{Z}_{(p)}$ and $\chi_{j}$ are group commutators in $a$ and $b$ of degree $\geq 3$.
        \item For all $a,b \in G$, 
        $$
        [a,b]_{G} = [a,b]_{L} + \sum_{j} u_{j} \chi_{j}
        $$
        where $u_{j} \in \mathbb{F}_{p}$ and the $\chi_{j}$ are Lie monomials in $a$ and $b$ of degree $\geq 3$. 
        \item A subset $K \subseteq G$ is a subgroup of $G$ if and only if $K \subseteq L$ is a Lie subalgebra. 
        \item A subset $I$ is a normal subgroup of $G$ if and only if $I$ is an ideal of $L$.
        \item A function from the underlying set $G = L$ to itself is an endomorphism of the group $G$ if and only if it is an endomorphism of the Lie algebra $L$. In particular, the automorphism groups $\mathrm{Aut}(L)$ and $\mathrm{Aut}(G)$ coincide as permutation groups. 
    \end{enumerate}
\end{fact}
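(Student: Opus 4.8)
The plan is to derive all five items from a single principle: on the common underlying set, the Lie operations $+_{L}$, scalar multiplication, and $[\cdot,\cdot]_{L}$ are \emph{term-definable} from the group operations of $G$ via the formulas $h_{1}, h_{2}$ together with raising to $\mathbb{Z}_{(p)}$-powers, while conversely $*_{G}$ and group inversion are term-definable from the Lie operations via the Baker--Campbell--Hausdorff series $H$ together with multiplication by the scalar $p-1$. Because $c < p$, each of these translation formulas is a \emph{finite} expression with coefficients in $\mathbb{Z}_{(p)}$, and such coefficients make sense both on a group of exponent $p$ and on an $\mathbb{F}_{p}$-vector space. Items (1) and (2) are the technical core: they record the precise ``leading term'' shape of two of these translations. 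Items (3)--(5) then follow by purely formal manipulation.

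For (2), expand the group commutator $[a,b]_{G} = a^{-1} *_{G} b^{-1} *_{G} a *_{G} b$ using the BCH series, working in the free nilpotent Lie $\mathbb{Z}_{(p)}$-algebra of class $c$ on two generators and specializing at the end. Since all cross-brackets of $a$ with $-a$ vanish, the group inverse of $a$ is $-a$ (equivalently $(p-1)a$), so $[a,b]_{G} = H(H(H(-a,-b),a),b)$; grading by total degree and tracking cancellation, the degree-$1$ contributions sum to $-a-b+a+b = 0$, the degree-$2$ part is exactly $[a,b]_{L}$, every remaining summand is a Lie monomial in $a,b$ of degree between $3$ and $c$ (and each such monomial involves $a$ at least once, since a bracket monomial in $b$ alone is zero), and nilpotence class $\leq c$ kills everything of higher degree, yielding the claimed finite sum. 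Item (1) is the mirror image: it is essentially the defining shape of $h_{2}$ displayed in the Preliminaries, namely $[g,h][g,g,h]^{1/2}[h,g,h]^{1/2}\cdots$, whose leading factor is the group commutator and whose further factors are iterated group commutators of degree $\geq 3$; alternatively one obtains it by inverting (2) inside the same finite free nilpotent object. The explicit BCH coefficients of \cite{Cicaloetal} can be quoted to make the combinatorics entirely explicit.

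Given (1) and (2), the structural items are formal. For (3): if $K$ is a subgroup of $G$, then $K$ is closed under group multiplication and inversion, hence under group commutators and under $r$-th powers for $r\in\mathbb{Z}_{(p)}$; since $a+_{L}b = h_{1}(a,b)$ and $[a,b]_{L} = h_{2}(a,b)$ are finite products of such expressions in $a,b$, and $ra = a^{r}$, the set $K$ is closed under $+_{L}$, scalars, and the Lie bracket, i.e. $K$ is a Lie subalgebra; conversely a Lie subalgebra is closed under $a*_{G}b = H(a,b)$ and under $a^{-1}_{G} = (p-1)a$, hence is a subgroup. For (4), combine (3) with the observation that a normal subgroup $I$ (resp.\ an ideal $I$) containing $a$ absorbs every group-commutator monomial (resp.\ Lie monomial) in $a$ and an arbitrary element $b$ in which $a$ occurs --- an induction on degree using $[I,G]\subseteq I$ (resp.\ $[I,L]\subseteq I$) together with commutator/Jacobi identities to relocate an occurrence of $a$ to an innermost slot --- and then read off from (2) that $[a,b]_{G}\in I$ whenever $I$ is an ideal with $a\in I$ and $b\in L$, and from (1) that $[a,b]_{L}\in I$ whenever $I$ is normal with $a\in I$ and $b\in G$; this gives ideal $\Rightarrow$ normal and normal $\Rightarrow$ ideal. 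For (5): a group homomorphism $\phi$ of $G$ commutes with $*_{G}$, with taking powers, and with group commutators, hence with $h_{1},h_{2}$ and with scalar multiplication, so $\phi$ is a Lie-algebra endomorphism of $L$; the converse is symmetric via $H$; finally a bijective endomorphism of either kind has an inverse of the same kind, so $\mathrm{Aut}(G)$ and $\mathrm{Aut}(L)$ are literally the same set of permutations of the common underlying set, hence coincide as permutation groups.

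The main obstacle is the bookkeeping underlying (1) and (2): one must verify that the BCH series and its inverses genuinely truncate to finite $\mathbb{Z}_{(p)}$-combinations under the hypothesis $c<p$, and that after cancellation the remainder past the degree-$2$ term consists only of honest iterated brackets/commutators; this is precisely where the assumptions that $p$ is odd and $p>c$ are used essentially, and it is the content one inherits from the development in \cite[Chapter 10]{Khu98} and the explicit formulas of \cite{Cicaloetal}. A secondary fiddly point is the group-side half of the absorption induction in (4), since group commutators lack the clean antisymmetry of the Lie bracket, so one must invoke the standard commutator calculus identities (such as the Hall--Witt identities and $[x,yz] = [x,z][x,y]^{z}$) to push it through.
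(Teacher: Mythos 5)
This result is cited from \cite[Chapter 10]{Khu98} as a known fact and the paper supplies no proof of its own, so there is no internal argument to compare against. Your reconstruction is essentially the standard one from the cited source: items (1) and (2) are the leading-term shapes of the degree-graded BCH and inverse-BCH expansions (via $[a,b]_{G} = H(H(H(-a,-b),a),b)$, the cancellation of the degree-$1$ and isolation of the degree-$2$ terms, and the observation that a nonzero Lie monomial or group commutator of degree $\geq 3$ in two letters must involve both letters), with the $c<p$ hypothesis ensuring the expansion is a finite $\mathbb{Z}_{(p)}$-expression, while items (3)--(5) then fall out formally from the fact that each structure's operations are term-definable from the other's via $h_{1}, h_{2}, H$, scalar multiplication, and $\mathbb{Z}_{(p)}$-powers, and closure under terms is inherited by subsets, preserved by homomorphisms, and respected by functional inverses. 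The one place you over-engineer is the absorption step in (4): no Hall--Witt identities or expansions like $[x,yz]=[x,z][x,y]^{z}$ are needed to relocate occurrences of $a$, since a direct tree induction already does the job --- in any nested commutator or Lie monomial containing a letter $a$ from $I$, the path from that leaf to the root consists of nodes each of which is a commutator with one argument already known to lie in $I$, hence lies in $I$ by normality (resp.\ ideality), and this propagates to the root. With that simplification your proposal is a clean and correct rendering of the proof one inherits from \cite{Khu98}.
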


Note that Fact \ref{Lazard facts} implies if $L$ is a Lie algebra over $\mathbb{F}_{p}$ of nilpotence class $\leq c$, the group $G$ is of nilpotence class $\leq c$ of exponent $p$, and $L$ and $G$ are in correspondence with one another, then a sequence $(H_{i})_{1 \leq i \leq c+1}$ is a Lazard series for $G$ if and only if it is a Lazard series for $L$.

\section{The generic nilpotent group of class 2} \label{nil-2}

\subsection{The bilinear-map correspondence}

We recall the following definitions from Baudisch \cite{Baudisch2}.
\begin{defn}\label{def:baudischdefvarietiesgroupsetc}
Let $p$ be a prime number.
\begin{enumerate}
\item We write $\mathbb{G}_{2,p}$ for the class of nilpotent groups of class 2 and exponent $p$.  We let $\mathbb{K}_{2,p}$ denote the finite groups in $\mathbb{G}_{2,p}$.
\item $\LL_{P}$ is the language of groups together with a unary predicate $P$.  We write $\mathbb{G}^{P}_{2,p}$ for those $\LL_{P}$-structures $G$ whose reduct to the language of groups lies in $\mathbb{G}_{2,p}$ and in which additionally $P(G)$ is a normal subgroup satisfying $[G,G] \subseteq P(G) \subseteq Z(G)$.  Likewise, we write $\mathbb{K}^{P}_{2,p}$ for the finite structures in this class.  When $p$ is understood from context, we will simply write $\mathbb{G}^{P}$ and $\mathbb{K}^{P}$.  
\item We write $\mathbb{B}_{p}$ for the class of pairs of $\mathbb{F}_{p}$-vector spaces $(V,W)$, viewed as $\mathcal{L}_{B}$-structures, where $\mathcal{L}_{B}$ contains a sort for each vector space (and the abelian group structure on each) together with an alternating bilinear map $\beta : V \times V \to W$. As in (2), we just write $\mathbb{B}$ when $p$ is understood from context. 
\end{enumerate}
\end{defn}

Baudisch defines a functor $F : \mathbb{G}^{P} \to \mathbb{B}$, which is defined by
$$
F(G) = (G/P(G), P(G), [\cdot,\cdot]) 
$$
 for all $G \in \mathbb{G}^{P}$, where $[\cdot,\cdot]$ is the commutator in $G$.  Given an embedding $f : G \to H$ of structures in $\mathbb{G}^{P}$, we define $F(f) (= (F(f)_{V},F(f)_{W}))$ to be the pair of maps $(\overline{f}, f|_{P(G)})$, where $\overline{f} : G/P(G) \to H/P(H)$ is the induced embedding and $f|_{P(G)}$ is the restriction of $f$ to $P(G)$.  
%

Baudisch deduces that both $\mathbb{K}^{P}$ and the class of finite structures in $\mathbb{B}$ are Fra\"iss\'e classes with quantifier elimination \cite[Corollary 1.3]{Baudisch2}.  Let $\mathbf{G}$ and $\mathbf{B}$ be their respective Fra\"iss\'e limits.  It can be shown that in $\mathbf{G}$, we have $P(\mathbf{G}) = Z(\mathbf{G})$ \cite[Corollary 1.3]{Baudisch2}.  Let $T_{\mathbf{G}} = \mathrm{Th}(\mathbf{G})$ and $T_{\mathbf{B}} = \mathrm{Th}(\mathbf{B})$.  Let $\mathbb{M}_{\mathbf{G}} \models T_{\mathbf{G}}$ and $\mathbb{M}_{\mathbf{B}} = (\mathbb{V},  \mathbb{W}, \beta) \models T_{\mathbf{B}}$ be their respective monster models.  Note that we may view $\mathbb{M}_{\mathbf{B}}$ as a structure interpreted in $\mathbb{M}_{\mathbf{G}}$ with $\mathbb{V} = \mathbb{M}_{\mathbb{G}}/Z(\mathbb{M}_{\mathbf{G}})$, $\mathbb{W} = Z(\mathbb{M}_{\mathbf{G}})$, and $\beta(\cdot, \cdot) = [\cdot, \cdot]$.  By abuse of notation, if $A \subseteq \mathbb{M}_{\mathbf{G}}$ is a substructure, we will write $F(A)$ to denote the image of $A$ under this interpretation, i.e. identifying $F(A)$ with a substructure of $\mathbb{M}_{\mathbf{B}}$ is an obvious way. 

Given $B = (V,W,\beta) \in \mathbb{B}$, we fix a basis $\overline{b} = \{b_{i} :  i < \alpha\}$ for $V$ and define a group $G(\overline{b},B)$ whose underlying set consists of $V \times W$ with multiplication defined by 
$$
\left(\sum_{i < \alpha} r_{i}b_{i}\,,\,w\right) \cdot \left(\sum_{i < \alpha} s_{i}b_{i}\,,\, w'\right) = \left(\sum_{i < \alpha} (r_{i} + s_{i})b_{i}\,,\, w + w' + \sum_{i < j < \alpha} r_{i} s_{j} \beta(b_{j},b_{i}) \right),
$$
where, in the above expressions, all but finitely many $r_{i}$ and $s_{i}$ are zero. Note that if $B = F(H)$ for some $H \in \mathbb{G}_{p}$, we have $G(\overline{b},F(H)) \cong H$.  More explicitly, if $h \in H$, then, since $\overline{b}$ is a basis of $H/P(H)$, we can pick $c_{i} \in H$ such that $c_{i}P(H) = b_{i}$ for each $i < \alpha$.  Then we have that $hP(H) = \prod_{i < \alpha} c_{i}^{r_{i}}P(H)$ for some $0 \leq r_{i} < p$ for each $i < \alpha$. It follows that $h = \prod_{i < \alpha} c_{i}^{r_{i}}w$ for some $w \in P(H)$. The map $h \mapsto \left( \sum_{i < \alpha} r_{i}b_{i},w\right)$ is an isomorphism from $H$ to $G(\overline{b},B)$.  

\begin{lem} \label{acl description}
Given a set $A$ of parameters in $\mathbb{M}_{\mathbf{B}}$, 
$$
\mathrm{acl}(A) = \mathrm{dcl}(A) = \mathrm{span}_{V}(V(A)) \cup \mathrm{span}_{W}(W(A) \cup \beta(V(A)^{2})).  
$$
\end{lem}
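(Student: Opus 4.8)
The plan is to recognise the claimed set as the substructure $C$ of $\mathbb{M}_{\mathbf{B}}$ generated by $A$, to prove $C\subseteq\dcl(A)$ directly, and to prove $\acl(A)\subseteq C$ using the quantifier elimination for $T_{\mathbf{B}}$ together with the strong amalgamation available for finite members of $\mathbb{B}$. Since $\acl(A)=\bigcup\{\acl(A_{0}):A_{0}\subseteq A\text{ finite}\}$ and the right-hand side of the asserted identity is the union of the corresponding sets for finite $A_{0}\subseteq A$, I may assume $A$ finite. Put $V_{0}=\mathrm{span}_{V}(V(A))$, $W_{0}=\mathrm{span}_{W}(W(A)\cup\beta(V(A)^{2}))$, and $C=(V_{0},W_{0})$. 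By bilinearity $\beta(V_{0},V_{0})\subseteq\mathrm{span}_{W}(\beta(V(A)^{2}))\subseteq W_{0}$, so $C$ is an $\mathcal{L}_{B}$-substructure of $\mathbb{M}_{\mathbf{B}}$, and it is finitely generated by $A$.

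Since the scalar field $\mathbb{F}_{p}$ is fixed and finite, every element of $C$ is an explicit $\mathbb{F}_{p}$-linear combination of elements of $A$ and of finitely many products $\beta(a,a')$ with $a,a'\in A$; such an element is defined over $A$ by a quantifier-free formula (an iterated sum with standard integer coefficients below $p$). Hence $C\subseteq\dcl(A)\subseteq\acl(A)$, and it remains to show $\acl(A)\subseteq C$.

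Since $A\subseteq C$ we have $\acl(A)\subseteq\acl(C)$, so it is enough to prove $\acl(C)=C$, i.e.\ that every $d\in\mathbb{M}_{\mathbf{B}}\setminus C$ has infinitely many realisations of $\tp(d/C)$. By quantifier elimination, $d'$ realises $\tp(d/C)$ exactly when fixing $C$ pointwise and sending $d\mapsto d'$ extends to an isomorphism $\langle Cd\rangle\to\langle Cd'\rangle$ of the finite generated substructures. I would then use that the class of finite members of $\mathbb{B}$ has strong amalgamation: to amalgamate two such structures over a common substructure, take the vector-space pushout in each sort, adjoin to the $W$-sort a free $\mathbb{F}_{p}$-space spanned by the ``missing'' products $\beta(b_{1},b_{2})$ --- with $b_{i}$ ranging over a basis of a complement of the common $V$-subspace in the $i$-th factor --- and extend $\beta$ bilinearly, setting $\beta(b_{2},b_{1})=-\beta(b_{1},b_{2})$; one checks the resulting bracket is alternating and the two images meet exactly in the common substructure. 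Now, given $d\notin C$ and any $n$, amalgamating $n$ copies of $\langle Cd\rangle$ over $C$ and embedding the result over $C$ into $\mathbb{M}_{\mathbf{B}}$ (extension property of the Fra\"iss\'e limit) yields $n$ pairwise distinct realisations of $\tp(d/C)$; as $n$ was arbitrary, $d\notin\acl(C)$. Explicitly, if $d\in\mathbb{V}\setminus V_{0}$ then $\langle Cd\rangle$ has $V$-part $V_{0}\oplus\mathbb{F}_{p}d$ and $W$-part $W_{0}+\mathrm{span}_{\mathbb{F}_{p}}\{\beta(d,a):a\in V(A)\}$, and the realisations of the type are fresh vectors $d'$ whose products $\beta(d',a)$ with a fixed basis of $V_{0}$ display the same finite pattern of $\mathbb{F}_{p}$-linear relations over $W_{0}$ as those of $d$; if $d\in\mathbb{W}\setminus W_{0}$ then $\langle Cd\rangle=(V_{0},\,W_{0}\oplus\mathbb{F}_{p}d)$ and the realisations are fresh $d'\in\mathbb{W}$ with $W_{0}+\mathbb{F}_{p}d'$ free over $W_{0}$. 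This gives $\acl(C)=C$, completing the proof.

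The only step carrying real content is this amalgamation/richness argument: either checking that the two-sorted free amalgam above is strong with alternating bracket, or --- in the explicit formulation --- checking that the isomorphism type of $\langle Cd'\rangle$ over $C$ is the same finite datum for infinitely many $d'$, equivalently that no unexpected quantifier-free formula over $C$ constrains $d$. This is routine $\mathbb{F}_{p}$-linear-algebra bookkeeping across the two sorts; everything else is formal.
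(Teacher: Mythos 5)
Your proposal is correct and takes essentially the same approach as the paper: both identify the right-hand side as the substructure generated by $A$ and show that every element outside it has infinitely many conjugates over it by constructing a suitable extension and invoking quantifier elimination. The only cosmetic difference is that the paper hand-builds a specific witnessing extension (new $V$-generators placed pairwise orthogonally, with $\beta'(u_i,u_j)=0$), whereas you invoke strong amalgamation of copies of $\langle Cd\rangle$ over $C$ (which freely adjoins new $W$-elements $\beta(u_i,u_j)$); both constructions are routine and interchangeable for this purpose.
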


\begin{proof}
Let $S =  \mathrm{span}_{V}(V(A)) \cup \mathrm{span}_{W}(W(A) \cup \beta(V(A)^{2}))$.  As the other containments are clear, it suffices to show $\mathrm{acl}(A) \subseteq S$.  Pick $u \in \mathbb{M}_{\mathbf{B}} \setminus S$.  We will show there are infinitely many pairwise distinct $u_{i}$ with $u_{i} \equiv_{S} u$.  Since, in particular, $u_{i} \equiv_{A} u$, it follows that $u \not\in \mathrm{acl}(A)$.  

\textbf{Case 1}:  Assume $u \in V$.  Introduce distinct new elements $(u_{i})_{i < \omega}$ and consider the vector space $V'$ spanned by $V(A)$ and $\{u_{i} : i < \omega\}$, with the $u_{i}$ linearly independent over $V(A)$.  We define $\beta'$ by setting $\beta'(u_{i},b) = \beta(u,b)$ for all $i < \omega$ and $b \in V(S)$, and $\beta'(u_{i},u_{j}) = 0$ for all $i, j < \omega$.  This determines a unique alternating bilinear map $\beta'$ on $V'$.  Embedding over $S$, we may assume $(V', W(S), \beta')$ is a substructure of $(\mathbb{V},  \mathbb{W}, \beta)$.  By construction, the function $u \mapsto u_{i}$ extends to an isomorphism
$$
\sigma_{i} : (\langle V(S) u \rangle, W(S), \beta|_{\langle V(S) u \rangle}) \to (\langle V(S)u_{i}), W(S), \beta'),
$$
which fixes $S$ pointwise.  By quantifier elimination, the $\sigma_{i}$ witness that $u_{i} \equiv_{S} u$ for all $i$.

\textbf{Case 2}:  If $u \in W$, we can just take infinitely many new elements $(u_{i})_{i < \omega}$ and define $W'$ to be a vector space spanned by $W(S) \cup \{u_{i} : i < \omega\}$ with the $u_{i}$ linearly independent over $W(S)$.  Then in an obvious way, we have 
$$
(V(S), W(S), \beta|_{V(S)}) \subseteq (V(S), W', \beta|_{V(S)}),
$$
so embedding $(V(S), W', \beta|_{V(S)})$ into $(\mathbb{V}, \mathbb{W}, \beta)$, we see that $u_{i} \equiv_{S} u$ for all $i < \omega$.  
\end{proof}

\subsection{NSOP$_1$}

In this subsection, we will establish that $T_{\mathbf{G}}$ is NSOP$_{1}$ by characterizing Kim-independence in this theory. See \cite{KR20} for the basis of the theory. 
 We will actually first establish that $T_{\mathbf{B}}$ is NSOP$_{1}$ by establishing the independence theorem for algebraic independence and applying the NSOP$_{1}$ Kim-Pillay theorem.  We will then use Baudisch's functor $F$ to deduce that $T_{\mathbf{G}}$ is  NSOP$_{1}$ as well. Recall that algebraic independence, denoted by $a \indi{a}_{C} b$, means $\mathrm{acl}(aC) \cap \mathrm{acl}(bC) = \mathrm{acl}(C)$.  Note that, by \cite[Proposition 2.1]{Baudisch1}, $T_{\mathbf{G}}$ is not simple so NSOP$_{1}$ is, in some sense, best possible. 

\begin{lem}[Independence Theorem] \label{independence lemma}
Suppose we are given small subsets $A, B, C_{0}, C_{1}, D \subseteq \mathbb{M}_{\mathbf{B}}$ such that $A \indi{a}_{D} B$, $C_{0} \indi{a}_{D} A$, $C_{1} \indi{a}_{D} B$, and $C_{0} \equiv_{D} C_{1}$.  Then there is $C_{*}$ such that $C_{*} \equiv_{AD} C_{0}$, $C_{*} \equiv_{BD} C_{1}$, and $C_{*} \indi{a}_{D} AB$.  
\end{lem}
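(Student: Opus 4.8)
The plan is to prove the independence theorem directly, by constructing an explicit amalgam and invoking quantifier elimination; this is a ``stationary independence'' style argument, made feasible by the transparent description of algebraic closure in Lemma~\ref{acl description}. First I would pass to algebraic closures: replace $A,B,D,C_0,C_1$ by $\acl(AD),\acl(BD),\acl(D),\acl(C_0D),\acl(C_1D)$. These are substructures of $\mathbb{M}_{\mathbf{B}}$ (with $\acl=\dcl$ by Lemma~\ref{acl description}), and, choosing enumerations of the closures of $C_0$ and $C_1$ matched by an automorphism witnessing $C_0\equiv_D C_1$, one checks routinely that the hypotheses are preserved and that a $C_*$ working for the closed configuration yields one for the original (for the last clause one uses that $\acl(C_*D)$ only shrinks when passing to a subtuple). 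After this, $D=\acl(D)\subseteq A,B,C_0,C_1$, the algebraic-independence hypotheses become the substructure identities $A\cap B=D$, $C_0\cap A=D$, $C_1\cap B=D$, and, by quantifier elimination, there is an isomorphism $\phi\colon C_0\to C_1$ fixing $D$ pointwise. Set $E=\acl(AB)$, $A^{+}=\acl(C_0A)$, $B^{+}=\acl(C_1B)$, all regarded as substructures of $\mathbb{M}_{\mathbf{B}}$.

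\textbf{Constructing the amalgam.}
Next I would build the structure $N$ that simultaneously amalgamates $E$, $A^{+}$ and $B^{+}$: glue $E$ and $A^{+}$ along $A$, glue $E$ and $B^{+}$ along $B$, glue $A^{+}$ and $B^{+}$ along $\phi\colon C_0\to C_1$, and otherwise let the bilinear map be as free as possible; formally, $N$ is the colimit in $\mathbb{B}$ of this finite diagram. The essential simplification is that, by Lemma~\ref{acl description}, algebraic closure does not enlarge the $V$-sort, so $V(A^{+})=V(A)+V(C_0)$, $V(B^{+})=V(B)+V(C_1)$, $V(E)=V(A)+V(B)$, and $V(N)$ is just the pushout of the vector spaces $V(A),V(B),V(C_0)$ over $V(D)$, which behaves well precisely because $V(A)\cap V(B)=V(A)\cap V(C_0)=V(B)\cap V(C_1)=V(D)$. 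One checks that $E,A^{+},B^{+}$ each embed into $N$ as substructures; that inside $N$ one has $A^{+}\cap E=A$, $B^{+}\cap E=B$, and $A^{+}\cap B^{+}=\widetilde{C}$, the amalgamated copy of $C_0$; and that $\beta^{N}$ is well defined because, for a suitable basis of $V(N)$, any two basis vectors lie in one of $E,A^{+},B^{+}$ and the possible choices of $\beta$-value agree on overlaps since $\phi$ is an isomorphism of substructures (so no ``new'' $\beta$-values are needed). Since every finitely generated substructure of $N$ lies in the age of $\mathbf{B}$ and $E$ is a substructure of $N$, a saturation/back-and-forth argument embeds $N$ into $\mathbb{M}_{\mathbf{B}}$ over $E$; fix such an embedding $f$ with $f|_{E}=\mathrm{id}$.

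\textbf{Verification.}
Put $C_*=f(\widetilde{C})$. The composite $A^{+}\hookrightarrow N\xrightarrow{f}\mathbb{M}_{\mathbf{B}}$ is an embedding fixing $A$ pointwise and carrying $C_0$ onto $C_*$, so $\langle C_0A\rangle\cong\langle C_*A\rangle$ over $A$, whence $C_*\equiv_{AD}C_0$ by quantifier elimination; symmetrically, since the gluing identifies $w\in C_1$ with $\phi^{-1}(w)\in C_0$, the composite $B^{+}\hookrightarrow N\xrightarrow{f}\mathbb{M}_{\mathbf{B}}$ fixes $B$ and carries $C_1$ onto $C_*$, giving $C_*\equiv_{BD}C_1$. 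Finally, as $\widetilde{C}\cong C_0$ is algebraically closed, Lemma~\ref{acl description} gives $\acl^{\mathbb{M}_{\mathbf{B}}}(C_*)=C_*$, and since $f$ fixes $E$ and preserves intersections,
\[
\acl(C_*D)\cap\acl(ABD)=C_*\cap E=f(\widetilde{C}\cap E)=f(\widetilde{C}\cap A^{+}\cap E)=f(\widetilde{C}\cap A)=f(C_0\cap A)=D,
\]
so $C_*\indi{a}_D AB$, as required.

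\textbf{Main obstacle.}
The crux is the amalgamation step: showing that the three-way free amalgam $N$ genuinely exists within the class, with all three structural maps injective and with the intersections $A^{+}\cap E=A$, $B^{+}\cap E=B$, $A^{+}\cap B^{+}=\widetilde{C}$ exactly as stated. This is elementary linear algebra, but it has to be set up with care: $A$, $B$ and $C_0$ may each interact non-trivially with one another through $\beta$, so $N$ is an honestly three-dimensional amalgam and cannot be obtained by iterating binary free amalgamation (doing so would force a free, rather than the prescribed, $\beta$-interaction between the copy of $C_0$ and one of $A$, $B$). It is precisely the disjointness hypotheses $A\cap B=C_0\cap A=C_1\cap B=D$, together with the fact (Lemma~\ref{acl description}) that algebraic closure leaves the $V$-sorts untouched, that make the diagram coherent and the colimit computation go through.
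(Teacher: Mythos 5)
Your proposal is correct and reaches the same conclusion by a genuinely different organization of the argument. You construct a three-way free amalgam $N$ abstractly (as a colimit of $E$, $A^{+}$, $B^{+}$ over the pairwise overlaps $A$, $B$, $\widetilde C$) and then embed $N$ into the monster over $E$ by homogeneity; the paper instead works inside $\mathbb{M}_{\mathbf{B}}$, passing to the stable reduct $(\mathbb{V},\mathbb{W})$ and choosing $C_{*}$ as a non-forking extension there, so that the three bilinear maps $\beta_{0}$, $\beta_{1}$, $\beta|_{V(E)}$ can be defined and shown to agree on overlaps by base-monotonicity arguments in the reduct. Both arguments turn on the same key observation you isolate: with the $V$-basis decomposed as $\bar d\,\bar a\,\bar b\,\bar c$, every pair of basis vectors already lies in one of the three ``planes'' $V(E)$, $V(A^{+})$, $V(B^{+})$, so no new $\beta$-values are needed and the amalgamated form is well-defined; the hypothesis $C_{0}\equiv_{D}C_{1}$ makes the two copies of $\widetilde C$ compatible. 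Your approach buys conceptual transparency and avoids the bookkeeping with $X$, $X_*$, $Y$, $Y_*$; its cost is that the claim ``$N$ is the colimit in $\mathbb{B}$, with all three maps injective and the expected pairwise intersections'' is exactly where the content sits, and you assert it without constructing $W(N)$ explicitly. To make it fully rigorous you should write out $W(N)$ as the amalgamated sum of $W(E)$, $W(A^{+})$, $W(B^{+})$ along $W(A)$, $W(B)$, $W(\widetilde C)$ (these are the correct overlaps by Lemma~\ref{acl description} together with the hypotheses $A\cap B=C_{0}\cap A=C_{1}\cap B=D$) and check that the resulting object is a member of $\mathbb{B}$, which is precisely the computation the paper carries out via $\tau_{0}$, $\tau_{1}$.
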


\begin{proof}
We may assume $A = \mathrm{acl}(AD)$, $B = \mathrm{acl}(BD)$, $C_{0} = \mathrm{acl}(C_{0}D)$, $C_{1} = \mathrm{acl}(C_{1}D)$, and $D = \mathrm{acl}(D)$.  Moreover, applying extension for $\ind^{a}$, we may assume that $A$ and $B$ are models of $T_{\mathbf{B}}$. Let $E = \mathrm{acl}(AB)$.  So in particular, $D$ is an algebraically closed subset of all of the given sets.  Choose $X$ to be a set that is linearly independent over $W(A) \cup W(C_{0})$ such that
$$
W(\mathrm{acl}(AC_{0}))  = \mathrm{span}(W(A) \cup W(C_{0}) \cup X).  
$$
Likewise choose $Y$ to be linearly independent over $W(B) \cup W(C_{1})$ and such that 
$$
W(\mathrm{acl}(BC_{1})) = \mathrm{span}(W(B) \cup W(C_{1}) \cup Y).  
$$
Fix $\sigma \in \mathrm{Aut}(\mathbb{M}_{\mathbf{B}}/D)$ with $\sigma(C_{0}) = C_{1}$.  

Work briefly in the reduct $(\mathbb{V},\mathbb{W})$ consisting of a disjoint union of two infinite dimensional $\mathbb{F}_{p}$-vector spaces, which is clearly stable.  Choose $C_{*}$ which realizes the unique non-forking extension of $\text{tp}(C_{0}/A)$ to $E$.  By transitivity and stationarity, we have that $C_{*}$ realizes the unique non-forking extension of $\text{tp}(C_{1}/B)$ as well.  Note that in this reduct, we have $X$ independent from $C_{0}A$ over $D$ and $Y$ independent from $C_{1}B$ over $D$.  Pick $X_{*}$ such that $X_{*}C_{*}$ has the same type (in the reduct) as $XC_{0}$ over $A$ and $Y_{*}$ such that $Y_{*}C_{*}$ has the same type as $YC_{1}$ over $B$.  By invariance, $Y_{*}$ is independent over $D$ from $C_{*}B$ so, by extension, we may assume $Y_{*}$ is independent from $C_{*}X_{*}E$.  There are isomorphisms (in the reduct language) $\tau_{0} : (V(\mathrm{acl}(AC_{0})),W(\mathrm{acl}(AC_{0}))) \to (\mathrm{span}(V(A)V(C_{*})), \mathrm{span}(W(C_{*})X_{*}W(A)))$ over $A$ and also $\tau_{1} : (V(\mathrm{acl}(BC_{1})),W(\mathrm{acl}(BC_{1}))) \to (\mathrm{span}(V(B)V(C_{*})), \mathrm{span}(W(C_{*})Y_{*}W(B)))$ over $B$ such that $\tau_{1} \circ \sigma|_{C_{0}} = \tau_{0}|_{C_{0}}$.  

Let $\beta_{0}$ be the alternating bilinear map on $\mathrm{span}(V(A)V(C_{*}))$ defined by pushing forward $\beta$ along $\tau_{0}$.  In other words, we define 
$$
\beta_{0}(v,w) = \tau_{0}(\beta(\tau_{0}^{-1}(v), \tau^{-1}_{0}(w))), 
$$
for all $v,w \in \mathrm{span}(V(A)V(C_{*}))$.  Likewise, define $\beta_{1}$ on $\mathrm{span}(V(B)V(C_{*}))$ by pushing forward $\beta$ along $\tau_{1}$.  

We claim that there is a unique alternating bilinear map on $\mathrm{span}(V(C_{*})V(E))$ extending $\beta_{0}$, $\beta_{1}$, and $\beta|_{V(E)}$.  First, note that if $v,w \in \mathrm{span}(V(A)V(C_{*})) \cap V(E)$, then since $C_{*}$ is independent with $E$ over $A$, it follows that $v,w \in V(A)$.  Since $\tau_{0}$ fixes $A$ pointwise and $A = \mathrm{acl}(A)$ (and therefore is closed under $\beta$), we have 
\begin{eqnarray*}
\beta_{0}(v,w) &=& \tau_{0}(\beta(\tau_{0}^{-1}(v), \tau^{-1}_{0}(w))) \\
&=& \tau_{0}(\beta(v,w)) \\
&=& \beta(v,w),
\end{eqnarray*}
so $\beta_{0}$ and $\beta|_{V(E)}$ agree on their common domain.  A symmetric argument shows that $\beta_{1}$ and $\beta|_{V(E)}$ agree on their common domain, using now that $C_{*}$ is independent with $E$ over $B$.  Finally, suppose 
$$
v,w \in \mathrm{span}(V(A)V(C_{*})) \cap \mathrm{span}(V(B)V(C_{*})).
$$
We know that, in the reduct, $C_{*}$ is independent from $AB$ over $D$ and $A$ is independent from $B$ over $D$, so by base monotonicity and transitivity, it follows that $A$ and $B$ are independent over $C_{*}$.  It follows, then, that $v,w \in V(C_{*})$.  Since $\tau_{0}|_{C_{*}} = \tau_{1} \circ \sigma|_{C_{*}}$ and $\sigma$ preserves $\beta$, we have 
\begin{eqnarray*}
\beta_{0}(v,w) &=& \tau_{0}(\beta(\tau_{0}^{-1}(v), \tau_{0}^{-1}(w))) \\
&=& (\tau_{1} \circ \sigma)(\beta((\sigma^{-1} \circ \tau_{1}^{-1})(v), (\sigma^{-1} \circ \tau_{1}^{-1})(w))) \\
&=& \tau_{1} ( \beta(\tau_{1}^{-1}(v), \tau_{1}^{-1}(w))) \\
&=& \beta_{1}(v,w),
\end{eqnarray*}
so $\beta_{0}$ and $\beta_{1}$ agree on the intersection of their domains.  It follows that the union of $\beta_{0}$, $\beta_{1}$, and $\beta|_{V(E)}$ determines an alternating form on $\mathrm{span}(V(C_{*})V(E))$.  After embedding over $E$ into $\mathbb{M}_{\mathbf{G}}$, we may assume that the structure we have constructed is a substructure of $\mathbb{M}_\mathbf{B}$.  By quantifier elimination, the isomorphisms $\tau_{0}$ and $\tau_{1}$ witness that $C_{*} \equiv_{A} C_{0}$ and $C_{*} \equiv_{B} C_{1}$.

We are left with showing that $C_{*} \indi{a}_{D} AB$.  However, by construction, we know $V(C_{*})$ is independent from $V(E)$ over $V(D)$ and $W(C_{*})$ is independent from $W(E)$ over $W(D)$.  Therefore $C_{*} \cap E = D$ which entails $C_{*} \indi{a}_{D} E$ and therefore $C_{*} \indi{a}_{D} AB$.  
\end{proof}

\begin{defn}
Suppose $A,B,C \subseteq \mathbb{M}_{\mathbf{G}}$ and denote by $Z$ the center of $\mathbb{M}_{\mathbf{G}}$.  We write $A \indi{*}_{C} B$ to indicate that the following hold:
\begin{enumerate}
\item $\langle A C \rangle \cap \langle B C \rangle = \langle C \rangle$. 
\item $\langle A/Z \rangle \cap \langle B/Z \rangle = \langle C /Z \rangle$, where $\langle A/Z \rangle$ denotes the subgroup of $\mathbb{M}_{\mathbf{G}}/Z$ generated by the cosets represented by elements of $A$. 
\end{enumerate}
\end{defn}

\begin{lem} \label{independence characterization}
Suppose $A,B,C \subseteq \mathbb{M}_{\mathbf{G}}$.  Then 
$$
A \indi{*}_{C} B \iff F(\langle AC \rangle) \indi{a}_{F(\langle C \rangle)}  F(\langle BC \rangle).  
$$
\end{lem}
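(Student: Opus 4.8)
The plan is to reduce the right-hand side to the purely set-theoretic condition $F(\langle AC\rangle)\cap F(\langle BC\rangle)=F(\langle C\rangle)$ (the intersection being taken sort by sort inside $\mathbb{M}_{\mathbf{B}}$), and then to recognize the two sorts of this condition as clauses (2) and (1) in the definition of $\indi{*}$.

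The first step is the observation that $F$ carries a subgroup to an \emph{algebraically closed} substructure of $\mathbb{M}_{\mathbf{B}}$. Indeed, for any subgroup $H \leq \mathbb{M}_{\mathbf{G}}$, the $V$-component $HZ/Z$ and the $W$-component $H\cap Z$ of $F(H)$ are $\mathbb{F}_{p}$-subspaces (subgroups of the $\mathbb{F}_{p}$-vector spaces $\mathbb{M}_{\mathbf{G}}/Z$ and $Z$), and since $\beta(\overline{h_{1}},\overline{h_{2}}) = [h_{1},h_{2}] \in [H,H] \subseteq H\cap Z$ for $h_{1},h_{2}\in H$, Lemma~\ref{acl description} gives $\acl(F(H)) = \mathrm{span}_{V}(HZ/Z)\cup \mathrm{span}_{W}\big((H\cap Z)\cup \beta((HZ/Z)^{2})\big) = F(H)$. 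Moreover $F$ is monotone on subgroups, so $F(\langle C\rangle)\subseteq F(\langle AC\rangle)$ and $F(\langle C\rangle)\subseteq F(\langle BC\rangle)$. Hence the base $F(\langle C\rangle)$ is contained in each side, and by the definition of algebraic independence $F(\langle AC\rangle)\indi{a}_{F(\langle C\rangle)}F(\langle BC\rangle)$ unwinds to $\acl(F(\langle AC\rangle))\cap \acl(F(\langle BC\rangle)) = \acl(F(\langle C\rangle))$, that is, to $F(\langle AC\rangle)\cap F(\langle BC\rangle)=F(\langle C\rangle)$.

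The next step is to read this equality off in the two sorts of $\mathbb{M}_{\mathbf{B}}$. The $V$-sort yields $\langle AC\rangle Z/Z \cap \langle BC\rangle Z/Z = \langle C\rangle Z/Z$, which is precisely clause~(2) of the definition of $\indi{*}$ (read with the base $C$ adjoined on each side). The $W$-sort yields $\langle AC\rangle \cap \langle BC\rangle \cap Z = \langle C\rangle \cap Z$, which is an immediate consequence of clause~(1). Conversely, I would recover clause~(1) from the two sort equalities by a short coset-lifting: if $g\in \langle AC\rangle\cap\langle BC\rangle$, then by the $V$-sort equality $gZ = cZ$ for some $c\in \langle C\rangle$, so $z:=c^{-1}g\in Z$; since $c\in\langle C\rangle$ lies in both $\langle AC\rangle$ and $\langle BC\rangle$, so does $z$, whence $z\in \langle AC\rangle\cap\langle BC\rangle\cap Z = \langle C\rangle\cap Z$ by the $W$-sort equality, and therefore $g=cz\in\langle C\rangle$. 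The reverse inclusion $\langle C\rangle\subseteq\langle AC\rangle\cap\langle BC\rangle$ is trivial, so clause~(1) holds. This gives the equivalence.

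The only real work here is the $\acl$ computation, i.e.\ the fact that $F$ sends subgroups to algebraically closed subsets of $\mathbb{M}_{\mathbf{B}}$; this rests entirely on Lemma~\ref{acl description} together with the elementary observation that $[H,H]\subseteq H\cap Z$ for a subgroup $H$ of the class-$2$ group $\mathbb{M}_{\mathbf{G}}$. Once this is in place the argument is routine bookkeeping across the two sorts, the only point of care being to keep straight what lives in $V=\mathbb{M}_{\mathbf{G}}/Z$ versus $W=Z$.
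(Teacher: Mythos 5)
Your proof is correct and takes the same route as the paper, which simply declares the equivalence ``immediate from Lemma~\ref{acl description}''; you have supplied the bookkeeping the paper omits. The key computation — that $F(H)$ is $\acl$-closed because $[H,H]\subseteq H\cap Z$ puts $\beta(V(F(H))^2)$ inside $W(F(H))$ — is exactly the point Lemma~\ref{acl description} is designed to make immediate, and the sort-by-sort intersection plus the coset-lift to recover clause (1) is the right way to unwind it.
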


\begin{proof}
We prove that the right hand side implies $(1)$ of the definition of $\indi *$, the rest is immediate from Lemma \ref{acl description}.  Assume that $F(\vect{AC})\cap F(\vect{BC}) = F(\vect{C})$ and let $x\in \vect{AC}\cap \vect{BC}$. As $(\vect{AC}/Z)\cap (\vect{BC}/Z) = \vect{C}/Z$, there exists $c\in \vect{C}$ such that $x-c\in Z$. Now $x-c\in Z(\vect{AC})\cap Z(\vect{BC})$ and the latter equals $Z(\vect{C})$ since $F(\vect{AC})\cap F(\vect{BC}) = F(\vect{C})$. So $x-c\in Z(\vect{C})$ hence $x\in \vect{C}$. The other inclusion being trivial, we have $\vect{AC}\cap \vect{BC} = \vect{C}$.
\end{proof}

\begin{thm} \label{star independence theorem}
Suppose $A,B,C_{0},C_{1},D \subseteq \mathbb{M}_{\mathbf{G}}$ satisfy $A \indi{*}_{D} C_{0}$, $B \indi{*}_{D} C_{1}$, $A \indi{*}_{D} B$, and $C_{0} \equiv_{D} C_{1}$, then there is $C_{*}$ with $C_{*} \equiv_{AD} C_{0}$, $C_{*} \equiv_{BD} C_{1}$, and $C_{*} \indi{*} AB$. 
\end{thm}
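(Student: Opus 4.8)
The plan is to transfer the whole situation to Baudisch's bilinear‑map reduct via the functor $F$, invoke the Independence Theorem there (Lemma~\ref{independence lemma}), and then lift the resulting amalgam back to a group using the explicit construction $G(\overline{b},B)$. First I would reduce to the case where $A,B,C_{0},C_{1},D$ are subgroups of $\mathbb{M}_{\mathbf{G}}$ with $D$ contained in each (replacing each set $X$ by $\langle XD\rangle$; tracking the distinguished enumerations through this replacement is routine). Writing $\overline{X}:=F(\langle XD\rangle)$ for $X\in\{A,B,C_{0},C_{1}\}$ and $\overline{D}:=F(\langle D\rangle)$, I record two translation facts: (i) for subgroups $X,Y\supseteq D$, since $\langle XY\rangle$ is $2$‑nilpotent one computes directly from Lemma~\ref{acl description} that $F(\langle XY\rangle)=\mathrm{acl}(\overline{X}\,\overline{Y})$ (using that $F$ of a subgroup is already algebraically closed); and (ii) since $\mathbb{M}_{\mathbf{B}}$ is interpreted without parameters in $\mathbb{M}_{\mathbf{G}}$ and $T_{\mathbf{B}}$ has quantifier elimination, an automorphism of $\mathbb{M}_{\mathbf{G}}$ induces one of $\mathbb{M}_{\mathbf{B}}$, so $C_{0}\equiv_{D}C_{1}$ yields $\overline{C_{0}}\equiv_{\overline{D}}\overline{C_{1}}$. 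Combining these with Lemma~\ref{independence characterization} and the symmetry of $\indi{a}$, the hypotheses translate into $\overline{C_{0}}\indi{a}_{\overline{D}}\overline{A}$, $\overline{C_{1}}\indi{a}_{\overline{D}}\overline{B}$, $\overline{A}\indi{a}_{\overline{D}}\overline{B}$ and $\overline{C_{0}}\equiv_{\overline{D}}\overline{C_{1}}$, which are exactly the hypotheses of Lemma~\ref{independence lemma}. That lemma produces $\overline{C_{*}}\subseteq\mathbb{M}_{\mathbf{B}}$ with $\overline{C_{*}}\equiv_{\overline{A}}\overline{C_{0}}$, $\overline{C_{*}}\equiv_{\overline{B}}\overline{C_{1}}$ and $\overline{C_{*}}\indi{a}_{\overline{D}}\overline{A}\,\overline{B}$.

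Next I would assemble the amalgamated group. Set $\overline{M}:=\mathrm{acl}(\overline{A}\,\overline{B}\,\overline{C_{*}})$. From $\overline{A}\indi{a}_{\overline{D}}\overline{B}$ and $\overline{C_{*}}\indi{a}_{\overline{D}}\overline{A}\,\overline{B}$ one extracts, in the pure vector‑space reduct, the linear‑disjointness relations $V(\overline{A})\cap V(\overline{B})=V(\overline{D})$ and $V(\overline{C_{*}})\cap(V(\overline{A})+V(\overline{B}))=V(\overline{D})$ (and likewise on the $W$‑sort). Hence there is a basis $\overline{b}=\overline{b}_{D}\,\overline{b}_{A}'\,\overline{b}_{B}'\,\overline{b}_{C}'$ of $V(\overline{M})$ with $\overline{b}_{D}$ a basis of $V(\overline{D})$, $\overline{b}_{D}\overline{b}_{A}'$ of $V(\overline{A})$, $\overline{b}_{D}\overline{b}_{B}'$ of $V(\overline{B})$, and $\overline{b}_{D}\overline{b}_{C}'$ of $V(\overline{C_{*}})$. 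Put $N:=G(\overline{b},\overline{M})$; this is a $2$‑nilpotent group of exponent $p$ lying in $\mathbb{G}^{P}_{2,p}$. Using that $G(\overline{c},F(H))\cong H$ for any subgroup $H$ and basis $\overline{c}$, and that the multiplication formula of $G(\overline{b},B)$ restricts cleanly to the span of any subfamily of the basis, I would check that the restrictions of $N$ to $\overline{b}_{D}\overline{b}_{A}'\overline{b}_{B}'$, to $\overline{b}_{D}\overline{b}_{A}'\overline{b}_{C}'$, and to $\overline{b}_{D}\overline{b}_{B}'\overline{b}_{C}'$ are subgroups isomorphic to $H:=\langle ABD\rangle$, to $\langle C_{0}A\rangle$ over $A$, and to $\langle C_{1}B\rangle$ over $B$, respectively — the last two isomorphisms obtained by transporting the $G(\overline{c},B)$‑presentations of $\langle C_{0}A\rangle$ and $\langle C_{1}B\rangle$ along automorphisms of $\mathbb{M}_{\mathbf{B}}$ witnessing $\overline{C_{*}}\equiv_{\overline{A}}\overline{C_{0}}$ and $\overline{C_{*}}\equiv_{\overline{B}}\overline{C_{1}}$, and all identifications of the common substructures made compatible by choosing the coset representatives defining $H\cong G(\overline{b}_{D}\overline{b}_{A}'\overline{b}_{B}',F(H))$ to lie in $D$, $A$, $B$ as appropriate.

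Finally, since $N\in\mathbb{G}^{P}_{2,p}$ is a small structure extending $H$, homogeneity and universality of $\mathbb{M}_{\mathbf{G}}$ give an embedding $\iota:N\hookrightarrow\mathbb{M}_{\mathbf{G}}$ that is the identity on $H$. Let $C_{*}$ be the image under $\iota$ of the copy of $C_{0}$ inside $N$. Then $C_{*}\equiv_{AD}C_{0}$ and $C_{*}\equiv_{BD}C_{1}$, because the relevant isomorphisms between substructures of $\mathbb{M}_{\mathbf{G}}$ are partial elementary maps by quantifier elimination for $T_{\mathbf{G}}$. And $C_{*}\indi{*}_{D}AB$: the induced map $F(\iota)\colon F(N)=\overline{M}\to\mathbb{M}_{\mathbf{B}}$ is a partial elementary map (quantifier elimination for $T_{\mathbf{B}}$) fixing $F(H)=\mathrm{acl}(\overline{A}\,\overline{B})$ pointwise and carrying $\overline{C_{*}}$ to $F(\langle C_{*}D\rangle)$; extending it to an automorphism of $\mathbb{M}_{\mathbf{B}}$ fixing $F(H)$ and applying it to $\overline{C_{*}}\indi{a}_{\overline{D}}\overline{A}\,\overline{B}$ gives $F(\langle C_{*}D\rangle)\indi{a}_{\overline{D}}\overline{A}\,\overline{B}$, i.e. $F(\langle C_{*}D\rangle)\indi{a}_{F(\langle D\rangle)}F(\langle ABD\rangle)$, which by Lemma~\ref{independence characterization} is precisely $C_{*}\indi{*}_{D}AB$.

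I expect the main obstacle to be the middle step: verifying that the group $N$ built from the bilinear‑map amalgam via $G(\overline{b},B)$, with bases chosen to reflect the algebraic‑independence data coming out of Lemma~\ref{independence lemma}, genuinely restricts to isomorphic copies of $\langle ABD\rangle$, $\langle C_{0}A\rangle$ and $\langle C_{1}B\rangle$ with mutually compatible identifications of their common substructures. This is exactly the point at which the failure of $F$ to be a bi‑interpretation must be handled by hand, through careful bookkeeping of bases and coset representatives; everything else is formal manipulation of $\indi{a}$ and $\indi{*}$ together with appeals to quantifier elimination on both sides.
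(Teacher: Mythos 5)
Your proposal is correct and follows essentially the same route as the paper's own proof: translate the hypotheses through the functor $F$ using Lemma~\ref{independence characterization}, invoke the bilinear-map Independence Theorem (Lemma~\ref{independence lemma}) to obtain $F_{*}$, and then reconstruct a group realizing the amalgam via the $G(\overline{b},B)$ presentation with carefully compatible basis choices, finishing by quantifier elimination. The only organizational difference is that you assemble a single ambient group $N=G(\overline{b},\overline{M})$ over $\mathrm{acl}(\overline{A}\,\overline{B}\,\overline{C_{*}})$ and locate the relevant pieces inside it, whereas the paper presents the groups $G(\cdot,\cdot)$ piecewise (building explicit isomorphisms $\varphi_{0},\varphi_{1}$ between $G$-presentations of $\langle AC_{0}\rangle$, $\langle BC_{1}\rangle$ and their images over $\langle F(A)F_{*}\rangle$, $\langle F(B)F_{*}\rangle$, checking agreement on $C_{0}$ via $\varphi$) before embedding everything into $\mathbb{M}_{\mathbf{G}}$ over $\langle AB\rangle$; the bookkeeping you flag as the main obstacle is precisely what those explicit $\varphi_{0},\varphi_{1}$ and the compatible choice of bases $\overline{d},\overline{a},\overline{b},\overline{c}_{0},\overline{c}_{1},\overline{e}_{0},\overline{e}_{1},\overline{f}$ accomplish in the paper.
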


\begin{proof}
We may assume $A = \langle AD \rangle$, $B = \langle BD \rangle$, $C_{i} = \langle C_{i}D \rangle$ for $i =0,1$, and $D = \langle D \rangle$.  Fix $\varphi : C_{0} \to C_{1}$, an isomorphism over $D$ that witnesses that $C_{0} \equiv_{D} C_{1}$.  By Lemma \ref{independence characterization}, it follows that $F(A) \indi{a}_{F(D)} F(C_{0})$, $F(B) \indi{a}_{F(D)} F(C_{1})$, and $F(A) \indi{a}_{F(D)} F(B)$.  Moreover, we have $F(C_{0}) \equiv_{F(D)} F(C_{1})$, witnessed by the $F(D)$-isomorphism $F(\varphi)$.  

Fix a basis $\overline{d}$ for $V(F(D))$, then extend this by $\overline{a}$ to a basis for $V(F(A))$, by $\overline{b}$ to a basis $\overline{b}\overline{d}$ of $V(F(B))$, and by $\overline{c}_{0}$ to a basis $\overline{c}_{0}\overline{d}$ of $V(F(C_{0}))$.  Then setting $\overline{c}_{1} = F(\varphi)(\overline{c}_{0})$, we see that $\overline{c}_{1}\overline{d}$ is a basis of $V(F(C_{1}))$.  By independence, $\overline{d}\overline{a}\overline{c}_{0}$ is linearly independent so can be extended by $\overline{e}_{0}$ to a basis $\overline{d}\overline{a}\overline{c}_{0}\overline{e}_{0}$ for $V(F(\langle A C_{0} \rangle))$ and, likewise, we can find $\overline{e}_{1}$ such that $\overline{d}\overline{b}\overline{c}_{1}\overline{e}_{1}$ is a basis for $V(F(\langle BC_{1} \rangle))$, and we can find $\overline{f}$ such that $\overline{d}\overline{a}\overline{b}\overline{f}$ is a basis for $V(F(\langle A B \rangle))$.  For each $X \in \{A,B,C_{0},C_{1}, D, \langle A C_{0} \rangle, \langle BC_{1} \rangle, \langle AB \rangle\}$, we may identify the group $X$ with $G(\overline{x},F(X))$, where $\overline{x}$ is the distinguished basis for $X$ described above.  Note that, with this identification, 
$$
\varphi\left( \left( \sum_{\alpha} r_{\alpha} c_{\alpha},w \right) \right) = \left( \sum_{\alpha} r_{\alpha} F(\varphi)_{V}(c_{\alpha}), F(\varphi)_{W}(w)\right),
$$
where $\alpha$ ranges over the indices of $\overline{c}$ and all but finitely many $r_{\alpha} \in \mathbb{F}_{p}$ are equal to zero.  

Now we apply Lemma \ref{independence lemma} to obtain some $F_{*}$ such that $F_{*} \equiv_{F(A)} F(C_{0})$, $F_{*} \equiv_{F(B)} F(C_{1})$, and $F_{*} \indi{a}_{F(D)} F(A)F(B)$.  Let $f_{0} : F(\langle AC_{0} \rangle) \to \langle F(A) F_{*} \rangle$ be an isomorphism over $F(A)$ and let $f_{1} : F(\langle BC_{1} \rangle) \to \langle F(B) F_{*} \rangle$ be an isomorphism over $F(B)$ such that 
$$
f_{0}|_{F(C_{0})} = (f_{1} \circ F(\varphi))|_{F(C_{0})}.  
$$  
Let $\overline{c}_{*} = f_{0}(\overline{c}_{0}) = f_{1}(\overline{c}_{1})$.  Define the group $C_{*}$ as $G(\overline{c}_{*},F_{*})$.  Note that if $f_{0} = (f_{0,V}, f_{0,W})$, then we have $f_{0,V}(\overline{a}\overline{c}_{0}\overline{d}\overline{e}_{0}) = \overline{a} \overline{c}_{*} \overline{d}\overline{e}'_{0}$ for some $\overline{e}'_{0}$.  Then, we can define a map $\varphi_{0}: G(\overline{a}\overline{c}_{0}\overline{d}\overline{e}_{0},F(\langle A C_{0} \rangle)) \to G(\overline{a} \overline{c}_{*} \overline{d}\overline{e}'_{0},\langle F(A) F_{*} \rangle)$ by 
$$
\left( \sum_{\alpha} r_{\alpha} x_{\alpha}, w\right) \mapsto \left(\sum_{\alpha} r_{\alpha}f_{V}(x_{\alpha}), f_{W}(w)\right),
$$
where $x_{\alpha}$ ranges over elements of the basis $\overline{a}\overline{c}_{0}\overline{d}\overline{e}_{0}$ and all but finitely many $r_{\alpha}$ are $0$.  Note that this defines an isomorphism and $F(\varphi_{0}) = f_{0}$.  Similarly, we may define an isomorphism $\varphi_{1} : G(\overline{b}\overline{c}_{1}\overline{d}\overline{e}_{1},F(\langle B C_{1} \rangle)) \to G(\overline{b} \overline{c}_{*} \overline{d}\overline{e}'_{1},\langle F(B) F_{*} \rangle)$.  Note that $\varphi_{0}|_{G(\overline{d},\overline{c}_{0}, C_{0})} = (\varphi_{1} \circ \varphi)|_{G(\overline{d}, \overline{c}_{0},C_{0})}$.  By quantifier elimination, we may assume that all these structures are embedded in $\mathbb{M}_{\mathbf{G}}$ over $\langle A B \rangle$.  Then we have $C_{*} \equiv_{A} C_{0}$, $C_{*} \equiv_{B} C_{1}$, and $C_{*} \indi{*}_{D} AB$, by Lemma \ref{acl description}.  
\end{proof}

By \cite[Proposition 5.3]{CR16}, there is an Kim-Pillay-style criterion for NSOP$_{1}$, allowing one to show that a theory is NSOP$_{1}$ by proving the existence of a well-behaved independence relation. The variant of this theorem in \cite[Theorem 6.11]{kaplan2021transitivity}, moreover, allows one to conclude that this independence relation must additionally correspond to Kim-independence.  For the definitions of the relevant properties of an independence relation, we encourage the reader to consult \cite{kaplan2021transitivity}.

\begin{cor} \label{nsop1 conclusion}
The theory $T_{\mathbf{G}}$ is NSOP$_{1}$ and $\indi{*} = \indi{K}$ over models.  
\end{cor}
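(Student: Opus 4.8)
The plan is to apply the Kim--Pillay-style criterion for NSOP$_{1}$ of \cite[Proposition 5.3]{CR16}, in the refined form of \cite[Theorem 6.11]{kaplan2021transitivity}, directly to the relation $\indi{*}$ on small subsets of $\mathbb{M}_{\mathbf{G}}$. Since the functor $F$ is \emph{not} a bi-interpretation, one cannot simply transport NSOP$_{1}$-ness from $T_{\mathbf{B}}$, so instead I would verify that $\indi{*}$ is an $\Aut(\mathbb{M}_{\mathbf{G}})$-invariant ternary relation satisfying, over an arbitrary model $M \models T_{\mathbf{G}}$, the list of hypotheses of that criterion: symmetry, monotonicity and base monotonicity, existence over models, strong finite character, transitivity, and the independence theorem over models. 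Granting these, the cited results yield at once that $T_{\mathbf{G}}$ is NSOP$_{1}$ and that $\indi{*}$ coincides with Kim-independence $\indi{K}$ over models.

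The decisive hypothesis, the independence theorem over models, is precisely Theorem \ref{star independence theorem} with the base $D$ specialized to a model $M$ (after matching $C_{0},C_{1}$ with the types being amalgamated and $A,B$ with the side parameters, using symmetry of $\indi{*}$). Invariance, symmetry, monotonicity and base monotonicity are immediate from the definition of $\indi{*}$: both clauses are phrased purely in terms of generated subgroups and of the definable center $Z = P$, and both are symmetric in $A$ and $B$; for monotonicity, if $A \indi{*}_{C} B$, $A' \subseteq A$, $B' \subseteq B$, then $\langle C \rangle \subseteq \langle A'C \rangle \cap \langle B'C \rangle \subseteq \langle AC \rangle \cap \langle BC \rangle = \langle C \rangle$, and the analogous inclusions hold for the quotient clause, so $A' \indi{*}_{C} B'$. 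For transitivity and existence I would route through Lemma \ref{independence characterization}, reducing each to the corresponding property of algebraic independence $\indi{a}$ inside $\mathbb{M}_{\mathbf{B}}$: here $T_{\mathbf{B}}$ is $\omega$-categorical with $\acl = \dcl$ given by the explicit span formula of Lemma \ref{acl description}, so $\indi{a}$ automatically enjoys monotonicity, base monotonicity, symmetry, transitivity and strong finite character, while the existence/extension content reduces to producing, over $F(M)$, a copy of $F(\langle aM \rangle)$ algebraically independent from $F(\langle bM \rangle)$ — which one obtains by forming the free amalgam in $\mathbb{B}$ of $F(\langle aM \rangle)$ and $F(\langle bM \rangle)$ over $F(M)$, namely the amalgamated sum of the two pairs of vector spaces with the new cross terms of the bilinear map set to zero, and embedding it over $F(\langle bM \rangle)$ into $\mathbb{M}_{\mathbf{B}}$, exactly the mechanism used in the proof of Lemma \ref{independence lemma}.

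The one point requiring a small argument of its own is strong finite character for $\indi{*}$ itself, i.e.\ that $\nindi{*}_{C}$ is witnessed by a single formula over $bC$. If the first clause of $\indi{*}$ fails, some element of $\langle aC \rangle \cap \langle bC \rangle \setminus \langle C \rangle$ is simultaneously a group word $w(a,c)$ and a group word $v(b,c)$, and the formula $w(x,c) = v(b,c) \wedge w(x,c) \notin \langle c \rangle$ does the job: by uniform local finiteness of $\mathbb{G}^{P}_{2,p}$ the subgroup $\langle c \rangle$ generated by a finite tuple is finite of uniformly bounded size, so membership in it, and hence this formula, is first order; failure of the second clause is treated identically in the quotient by $Z$. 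I expect the main obstacle to be organizational rather than mathematical — the genuinely hard analytic work has already been isolated in Theorem \ref{star independence theorem}, and what remains is the careful matching of the ad hoc clauses defining $\indi{*}$ against the abstract axiom list, keeping precise track of where $\omega$-categoricity, uniform local finiteness, the $\acl$ computation of Lemma \ref{acl description}, and free amalgamation in $\mathbb{B}$ are each invoked.
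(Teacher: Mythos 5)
You have the right general framework — apply the NSOP$_{1}$ Kim–Pillay criterion to $\indi{*}$ over models, with Theorem \ref{star independence theorem} doing the heavy lifting for the independence theorem — but your list of hypotheses is off in a way that leaves a real gap. The criterion in \cite[Theorem 6.11]{kaplan2021transitivity} does \emph{not} require transitivity or base monotonicity (indeed, one should not expect these to hold for a candidate $\indi{K}$, and your suggested route to transitivity via $\indi{a}$ in $T_{\mathbf{B}}$ is dubious since $\indi{a}$ is not transitive in general). More importantly, what \cite[Theorem 6.11]{kaplan2021transitivity} \emph{does} require, beyond the CR16 list, is the \emph{witnessing} axiom: roughly, that if $a \nindi{*}_{M} b$ then Kim-dividing of $\tp(a/Mb)$ is witnessed along an $\indi{*}$-Morley (or coheir) sequence in $b$. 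Without witnessing, the CR16 criterion gives you only that $T_{\mathbf{G}}$ is NSOP$_{1}$ and that $\indi{*}$ implies $\indi{K}$ over models; it does not give the reverse implication, so the second half of the corollary, $\indi{*} = \indi{K}$ over models, is not established. Your proposal never mentions witnessing, and this is precisely where the paper's proof spends essentially all of its effort.

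Concretely, the paper takes a coheir sequence $(b_{i})_{i<\omega}$ over $M$ with $b_{0} = b$ and, using the explicit description of $\langle AC \rangle$ and $\langle AC \rangle / Z$ together with the coheir property, shows that whenever clause (1) or (2) of $\indi{*}$ fails — i.e.\ some $t(a,m) = s(b,m)$ lies outside $M$, or some coset $t(a,m)Z = s(b,m')Z$ lies outside $M/Z$ — the relevant equalities $\{t(x,m)=s(b_i,m)\}_{i<\omega}$ (resp.\ $\{(t(x,m))^{-1}s(b_i,m')\in Z\}_{i<\omega}$) become pairwise inconsistent. This is not an automatic consequence of the other axioms nor of Lemma \ref{independence characterization}; it uses the group-language term structure and the coheir property in a specific way. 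Your remark about strong finite character is in the right spirit (and, over a model $M$, one can in fact drop the conjunct $w(x,c)\notin\langle c\rangle$, since $v(b,c_{1})\notin M$ is known independently of $x$), but it is a side point: the genuinely nontrivial missing verification is witnessing, and your proposal as written would not yield the identification $\indi{*} = \indi{K}$.
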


\begin{proof}
It is easy to check that $\ind^{*}$ is invariant and satisfies strong finite character, symmetry, monotonicity, and existence over models. The independence theorem is established in Theorem \ref{star independence theorem}.  Finally, to show witnessing, suppose $M \models T_{\mathbf{G}}$ and $a \nind^{*}_{M} b$.  Take a coheir sequence $(b_{i})_{i < \omega}$ over $M$ with $b_{0} = b$.  Suppose we are given terms $t,s$ and tuples $m,m' \in M$. If $d = t(a,m) = s(b,m')$ and $d \not\in M$, then, as $(b_{i})_{i < \omega}$ is a coheir sequence over $M$ and $s(b,m') \not\in M$, we must have that the $s(b_{i},m')$ are pairwise distinct, so $\{t(x,m) = s(b_{i},m') : i < \omega\}$ is inconsistent. Next, consider the case that we have an equality $\overline{d} = t(a,m)Z = s(b,m')Z$ of cosets of $Z$ with $\overline{d} \not\in M/Z$.  As before, since $(b_{i})_{i < \omega}$ is a coheir sequence over $M$, we must have that the cosets $s(b_{i},m')Z$ are pairwise distinct:  if $s(b_{1},m')Z = s(b_{0},m')Z$, then the formula $(s(y,m'))^{-1}s(b_{0},m') \in Z$ must have a realization in $M$, hence $s(b_{0},m')Z = \overline{d} \in M/Z$, against our assumption. This shows that $\{(t(x,m))^{-1}s(b_{i},m') \in Z : i < \omega\}$ is inconsistent.  Together these show witnessing, so we conclude that $\ind^{*} = \ind^{K}$ over $M$.    
\end{proof}

\begin{rem}
    It is straightforward to modify the previous arguments to show that $T_{\mathbf{G}}$ satisfies the existence axiom for $\ind^{*}$ and that $\ind^{*} = \ind^{K}$ over all sets, using the variant of the NSOP$_{1}$ Kim-Pillay theorem in \cite[Theorem 6.1]{chernikov2020transitivity}. As we will not use this later, we leave this extension to the reader for brevity's sake. 
\end{rem}

\begin{cor}
    The theory $T_{\mathbf{B}}$ is NSOP$_{1}$ and $\ind^{K} = \ind^{a}$ over models. 
\end{cor}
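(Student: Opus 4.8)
The plan is to apply the NSOP$_1$ Kim--Pillay criterion \cite[Proposition 5.3]{CR16}, together with its refinement \cite[Theorem 6.11]{kaplan2021transitivity}, directly to algebraic independence $\ind^a$ on $\mathbb{M}_\mathbf{B}$, in exact analogy with the proof of Corollary \ref{nsop1 conclusion}. The key point is that the only potentially hard step is already done: the independence theorem for $\ind^a$ over an arbitrary base, in particular over a model, is precisely Lemma \ref{independence lemma}. What remains is to verify that $\ind^a$ is invariant and satisfies symmetry, monotonicity, strong finite character, existence over models, and the witnessing property, after which the cited criteria yield both NSOP$_1$ and $\ind^a = \ind^K$ over models.

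Invariance, symmetry, and monotonicity are formal consequences of the definition $a \ind^a_C b \iff \acl(aC)\cap\acl(bC) = \acl(C)$. For strong finite character I would use that $\acl = \dcl$ on $\mathbb{M}_\mathbf{B}$ by Lemma \ref{acl description}: if $a \nind^a_C b$, fix $e \in \acl(aC)\cap\acl(bC)\setminus\acl(C)$; by the explicit span description in Lemma \ref{acl description}, $e$ is named over $aC$ by an $\mathcal{L}_B$-term $t(x,c)$ with $c$ from $C$ (an $\mathbb{F}_p$-affine combination of the coordinates of $a$, together with values of $\beta$ if $e \in W$), and likewise over $bC$ by a term $s(y,c')$; then the formula $t(x,c) = s(b,c')$ lies in $\tp(a/Cb)$ and forces any realization $a'$ to satisfy $t(a',c) = e \in \acl(a'C)\cap\acl(bC)\setminus\acl(C)$, i.e. $a' \nind^a_C b$. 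For existence over models I would rerun the move from the proof of Lemma \ref{acl description}: given $a$, $b$, $M \models T_\mathbf{B}$, choose complements of $V(M)$ and $W(M)$ inside $\acl(aM)$ and relocate them inside the monster so that they become linearly independent from $\acl(bM)$ over $M$ (possible since $\mathbb{V}$ and $\mathbb{W}$ are infinite-dimensional over $V(M)$ and $W(M)$), extend $\beta$ generically, and invoke quantifier elimination to get $a' \equiv_M a$ with $\acl(a'M)\cap\acl(bM) = M$.

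Finally, for the witnessing property I would follow the coheir-sequence argument from the proof of Corollary \ref{nsop1 conclusion}, which is if anything simpler here as there is no quotient to handle separately. With $e$, $s$, $t$ as above and $e \notin M = \dcl(M)$, take a coheir sequence $(b_i)_{i<\omega}$ over $M$ with $b_0 = b$; if $s(b_0,c') = s(b_1,c')$, then finite satisfiability of $\tp(b_1/Mb_0)$ in $M$ yields $m^* \in M$ with $s(m^*,c') = s(b_0,c') = e$, contradicting $e \notin M$. Hence the $s(b_i,c')$ are pairwise distinct and $\{t(x,c) = s(b_i,c') : i < \omega\}$ is $2$-inconsistent, hence inconsistent, witnessing the required inconsistency along the Morley sequence. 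Then \cite[Proposition 5.3]{CR16} and \cite[Theorem 6.11]{kaplan2021transitivity} give that $T_\mathbf{B}$ is NSOP$_1$ and $\ind^a = \ind^K$ over models. (Alternatively, NSOP$_1$ alone is immediate from Corollary \ref{nsop1 conclusion}, since $\mathbb{M}_\mathbf{B}$ is interpreted in $\mathbb{M}_\mathbf{G}$ and NSOP$_1$ is inherited by interpreted structures, but the criterion is still needed to identify $\ind^K$.) So there is no genuine obstacle here: the substantive content is Lemma \ref{independence lemma}, and everything else is a routine unwinding of Lemma \ref{acl description} and quantifier elimination, entirely parallel to the group computation.
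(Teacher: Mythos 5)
Your proof is correct, but it takes a genuinely different route from the paper's. The paper disposes of this corollary in one line, deferring to Corollary~\ref{nsop1 conclusion} via the interpretation of $T_{\mathbf{B}}$ in $T_{\mathbf{G}}$; you instead run the Kim--Pillay criterion for $\ind^a$ directly on $\mathbb{M}_{\mathbf{B}}$. The trade-off is worth noting. The paper's deferral gets NSOP$_1$ for free (SOP$_1$ pulls back along interpretations), but --- as you yourself flag in the final parenthetical --- the identification $\ind^K = \ind^a$ is not literally automatic from the interpretation: one must still check that Kim-independence in $\mathbb{M}_{\mathbf{B}}$ corresponds to the restriction of $\ind^*$ to $F(\mathbb{M}_{\mathbf{G}})$, which in turn is $\ind^a$ by Lemma~\ref{independence characterization}. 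Your approach sidesteps that transfer entirely and is arguably the more natural one, since the independence theorem was already proved for $T_{\mathbf{B}}$ (Lemma~\ref{independence lemma}); the paper effectively travels out to $T_{\mathbf{G}}$ and back. Your verifications of strong finite character and witnessing, using $\acl = \dcl$ and the term description from Lemma~\ref{acl description} together with a coheir sequence, are exactly parallel to (and simpler than) those in Corollary~\ref{nsop1 conclusion} --- simpler because there is no $Z$-quotient case to handle. One minor overshoot: what the criterion needs is ``existence over models'' in the sense $a \ind^a_M M$, which for $\ind^a$ is immediate since $\acl(M) = M$; you prove the stronger full existence (finding $a' \equiv_M a$ with $a' \ind^a_M b$), which is not wrong but is more than required at that step.
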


\begin{proof}
    This follows from Lemma \ref{independence characterization} and Corollary \ref{nsop1 conclusion}, using the intepretation of $T_{\mathbf{B}}$ in $T_{\mathbf{G}}$.  
\end{proof}

\subsection{2-dependence}
In this subsection, we will give a rapid argument that $T_{\mathbf{B}}$ is $2$-dependent.  Although the $2$-dependence of $T_{\mathbf{G}}$ can be deduced from this, the fact that the interpretation of $T_{\mathbf{B}}$ in $T_{\mathbf{G}}$ is not a bi-interpretation makes this route to proving the $2$-dependence of $T_{\mathbf{G}}$ excessively cumbersome.  We will prove the $2$-dependence of $T_{\mathbf{G}}$ later, in Corollary \ref{groupc-dependence}, as a consequence of a more general result about the $c$-dependence of $c$-nilpotent Lie algebras, using the Lazard correspondence. 
\begin{defn} \label{IPk def}
 A formula $\varphi(x;y_{0}, \ldots, y_{k-1})$ is said to have the $k$\emph{-independence property} (or IP$_{k}$) if there are $(a_{0,i}, \ldots, a_{k-1,i})_{i < \omega}$ and $(b_{X})_{X \subseteq \omega^{k}}$ such that, for all $X \subseteq \omega^{k}$,  
 $$
 \models \varphi(b_{X};a_{0,i_{0}}, \ldots, a_{k-1,i_{k-1}}) \iff (i_{0}, \ldots, i_{k-1}) \in X.  
 $$
 We say a theory $T$ is $k$\emph{-dependent} (or NIP$_{k}$) if no formula has IP$_{k}$ modulo $T$.
\end{defn}

Note that IP$_{1}$ is exactly the usual independence property and the $1$-dependent theories are exactly the NIP theories. 

\begin{fact} \label{2dep composition lemma} \cite[Theorem 5.12]{chernikov2021n}
Let $M$ be an $\LL'$-structure such that its reduct to a language $\LL \subseteq \LL'$ is NIP.  Let $\varphi(x_{1}, \ldots, x_{d})$ be an $\LL$-formula. For each $i \in [d]$, fix some $s_{i} < t_{i} \in [3]$ and let $f_{i} : M_{y_{s_{i}}} \times M_{y_{t_{i}}} \to M_{x_{i}}$ be an arbitrary binary function.  Then the formula 
$$
\psi(y_{1};y_{2},y_{3}) = \varphi(f_{1}(y_{s_{1}},y_{t_{1}}), \ldots, f_{d}(y_{s_{d}},y_{t_{d}}))
$$
is $2$-dependent. 
\end{fact}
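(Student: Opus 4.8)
Since the inner functions $f_m$ are arbitrary, this is really a purely combinatorial statement about set systems, and the plan is to prove it by a Vapnik--Chervonenkis density estimate of the kind underlying the higher-arity VC results of Chernikov and collaborators (cf.\ \cite{chernikov2021n}). Assume toward a contradiction that $\psi(y_1;y_2,y_3)$ has IP$_2$. By Definition~\ref{IPk def}, for every $N<\omega$ we obtain tuples $(\bar c_i)_{i<N}$ in the $y_2$-sort, $(\bar d_j)_{j<N}$ in the $y_3$-sort, and witnesses $(b_X)_{X\subseteq[N]\times[N]}$ with $\models\psi(b_X;\bar c_i,\bar d_j)\iff(i,j)\in X$. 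Writing $B(b):=\{(i,j):\models\psi(b;\bar c_i,\bar d_j)\}\subseteq[N]\times[N]$, the map $X\mapsto B(b_X)$ is onto $2^{[N]\times[N]}$, so exactly $2^{N^{2}}$ distinct ``behaviours'' $B(b)$ occur as $b$ ranges over the sort of $y_1$. The whole argument consists in contradicting this by showing that only $2^{o(N^{2})}$ behaviours are possible.

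The first step is to stratify the composition according to which pair of the three variable blocks each $f_m$ sees. Partition $[d]=\mathcal A\sqcup\mathcal B\sqcup\mathcal C$, placing $m$ into $\mathcal A$, $\mathcal B$ or $\mathcal C$ according as $\{s_m,t_m\}$ is $\{1,2\}$, $\{1,3\}$ or $\{2,3\}$, and put $\bar p(b,i)=(f_m(b,\bar c_i))_{m\in\mathcal A}$, $\bar q(b,j)=(f_m(b,\bar d_j))_{m\in\mathcal B}$, $\bar r(i,j)=(f_m(\bar c_i,\bar d_j))_{m\in\mathcal C}$, so that
\[
\models\psi(b;\bar c_i,\bar d_j)\iff \models\varphi\bigl(\bar p(b,i),\,\bar q(b,j),\,\bar r(i,j)\bigr).
\]
The structural point is that $\bar r(i,j)$ does not depend on $b$ at all, while the ``$\mathcal A$-profile'' $i\mapsto\bar p(b,i)$ and the ``$\mathcal B$-profile'' $j\mapsto\bar q(b,j)$ each involve only one of the two grid coordinates; hence $B(b)$ is a function of the pair of profiles of $b$ together with the fixed array $\bigl(\bar r(i,j)\bigr)_{i,j<N}$.

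The second step uses that the $\LL$-reduct is NIP, so that $\varphi$ has finite VC dimension with respect to both partitions $x_{\mathcal A}/x_{\mathcal B}x_{\mathcal C}$ and $x_{\mathcal B}/x_{\mathcal A}x_{\mathcal C}$; write these as $v_{\mathcal A}$ and $v_{\mathcal B}$. Fix the $\mathcal B$-profile of $b$. Then for each $i$ the $i$-th row of $B(b)$, namely $\{j:\models\varphi(\bar p(b,i),\bar q(b,j),\bar r(i,j))\}$, depends on $b$ only through the single tuple $\bar p(b,i)$, so it is the trace of an $x_{\mathcal A}$-definable set on a parameter set of size $\le N$; by Sauer--Shelah for $\varphi(x_{\mathcal A};x_{\mathcal B}x_{\mathcal C})$ there are at most $O(N^{v_{\mathcal A}})$ possibilities for this row, hence at most $O(N^{v_{\mathcal A}})^{N}=2^{O(N\log N)}$ behaviours once the $\mathcal B$-profile is fixed. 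Symmetrically, fixing the $\mathcal A$-profile and peeling off columns with $\varphi(x_{\mathcal B};x_{\mathcal A}x_{\mathcal C})$ gives at most $2^{O(N\log N)}$ behaviours. The final step is to fuse these two one-sided estimates into a bound $2^{o(N^{2})}$ on the total number of behaviours; this is a genuinely combinatorial statement about subsets of $[N]\times[N]$ of the form $\{(i,j):\varphi(\alpha[i],\beta[j],\bar r(i,j))\}$ with $\alpha,\beta$ ranging over $N$-tuples, which is precisely what the VC$_n$-density / higher-arity VC machinery of \cite{chernikov2021n} delivers. Once that bound is in place, it contradicts the $2^{N^{2}}$ behaviours forced by IP$_2$ for large $N$, and we conclude that $\psi$ is $2$-dependent.

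The step I expect to be the main obstacle is exactly this fusion of the two one-sided counts. A row of $B(b)$ depends on $b$ not only through the single tuple $\bar p(b,i)$ but also, through the other arguments of $\varphi$, through the entire opposite profile $\bar q(b,\cdot)$, so the naive product of the two one-sided bounds leaves an uncontrolled factor counting ``essentially distinct'' profiles of $b$, and NIP alone gives no bound on the number of such profiles. An alternative route first reduces IP$_2$ to a clean pattern on a pair of mutually indiscernible sequences $(\bar c_i)$, $(\bar d_j)$ and then tries to extract an $\emptyset$-indiscernible sequence witnessing IP$_1$ for the NIP formula $\varphi$; but it meets the same obstruction, because the witness $b$ feeds into the $\mathcal A$- and $\mathcal B$-slots of $\varphi$ simultaneously, which is exactly what prevents the composed sequence from being indiscernible over the base the argument needs. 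In short, the heart of the matter is pushing a Sauer--Shelah count through a composition in which the object variable is coupled into several arguments of the NIP formula at once.
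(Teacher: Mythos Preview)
The paper does not prove this statement; it is quoted as a \emph{Fact} with a citation to \cite[Theorem 5.12]{chernikov2021n} and used as a black box. So there is no ``paper's own proof'' to compare against, and the relevant question is simply whether your sketch stands on its own.

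Your setup is sound: the tripartition $\mathcal A\sqcup\mathcal B\sqcup\mathcal C$ according to which pair of blocks each $f_m$ sees, the observation that $\bar r(i,j)$ is $b$-free, and the one-sided Sauer--Shelah bounds for rows (with the $\mathcal B$-profile frozen) and columns (with the $\mathcal A$-profile frozen) are all correct and are exactly the right first moves. But you have correctly located, and not closed, the genuine gap. The ``fusion'' step---passing from the two conditional $2^{O(N\log N)}$ bounds to an unconditional $2^{o(N^2)}$ bound on the number of behaviours $B(b)$---is the entire content of the theorem, and your argument at that point simply invokes ``the VC$_n$-density / higher-arity VC machinery of \cite{chernikov2021n}.'' That is circular: the statement you are asked to prove \emph{is} Theorem~5.12 of \cite{chernikov2021n}, so appealing to that paper's machinery at the crux is not a proof but a restatement of the citation. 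Your own diagnosis in the last paragraph is accurate: the naive product of the two one-sided counts is useless because the number of $\mathcal A$- or $\mathcal B$-profiles of $b$ is a priori unbounded in terms of $N$, and NIP of $\varphi$ alone says nothing about how many such profiles can occur.

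What actually closes the gap in \cite{chernikov2021n} is not a straight counting argument of the shape you outline, but a reduction via generalized indiscernibles (ordered-partite hypergraph indiscernibles) together with a structural analysis that lets one collapse the $b$-dependence in the $\mathcal A$- and $\mathcal B$-slots to a single NIP instance. If you want to produce a self-contained argument, that is the direction to pursue; the pure Sauer--Shelah row/column count, as you suspected, does not suffice.
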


\begin{lem}\label{lm:TB2dep}
    The theory $T_{\mathbf{B}}$ is $2$-dependent.  
\end{lem}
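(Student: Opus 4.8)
The plan is to derive this straight from the Chernikov--Hempel composition lemma, Fact \ref{2dep composition lemma}. Take $\mathscr{L}'=\mathcal{L}_{B}$ and let $\mathscr{L}$ be its $\beta$-free fragment, i.e.\ the language of a pair of $\mathbb{F}_{p}$-vector spaces. The $\mathscr{L}$-reduct of $\mathbb{M}_{\mathbf{B}}$ is then a disjoint union of two infinite-dimensional $\mathbb{F}_{p}$-vector spaces with no interaction; this is stable (the theory of a module is stable, and a disjoint union of two stable structures is stable), hence NIP. So the hypothesis of Fact \ref{2dep composition lemma} is met, and it remains to exhibit an arbitrary $\mathscr{L}'$-formula in three blocks of variables --- which we rename $y_{1},y_{2},y_{3}$ to match the statement --- as a composition $\varphi(f_{1}(y_{s_{1}},y_{t_{1}}),\dots,f_{d}(y_{s_{d}},y_{t_{d}}))$ with $\varphi$ an $\mathscr{L}$-formula and each $f_{i}$ a function of (at most) two of the three blocks.

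Since $T_{\mathbf{B}}$ has quantifier elimination, it is enough to handle a quantifier-free $\psi(y_{1};y_{2};y_{3})$, which is a Boolean combination of equations between $\mathscr{L}'$-terms. Moving each equation to the form $t=0$, the term $t$ has sort $V$ or $W$. If $t$ has sort $V$ it is already an $\mathbb{F}_{p}$-linear combination of $V$-variables, so an $\mathscr{L}$-term. If $t$ has sort $W$, then expanding $\beta$ by bilinearity (and tidying up with alternation) writes $t$ as an $\mathbb{F}_{p}$-linear combination of the $W$-variables together with finitely many subterms $\beta(v,v')$ with $v,v'$ individual $V$-variables; the point is that each such $\beta(v,v')$ involves variables from at most two of the blocks $y_{1},y_{2},y_{3}$.

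To finish, introduce a fresh variable for each original variable of $\psi$ and for each of the (finitely many) distinct subterms $\beta(v,v')$ occurring, and let $\varphi$ be the $\mathscr{L}$-formula --- a Boolean combination of $\mathbb{F}_{p}$-linear equations in these fresh variables --- obtained by substitution. Each substituted value is computed by a function of at most two blocks: for an original variable it is a coordinate projection, which we regard as a binary function of the block containing that variable and any chosen second block, ignoring the latter; for a subterm $\beta(v,v')$ it is the binary function taking an input from the block of $v$ and an input from the block of $v'$ to the value $\beta(v,v')$. Listing the two relevant block indices in increasing order gives functions $f_{i}\colon M_{y_{s_{i}}}\times M_{y_{t_{i}}}\to M_{x_{i}}$ with $s_{i}<t_{i}\in[3]$ and $\psi=\varphi(\dots,f_{i}(y_{s_{i}},y_{t_{i}}),\dots)$, so Fact \ref{2dep composition lemma} applies and $\psi$ is $2$-dependent; as every formula is $T_{\mathbf{B}}$-equivalent to such a $\psi$, the theory $T_{\mathbf{B}}$ is $2$-dependent. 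The only real obstacle is the bookkeeping in this last step: making sure the bilinear expansion leaves no $\beta$-subterm straddling all three blocks, and recasting the essentially unary projections into the strict two-block shape $s_{i}<t_{i}$ the lemma demands. Both are routine.
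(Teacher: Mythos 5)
Your proof is correct and follows essentially the same route as the paper: reduce to the $\beta$-free language of a pair of $\mathbb{F}_p$-vector spaces (which is stable, hence NIP), use quantifier elimination and bilinearity to rewrite any formula as an $\mathscr{L}_{-}$-formula applied to the variables together with atomic terms $\beta(v,v')$ in single $V$-variables (each therefore binary in the blocks), and invoke the Chernikov--Hempel composition lemma. The paper's proof is terser on the bookkeeping for the $s_i < t_i$ requirement, which you carry out explicitly, but the substance is identical.
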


\begin{proof}
    Let $T_{-}$ denote the reduct of $T_{\mathbf{B}}$ to the language $\LL_{-}$ consisting of the sorts $V$ and $W$ and the abelian group structure on each, but forgetting the bilinear map. This is the theory of two disjoint copies of an infinite dimensional $\mathbb{F}_{p}$-vector space which is interpretable in an $\mathbb{F}_{p}$-vector space and is therefore stable (even $\omega$-stable).  By quantifier elimination, every $\LL_{B}$-formula $\varphi(x_{1}, \ldots, x_{n}, y_{1}, \ldots, y_{m})$, where the $x_{i}$ are in the sort $V$ and the $y_{i}$ are in the sort $W$, can be written in the form 
$$
\psi(x_{1}, \ldots, x_{n}, y_{1}, \ldots, y_{m}, (\beta(x_{i},x_{j}) : i < j))
$$
where $\psi(\overline{x}, \overline{y}, \overline{z})$ is a (stable) $\LL_{-}$-formula.  By Fact \ref{2dep composition lemma}, we can conclude that $T_{\mathbf{B}}$ is $2$-dependent. 
\end{proof}

The following was pointed out to us by Gabe Conant: 

\begin{remark}
In \cite{terry2022irregular}, Terry and Wolf defined a new classification hierarchy called NFOP$_k$, which was subsequently developed by Abd Aldaim, Conant and Terry in \cite{abdaldaim2023higher}. NFOP$_k$ is a promising candidate to be a $k$-ary extension of stability, the same way NIP$_k$ is a $k$-ary extension of NIP.
Using \cite[Theorem 2.16]{abdaldaim2023higher} instead of Fact \ref{2dep composition lemma} in the proof of Lemma \ref{lm:TB2dep}, we immediately conclude that $T_{\mathbf{B}}$ is NFOP$_2$.  Our proof of the $2$-dependence of $T_{\mathbf{G}}$ also gives that $T_{\mathbf{G}}$ is NFOP$_{2}$. 
\end{remark}

\section{Free amalgamation of Lie algebras} \label{sec:freeamalgamation}

\subsection{Stages of the construction}

In this section, we fix a natural number $c$, a prime number $p>c$ and a field $\F$. We extend the notations from Definition \ref{def:baudischdefvarietiesgroupsetc}. Let $\LL_{c,\F}$ be the (one-sorted) language of $\F$-vector spaces $\set{+,-,0, (\lambda\cdot)_{\lambda\in \F}}$ expanded by a binary function symbol $[\cdot,\cdot]$ and predicates $(P_i)_{1\leq i\leq c+1}$. Let $\LL_{c}$ be the reduct of $\LL_{c,\F}$ omitting the functions $(\lambda\cdot)_{\lambda\in \F}$.

\begin{defn}
\begin{enumerate}
\item Let $\Lbb_{c,\F}$ be the class of finitely generated Lazard Lie algebras over $\F$ of nilpotency class $\leq c$, in the language $\LL_{c,\F}$.
\item Let $\Lbb_{c,p}$ be the class of finite Lazard Lie algebras over $\F_p$ of nilpotency class $\leq c$, in the language $\LL_c$.
\item We write $\G_{c,p}$ for the class of finite Lazard groups of exponent $p$ and of nilpotency class $\leq c$ in the language of groups expanded by predicates for the Lazard series.
\end{enumerate}
\end{defn}

When $p > c$, the classes $\Lbb_{c,p}$ and $\G_{c,p}$ are uniformly bi-definable via the Lazard correspondence, see Subsection \ref{subsec:lazardcorrespondence}. Further, $\Lbb_{c,p}$ is a particular case of $\Lbb_{c,\F}$. The goal of this section is to prove that $\Lbb_{c,\F}$ is a Fraïssé class (see Definition \ref{fraisseclass}), by proving an amalgamation result. We actually prove a stronger result: $\Lbb_{c,\F}$ is a \textit{free amalgamation class} in the sense of Baudisch (Definition \ref{def:freeamalgambaudisch} below).
%

This free amalgam can be compared to an \textit{amalgamated free product} of structures. Although the existence of the free amalgam of graded Lie algebras (a notion equivalent to that of an LLA) was claimed by Baudisch in \cite{Baudisch4}, we found it worthwhile to give our own account, which differs substantially from the argument of \cite{Baudisch4}. Section \ref{subsection:stageII} gives explicitly the induction mentioned (but not proved) by Baudisch in \cite{Baudisch4}, which turns out to be highly non-trivial.

\begin{definition}
    A \textit{basic} extension of an LLA $A$ is an extension $B\supseteq A$ such that $A$ is an ideal of $B$ and $B = \vect{Ab}$ for some singleton $b$.  Note, we do not require $B$ to be a \emph{proper} extension of $A$. 
\end{definition}

The construction of the free amalgam of nilpotent Lie algebras passes through three stages.

\begin{itemize}
    \item[(I)] (Subsection \ref{subsection:stageI}) We construct the free amalgam of $A = \langle Ca \rangle$ and $B = \langle Cb \rangle$ over $C$ where both $A$ and $B$ are basic extensions of $C$. The free amalgam is in this case explicitly constructed as a semi-direct product of $C$ and a free nilpotent Lie algebra generated by $X$ and $Y$, where $X$ and $Y$ act on $C$ by derivations in the same way as $a$ and $b$ respectively.  
    \item[(II)] (Subsection \ref{subsection:stageII}) We construct the free amalgam of $A$ and $B$ over $C$ when $A$ is a basic extension of $C$ and $B$ is arbitrary. This is achieved using Stage I and induction on a certain \textit{rank} of the extension $B$ of $C$. This rank computes a certain amount of complexity of the extension $B/C$. It is also witnessed via a particular linear basis of $B$ over $C$, which we called \textit{Malcev basis}\footnote{We called those bases after Malcev for their direct connection with the so-called \textit{Malcev coordinates} in nilpotent groups of exponent $p$.}. An induction scheme is used to construct the amalgam which is rooted in the work of Maier \cite{Maierexpp}\footnote{Maier himself was inspired by the work of Higman \cite{higman64}.}. The rank drops at each step of the induction and yields a first amalgam (denoted by the $\oslash$-amalgam) which is complicated to describe except as ``the amalgam resulting from the induction scheme". Revisiting the induction for each case, we prove that the $\oslash$-amalgam obtained satisfies $(1)$, $(2)$ and $(3)$ of Definition \ref{def:freeamalgambaudisch} hence is a free amalgam. 
    \item[(III)] (Subsection \ref{subsection:stageIII}) We construct the free amalgam of $A$ and $B$ over $C$ when $A$ and $B$ are arbitrary. This is achieved via a standard induction using Stage II.
\end{itemize}

The conclusion is given in Subsection \ref{subsec:conclusionfraisseclass}: the classes $\Lbb_{c,\F}$, $\Lbb_{c,p}$ are Fraïssé (and therefore $\G_{c,p}$ as well, when $p > c$). The last subsection of Section \ref{sec:freeamalgamation}\textemdash Subsection \ref{subsec:extraNSOP4}\textemdash is dedicated to a technical result on the free amalgam which will be used in Section \ref{sec:neostabilityproperties} to prove that the theories of the Fraïssé limits of $\Lbb_{c,p}$ and $\G_{c,p}$ are NSOP$_4$.

\subsection{Free Lazard Lie algebras and Hall sets}

\subsubsection{Free nilpotent Lie algebras} The goal of this subsubsection is to define the free LLA in a given set of generators. Recall that we fix a field $\F$. Every vector space and Lie algebra for the entirety of this section will be assumed to be over $\F$.

\begin{defn}
    Let $X = (X_1,\ldots,X_n)$ be a tuple of indeterminates. The \textit{free Lie algebra (over $\F$) generated by $X$} is the Lie algebra $F(X)$ containing $X$ which satisfies the following universal property: for any Lie algebra $L$ and any function $f:\set{X_1,\ldots, X_n}\to L$, there exists a unique Lie algebra homomorphism $g:F(X)\to L$ which extends $f$.
\end{defn}

For any choice of $X$, the free Lie algebra generated by $X$ exists and is unique up to isomorphism (see for instance \cite[Proposition 1]{bourbakiLiechapter23}). When $\abs{X}=1$, we get the free Lie algebra on one generator which is an abelian Lie algebra of dimension 1 as an $\F$-vector space. When $|X|\geq 2$, the free Lie algebra will be of infinite dimension as an $\F$-vector space. Elements of $F(X)$ will often be identified with \textit{Lie polynomials} i.e. formal expressions obtained by identifying terms of the language $\set{+,0,[\cdot,\cdot]}$ modulo the equations $[X_i,X_i] = 0$ and the Jacobi identity. For any $P(X)\in F(X)$ and any $a = (a_1,\ldots, a_n)$ from some Lie algebra $L$, the map $X_i\to a_i$ extends to a Lie algebra homomorphism $h:F(X)\to \vect{a_1,\ldots,a_n}$ and we will denote $h(P(X))$ by $P(a)$, the \textit{evaluation} of $P$ in $a$. \\

\noindent\textbf{Gradation.} Free Lie algebras carry a natural gradation: 
 elements of the free Lie algebra are Lie polynomials in the generators and the Lie algebra can be written as the direct sum of the Lie monomials of each degree. \emph{Lie monomials}, or just \emph{monomials} when it is understood from context, in $F(X)$ are inductively defined as follows: every $X_i\in X$ is a monomial and if $P$ and $Q$ are monomials, then so is $[P,Q]$. We denote by $M(X)$ the set of Lie monomials. We will also need to consider gradations on a free Lie algebra where the generators live in a specified summand, which might not be the first summand. 
 
 Let $\N^+ = \set{1,2,\ldots}$. Any map $f:\set{X_1,\ldots,X_n}\to \N^+$ extends\footnote{More precisely, as $(\N^+,+)$ is a magma, the map $f$ extends to a homomorphism of the free magma generated by $X$, and being a magma homomorphism one has $f([X_i,X_j]) = f(X_i)+f(X_j)$. This map can directly be extended to the free Lie algebra in the following way: the map extends to the free $\F$-algebra having $M(X)$ as monomials and then to the quotient by the ideal generated by the elements $[x,x]$ and the Jacobi identity elements $[x,y,z]+[y,z,x]+[z,x,y]$ to get the corresponding $f$.} to a map $f:M(X)\to \N^+$ such that $f$ defines a \textit{gradation of $F(X)$} in the following sense: define 
\[V_n^f := \Span (\set{Q\in M(X)\mid f(Q) = n})\] then we have:
\begin{itemize}
    \item $F(X) = \bigoplus\limits_{n\in \N} V_n^f$,
    \item $[V_n^f,V_m^f]\seq V_{n+m}^f$.
\end{itemize}
This appears for instance in \cite[Chapitre 2, §2, Section 6]{bourbakiLiechapter23}. By setting $$P_n^f := \bigoplus_{k\geq n} V_k^f,$$ we get that $(P_i^f)_{i<\omega}$ is a Lazard series (of infinite length) in the Lie algebra $F(X)$. The map $f$ is entirely determined by the tuple $\alpha = (\alpha_1,\ldots, \alpha_n)$ where $\alpha_i = f(X_i)$ hence we will for now consider that each tuple $\alpha\in (\N^+)^n$ determines a unique gradation $(V_i^\alpha)_{i<\omega}$ and a Lazard series $(P_i^\alpha)_{i<\omega}$. Then, for each element $Q\in F(X_1,\ldots,X_n)\setminus\set{0}$ there exists a maximal $m$ and a minimal $n$ such that $m\leq n$ and 
\[Q\in V_{m}^\alpha \oplus V_{m+1}^\alpha\oplus \ldots \oplus V_{n}^\alpha.\]
In this case we call $m$ the \textit{$\alpha$-level of $Q$} and $n$ the \textit{$\alpha$-degree of $Q$}, denoted respectively by $\lev_\alpha(Q)$ and $\deg_\alpha(Q)$. We extend those functions to $0$ by setting $\lev_\alpha(0) = \deg_\alpha(0) = \infty$. Then $P^\alpha_n = \set{Q\in F(X_1,\ldots,X_n)\mid \lev_\alpha(Q)\geq n}$. An element $Q\in V_n^\alpha$ for some $n$ is called \textit{$\alpha$-homogeneous} and such element satisfies $\deg_\alpha(Q) = \lev_\alpha(Q)$. 

When $\alpha$ is not mentioned, $\lev$, $\deg$, $P_n$ in $F(X)$ refers to the case where $\alpha = (1,\ldots,1)$, and the associated gradation is called the \textit{natural gradation}. In this case, elements of each $V_n$ are called \emph{homogeneous}.

\begin{example}[Natural gradation]
    Consider the gradation given by $\alpha = (1,\ldots,1)$ in $F(X,Y,\ldots)$. Then $X$ is of level and degree $1$, $[X,Y]$ is of level and degree $2$ and $X+[X,Y]$ has level $1$ and degree $2$. The degree behaves like the degree of polynomials and the level behaves like a valuation: $\lev(a+b)\geq \min\set{\lev(a),\lev(b)}$ with equality if $\lev(a) \neq \lev(b)$.
\end{example}

Observe that for all $\alpha \in \N^n$ and $c\in \N$ the space $P_{c+1}^\alpha$ is an ideal of $F(X)$, hence we may consider the quotient of $F(X)$ by $P_{c+1}^\alpha$. This quotient is a Lie algebra of nilpotency class at most $c$. In the case of the natural gradation $\alpha = (1,\ldots,1)$, the Lie algebra $F_c(X) := F(X)/P_{c+1}$ is of nilpotency class $c$ and is usually called the \textit{free $c$-nilpotent Lie algebra}. It is easy to deduce the universal property for the Lie algebra $F_c(X)$ in the category of $c$-nilpotent Lie algebras from the universal property for $F(X)$ in the category of Lie algebras.

\begin{definition}[Free Lazard Lie algebra]
    Given $X = (X_1,\ldots,X_n)$, $\alpha = (\alpha_1,\ldots,\alpha_n)\in \N^n$ and $c\in \N$, we denote by $F_c(X,\alpha)$ the quotient $F(X)/P_{c+1}^\alpha$. Then $F_c(X,\alpha)$ is nilpotent of nilpotency class at most $ c$. In $F_c(X,\alpha)$, the sequence $(S_{i}^{\alpha})_{1\leq i\leq c+1}$ of ideals defined by
    \[S_i^\alpha := P_i^\alpha/P_{c+1}^\alpha\]
    is a Lazard series in $F_c(X,\alpha)$. The Lie algebra $F_c(X,\alpha)$ equipped with the Lazard series $(S_i^\alpha )_{1\leq i\leq c+1}$ is called the \textit{free Lazard Lie algebra associated to $n,\alpha, c$}. 
\end{definition}

One easily sees that $\lev_{\alpha}(Q)\geq \lev_{(1,\ldots,1)}(Q)$ hence $P_n\subseteq P_n^{\alpha}$ in general. It follows that $F_c(X,\alpha)$ surjects onto $F_c(X)$ in general, though they need not be isomorphic, see Example \ref{example:freeLLA}.
 
Recall that in any LLA $A$ and $a\in A$, $\lev(a)$ is the maximal $i$ such that $a\in P_i(A)$. In $F_c(X,\alpha)$, the $\alpha$-level coincides with the level of the LLA $(F_c(X,\alpha), (S_i^\alpha)_i)$. The free LLA $F_c(X,\alpha)$ enjoys the following universal property:
\begin{center}
    For any LLA $A$ and $a = (a_1,\ldots,a_n)\in A^n$ such that $\lev(a_i)\geq \alpha_i$ for each $1\leq i\leq n$ then the map $X_i\to a_i$ extends to a LLA homomorphism $F_c(X,\alpha)\to A$.
\end{center}

\subsubsection{Hall sets}  A particular (ordered) linear basis for the free Lie algebra $F(X)$ is given by the set of \textit{Hall monomials}.

\begin{definition}[Hall sets]
    Let $X=(X_1,\ldots,X_n)$. We recursively define linearly ordered sets of monomials in $F(X)$, called \textit{Hall sets}. Start with $\HS_1=\{X_1,X_2,\ldots,X_n\}$ and declare $X_1<X_2<\cdots<X_n$. If  $\HS_1, \ldots, \HS_n$ have been defined, then $\HS_{n+1}$ is the set of monomials $[P,Q]$ such that
        \begin{enumerate}[label=(\alph*)]
            \item $P,Q\in \HS_1\cup \cdots \cup \HS_n$;
            \item $\deg(P)+\deg(Q)=n+1$;
            \item $P>Q$; and
            \item if $P=[R,S]$, then $S\leq Q$.
        \end{enumerate}
We then linearly order the monomials in $\HS_{n+1}$ and for $P\in \HS_1\cup\cdots \cup \HS_n$ and $Q\in \HS_{n+1}$ we declare $P<Q$. Put $\HS(X_1,\ldots,X_n)=\bigcup_{n\geq 1}\HS_n$. Members of $\HS$ are called \textit{Hall monomials}\footnote{These are known as \textit{basic commutators} in the literature, this terminology comes from the fact that they form a basis of the free Lie algebra.}.
\end{definition}

Every monomial of the Hall set $\HS_n$ is of degree $n$. By construction, for any Hall monomials $P$ and $Q$, if $\deg(P)<\deg(Q)$, then $P<Q$. Also, if $P<Q$, then $\deg(P)\leq \deg(Q)$. For any $n$, the set $\HS_n$ is a basis of the space of homogeneous polynomials of (natural) degree $n$ in $F(X)$, see \cite{hallbasisMhall}. In particular, the set $\HS$ forms a basis for the free Lie algebra $F(X)$. 

In the free $c$-nilpotent Lie algebra $F_c(X)$, Hall monomials of degree larger than $c$ vanish and the family $\HS_{\leq c} := \bigcup_{n\leq c} \HS_n$ is a basis of $F_c(X)$. 

There is also a weighted version of Hall sets which we give now. This appears in \cite[Section 2]{labute}, where it is called a \textit{weighted} Hall set. This is similar to \cite[Fact 4.2]{Baudisch4}.

\begin{definition}[Weighted Hall sets]
    Let $X=(X_1,\ldots,X_n)$ and $\alpha\in (\N^+)^n$. We recursively define linearly ordered sets of monomials in $F(X)$, called \textit{Hall sets}. Let $\HS_1^\alpha$ be those $X_i$ of $\alpha$-degree $1$ and order $\HS_1^\alpha$ arbitrarily, for instance $X_i< X_j$ if $i<j$. If  $\HS_1^\alpha, \ldots, \HS_n^\alpha$ have been defined, then $\HS_{n+1}^\alpha$ is the set of those $X_i$ of $\alpha$-degree $n+1$ together with monomials $[P,Q]$ such that
    \begin{enumerate}[label=(\alph*)]
        \item $P,Q\in \HS_1^\alpha\cup \ldots \cup \HS_n^\alpha$;
        \item $\deg_\alpha(P)+\deg_\alpha(Q)=n+1$;
        \item $P>Q$; and
        \item if $P=[R,S]$, then $S\leq Q$.
    \end{enumerate}
We then linearly order the monomials in $\HS_{n+1}^\alpha$ and for $P\in \HS_1^\alpha\cup\cdots \cup \HS_n^\alpha$ and $Q\in \HS_{n+1}^\alpha$ we declare $P<Q$. Put $\HS^\alpha(X_1,\ldots,X_n)=\bigcup_{n\geq 1}\HS_n^\alpha$. Members of $\HS^\alpha$ are called \textit{(weighted) Hall monomials}.
\end{definition}

We again get that $\HS_d^\alpha$ is a basis of the space of $\alpha$-homogeneous polynomials of $\alpha$-degree $d$.

\begin{remark}[Witt formula]\label{rk:wittformula}
The cardinality of $\HS_d$, i.e. the number of Hall monomials of degree $d$ in the free Lie algebra with $n$ generators is given by the Witt formula
\[\frac{1}{d} \sum_{k\mid d}\mu(k)n^{d/k},\]
where $\mu$ is M\"obius function. See \cite[Ch. II, \S3, Th\'eor\`eme 2]{bourbakiLiechapter23}.  It follows that the dimension of $F_c(X)$ is  
\[\sum_{d=1}^{c}\frac{1}{d} \sum_{k\mid d}\mu(k)n^{d/k}.\]
Note that the dimension of $F_c(X,\alpha)$ only depends on $c, n$ and $\alpha$, we denote it $g_{n,c}(\alpha)$ or just $g(\alpha)$. There should exist an explicit description of the function $g$. For any $\alpha\in (\N^+)^n$ we have $P_n\seq P_n^\alpha$, hence the dimension of $F_c(X,\alpha) = F(X)/P_{c+1}^\alpha$ is less than or equal to the dimension of $F_c(X) = F(X)/P_{c+1}$. Thus 
\[g(\alpha)\leq g(1,\ldots,1) = \sum_{d=1}^{c}\frac{1}{d} \sum_{k\mid d}\mu(k)n^{d/k}.\]
\end{remark}

\begin{remark}[A construction of weighted Hall sets]
We now describe an easy recipe for obtaining weighted Hall bases given the existence of unweighted ones. Let $X=(X_1,\ldots,X_n)$ and $\alpha = (\alpha_1,\ldots, \alpha_n)$ be given. For each $i=1,\ldots,n$ let $Y^i_1,\ldots, Y^i_{\alpha_i}$ be new variables and consider a Hall set $\HS$ in the free algebra $F((Y^1_k)_{k\leq \alpha_1},\ldots, (Y^n_k)_{k\leq \alpha_n})$ and pick $Z_i \in \vect{Y_1^i, \ldots, Y_{\alpha_i}^i}\cap \HS$ of degree $\alpha_i$ for each $i$. The Lie algebra $L = \vect{Z_1,\ldots, Z_n}$ is free by the Shirshov–Witt Theorem \cite{shirshov2009subalgebras, witt1956unterringe}, hence $L$ is isomorphic to $F(X_1,\ldots, X_n)$. Let $H$ be the set of those monomials in $\HS$ which only involve $Z_i$, then $H$ is a weighted Hall set of $L$ for the gradation given by $\alpha$.
\end{remark}

\begin{remark}[Weighted Hall sets in two variables]
In the case of two variables $(X,Y)$ and given a gradation $(\alpha,\beta)$, the situation is easier. The weighted Hall monomials $\HS^{\alpha,\beta}$ are constructed following the same algorithm as in the unweighted case with the only constraint that $X$ and $Y$ are ordered consistently with the order of $\alpha$ and $\beta$, i.e. $X<Y$ if and only if $\alpha<\beta$. Then $X,Y$ are elements of minimal $\alpha$-degree, the next monomial in the Hall basis is $[X,Y]$ (or $[Y,X]$) and the iterative construction of the Hall set follows the same procedure as in the unweighted case. This allows us to forget about the differences between weighted and unweighted Hall monomials in what concerns their iterative construction, as we will only consider them in two variables in the construction of the free amalgam of two basic extensions.
\end{remark}

We now illustrate the notions above with concrete examples.

\begin{example}\label{example:freeLLA}
    We consider $F_{3}(X,Y,\alpha,\beta)$ in the case of nilpotence class $3$, with two different initial conditions.\\
    
 \noindent\begin{minipage}{0.7\textwidth}
If $\alpha = \beta = 1$, then the Hall set is given by
    \[\HS_{\leq 3} = \set{X<Y<[Y,X]<[[Y,X],X]<[[Y,X],Y]}\]
    and $P_4 = \HS_{\geq 4}$, so $F(X,Y,1,1)$ is identified with the vector span of $\HS_{\leq 3} $. In particular, $\dim(F_{3}(X,Y,\alpha,\beta)) = 5$. 
\end{minipage}%
\begin{minipage}{0.3\textwidth}
\begin{tikzpicture}[scale=0.8, every node/.style={transform shape}]
\draw[airforceblue,thick] (0,0) -- (4,0) -- (2,-3.46) -- cycle; 
\fill[airforceblue!30] (0,0) -- (4,0) -- (2,-3.46) -- cycle; 
\draw[airforceblue, thick] (0.5,-0.9) -- (3.5,-0.9); 
\draw[airforceblue, thick] (1,-1.73) -- (3,-1.73); 
\node at (1.5,-0.45){\textcolor{red!80!black}{$X$}};
\node at (2.5,-0.45){\textcolor{red!80!black}{$Y$}};
\node at (2,-1.32){\textcolor{red!80!black}{$[Y,X]$}};
\node at (2,-2.0){\begin{scriptsize}\textcolor{red!80!black}{$[[Y,X],X]$}\end{scriptsize}};
\node at (2,-2.3){\begin{scriptsize}\textcolor{red!80!black}{$[[Y,X],Y]$}\end{scriptsize}};
\node at (4.5,-2.5){\textcolor{red!80!black}{$P_3$}};
\node at (4.5,-1.4){\textcolor{red!80!black}{$P_2$}};
\node at (4.5,-0.45){\textcolor{red!80!black}{$P_1$}};
\end{tikzpicture}
\end{minipage}

\noindent\begin{minipage}{0.7\textwidth}
If $\alpha = 1, \beta = 2$, one computes: 
        \begin{center}
            \begin{tabular}{ c|c } 
$Q$ & $\deg_{\alpha,\beta}(Q)$\\
 \hline
 $X$ & $1$ \\ 
 $Y$ & $2$ \\ 
 $[Y,X]$ & $3$ \\ 
 $[[Y,X],X]$ & $4$\\
 $[[Y,X],Y]$ & $5$
\end{tabular}
        \end{center}
        
The Hall sets are: $\HS_1^{\alpha,\beta} = \set{X}$, $\HS_2^{\alpha,\beta} = \set{Y}$, $\HS_3^{\alpha,\beta} = \set{ [Y,X]}$, $\HS_4^{\alpha,\beta} = \set{ [[Y,X],X]}$, $\HS_5^{\alpha,\beta} = \set{ [[Y,X],Y]}$. In particular, $P_4^{\alpha,\beta} = \HS_{\geq 4}^{\alpha,\beta}$ hence $\dim(F_{3}(X,Y,\alpha,\beta)) = 3$. 
\end{minipage}%
\begin{minipage}{0.3\textwidth}
\begin{tikzpicture}[scale=0.8, every node/.style={transform shape}]
\draw[airforceblue,thick] (0,0) -- (4,0) -- (2,-3.46) -- cycle; 
\fill[airforceblue!30] (0,0) -- (4,0) -- (2,-3.46) -- cycle; 
\draw[airforceblue, thick] (0.5,-0.9) -- (3.5,-0.9); 
\draw[airforceblue, thick] (1,-1.73) -- (3,-1.73); 
\node at (1.5,-0.45){\textcolor{red!80!black}{$X$}};
\node at (2.5,-1.32){\textcolor{red!80!black}{$Y$}};
\node at (2,-2.2){\textcolor{red!80!black}{$[Y,X]$}};
\node at (4.5,-2.5){\textcolor{red!80!black}{$P_3$}};
\node at (4.5,-1.4){\textcolor{red!80!black}{$P_2$}};
\node at (4.5,-0.45){\textcolor{red!80!black}{$P_1$}};
\end{tikzpicture} 
\end{minipage}
\end{example}

\subsection{Stage I - Basic extensions}\label{subsection:stageI}

Recall that all vector spaces and Lie algebras will be considered to be taken over $\mathbb{F}$.  Additionally, we fix a $c$ and will write `LLA' to mean `Lazard Lie algebra over $\mathbb{F}$ of nilpotence class at most $c$' for the rest of the section, unless otherwise specified.  Recall (Definition \ref{def:levelofelement}) that the level $\lev(a)$ is defined as the maximal $i \leq c+1$ such that $a\in P_i$.
\begin{definition}\label{def:levelofanextension}
    For LLAs $B\seq A$, the \textit{level of $A$ over $B$}, denoted $\lev(A/B)$, is defined to be the maximal $1\leq i\leq c+1$ such that $A = \Span(BP_i(A))$.
\end{definition}

The goal of this subsection is to prove the following theorem.

\begin{theorem}\label{thm:existence_strong_amalgam_monogenous}
    Let $A,B,C$ be LLAs of nilpotency class $\leq c$ with $A, B$ basic extensions of $C$. Assume that $ A= \vect{Ca}$ and $B = \vect{Cb}$ with $\lev(a),\lev(b)$ maximal with this property. Let $\alpha = \lev(a)$, $\beta = \lev(b)$. Then there exists a strong amalgam $S$ of $A$ and $B$ over $C$ such that the following conditions are satisfied:
    \begin{enumerate}
        \item in $S$, we have $\vect{a,b} \cong F_c(X,Y,\alpha,\beta)$ via the map $a\mapsto X$, $b\mapsto Y$;
        \item $C$ is an ideal of $S$;
        \item there exists an ideal $D$ of $S$ containing $B$ such that $S = \vect{Da}$ is a basic extension of $D$ and $\lev(D/B) = \lev(a)+\lev(b)$.
    \end{enumerate}
    In $(3)$, $D = \vect{Bh_3\ldots h_k}$, where $(h_i)_{3\leq i\leq k}$ is an enumeration of Hall monomials in $\HS^{\alpha,\beta}$ without $X$ and $Y$. 
\end{theorem}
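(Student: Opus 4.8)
The plan is to construct $S$ explicitly as the ``gluing'' of $C$ with a copy of the free nilpotent Lie algebra $F_c(X,Y,\alpha,\beta)$ along the natural map, and then verify properties (1)--(3) directly. Concretely, I would first set up the underlying vector space of $S$: since $A = \vect{Ca}$ and $B = \vect{Cb}$ are basic extensions, the Lazard grading forces $a$ and $b$ to have levels $\alpha$ and $\beta$ respectively, and the elements of $A$ (resp.\ $B$) are of the form $c + P(a)$ with $P$ a Lie polynomial in one variable of appropriate level, evaluated using the ideal structure of $C$ in $A$. I would take $S$ to be (as a vector space) the span of $C$ together with the Hall monomials $\HS^{\alpha,\beta}(X,Y)$ evaluated at $(a,b)$; the key point is that $\vect{a,b}$ should be a \emph{free} copy of $F_c(X,Y,\alpha,\beta)$ sitting inside $S$, with the Lie bracket between $C$ and this free part dictated by the given brackets in $A$ and $B$ (i.e.\ $[c,a]$ is read off from $A$, $[c,b]$ from $B$), and extended to all of $S$ by bilinearity and the Jacobi identity.

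The main work --- and the main obstacle --- is checking that this prescription actually defines a Lie algebra, i.e.\ that the bracket is well-defined and satisfies the Jacobi identity, and that the resulting object has nilpotency class $\leq c$ with the $P_i$'s forming a genuine Lazard series. Here I would follow the outline sketched in Subsection~\ref{subsub:baudischconstruction}: the Hall basis gives a normal form for every element of $\vect{a,b}$, and Hall's rewriting algorithm \cite{hallbasisMhall} lets one re-express any bracket $[h_i, h_j]$ of Hall monomials as a $C$-linear (indeed $\F$-linear) combination of Hall monomials of strictly larger $(\alpha,\beta)$-degree. The verification that Jacobi holds when one or more arguments lie in $C$ reduces, via the derivation property of $\delta_a, \delta_b$ and the ideal hypotheses, to the corresponding identities already known to hold in $A$, $B$, and $F_c(X,Y,\alpha,\beta)$; the genuinely delicate case is three arguments all in $\vect{a,b}$, which I expect to handle by induction on the number of rewriting steps needed to bring a product into Hall normal form, exactly as the outline anticipates (``an induction on the number of iterations of Hall's rewriting algorithm'').

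Once $S$ is known to be an LLA, properties (1)--(3) follow with comparatively little effort. For (1): by construction $\vect{a,b}$ is the span of the Hall monomials $\HS^{\alpha,\beta}_{\leq \text{relevant degrees}}$ evaluated at $(a,b)$, with no relations beyond those forced by nilpotency, so the universal property of $F_c(X,Y,\alpha,\beta)$ gives the isomorphism $a \mapsto X$, $b \mapsto Y$. For (2): $C$ is an ideal of $A$ and of $B$, and the brackets $[c,a]$, $[c,b]$ land in $C$ (by the ideal hypothesis in $A$ and $B$), so by bilinearity and Jacobi, $[c,s] \in C$ for every $s \in S$; hence $C \lhd S$. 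For (3): take $D = \vect{B h_3 \cdots h_k}$ where the $h_i$ enumerate the Hall monomials in $\HS^{\alpha,\beta}$ omitting $X$ and $Y$, evaluated at $(a,b)$. One checks that $D$ is an ideal: it is a subspace closed under bracketing with $a$ (since $[h_i, a]$ is a Hall monomial or a combination thereof, again by the rewriting algorithm, and these lie in $D$), with $b$ (similarly, and $[b,a] = \pm h_3 \in D$), and with $C$ (by the ideal property of $C$). Then $S = \vect{Da}$ is immediate since $a$ together with $D \supseteq \vect{B, [b,a], \ldots}$ generates everything, and the level computation $\lev(D/B) = \alpha + \beta$ follows because the lowest-level new generator of $D$ over $B$ is $[b,a] = h_3$, of $(\alpha,\beta)$-level $\alpha + \beta$, while the remaining $h_i$ have level $\geq \alpha+\beta$. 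Finally, $S$ is a \emph{strong} amalgam: $A \cap B = C$ inside $S$ because an element of $\vect{Ca} \cap \vect{Cb}$, written in Hall normal form, can involve only the constant ($C$) part, the others being linearly independent over $C$ by the freeness in (1).
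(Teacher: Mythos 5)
Your broad outline matches the paper's: build $S = C \oplus F_c(X,Y,\alpha,\beta)$ as a vector space, define the bracket by reading $[c,a]$, $[c,b]$ off from $A$ and $B$, extend along Hall monomials, and verify Jacobi. However, you have inverted which cases of the Jacobi verification are trivial and which one carries the entire weight of the argument. You write that Jacobi ``when one or more arguments lie in $C$ reduces, via the derivation property of $\delta_a,\delta_b$ and the ideal hypotheses, to the corresponding identities already known to hold in $A$, $B$, and $F_c(X,Y,\alpha,\beta)$,'' and that ``the genuinely delicate case is three arguments all in $\langle a,b\rangle$.'' This is backwards. When all three arguments lie in $\langle a,b\rangle$, Jacobi is automatic: you have set $\langle a,b\rangle$ up in advance as the free Lie algebra $F_c(X,Y,\alpha,\beta)$, which is already a Lie algebra, so nothing needs to be checked there and no Hall rewriting induction is needed. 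Likewise when all three arguments lie in $C$. The mixed cases are where the work is, and the hard one is exactly one argument in $C$ and two in the Hall basis $H$ (the paper's Subsubsection \ref{subsub:case1baudisch}).

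That case does not ``reduce to identities already known.'' The bracket $[c,h]$ for $h \in H$ is defined by recursion on the unique Hall decomposition $h = [h_i,h_j]$ via $[c,h] := [[c,h_i],h_j] - [[c,h_j],h_i]$. To prove Jacobi with one argument in $C$, one must show that this recursively-built map $\delta_c$ is genuinely a derivation on all of $F$, i.e.\ that $[c,[h_1,h_2]] = [[c,h_1],h_2] + [h_1,[c,h_2]]$ also holds when $[h_1,h_2]$ is \emph{not} itself a Hall monomial. (This is precisely the gap in Baudisch's original argument that the paper's footnote flags and that Subsubsection \ref{subsub:case1baudisch} exists to close, by a double induction on degree and on the number of loops of Hall's rewriting algorithm, via the statements $(*)$ and $(**)$ and Claims 1--4.) There is no shortcut through ``the derivation property of $\delta_a,\delta_b$'': those concern bracketing with $a$ or $b$, not with an arbitrary $c \in C$, and the derivation property of $\delta_c$ on $F$ is not a hypothesis but the thing to be proved. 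So the induction on Hall's rewriting algorithm that you reserve for the all-$\langle a,b\rangle$ case is misallocated; as written, your proposed proof would find nothing to prove in the case you call delicate and would stall on the case you dismiss. Once that is corrected, your treatment of (1)--(3) is fine and matches the paper's.
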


\begin{proof} We define the amalgam $S$ as follows. Consider the free Lazard Lie algebra $F = F_c (X,Y,\alpha,\beta)$. As $C$ is an ideal of both $A$ and $B$, $\mathrm{ad}(a)|_{C}$ and $\mathrm{ad}(b)|_{C}$ define derivations on $C$.  Moreover,  $\mathrm{ad}(a)|_{C}$ and $\mathrm{ad}(b)|_{C}$ are in $D_{\alpha}$ and $D_{\beta}$ respectively in the associated Lazard series on $\mathrm{Der}_{\mathrm{Laz}}(C)$, by Lemma \ref{Lazard derivations}. By the universal property of $F_{c}(X,Y,\alpha,\beta)$, the function $X \mapsto \mathrm{ad}(a)|_{C}$ and $Y \mapsto \mathrm{ad}(b)|_{C}$ gives rise to a unique LLA homomorphism $g: F \to \mathrm{Der}_{\mathrm{Laz}}(C)$. We define $S = C \rtimes F$ to be the associated semi-direct product. We interpret the predicates by 
$$
P_{i}(S) = \mathrm{Span}(P_{i}(C) \cup P_{i}(F)).
$$
If $c\in P_i(C), d\in P_j(C)$ and $u\in P_i(F), v\in P_j(F)$ then 
\[[c+u,d+v] = [c,d]-g(v)(c)+g(u)(d)+[u,v]\]
It is clear that $[c,d]\in P_{i+j}(C)$ and $[u,v]\in P_{i+j}(F)$. Further, as $g$ is an LLA homomorphism, $g(v)\in D_j$ hence $g(v)(c)\in P_{i+j}(C)$. Similarly, $g(u)(d)\in P_{i+j}(C)$. We conclude that $[c+u,d+v]\in P_{i+j}(S)$ hence $(P_i(S))_{1\leq i\leq c+1}$ is a Lazard series on $S$.
Now we check that this $S$ is the desired amalgam.  Observe that properties (1) and (2) are satisfied by construction. 

Next, note that the map $\iota : A \to \langle C,X \rangle$, defined to be the identity on $C$ and mapping $a \mapsto X$, is an LLA isomorphism, where $\langle C,X \rangle$ is the subalgebra of $S$ generated by $C$ and $X$.  It is clearly an isomorphism of the underlying vector spaces, since $C$ is an ideal of both and thus, as vector spaces, we have $A = C \oplus \langle a \rangle$ and $\langle C,X \rangle = C \oplus \langle X \rangle$. It also respects the predicates for the Lazard series on $S$. Finally, if we are given arbitrary $c + \gamma a$, $c' + \gamma' a \in A$, where $c,c' \in C$ and $\gamma,\gamma' \in \mathbb{F}$, we have 
$$
\iota([c+ \gamma a, c'+\gamma'a]) = \iota([c,c'] + \gamma \mathrm{ad}(a)(c') - \gamma'\mathrm{ad}(a)(c)) = [c,c'] + \gamma [X,c'] - \gamma' [X,c].
$$
Similarly, we have 
$$
[\iota(c+ \gamma a), \iota(c'+\gamma'a)] = [c + \gamma X,c' + \gamma' X].
$$
By bilinearity, this easily implies that $\iota$ is a Lie algebra homomorphism and therefore $\iota$ is an LLA isomorphism. This shows that, via $\iota$, $A$ embeds into $S$ over $C$.  A parallel argument shows that $B$ embeds into $S$ over $C$, and thus $S$ is an amalgam.  Since $\langle C,X \rangle \cap \langle C,Y \rangle = C$, this amalgam is strong.  

Thus, we only are left with showing (3). If we set $D = \vect{Bh_3\ldots h_k}$, where $(h_i)_{3\leq i\leq k}$ is an enumeration of Hall monomials in $\HS^{\alpha,\beta}$ without $X$ and $Y$, then, since $[X,Y] \in P_{\alpha + \beta}(S)$ (or $[Y,X] \in P_{\alpha + \beta}(S)$) is the monomial of minimal degree in the Hall basis, excluding $X$ and $Y$, we have $\mathrm{lev}(D/B) = \alpha + \beta = \mathrm{lev}(a) + \mathrm{lev}(b)$. Since the Hall basis is a basis of $S$ over $C$ and $[X,h_{i}] \in \mathrm{Span}(h_{3}, \ldots, h_{k})$ for all $i > 1$, we know $D$ is an ideal of $S$ and $S = \langle D,X \rangle$ is a basic extension of $D$. 
\end{proof}


\subsubsection{What is the amalgam constructed by Baudisch?}

Recall Baudisch's definition of the free amalgam.
\begin{definition}[Baudisch] \label{def:freeamalgambaudisch}
    Let $A,B,C$ be LLAs with embeddings $f_0 :C\to A, g_0:C\to B$. We say that $S$ is a \textit{free amalgam of $A$ and $B$ over $C$} if $S$ is an amalgam of $A$ and $B$ over $C$, with embedding $f_1:A\to S$, $g_1:B\to S$ with $f_1\circ f_0 = g_1\circ g_0$ and such that the following three conditions hold, for $A' = f_1(A), B' = g_1(B), C' = (f_1\circ f_0)(C)$:
    \begin{enumerate}
        \item $S = \vect{A'B'}$;
        \item \textit{(Strongness)} $A'\cap B' = C'$;
        \item \textit{(Freeness)} for any LLA $D$ and any LLA \textit{homomorphisms} $f:A\to D$ and $g:B\to D$, there exists a (unique) $h : S\to D$ such that the following diagrams commute. 
        \begin{center}\begin{tikzcd}
& A \ar[dr,"f_1"] \ar[drr, "f", bend left=20]
&
&[1.5em] \\
C \ar[ur,"f_0"] \ar[dr,"g_0"]
&
& S  \ar[r, "h",dashed]
& D \\
& B \ar[ur, "g_1"]\ar[urr, "g"', bend right=20]
&
&
\end{tikzcd}\end{center}
    \end{enumerate}
    We denote the free amalgam $S$ by $A\otimes_C B$.
\end{definition}

\begin{remark}\label{rk:uniquenessofthemap}
Under condition $(1)$ if a map $h$ satisfying $(3)$ exists, then it is unique. Thus, we will allow ourselves to talk about \textit{the} free amalgam of $A$ and $B$ over $C$. 
\end{remark}

\begin{remark}[The free amalgam is unique up to isomorphism over $C$]
Let $S$ and $S'$ be two free amalgams of $A$ and $B$ over $C$. Then $S\cong S'$. By Remark \ref{rk:uniquenessofthemap}, we may assume that $A,B$ are finitely generated over $C$. By considering images, assume that $A,B,C\seq S$. Let $f:A\to S'$ and $g:B\to S'$ be embeddings agreeing on $C$ and let $A',B',C'$ be the images of $A,B,C$ in $S'$. By the freeness property, there is a homomorphism $h: S\to S'$ which commutes with the inclusions $C\seq A\cap B$ and the isomorphism $f:A\cong A'$ and $g:B\cong B'$. We have that $h$ is surjective as $S' = \vect{A'B'}$. The same argument yields that there is a surjective homomorphism from $S'\to S$. As $S$ and $S'$ are finitely generated over $C$, $h$ is an isomorphism. Note that the strongness condition is not used for the uniqueness, only properties $(1)$ and $(3)$. This definition is designed to make the strong amalgam unique, if it exists. 
\end{remark}

%
%
%

\begin{lemma}\label{lm:idealspan}
    Let $C,A$ be LLAs. If $C$ is an ideal of $\vect{AC}$, then $\vect{AC} = \Span(AC)$.
\end{lemma}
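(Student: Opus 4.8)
The plan is to show that $\Span(AC)$ is already closed under the Lie bracket, so that it must coincide with the subalgebra it generates. Set $L = \vect{AC}$. Since $A$ and $C$ are in particular $\F$-subspaces of $L$, we have $\Span(AC) = A + C$ as a subspace of $L$, and the inclusion $A + C \subseteq L$ is immediate; the content of the lemma is the reverse inclusion $L \subseteq A + C$.

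For this, the one thing to verify is that $A + C$ is a Lie subalgebra of $L$. It is a subspace of $L$ containing $A \cup C$, so it suffices to check closure under the bracket. Given $a_1 + c_1,\, a_2 + c_2 \in A + C$ with $a_i \in A$ and $c_i \in C$, bilinearity gives
$$
[a_1 + c_1, a_2 + c_2] = [a_1,a_2] + [a_1,c_2] + [c_1,a_2] + [c_1,c_2],
$$
where $[a_1,a_2] \in A$ because $A$ is a subalgebra, $[c_1,c_2] \in C$ because $C$ is a subalgebra, and $[a_1,c_2] = -[c_2,a_1] \in C$ and $[c_1,a_2] = -[a_2,c_1] \in C$ because $C$ is an ideal of $L$ and $a_1, a_2 \in L$. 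Hence the bracket lands in $A + C$.

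Since $L = \vect{AC}$ is by definition the smallest Lie subalgebra of $L$ containing $A \cup C$, it follows that $L \subseteq A + C = \Span(AC)$, and therefore $\vect{AC} = \Span(AC)$. At the level of LLAs there is nothing further to do, as the Lazard series of $\vect{AC}$ is the one induced from $L$ and the assertion is really the vector-space equality just established. I do not expect any genuine obstacle here: the only point worth flagging is that the ideal hypothesis on $C$ is exactly what forces the cross terms $[A,C]$ and $[C,A]$ back inside $A + C$, and without it the generated subalgebra could be strictly larger than $\Span(AC)$.
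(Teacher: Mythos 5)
Your proof is correct, and it is genuinely more direct than the one in the paper. You observe that $A + C$ is already a Lie subalgebra: $[A,A] \subseteq A$ and $[C,C] \subseteq C$ because each is a subalgebra, while the cross terms $[A,C]$ and $[C,A]$ land in $C$ precisely because $C$ is an ideal of $\vect{AC}$. Since $A + C$ is a subalgebra containing $A \cup C$ and $\vect{AC}$ is by definition the smallest such, equality follows. This is the standard ``ideal plus subalgebra is a subalgebra'' argument, entirely parallel to $HN$ being a subgroup when $N$ is normal.

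The paper instead routes the argument through Hall bases: it chooses generators of $A$ and $C$, takes a surjection from a free Lazard Lie algebra $F$ onto $\vect{AC}$, and proves by induction on the recursive structure of Hall monomials that every Hall monomial, evaluated at the chosen generators, lies in $A$ or in $C$; since the evaluated Hall monomials span $\vect{AC}$, the conclusion follows. What the paper's approach buys is a slightly finer piece of information (each evaluated Hall monomial individually lands in $A$ or $C$, not merely in $A + C$), and it exercises the Hall-basis machinery that the rest of Section~4 relies on heavily. For the lemma as stated, however, your one-line bracket computation is the more economical proof, requires no choice of generators or appeal to free Lie algebras, and works verbatim without any finiteness or finite-generation hypotheses. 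Your closing remark about the Lazard series is also correct: the statement is a vector-space identity and the Lazard structure plays no role in it.
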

\begin{proof}
    We may assume that $A,C$ are finitely generated. Let $c = (c_1,\ldots, c_n)$ and $a = (a_1,\ldots,a_m)$ be generators of $C$ (resp. $A$) as LLAs. Let $\gamma = (\gamma_1,\ldots,\gamma_n)$ and $\alpha = (\alpha_1,\ldots,\alpha_m)$
    be the levels of $c$ and $a$. Let $X = (X_1,\ldots,X_n)$, $Y = (Y_1,\ldots,Y_m)$ and $F = F_{c}(X_1,\ldots,X_n,Y_1,\ldots,Y_n, \gamma,\alpha)$. Let $H$ be a Hall basis of $F$. The surjective endomorphism $F\to \vect{C,A}$ given by the universal property of $F$ implies that $H_0 = \set{P(c,a)\mid P(X,Y)\in H}$ is a generating subset of $\vect{AC}$ as a vector space. We prove that for each $P\in H$, either $P(c,a)\in C$ or $P(c,a)\in A$. We prove it by induction on the complexity. If $P(X,Y)$ is $X$ or $Y$ then $P(c,a)$ is in $C$ or $A$. By induction, assume that $P = [Q,R]$ for $P,Q,R\in H$. By induction hypothesis, $Q(c,a)$ and $R(c,a)$ are in $A$ or $C$. If either one is in $C$ then so is $P$ as $C$ is an ideal. Otherwise both $Q$ and $R$ are in $A$ hence $P$ is in $A$.
\end{proof}

Here is a cheap way to get free amalgams in the sense of Definition \ref{def:freeamalgambaudisch}.

\begin{lemma}\label{lm:characterisationidealfreeamalgamation}
    Let $C\seq A\cap B$ be LLAs and let $S$ be an amalgam of $A$ and $B$ over $C$ with $S = \vect{AB}$. Let $a = (a_1,\ldots,a_n)$ and $b=(b_1,\ldots,b_m)$ be bases of $A$ and $B$ over $C$ respectively. If
    \begin{enumerate}
        \item $C$ is an ideal of $S$
        \item $\vect{ab}\cap C = \set{0}$
        \item $\vect{ab} \cong F(X,Y,\lev(a),\lev(b))$
    \end{enumerate}
    Then $S$ satisfies (1),(2), and (3) of Definition \ref{def:freeamalgambaudisch}. In particular, the existence of such an $S$ implies $A\otimes_C B$ exists (here $X = (X_{1}, \ldots, X_{n})$ and $\mathrm{lev}(a)$ denotes the tuple $(\mathrm{lev}(a_{1}),\ldots, \mathrm{lev}(a_{n}))$ and likewise for $Y$ and $\mathrm{deg}(b)$).
\end{lemma}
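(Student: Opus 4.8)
The plan is to check directly that $S$, together with its embeddings of $A$, $B$, $C$, satisfies the three conditions in Definition~\ref{def:freeamalgambaudisch}. Condition~$(1)$ is the hypothesis $S=\vect{AB}$, so it remains to establish \emph{strongness} ($A\cap B=C$) and \emph{freeness}. Both of these rest on a structural description of $S$ that I would extract first.

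Since $C$ is an ideal of $S$ it is an ideal of every subalgebra of $S$ containing it. Applying Lemma~\ref{lm:idealspan} with $\vect{\vect{ab},C}=S$ gives $S=\Span(\vect{ab}\cup C)$, which is a direct sum of vector spaces by hypothesis~$(2)$; applying the same lemma with $\vect{\vect{a},C}=A$ and $\vect{\vect{b},C}=B$ gives $A=F_X\oplus C$ and $B=F_Y\oplus C$, where $F_X:=\vect{a}$, $F_Y:=\vect{b}$ (the intersections with $C$ are contained in $\vect{ab}\cap C=\{0\}$). Under the isomorphism $\vect{ab}\cong F:=F_c(X,Y,\deg a,\deg b)$ of hypothesis~$(3)$, the subalgebras $F_X,F_Y$ correspond to the free subalgebras $\vect{X},\vect{Y}$ of $F$, which are spanned by disjoint subsets of a Hall basis of $F$, so $F_X\cap F_Y=\{0\}$. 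Strongness follows at once: if $s\in A\cap B$, write $s=u+c=u'+c'$ with $u\in F_X$, $u'\in F_Y$, $c,c'\in C$; then $u-u'=c'-c\in\vect{ab}\cap C=\{0\}$, so $u=u'\in F_X\cap F_Y=\{0\}$ and $s=c\in C$.

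For freeness, fix an LLA $D$ and LLA homomorphisms $f\colon A\to D$, $g\colon B\to D$; commutativity of the required diagram forces $f|_C=g|_C=:\gamma$. The extension $h\colon S\to D$ is unique if it exists, since $A\cup B$ generates $S$. To construct it, use the universal property of the free Lazard Lie algebra to obtain an LLA homomorphism $\hat h\colon\vect{ab}\cong F\to D$ with $\hat h(a_i)=f(a_i)$ and $\hat h(b_j)=g(b_j)$ (the required level inequalities hold because $f,g$ are LLA homomorphisms), and set $h(v+c):=\hat h(v)+\gamma(c)$ for $v\in\vect{ab}$, $c\in C$, using the decomposition $S=\vect{ab}\oplus C$. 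This map is $\F$-linear, and bracket-preservation reduces, by bilinearity and since $\vect{ab}$ and $C$ are subalgebras, to the identity $h([v,c])=[\hat h(v),\gamma(c)]$ for $v$ a Hall monomial in $a,b$ and $c\in C$ (note $[v,c]\in C$ because $C$ is an ideal). The cases $v=a_i,b_j$ hold because the relevant bracket is computed inside $A$, resp.\ $B$, where $f$, resp.\ $g$, is a homomorphism; the inductive step $v=[v_1,v_2]$ follows by applying the Jacobi identity in $S$, the induction hypothesis to $v_1$ and $v_2$ (and to $v_2$ bracketed with $[v_1,c]\in C$, which has strictly smaller Hall degree), and then the Jacobi identity in $D$. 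To see $h$ preserves the predicates $P_k$, one checks $P_k(S)=P_k^{\alpha,\beta}(\vect{ab})\oplus P_k(C)$ with $\alpha=\deg a$, $\beta=\deg b$: "$\supseteq$" is clear from $(3)$, and "$\subseteq$" holds because $(P_\bullet(S)+C)/C$ is a length-$c$ Lazard series on $S/C\cong\vect{ab}\cong F_c(X,Y,\alpha,\beta)$ containing the standard one, hence equal to it. Then $\hat h$ carries $P_k^{\alpha,\beta}(\vect{ab})$ into $P_k(D)$ and $f$ carries $P_k(C)\subseteq P_k(A)$ into $P_k(D)$. Finally $h|_A$ agrees with $f$ on the $a_i$ and on $C$, hence $h|_A=f$, and similarly $h|_B=g$; so $S$ is a free amalgam, $S\cong A\otimes_C B$, and $A\otimes_C B$ exists.

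I expect the main obstacle to be the bracket-preservation of the hand-built map $h$: the mixed terms $[v,c]$ are not directly controlled and must be resolved by an induction on Hall degree that repeatedly invokes the Jacobi identity, exactly the kind of bookkeeping organized in the Jacobi computations of Subsection~\ref{subsection:stageI}. A secondary point requiring care is pinning down the Lazard series of $S$, which is what guarantees that $h$ respects the predicates $P_k$.
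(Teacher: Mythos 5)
Your approach is the same as the paper's: decompose $S = C \oplus \vect{ab}$ via Lemma~\ref{lm:idealspan}, build $h$ as $\gamma + \hat h$ with $\hat h$ coming from the universal property of the free LLA (equivalently, by induction on the Hall basis), and verify bracket-preservation by Jacobi-identity induction on Hall monomials. That part matches the paper and is correct. The step you add --- checking that $h$ carries $P_k(S)$ into $P_k(D)$ --- is something the paper's proof is silent about, and you are right to want it; but the argument you give for it fails at exactly the key point.

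The assertion that a Lazard series on $F_c(X,Y,\alpha,\beta)$ containing the standard one must equal it is false. In $F_3(X,Y,2,2)=\Span(X,Y)$ (abelian, since $[X,Y]$ has weight $4>3$) the standard series has $P_3=0$, yet $Q_1=Q_2=F$, $Q_3=\Span(X+Y)$, $Q_4=0$ is a strictly larger Lazard series in which both generators retain level exactly $2$. This really bites for the lemma: with $c=3$, $C=\Span(c_1)$, $A=\Span(c_1,a)$, $B=\Span(c_1,b)$, all abelian with $c_1,a,b$ of level $2$, the abelian LLA $S=\Span(c_1,a,b)$ with $P_1(S)=P_2(S)=S$ and $P_3(S)=\Span(a+b+c_1)$ satisfies hypotheses (1)--(3) (each of $P_3(S)\cap A$, $P_3(S)\cap B$, $P_3(S)\cap\vect{ab}$, $P_3(S)\cap C$ is $\{0\}$, so in particular $\vect{ab}\cong F_3(X,Y,2,2)$), but the identity map from $S$ to the LLA $S'$ that equals $S$ except that $P_3(S')=0$ does not preserve $P_3$, so $S$ is not a free amalgam. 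The decomposition $P_k(S)=P_k^{\alpha,\beta}(\vect{ab})\oplus P_k(C)$ that your argument (and the paper's) tacitly relies on is therefore an \emph{additional} hypothesis, not a consequence of (1)--(3); in the paper's actual applications of this lemma (Corollary~\ref{cor:characterisationBaudischfreeamalgammonogeneous} and beyond) it holds because the Lazard series of $S$ is \emph{constructed} to be the direct sum one in Theorem~\ref{thm:existence_strong_amalgam_monogenous}.
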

\begin{proof}
    Condition $(3)$ implies that $a\cup b$ is linearly independent over $\emptyset$, hence by $(2)$ we have $A\cap B = C$. It remains to prove the freeness condition. By $(1)$, $C$ is an ideal of $\vect{C\vect{ab}}$ hence $S = \vect{AB} = \Span(C,\vect{ab})$ by Lemma \ref{lm:idealspan}. Then $S = C\oplus \vect{ab}$, as a vector space, by $(2)$. Let $f:A\to D$ and $g:B\to D$ be homomorphisms such that $f(c) = g(c)$ for all $c\in C$. Let $j_0 = f\upharpoonright C: C\to D$. We extend $j_0$ to $j:S\to D$. By $(3)$ there is a Hall basis $H$ of $\vect{ab}$ and we define $j_1:\vect{ab}\to D$ by the universal property of the free LLA. More precisely we define $j_1$ inductively: for $a_i, b_i$ we define $j_1(a_i) = f(a_i)$ and $j_1(b_i) = g(b_i)$. For $P\in H$ there exists a unique pair $(Q,R)\in H$ such that $P = [Q,R]$. Inductively, $j_1$ is defined on $Q,R$ and we define $j_1(P) := [j_1(Q),j_1(R)]$. This defines an LLA homomorphism $j_1:\vect{ab}\to D$. As $S = C\oplus \vect{ab}$, $j := j_0+j_1$ defines a linear homomorphism. We check that it preserves the Lie bracket. Note that as $C$ is an ideal, hence for $c,c'\in C, u,v\in \vect{ab}$ \[[c+u,c'+v] = \underbrace{[c,c']+[c,v]+[u,c']}_\text{$\in C$}+[u,v].\]
     It follows that $j([c+u,c'+v]) = j_0([c,c'])+j_0([c,v])+j_0([u,c'])+j_1([u,v]) = [j_0(c),j_0(c')]+j_0([c,v])+j_0([u,c'])+ [j_1(u),j_1(v)]$ hence by bilinearity it is enough to check that $j([c,P]) = [j(c),j(P)]$ for all $c\in C,P\in H$. We proceed by induction. If $P=a_i$ (or $P=b_i$) then it follows from the fact that $f$ (resp. $g$) is a Lie algebra homomorphism. If $P = [Q,R]$ for $Q,R\in H$, we have $[c,P] = [[c,Q],R]+[Q,[c,R]]$. As $C$ is an ideal, $[c,Q]\in C$ hence by the induction hypothesis on $R$, $j([[c,Q],R]) = [j([c,Q]),j(R)]$. By the induction hypothesis (on $Q$) we have $j([c,Q]) = [j(c),j(Q)]$ hence in turn $j([[c,Q],R]) = [[j(c),j(Q)],j(R)]$. Similarly $j([Q,[c,R]]) = [j(Q),[j(c),j(R)]]$, so, using bilinearity and the Jacobi identity backwards:
     \[j([c,P]) = j([[c,Q],R])+j([Q,[c,R]]) = [j(c),[j(Q),j(R)]]\]
     As $j$ extends $j_1$ we have $[j(Q),j(R)]=j([Q,R]) = j(P)$ hence we conclude $j([c,P]) = [j(c),j(P)]$.
\end{proof}

\begin{remark}
    Of course, the converse of Lemma \ref{lm:characterisationidealfreeamalgamation} does not hold in general: take $A= \vect{a}$ and $B= \vect{b_1,b_2}$ with $b_1,b_2$ not free (e.g. $[b_1,b_2] = 0$), then there is no such amalgam of $A$ and $B$.
\end{remark}

\begin{corollary}\label{cor:characterisationBaudischfreeamalgammonogeneous}
    The amalgam constructed in Theorem \ref{thm:existence_strong_amalgam_monogenous} is a free amalgam. In particular  for singletons $a,b,C$ with $C$ an ideal of $\vect{Ca}$ and $\vect{Cb}$ the free amalgam of $\vect{Ca}$ and $\vect{Cb}$ over $C$ exists. Finally, the following are equivalent:
    \begin{enumerate}
        \item $\vect{Cab}\cong \vect{Ca}\otimes_{C} \vect{Cb}$
        \item \begin{enumerate}
        \item $C$ is an ideal of $\vect{Cab}$
        \item $\vect{ab}\cap C = \set{0}$
        \item $\vect{ab} \cong F(X,Y,\deg(a),\deg(b))$
    \end{enumerate}
    \end{enumerate}
\end{corollary}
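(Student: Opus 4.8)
The plan is to bundle together Theorem~\ref{thm:existence_strong_amalgam_monogenous}, Lemma~\ref{lm:idealspan}, Lemma~\ref{lm:characterisationidealfreeamalgamation}, and the uniqueness of free amalgams recorded in the Remark following Definition~\ref{def:freeamalgambaudisch}. First I would dispose of the degenerate cases: if $a\in C$ then $\vect{Ca}=C$, and one checks straight from the universal property that $\vect{Cb}$, with the inclusion $C\hookrightarrow\vect{Cb}$ and the identity $\vect{Cb}\to\vect{Cb}$, is the free amalgam of $\vect{Ca}$ and $\vect{Cb}$ over $C$; symmetrically if $b\in C$. So from now on assume $a,b\notin C$. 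Then Lemma~\ref{lm:idealspan} (applied with the ideal $C$) gives $\vect{Ca}=\Span(Ca)$ and $\vect{Cb}=\Span(Cb)$, so the singletons $a$ and $b$ are linear bases of $\vect{Ca}$ and of $\vect{Cb}$ over $C$. I would also record that for such a generator $\deg(a)=\lev(a)$, and that this level is unchanged in passing to any LLA containing $\vect{Ca}$ as an ideal in which $a$ stays outside the base subalgebra.

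For the first assertion, let $S$ be the amalgam produced by Theorem~\ref{thm:existence_strong_amalgam_monogenous} for $A=\vect{Ca}$, $B=\vect{Cb}$, and put $\alpha=\lev(a)$, $\beta=\lev(b)$. I would verify the hypotheses of Lemma~\ref{lm:characterisationidealfreeamalgamation}: by construction $S=\Span(C,H)=\vect{\vect{Ca}\,\vect{Cb}}$ with $C\subseteq\vect{Ca}\cap\vect{Cb}$; $C$ is an ideal of $S$ by Property~$(2)$ of the theorem; $\vect{ab}\cap C=\set{0}$ because as a vector space $S=F\oplus C$ with $\vect{ab}\subseteq F$; and $\vect{ab}\cong F_c(X,Y,\alpha,\beta)=F_c(X,Y,\deg a,\deg b)$ by Property~$(1)$. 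Hence $S\cong\vect{Ca}\otimes_C\vect{Cb}$, so in particular the free amalgam of $\vect{Ca}$ and $\vect{Cb}$ over $C$ exists, and it is unique up to isomorphism over $C$.

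For the equivalence, the implication $(2)\Rightarrow(1)$ is again Lemma~\ref{lm:characterisationidealfreeamalgamation}, now applied with $S=\vect{Cab}$: from $(2)(a)$, since $[C,\vect{Ca}]\subseteq[C,\vect{Cab}]\subseteq C$ and likewise for $\vect{Cb}$, $C$ is an ideal of $\vect{Ca}$ and of $\vect{Cb}$, so by Lemma~\ref{lm:idealspan} together with $(2)(b)$ (which forces $a,b\notin C$) the singletons $a,b$ are bases over $C$; then $(2)(a)$, $(2)(b)$, $(2)(c)$ are precisely hypotheses $(1)$, $(2)$, $(3)$ of that lemma, and $\vect{Cab}=\vect{\vect{Ca}\,\vect{Cb}}$, so $\vect{Cab}\cong\vect{Ca}\otimes_C\vect{Cb}$. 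For $(1)\Rightarrow(2)$, suppose $\vect{Cab}$ is a free amalgam of $\vect{Ca}$ and $\vect{Cb}$ over $C$ via the inclusion embeddings. By uniqueness of the free amalgam there is an LLA isomorphism from the constructed $S$ onto $\vect{Cab}$ which (being the map furnished by the freeness universal property in the uniqueness argument) commutes with the embeddings of $\vect{Ca}$ and $\vect{Cb}$, hence is the identity on $C$ and sends $a\mapsto a$, $b\mapsto b$. Transporting along it the three facts that hold in $S$ — namely $C$ is an ideal, $\vect{ab}\cap C=\set{0}$, and $\vect{ab}\cong F_c(X,Y,\lev a,\lev b)$ — yields $(2)(a)$, $(2)(b)$, $(2)(c)$ for $\vect{Cab}$, using that the isomorphism preserves the Lazard series so that $\lev_{\vect{Cab}}(a)=\lev_S(a)=\alpha=\deg(a)$.

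I expect the only real friction to be the bookkeeping in $(1)\Rightarrow(2)$: one must be sure the abstract isomorphism coming from uniqueness of the free amalgam matches up the \emph{distinguished generators} $a,b$, and not merely the subalgebras $\vect{Ca},\vect{Cb}$ they generate, and that ``$\deg(a)$'' is computed consistently inside $\vect{Ca}$, inside $S$, and inside $\vect{Cab}$. Both points are settled by tracing the construction of Theorem~\ref{thm:existence_strong_amalgam_monogenous} — where $a$ corresponds to the generator $X$ of $F$ and $\lev_S(a)=\lev^{\alpha,\beta}_F(X)=\alpha$ — together with the explicit commuting diagram in the uniqueness Remark.
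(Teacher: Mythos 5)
Your proposal is correct and follows the same route as the paper: use Theorem \ref{thm:existence_strong_amalgam_monogenous} to build $S$, verify the hypotheses of Lemma \ref{lm:characterisationidealfreeamalgamation} to show $S$ is free (and hence the free amalgam exists), read off $(2)\Rightarrow(1)$ directly from Lemma \ref{lm:characterisationidealfreeamalgamation}, and get $(1)\Rightarrow(2)$ from the uniqueness of the free amalgam by transporting $(a)$, $(b)$, $(c)$ across the resulting isomorphism. You supply two details the paper leaves implicit — the degenerate cases $a\in C$ or $b\in C$, and the verification that the uniqueness isomorphism commutes with the distinguished embeddings so that it matches $a\mapsto a$, $b\mapsto b$ and preserves $\lev(a)$ — both of which are sound and in fact helpful clarifications rather than deviations.
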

\begin{proof}
    By Theorem \ref{thm:existence_strong_amalgam_monogenous} and Lemma \ref{lm:characterisationidealfreeamalgamation}, for any such $C, a,b$ an amalgam satisfying $(a),(b),(c)$ exists and this amalgam is free. $(2)$ implies $(1)$ is Lemma \ref{lm:characterisationidealfreeamalgamation}. Now assume that $\vect{Cab}\cong \vect{Ca}\otimes_{C} \vect{Cb}$ and let $S$ be the amalgam of $\vect{Ca}$ and $\vect{Cb}$ over $C$ constructed from Theorem \ref{thm:existence_strong_amalgam_monogenous}. Then $S$ is also free hence by the uniqueness of the free amalgam we have $S\cong \vect{Ca}\otimes_{C} \vect{Cb}$ hence via the isomorphism, $\vect{Cab}$ satisfies $(a),(b),(c)$.
\end{proof}

\begin{definition}
    For all $A,B,C$ subsets of a common $c$-nilpotent LLA over $\mathbb{F}$, we define
    \[A\indi \otimes _C B\iff \vect{ABC}\cong \vect{AC}\otimes_C \vect{BC}\]
\end{definition}

\begin{proposition}\label{prop:basicpropertiesoffreeindependence}
    The relation $\indi \otimes$ satisfies symmetry, invariance, stationarity, and transitivity. Furthermore, if $\indi \otimes$ satisfies full existence (for all $A,B,C$ there exists $A'\equiv_C A $ such that $A'\indi \otimes _C B$) then it also satisfies monotonicity and base monotonicity, hence $\indi \otimes$ is a stationary independence relation in the sense of \cite{tentzieglerurysohn}.
\end{proposition}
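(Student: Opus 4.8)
\emph{Plan.} The plan is to read ``$A \indi\otimes_C B$'' as the assertion that the inclusions $\vect{AC} \hookrightarrow \vect{ABC} \hookleftarrow \vect{BC}$ (all containing $\vect{C}$) exhibit $\vect{ABC}$ as a free amalgam of $\vect{AC}$ and $\vect{BC}$ over $\vect{C}$ in the sense of Definition \ref{def:freeamalgambaudisch}, and then to verify each clause either directly from that universal property (together with the uniqueness observations following Definition \ref{def:freeamalgambaudisch}) or, for monotonicity and base monotonicity, by a standard three-step argument. I would work throughout in a big LLA $\mathcal{U}$, using quantifier elimination for its theory: $\tp(\bar a/C) = \tp(\bar a'/C)$ iff there is an isomorphism $\vect{\bar a C} \to \vect{\bar a' C}$ fixing $C$ pointwise and sending $\bar a$ to $\bar a'$. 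In particular I would first record that whether $A \indi\otimes_C B$ holds depends only on $\tp(A/\vect{BC})$ -- equivalently, it is invariant under $\Aut(\mathcal{U}/BC)$ -- since an automorphism fixing $B\cup C$ pointwise carries the free amalgamation diagram for $(A,B,C)$ to the one for $(A',B,C)$, and ``being a free amalgam'' is preserved by isomorphisms of diagrams.

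\emph{Symmetry} is then immediate because conditions (1)--(3) of Definition \ref{def:freeamalgambaudisch} are symmetric in the two factors, so $\vect{ABC} = \vect{BAC}$ is a free amalgam of $\vect{AC},\vect{BC}$ over $C$ exactly when it is one of $\vect{BC},\vect{AC}$ over $C$; \emph{invariance} is the remark just made. For \emph{stationarity} I would assume $A \equiv_C A'$, $A \indi\otimes_C B$, $A' \indi\otimes_C B$, fix an isomorphism $\varphi\colon \vect{AC} \to \vect{A'C}$ over $C$ with $\varphi(A)=A'$, feed the compatible pair $(\iota'_A\circ\varphi,\iota'_B)$ (with $\iota'_A,\iota'_B$ the inclusions into $\vect{A'BC}$) into the freeness of $\vect{ABC}$ to get $\Phi\colon \vect{ABC}\to\vect{A'BC}$, and note the map built the same way from $\varphi^{-1}$ is its two-sided inverse by the uniqueness observation. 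Then $\Phi$ fixes $\vect{BC}$ pointwise and sends $A$ to $A'$, so $A\equiv_{BC}A'$. This is just the ``compatible isomorphisms of the legs lift to an isomorphism of the amalgams'' refinement of the uniqueness observation.

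For \emph{transitivity} I would assume $A \indi\otimes_C B$ and $A \indi\otimes_{BC} D$ and prove $A \indi\otimes_C BD$, i.e.\ that the inclusions of $\vect{AC}$ and $\vect{BCD}$ make $\vect{ABCD}$ a free amalgam over $C$. Generation is clear; strongness is a diagram chase, $\vect{AC}\cap\vect{BCD}\subseteq \vect{ABC}\cap\vect{BCD}=\vect{BC}$ (from $A\indi\otimes_{BC}D$), so it lies in $\vect{AC}\cap\vect{BC}=\vect{C}$ (from $A\indi\otimes_C B$). For freeness, given homomorphisms $f\colon \vect{AC}\to D'$ and $g\colon \vect{BCD}\to D'$ agreeing on $C$, I would first apply the freeness of $\vect{ABC}$ to $f$ and $g|_{\vect{BC}}$ to get $\tilde f\colon \vect{ABC}\to D'$ with $\tilde f|_{\vect{BC}}=g|_{\vect{BC}}$, and then, since $\tilde f$ and $g$ now agree on $\vect{BC}$, apply the freeness of $\vect{ABCD}$ over $\vect{BC}$ to $\tilde f$ and $g$ to get $h\colon \vect{ABCD}\to D'$ extending $f$ and $g$; $h$ is unique by the uniqueness observation. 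The hard part here will be exactly this chaining step -- checking that the intermediate map $\tilde f$ genuinely restricts to $g$ on $\vect{BC}$ so that the second use of the universal property is legitimate -- whereas everything else is bookkeeping.

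Finally, assuming full existence, I would derive \emph{monotonicity} and \emph{base monotonicity} together from $A \indi\otimes_C BD$ as follows: apply full existence over $C$ with right side $B$ to get $A_1\equiv_C A$ with $A_1\indi\otimes_C B$, then full existence over $\vect{BC}$ with right side $D$ to get $A^\ast\equiv_{\vect{BC}}A_1$ with $A^\ast\indi\otimes_{\vect{BC}}D$; invariance (under an automorphism fixing $BC$ realizing $A^\ast\equiv_{\vect{BC}}A_1$) gives $A^\ast\indi\otimes_C B$ as well, and $A^\ast\equiv_C A$. Transitivity then yields $A^\ast\indi\otimes_C BD$, so by stationarity $A^\ast\equiv_{BCD}A$; choosing $\sigma\in\Aut(\mathcal{U}/BCD)$ with $\sigma(A^\ast)=A$ and applying invariance to $A^\ast\indi\otimes_C B$ and $A^\ast\indi\otimes_{\vect{BC}}D$ gives $A\indi\otimes_C B$ and $A\indi\otimes_{\vect{BC}}D$, and symmetry then also gives $A\indi\otimes_C D$. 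Combined with the clauses already established and the full existence hypothesis, this is precisely the list of axioms for a stationary independence relation in the sense of \cite{tentzieglerurysohn}.
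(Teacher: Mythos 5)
Your proof is correct, and the constituent arguments are what one would expect: symmetry and invariance from the inherent symmetry and isomorphism-invariance of the free-amalgam diagram, stationarity and transitivity from the universal property (with the chaining step for transitivity being routine once one notes $\tilde f$ restricts to $g$ on $\vect{BC}$ by construction, so it is not actually "the hard part"), and monotonicity/base monotonicity via the standard three-realization argument from full existence, transitivity, stationarity and invariance. The paper itself does not carry out these verifications — it simply cites Baudisch's \cite[Theorem 3.4]{Baudisch4} and records that the first four clauses do not require full existence — so your proposal amounts to a self-contained reconstruction of the argument the paper outsources to that reference; the substance is presumably the same, but you supply the details the paper delegates.
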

\begin{proof}
    This follows from \cite[Theorem 3.4]{Baudisch4}, with the observation that the proofs of symmetry, invariance, stationarity, and transitivity do not use full existence\footnote{Note that what is here called full existence is what was called existence in \cite{Baudisch4} and the literature at that time. Existence is nowadays understood to refer to the property $A\ind_C C$ for all $A,C$. Full existence follows from existence and \textit{extension}: if $A\ind_C B$ and $C\seq B\seq D$ then there exists $A'\equiv_{BC} A$ such that $A'\ind_C D$. }. 
\end{proof}

\subsection{Stage II - Induction on the rank}\label{subsection:stageII}

\subsubsection{Malcev sets}

Observe the following consequence of Lemma \ref{lm:idealspan}:
\begin{corollary} \label{cor:spanprop}
    If $C\lteq \vect{Ca_1}\lteq \vect{Ca_1a_2}\lteq \ldots \lteq \vect{Ca_1\ldots a_n}$ then $\vect{Ca_1\ldots a_m} = \Span_\F(Ca_1\ldots a_m)$ for all $m\leq n$.
\end{corollary}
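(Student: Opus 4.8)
The plan is to induct on $m \leq n$, applying Lemma \ref{lm:idealspan} at each stage; essentially all the real work is already contained in that lemma, so the corollary amounts to iterating it along the given subnormal chain.

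For the base case $m = 1$, I would set $A = \vect{a_1}$. Since $[a_1,a_1] = 0$, the subalgebra generated by the singleton $a_1$ is the one-dimensional space $\F a_1 = \Span_\F(a_1)$, so $\vect{AC} = \vect{Ca_1}$ and $\Span_\F(AC) = \Span_\F(Ca_1)$. The hypothesis $C \lteq \vect{Ca_1}$ says precisely that $C$ is an ideal of $\vect{AC}$, so Lemma \ref{lm:idealspan} gives $\vect{Ca_1} = \Span_\F(Ca_1)$.

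For the inductive step, I would assume $\vect{Ca_1\ldots a_{m-1}} = \Span_\F(Ca_1\ldots a_{m-1})$ and abbreviate $C' := \vect{Ca_1\ldots a_{m-1}}$, which is again an LLA of nilpotency class $\leq c$ (with the induced Lazard series). Then $\vect{C'a_m} = \vect{Ca_1\ldots a_m}$, and the relevant link of the given chain asserts exactly that $C' \lteq \vect{Ca_1\ldots a_m} = \vect{C'a_m}$, i.e.\ $C'$ is an ideal of $\vect{C'a_m}$. Applying Lemma \ref{lm:idealspan} with $C'$ in place of $C$ and $A = \vect{a_m} = \Span_\F(a_m)$ yields $\vect{Ca_1\ldots a_m} = \vect{C'a_m} = \Span_\F(C' \cup \{a_m\})$, and the inductive hypothesis rewrites the right-hand side as $\Span_\F(Ca_1\ldots a_m)$. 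This closes the induction, and the statement for every $m \leq n$ follows.

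The only points to watch are bookkeeping ones — that $\vect{C'a_m}$ really coincides with $\vect{Ca_1\ldots a_m}$, and that the cyclic Lie subalgebra $\vect{a_m}$ is one-dimensional so ``generated by'' and ``spanned by'' agree when a single new generator is adjoined — but none of these is a genuine obstacle.
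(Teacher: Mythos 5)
Your proof is correct and follows the same approach as the paper's: induction on $m$, at each step taking $C' = \vect{Ca_1\ldots a_{m-1}}$ and applying Lemma \ref{lm:idealspan} to $C'$ and $A = \vect{a_m} = \Span_\F(a_m)$, then folding in the inductive hypothesis.
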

\begin{proof}
    Let $x$ be in $\vect{Ca_1\ldots a_m}$. As $\vect{a_m} = \Span(a_m)$, $\Span(a_m)$ is an LLA so by Lemma \ref{lm:idealspan} there exists $\lambda_m\in \F$ and $y\in \vect{Ca_1\ldots a_{m-1}}$ such that $x = y+\lambda_m a_m$. By induction hypothesis, there exists $\lambda_1,\ldots,\lambda_{m-1}\in \F$ such that $y = c+\sum_{i=1}^{m-1} \lambda_ia_i$ hence $x = c+\sum_{i=1}^m \lambda_i a_i\in \Span_\F(Ca_1,\ldots, a_m)$.
\end{proof}

Recall from Corollary \ref{cor:characterisationBaudischfreeamalgammonogeneous}: for singletons $a,b$ with $C$ an ideal of $\vect{Ca}$ and $\vect{Cb}$, we have $a\indi \otimes_C b$ if and only if 
\begin{enumerate}
    \item $C$ is an ideal of $\vect{Cab}$
    \item $\vect{ab}\cap C = \set{0}$
    \item $\vect{ab} \cong F(X,Y,\deg(a),\deg(b))$
\end{enumerate}

\begin{corollary}\label{cor:droppingtheEbaudischcase}
    Assume that $C$ is an ideal of $\vect{Ca}$ and $\vect{Cb}$ for singletons $a,b$. If $a\indi \otimes _C b$, then for all $E\seq C$ with $E\lteq \vect{Ea}$ and $E\lteq \vect{Eb}$ we have $a\indi \otimes _E b$.
\end{corollary}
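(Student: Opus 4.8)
The plan is to run everything through the characterisation of singleton free independence provided by Corollary \ref{cor:characterisationBaudischfreeamalgammonogeneous}. Fix the ambient big LLA $\mathbb{M}$ in which $a,b,C,E$ all live. Since $E$ is an ideal of $\vect{Ea}$ and of $\vect{Eb}$ by hypothesis, the free amalgam $\vect{Ea}\otimes_E\vect{Eb}$ exists, and $a\indi{\otimes}_E b$ holds precisely when (i) $E$ is an ideal of $\vect{Eab}$, (ii) $\vect{ab}\cap E=\set{0}$, and (iii) $\vect{ab}\cong F(X,Y,\deg(a),\deg(b))$. On the other hand, from $a\indi{\otimes}_C b$ the same Corollary hands us the analogous three statements with $C$ in place of $E$. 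So the task is simply to transfer conditions (i)--(iii) from $C$ down to $E$.

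First I would dispose of the two easy clauses. Clause (iii) is literally the same assertion for $C$ and for $E$: the pair $(\deg(a),\deg(b))$ is read off from the levels of $a$ and $b$ in $\mathbb{M}$ and makes no reference to the base, so (iii) for $C$ \emph{is} (iii) for $E$. Clause (ii) follows from its $C$-version by monotonicity of intersection, $\vect{ab}\cap E\seq\vect{ab}\cap C=\set{0}$, since $E\seq C$ (in particular $a,b\notin C$, hence $a,b\notin E$).

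The only clause needing an argument is (i), and it is worth noting that it uses only the hypotheses on $E$, not the hypothesis $a\indi{\otimes}_C b$. From $E\lteq\vect{Ea}$ and $E\lteq\vect{Eb}$ we get $[E,a]\seq E$ and $[E,b]\seq E$, and $[E,E]\seq E$ since $E$ is a subalgebra. I would then invoke the standard fact that $N:=\set{x\in\vect{Eab}\mid [E,x]\seq E}$ is a Lie subalgebra of $\vect{Eab}$: it is visibly a subspace, and for $x,y\in N$ and $e\in E$ the derivation identity $[e,[x,y]]=[[e,x],y]+[x,[e,y]]$ together with $[e,x],[e,y]\in E$ and antisymmetry places $[e,[x,y]]$ in $[E,y]+[E,x]\seq E$, so $[x,y]\in N$. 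As $N$ contains the generating set $E\cup\set{a,b}$ of $\vect{Eab}$, we get $N=\vect{Eab}$, i.e. $[E,\vect{Eab}]\seq E$, which is exactly (i). With (i)--(iii) established, Corollary \ref{cor:characterisationBaudischfreeamalgammonogeneous} applied over the base $E$ gives $\vect{Eab}\cong\vect{Ea}\otimes_E\vect{Eb}$, that is, $a\indi{\otimes}_E b$. I do not anticipate a genuine obstacle; the only subtlety is the bookkeeping point that the invariant in (iii) is ambient (so it is untouched by shrinking $C$ to $E$) and that the normaliser-subalgebra computation must be carried out for the possibly non-abelian ideal $E$ rather than for a single element, which the Jacobi identity handles without difficulty.
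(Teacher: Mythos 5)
Your proof is correct and adopts the same overall strategy as the paper's: apply the characterization in Corollary \ref{cor:characterisationBaudischfreeamalgammonogeneous} over $E$, note that conditions (ii) and (iii) transfer immediately from the $C$-version, and then verify that $E$ is an ideal of $\vect{Eab}$. Where you differ is in the last step. The paper enumerates the evaluated Hall monomials $h_1,\ldots,h_l$ in $\vect{ab}$ (with $h_1=a$, $h_2=b$) and shows $[e,h_k]\in E$ by induction on $k$, the step $h_k=[h_i,h_j]$ being a Jacobi computation; you instead observe that the idealizer $N=\set{x\in\vect{Eab}\mid [E,x]\subseteq E}$ is a subalgebra of $\vect{Eab}$ (by that same Jacobi computation) containing the generating set $E\cup\set{a,b}$, hence equals $\vect{Eab}$. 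These are the same argument up to packaging, but your idealizer phrasing is cleaner, sidesteps the Hall basis bookkeeping, and makes transparent the point you note explicitly: that condition (i) depends only on the hypotheses $E\lteq\vect{Ea}$ and $E\lteq\vect{Eb}$, not on $a\indi\otimes_C b$.
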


\begin{proof}
If $E\lteq \vect{Ea}$ and $E\lteq \vect{Eb}$ then $a\indi \otimes _E b$ is equivalent to 
\begin{enumerate}
        \item $E$ is an ideal of $\vect{Eab}$
        \item $\vect{ab}\cap E = \set{0}$
        \item $\vect{ab} \cong F(X,Y,\deg(a),\deg(b))$
    \end{enumerate}
As $E\seq C$ only Condition $(1)$ needs to be checked. Let $h_1,\ldots,h_l$ be the Hall basis, evaluated in $(a,b)$, with $h_1 = a$ and $h_2 = b$. Then $\vect{Eab} = \Span_\F(E (h_i)_{1\leq i\leq l})$ and we prove by induction that $E$ is an ideal of $\vect{Eab}$. For $h_1=a$ and $h_2 = b$ we have, for all $e \in E$, $[e,h_1]\in E$ and $[e,h_2]\in E$ as $E$ is an ideal of $\vect{Ea}$ and $\vect{Eb}$. Then for each $k\geq 3$ there exists $i,j\leq k$ such that $[h_i,h_j] = h_k$. Then by induction we see that, for all $e \in E$, $[e,h_k] = [[e,h_i],h_j]+[h_i,[e,h_j]]\in E$ so we conclude.
\end{proof}

\begin{remark}
    For singletons $a,b$ we actually always have $E\lteq \vect{Ea}$ and $E\lteq \vect{Eb}$ iff $E\lteq \vect{Eab}$.
\end{remark}

\begin{definition}\label{def:Malcev}
    A tuple $a =(a_1,\ldots,a_n)$ is called a \textit{Malcev} tuple over an LLA $C$ (or simply \textit{Malcev} over $C$) if $a$ is linearly independent over $C$ and for all $i = 1,\ldots, c$ we have 
    \[\Span_\F(CP_i(\vect{Ca})) = \Span_\F(C P_i(a)).\]
    Here we write $P_{i}(a)$ for the subtuple of $a$ contained in $P_{i}$. 
 If $A = \vect{Ca}$ we call $a$ a \textit{Malcev basis} of $A$ over $C$.
\end{definition}

\begin{remark}\label{rk:malcevtuples} Note that we always have the inclusion $ \Span_\F(CP_i(\vect{Ca})) \supseteq  \Span_\F(C P_i(a))$. Below we list some easy facts.
\begin{enumerate}
    \item If $a = (a_1,\ldots,a_n)$ is Malcev over $C$ then there is a re-indexing of $a$ such that for some $n = k_1\geq \ldots \geq k_c\geq 1$ we have 
    \[\Span_\F(CP_i(\vect{Ca})) = \Span_\F(C a_1\ldots a_{k_i}).\]
    Namely, re-index $a$ so that $\lev(a_{i})\geq \lev(a_{i+1})$ and apply Corollary \ref{cor:spanprop}. Then we also have:
    \[C\lteq \vect{Ca_1}\lteq \ldots \lteq \vect{Ca_1\ldots a_n}\]
    and hence $\vect{Ca_1\ldots a_i} = \Span_\F(Ca_1\ldots a_i)$ for all $i=1,\ldots,n$. We will now call it an \textit{ordered} Malcev basis/tuple.
    \item It is easy to see that for $a = (a_1\ldots a_n)$ with $\lev(a_i)\geq \lev(a_{i+1})$, the tuple $a$ is Malcev over $C$ if and only if $(a_{k+1},\ldots ,a_n)$ is Malcev over $Ca_1\ldots a_k$ and $a_1\ldots a_k$ is Malcev over $C$, for all $1\leq k\leq n$.
    \begin{proof}
        For the forward direction, fix $1\leq k\leq n$ and $A = \vect{Ca_1,\ldots,a_k}$. We prove that $\Span(A P_i(\vect{Aa_{k+1},\ldots,a_n}) = \Span(AP_i(a_{k+1},\ldots,a_n))$. Fix $1\leq i\leq c+1$. From $(1)$ we have $C\lteq \vect{Ca_1}\lteq\ldots\lteq \vect{Ca_1,\ldots,a_n}$ hence $A=\Span(Ca_1,\ldots,a_k)$and $\vect{Aa_{k+1},\ldots,a_n} = \Span(Ca_1,\ldots,a_n)$. As $a$ is Malcev over $C$, it follows that \[P_i(\vect{Aa_{k+1},\ldots,a_n}) \seq \Span(Ca_1,\ldots,a_s)\]
        where $1\leq s\leq n$ is such that $P_i(a) = (a_1,\ldots,a_s)$. If $s\leq k$, then 
        \[\Span(A P_i(\vect{Aa_{k+1},\ldots,a_n}) = A\] and we conclude since $P_i(a_{k+1},\ldots,a_n) = \emptyset$. If $s>k$ then 
        \[\Span(A P_i(\vect{Aa_{k+1},\ldots,a_n}) = \Span(A a_{k+1},\ldots,a_s)\]
        and we conclude since $a_{k+1},\ldots,a_s = P_i(a_{k+1},\ldots,a_n)$. The same sort of argument yields that $a_1,\ldots,a_k$ is Malcev over $C$.
        The converse is a particular case of Lemma \ref{lm:propertiesofmalcevsets} (1) below.
    \end{proof}
    \item By $(1)$ if $a = (a_1,\ldots,a_n)$ is Malcev over $C$ then \[\vect{Ca_1\ldots a_n} = \Span_\F(Ca_1\ldots a_n)\] 
    This is regardless of the indexing of the $a_i$.
    \item For any LLA extension $B\seq A$ there exists an ordered Malcev basis $a_1,\ldots,a_n$ such that $A = \vect{B,a_1,\ldots,a_n} = \Span(B,a_1,\ldots,a_n)$. This is obtained by iteratively taking bases of the complement of $B$ in $P_c(A)$, in $P_{c-1}(A)$, etc. Another way of seeing this: observe that $B$ is an ideal of $\Span(BP_c(A))$ which is an ideal of $\Span(BP_{c-1}(A))$, etc. which is an ideal of $\Span(BP_{1}(A)) = A$ and a Malcev basis is given by taking iteratively bases of $\Span(BP_{i}(A))$ over $\Span(BP_{i+1}(A))$. For such basis, we have $\lev(a_i)\geq \lev(a_{i+1})$ so we see that $a_n$ is of minimal level among the $a_i's$. 
    \item If a tuple $a = (a_1,\ldots, a_n)$ is Malcev over $C$ then it is not necessarily the case that every subtuple of $a$ is Malcev over $C$. To see this, consider a Lie algebra with basis $a_1,a_2,a_3$ such that $[a_2,a_3] = a_1$ and every other bracket $[a_{i},a_{j}]$ with $i < j$ is trivial. Then for $\lev (a_1)\geq \lev (a_2)\geq \lev (a_3)$ we have that $(a_1,a_2,a_3)$ is an ordered Malcev basis of $A = \vect{a_1,a_2,a_3}$, over $\set{0}$ in particular it is Malcev over $\set{0}$ but $a_2,a_3$ is not Malcev over $\set{0}$.
    \item Consider the following example: let $b$ be in $P_1\setminus P_2$ and $B = \Span_\F(b)$. Let $a$ be in $P_2\setminus P_3$ and define the bracket to be trivial on $A = \Span(a,b)$. We have $c = a+b\in P_1\setminus P_2$ and $a+b\in\Span(B P_2(A))$ and $\lev(a+b) = 1$, hence $\lev(\Span(B P_2(A))/B) = 1$. We have $A = \vect{B,a} = \vect{B,c}$. Here both $a$ and $c$ satisfy $\vect{Ba} = \Span(Ba)$ and $\vect{Bc} = \Span(Bc)$ but only $a$ is Malcev over $B$. 
    \item By the previous point if for some $B$ and $a = (a_1,\ldots,a_n)$ we have $\vect{Ba_1\ldots a_n} = \Span(Ba_1\ldots a_n)$ then $a$ is not necessarily Malcev over $B$.
    \item If $\langle Ca \rangle \otimes _C \langle Cb \rangle = \vect{C(h_i)_i} = \Span_\F(C(h_i)_i)$ is as in Corollary \ref{cor:characterisationBaudischfreeamalgammonogeneous}, where $h_1,\ldots,h_k$ are (evaluated) Hall polynomials from $\HS^{\alpha,\beta}$ where $\alpha = \lev(a)$ and $\beta = \lev(b)$. Then $h_k,\ldots,h_1$ is a (ordered) Malcev basis of $\vect{Ca}\otimes_C \vect{Cb}$ over $C$, see Proposition \ref{prop:baudischamalgamhallbasisgivesmalcevbasisandmore}.
\end{enumerate}
\end{remark}

\begin{lemma}\label{lm:propertiesofmalcevsets}
Let $L$ be any LLA and let $a$, $b$ be tuples and $C$ a subalgebra of $L$.
\begin{enumerate}
    \item If $a$ is Malcev over $\vect{Cb}$ and $b$ is Malcev over $C$, then $ab$ is Malcev over $C$.
    \item If $b$ is Malcev over $C$ and $ab$ is Malcev over $C$, then $a$ is Malcev over $Cb$.
    \item If $a$ is Malcev over $\vect{Cb}$ and $ab$ is Malcev over $C$, then $b$ is Malcev over $C$.
\end{enumerate}
In short, any two of the three Malcev conditions between $a,b$ and $C$ above imply the third. 
\end{lemma}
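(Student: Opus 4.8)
The plan is to unfold Definition \ref{def:Malcev} in each case and reduce everything to two bookkeeping facts: first, that the defining equality $\Span_\F(CP_i(\vect{Ca})) = \Span_\F(C\,P_i(a))$ propagates through a chain $C \lteq \vect{Cb} \lteq \vect{Cba}$, using Remark \ref{rk:malcevtuples}(2) and (3) to identify $\vect{Cba}$ with $\Span_\F(Cba)$; and second, that linear independence of a concatenation $ab$ over $C$ is equivalent to $b$ linearly independent over $C$ together with $a$ linearly independent over $\vect{Cb}$ (which, once we know the relevant spans are spans of tuples, is just linear algebra in the vector space $L$). Throughout I will use Remark \ref{rk:malcevtuples}(2), which already states the ordered version of exactly the equivalence we want, so the real content is to upgrade from ``ordered'' tuples (where $\lev(a_i) \geq \lev(a_{i+1})$) to arbitrary tuples; but since Malcev-ness of a tuple does not depend on its indexing (Remark \ref{rk:malcevtuples}(3) gives $\vect{Ca} = \Span_\F(Ca)$ regardless of indexing, and the predicate $P_i$ of a tuple is index-independent), we may freely re-index $a$, $b$, and $ab$ to be ordered and then invoke Remark \ref{rk:malcevtuples}(2).

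For (1): assume $b$ is Malcev over $C$ and $a$ is Malcev over $\vect{Cb}$. Re-index $b$ so it is ordered, so $C \lteq \vect{Cb_1} \lteq \cdots \lteq \vect{Cb}$ and $\vect{Cb} = \Span_\F(Cb)$; similarly re-index $a$ so that $\vect{Cb} \lteq \vect{Cba_1} \lteq \cdots \lteq \vect{Cba}$ and $\vect{Cba} = \Span_\F(Cba)$. Concatenating gives the chain $C \lteq \vect{Cb_1} \lteq \cdots \lteq \vect{Cb} \lteq \vect{Cba_1} \lteq \cdots \lteq \vect{Cba}$, so $ab$ (re-indexed in this order) is ordered over $C$, and Remark \ref{rk:malcevtuples}(2) applied to this ordered tuple immediately yields that $ab$ is Malcev over $C$. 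For the linear independence part of the definition: $b$ is independent over $C$ by hypothesis, and $a$ is independent over $\vect{Cb} \supseteq \Span_\F(Cb)$, so $ab$ is independent over $C$. For (2): assume $b$ is Malcev over $C$ and $ab$ is Malcev over $C$. Re-index $ab$ to be ordered; since $b$ is itself Malcev over $C$, we can arrange (using that the levels of the $b_i$ and $a_i$ refine into the single ordered list) that the sub-block $b$ appears in the tail positions, i.e. the chain reads $C \lteq \vect{Ca_1} \lteq \cdots \lteq \vect{Ca}\, (=\vect{Ca_{1}\dots a_{m}}) \lteq \cdots$ — more precisely, we order so that all of $b$ sits at the bottom, giving $C \lteq \vect{Cb_1}\lteq\cdots\lteq \vect{Cb}\lteq \vect{Cba_1}\lteq\cdots\lteq\vect{Cba}$, which is possible because Remark \ref{rk:malcevtuples}(2) shows that for a Malcev tuple any re-ordering consistent with the level ordering works and the hypothesis on $b$ lets us group it. Then Remark \ref{rk:malcevtuples}(2) read in the other direction tells us the tail $a$ is Malcev over $\vect{Cb}$. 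Linear independence of $a$ over $\vect{Cb} = \Span_\F(Cb)$ follows from independence of $ab$ over $C$. Case (3) is the symmetric counterpart of (2), obtained by swapping the roles of the two blocks in exactly the same argument: assume $a$ is Malcev over $\vect{Cb}$ and $ab$ is Malcev over $C$, order $ab$ with $b$ at the bottom, and read off that $b$ is Malcev over $C$.

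The one genuine subtlety — and the step I expect to be the main obstacle — is the re-indexing bookkeeping in (2) and (3): namely, that when $ab$ is Malcev over $C$ and $b$ is (in case (2)) or $a$ is (in case (3)) already known to be Malcev over the appropriate base, one can choose an ordered re-indexing of $ab$ in which the distinguished sub-block occupies a contiguous set of tail or bottom positions compatibly with the level filtration. This needs a short argument: the levels $\lev(c_i)$ of the entries of $C$-Malcev tuple $ab$ partition the entries into level-blocks; within each level-block the order is arbitrary (Remark \ref{rk:malcevtuples}(1)), and since Malcev-ness of $b$ over $C$ means that within each level-block the $b$-entries already ``complete'' $\Span_\F(C P_{i+1}) $ to $\Span_\F(C P_i)$ on their own, we may always list the $b$-entries of a given level before the $a$-entries of that level (or vice versa), producing the desired ordered tuple. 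Once this combinatorial point is granted, each of (1), (2), (3) is a one-line appeal to Remark \ref{rk:malcevtuples}(2) together with the trivial linear-independence observation, so I would state the re-indexing lemma as a preliminary observation and then dispatch the three cases quickly.
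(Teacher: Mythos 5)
Your proposal reduces all three parts to Remark \ref{rk:malcevtuples}(2), and the load-bearing step is the claim that you can always re-index the concatenated tuple $ab$ into an \emph{ordered} Malcev tuple (decreasing levels) in which the block $b$ occupies a contiguous initial segment. This claim is false in general, and the failure is not a corner case: it occurs whenever the levels of the $a$-entries and $b$-entries interleave. Concretely, if $b=(b_1)$ with $\lev(b_1)=2$ and $a=(a_1)$ with $\lev(a_1)=3$, the only ordered arrangement is $(a_1,b_1)$, so $b$ cannot be placed first without violating $\lev(c_i)\geq\lev(c_{i+1})$, and Remark \ref{rk:malcevtuples}(2) with the split ``$b$ first, $a$ second'' is simply not available. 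Your ``level-block'' repair does not fix this: moving $b$-entries ahead of $a$-entries \emph{within} a level block produces an ordering where $b$-entries of level $2$ still sit after $a$-entries of level $3$, not the chain $C\lteq\vect{Cb_1}\lteq\cdots\lteq\vect{Cb}\lteq\vect{Cba_1}\lteq\cdots$ that you write down. Moreover, in case (1) the existence of the ideal chain $C\lteq\vect{Cb_1}\lteq\cdots\lteq\vect{Cba}$ does not by itself imply that the tuple is ordered or Malcev (Remark \ref{rk:malcevtuples}(7) records exactly that the converse of \ref{rk:malcevtuples}(1) fails), so the sentence ``so $ab$ (re-indexed in this order) is ordered over $C$'' is an unjustified leap; reading Remark \ref{rk:malcevtuples}(2)'s ``if'' direction without the ordering hypothesis would be tantamount to assuming item (1) of the lemma, which is circular.

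The paper takes a more elementary route that sidesteps all of this: it unfolds the definition of Malcev for each of the three conditions as equalities of spans, picks an arbitrary element $x$ of the relevant span $\Span(CP_k(\cdot))$, and shows by a short computation with coset representatives that $x$ lies in the span of the corresponding subtuple. No re-indexing of $ab$ is ever performed; the only linear-algebraic ingredient is that $P_k(L)$ is a subspace, so differences of elements in it stay in it, together with linear independence of the tuples over the various bases. If you want to salvage a proof along the lines you propose, you would need to either (a) prove the ``if'' direction of Remark \ref{rk:malcevtuples}(2) without the ordering hypothesis (which is precisely item (1) and requires the span argument anyway) or (b) carry out an interleaving induction on level blocks, verifying the span equality at each level, which again reduces to the direct argument the paper gives.
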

\begin{proof}
For an ease of notation, we say $M(x/D)$ holds if $x$ is Malcev over $\vect{D}$. Recall
\begin{itemize}
    \item [(i)] $M(a/Cb)$ if and only if $a$ is independent over $\vect{Cb}$ and for any $k$ we have $\Span(\vect{Cb}P_k(\vect{Cab})=\Span(\vect{Cb}P_k(a))$;
     \item [(ii)] $M(b/C)$ if and only if $b$ is independent over $C$ and for any $k$ we have $\Span(CP_k(\vect{Cb})=\Span(CP_k(b))$ and
      \item [(iii)] $M(ab/C)$ if and only if $ab$ is independent over $C$ and for any $k$ we have $\Span(CP_k(\vect{Cab})=\Span(CP_k(ab))$.
\end{itemize}
\emph{Proof of $(1)$:} Assume $(i)$ and $(ii)$ from above hold. We want to establish $(iii)$. One easily checks that $ab$ is still linearly independent over $C$. For the equality of the spans, consider $x\in \Span(CP_k(\vect{Cab}) $ arbitrary. We can write 
$$ x= c_1 + y$$
 for some $c_1\in C$ and $y\in P_k(\vect{Cab})$. By $(i)$, we know $x\in \Span(\vect{Cb}P_k(\vect{Cab}))=\Span(\vect{Cb}P_k(a))$, whence 
$$ x = \beta + \sum_i \lambda_ia_i$$
for some $\lambda_i \in \F, a_i$ from $P_k(a)$ and $\beta\in \vect{Cb}$. As  $P_k(L)$ is a subalgebra, observe that also $y-\sum_i \lambda_ia_i=\beta - c_1$ is in $P_k(L)$. Further, as $\beta, c_1\in \vect{Cb}$, we get that actually
$\beta-c_1 \in P_k(\vect{Cb})$ and by  $(ii)$ we infer that $\beta-c_1 \in \Span(CP_k(b))$. Thus, we find $c_2\in C$ and $\mu_i\in \F$ such that $\beta-c_1 = c_2+\sum_i \mu_i b_i$ for $b_i\in P_k(b)$. This yields, 
$$x=\beta + \sum_i \lambda_ia_i = c_1 +c_2+\sum_i \mu_i b_i+\sum_i \lambda_ia_i \in \Span(CP_k(ab)),$$
as desired.\\

\emph{Proof of $(2)$:} Now assume $(ii)$ and $(iii)$ hold. We want to establish $(i)$. To see that $a$ is linearly independent over $\vect{Cb}$, recall that by $(ii)$ and Remark \ref{rk:malcevtuples} (3), we get that $\vect{Cb}=\Span(Cb)$ and use that, by $(iii)$, we know that $ab$ is linearly independent over $C$. 
Now we need to take care of the equality of spans. To this end, consider $x\in \Span(\vect{Cb}P_k(\vect{Cab}))$ arbitrary. Then there are $\beta\in \vect{Cb}$ and $y\in P_k(\vect{Cab})$ such that $x=\beta+y$. By $(iii)$, we get $y\in \Span(CP_k(ab))$, whence $y=c+\sum_i\lambda_ia_i+\sum_i\mu_ib_i$, for some $c\in C, a_i\in P_k(a), b_i\in P_k(b)$ and $\lambda_i,\mu_i\in \F$. Thus 
$$x=\left(\beta+c+\sum_i\mu_ib_i\right)+\sum_i\lambda_ia_i$$
and as $\beta+c+\sum_i\mu_ib_i\in \vect{Cb}$, we conclude $x\in \Span(\vect{Cb}P_k(a))$, as desired. \\

\emph{Proof of $(3)$:}
Finally, assume $(i)$ and $(iii)$ hold. We need to establish $(ii)$. Clearly, $b$ is independent over $C$ by $(iii)$. Now, as above, pick $x\in \Span(CP_k(\vect{Cb})$ arbitrary. By $(iii)$, we have $x\in \Span(CP_k(ab))$. But by choice we know that  $x\in \vect{Cb}$ and as $(i)$ yields that $a$ is linearly independent over $\vect{Cb}$, it is easy to conclude that indeed $x\in \Span(CP_k(b))$, as desired. 
\end{proof}

\begin{proposition}\label{prop:baudischamalgamhallbasisgivesmalcevbasisandmore}
    Let $a,b$ be singletons and let $A =  \vect{Ca}$ and $B= \vect{Cb}$ be basic extensions of $C$. Let $S = A\otimes_C B$ and $ (h_1,\ldots,h_k)$ be an enumeration of the evaluated Hall monomials in $a$ and $b$, with $h_1 = a$, $h_2 = b$, and $h_3 = [a,b]$. Then
    \begin{enumerate}
        \item $(h_1,\ldots,h_k)$ is a Malcev basis of $S$ over $C$,
        \item $(a,h_3,\ldots,h_k)$ is Malcev over $C$,
        \item $(b,h_3,\ldots,h_k)$ is Malcev over $C$,
        \item $(h_3,\ldots,h_k)$ is Malcev over $A$ and over $B$. 
    \end{enumerate}
\end{proposition}
\begin{proof}
\begin{enumerate}
    \item We may assume that $\lev(h_i)\geq \lev(h_{i+1})$ for all $i = 1,\ldots, k-1$. By Corollary \ref{cor:characterisationBaudischfreeamalgammonogeneous} and Lemma \ref{lm:idealspan}, $(h_1,\ldots,h_k)$ is a linear basis of $S$ over $C$. Let $i\in \set{1,\ldots, c}$ and let $n_i\leq k$ such that $P_i(h_1,\ldots,h_k) = (h_{n_i},\ldots, h_k)$. By Theorem \ref{thm:existence_strong_amalgam_monogenous}, $P_i(S)$ is defined as $P_i(C)\oplus P_i(\vect{a,b})$. As $\vect{a,b}\cong F(X,Y,\lev(a),\lev(b))$, we have $P_i(\vect{a,b})=\Span(h_{n_i},\ldots, h_k)$ hence $\Span(CP_i(S)) = \Span(Ch_{n_i}\ldots h_k)$, so $(h_1,\ldots,h_k)$ is a Malcev basis over $C$. 
    \item As in $(1)$, we may assume that $\lev(h_i)\geq \lev(h_{i+1})$ for all $i = 1,\ldots, k-1$ and that $1 = n_1\leq \ldots \leq n_{c} \leq k$ are such that $P_i(h_1,\ldots,h_k) = (h_{n_i},\ldots, h_k)$. It is clear that $h' = (a,h_3,\ldots,h_k)$ is linearly independent over $C$. In particular, we still have $P_i(\vect{Ch'}) = P_i(C)+P_i(\vect{h'})$, so it is enough to prove that $P_i(\vect{h'}) = \Span(P_i(h'))$. First, by Theorem \ref{thm:existence_strong_amalgam_monogenous}, we have $\vect{a,b}\cong F(X,Y,\lev(a),\lev(b))$. In particular, $b\notin \vect{h'}$ and from $\vect{h_1,\ldots,h_k} = \Span(h_1,\ldots,h_k)$ we obtain $\vect{h'} = \Span(h')$. Then 
    \begin{align*}
        P_i(\vect{h'}) &= \vect{h'}\cap P_i(\vect{h_1,\ldots,h_k})\\ 
        &= \Span(h')\cap \Span(h_{n_i},\ldots,h_n) \\
        &=\Span(P_i(h')).
    \end{align*}
    \item Same as $(2)$ by symmetry.
    \item As $A$ is a basic extension of $C$, $a$ is Malcev over $C$. Using $(2)$ and Lemma \ref{lm:propertiesofmalcevsets} (2) we get that $(h_3,\ldots,h_k)$ is Malcev over $A$. The argument for $B$ is symmetric.
\end{enumerate}    
\end{proof}

\begin{remark}[Malcev Calculus]\label{rk:malcevcalculus}
    Lemma \ref{lm:propertiesofmalcevsets} can be seen as a list of basic operations for obtaining new Malcev tuples from old ones. For any tuples $a,b$, we denote $M( a/ b C)$ to express ``$a$ is Malcev over $\vect{C b}$". We have the Malcev triangle:
    \[\begin{tikzcd}[every arrow/.append style={dash}]
        M( a/ C  b)\ar[rr] &  & M( b/C) \ar[ddl]\\
        & & \\
        & M(  a  b / C) \ar[uul]& 
    \end{tikzcd}
    \]
    where by Lemma \ref{lm:propertiesofmalcevsets} every two vertices implies the third. For instance, one easily deduces $M(a/Cbh_3,\ldots,h_k)$ and  $M(bh_3,\ldots,h_k/Ca)$ from Proposition \ref{prop:baudischamalgamhallbasisgivesmalcevbasisandmore}. This ``Malcev calculus" will be heavily used later, in particular in the proof of Theorem \ref{thm:weaktransitivitystageII}. 
\end{remark}

\subsubsection{Rank of LLA extensions} 


Recall (Definition \ref{def:levelofanextension}) that, given LLAs $B\seq A$, the level $\lev(A/B)$ is the maximal $1\leq i\leq c+1$ such that $A = \Span(BP_i(A))$. Recall that as far as levels of elements are concerned, the addition is ``truncated" in $\set{1,\ldots,c+1}$ in the sense that for $i,j\in \set{1,\ldots,c+1}$ we have that $i+j$ takes the value $c+1$ if the numerical value of $i+j$ is $\geq c+1$.
\begin{definition}
    Given LLAs $B\seq A$ with $A$ finite-dimensional over $B$, we define the \textit{rank of $A$ over $B$}, denoted $\rk(A/B)$ to be the pair $(\nu, n)$ where $\nu = \lev(A/B)$ and $n$ is the dimension of $\Span(BP_\nu(A))$ over $\Span(BP_{\nu+1}(A))$ if $\nu\neq c+1$ or $n = 0$ if $\nu = c+1$.
\end{definition}

We order those pairs in a counter-intuitive way: 
\[(\mu,m)\prec (\nu,n) \iff \begin{cases} \mu>\nu\ \text{ or }\\ \nu = \mu\text{ and }m<n\end{cases}.\]

\begin{remark}\label{rk:rankstuff}
Some easy facts.
\begin{enumerate}
    \item $\rk(A/B) = (c+1,0)$ if and only if $A = B$.
    \item Assume that $\rk(A/B) = (\nu,n)$ and $B\subsetneq A$. Let $a_1,\ldots,a_k$ be a Malcev basis of $A$ over $B$. Then $\nu = \lev(A/B)$ is the minimum of the levels of $a_i$ and $n$ is the number of elements among $a_1,\ldots,a_k$ which are of level $\nu$. 
    \item Let $C\seq B\seq A$ be LLA, then $\lev(A/B)\geq \lev(A/C)$. Indeed, for $\nu = \lev(A/C)$ we have $A = \Span(CP_\nu(A))$ hence also $A = \Span(BP_\nu(A))$ so we have 
    $$\nu\leq \max\set{i \leq c+1 \mid A = \Span(BP_{i}(A))} = \lev(A/B).$$ 
    \item Assume that $C\seq B$ are LLAs and for some singleton $a$ we have $B\lteq \vect{Ba}$ then $\lev(\vect{Ba}/C)\geq \lev(B/C)$ hence $\rk(B/C)\preceq \rk(\vect{Ba}/C)$. 
    \begin{proof}
        First, we prove that $\lev(B/C)\geq \lev(\vect{Ba}/C)$. Let $\lev(a) = \mu$ and $\lev(B/C) = \nu$. If $\mu\geq \nu$, then $a\in P_\nu(\vect{Ba})$ and $\vect{Ba} = \Span(CP_\nu(\vect{Ba})$ so $\lev(\vect{Ba}/C) =  \nu= \lev(B/C)$. If $\mu<\nu$ then $\vect{Ba} = \Span(CP_\mu(\vect{Ba})$ and $\lev(\vect{Ba}/C) = \mu<\nu = \lev(B/C)$.
    \end{proof}
    \item For $A = \vect{Ca}$ and $B = \vect{Cb}$ basic extensions of $C$ we have 
    \begin{enumerate}
        \item $\rk(A\otimes_C B/A) = (\lev(b),1)$
        \item $\rk(A\otimes_C B/B) = (\lev(a),1)$
        \item If $A\otimes_C B = \vect{Da} = \vect{D'b}$ for some $A\seq D'\lteq \vect{D'b}$, $B\seq D\lteq \vect{Da}$ then $\rk(D/B) = \rk(D'/A)= (\lev(a)+\lev(b),1)$ This follows from Proposition \ref{prop:baudischamalgamhallbasisgivesmalcevbasisandmore} (4).
    \end{enumerate} 
\end{enumerate}
\end{remark}

\subsubsection{Construction of a $\oslash$-amalgam}

In this subsubsection, we describe the induction scheme along which the free amalgam of a basic extension and an arbitrary extension will be constructed. We are not certain whether the induction suggested by Baudisch in \cite{Baudisch4} (at the bottom of p. 944), corresponds to the one we describe below\footnote{The naive way one would inductively amalgamate a basic extension $A = \vect{Ca}$ of $C$ and an arbitrary extension $B$ over $C$ would be by writing $B = \vect{Cb_1,\ldots,b_n}$ where $(b_1,\ldots b_n)$ is a Malcev basis of $B$ over $C$ and do an induction on $n$. However, if $D$ is the amalgam of $A$ and $\vect{Cb_1\ldots b_{n-1}}$ over $C$ then there is no control of the dimension of $D$ over $\vect{Cb_1,\ldots,b_{n-1}}$, which could be greater than $n$. The notion of rank is there to circumvent this problem.}.



To define the $\oslash$-amalgam $A\oslash_C B$ for a basic extension $A$ of $C$, we proceed by induction on $\rk(B/C)$. More precisely, we prove the following by induction on $\rk(B/C)$:

\begin{center}
$(*)$ $\begin{cases}\fbox{
\begin{minipage}{0.75\textwidth}
For all LLAs $A,B,C$ such that $A = \vect{Ca}$ is a basic extension of $C$ and $C\seq B$, there exists an amalgam $S$ of $A$ and $B$ over $C$ such that: 
\begin{enumerate}[label=(\alph*)]
    \item  there exists $H\lteq S$ containing $B$ such that $S = \vect{Ha}$ is a basic extension of $H$ and $\lev(H/B) = \lev(a)+\lev(B/C)$
    \item $S = \vect{AB}$
\end{enumerate}
We call $S $ a $\oslash$-amalgam of $A$ and $B$ over $C$ if it satisfies those conditions, denoted $S = A\oslash_C B$.
\end{minipage}
}
\end{cases}$
\end{center}

Note that in the above (and below) we identify $A$ and $B$ with their image in the amalgam. For instance, condition $(a)$ and $(b)$ above hold in the amalgam $S$.

We prove $(*)$ by induction on $\rk(B/C)$ for the order $\prec$.

The base case starts with any $A =\vect{Ca}$ and $B = \vect{Cb}$ with $\rk(B/C)$ minimal such that $C\seq B$, i.e. $\rk(B/C) = (c+1,0)$. This means that $B = C$ and the amalgam $S := A$ satisfies $(*)$ by considering $H = C$. Recall that as far as addition of levels is concerned, $c+k = c+1$ for all $k\geq 1$, in particular $c+1 = \lev(C/C) = \lev(a)+\lev(C/C) = \lev(a)+c+1$.


Assume now that for some $(\nu,n)$ we have that $(*)$ holds for any basic extension $A = \vect{Ca}$ of $C$ and $B$ extending $C$ with $\rk(B/C) \prec (\nu,n)$. Fix a basic extension $A = \vect{Ca}$ of $C$ and an extension $B$ of $C$ such that $\rk(B/C) = (\nu,n)$. There exists an ordered Malcev basis $b_1,\ldots,b_s$ of $B$ over $C$ so that $B = \vect{Cb_1,\ldots,b_s} = \Span(Cb_1,\ldots,b_s)$. Then $\nu = \lev(b_s)$. Let $\mu = \lev(a)$. We have $1\leq \mu,\nu\leq c+1$.

Let $A_0 = A$, $C_0 = C$ and let $D_0 = \vect{C,b_1,\ldots,b_{s-1}}$ so that $B = \vect{D_0 b_{s}}$. As $\nu = \lev(b_s)$, we have that $\rk(D_0/C_0)$ is either $(\nu, n-1)$ or $(\nu',k)$ for some $\nu'>\nu$. It follows that $\rk(D_0/C_0)\prec (\nu,n)$ so by the induction hypothesis (with $A = A_0, B = D_0, C = C_0$), there exists a $\oslash$-amalgam $A_1$ of $A = \vect{Ca}$ and $D_0$ over $C_0$, and there exists $C_1$ containing $D_0$ such that $A_1 = \vect{C_1a}$ and $\lev(C_1/D_0)= \lev(a)+ \lev(D_0/C_0)$. As $D_0 = \vect{C,b_1,\ldots,b_{s-1}}$ we have that $\lev(D_0/C_0)\geq \nu$ (it is equal if $n>1$). It follows that $\lev(C_1/D_0)\geq \mu+\nu>\nu$.

Starting with $A_0 = A$, $B_0 = B$, $C_0 = C$ and $D_0 = \vect{C,b_1,\ldots,b_{s-1}}$, we recursively construct sequences $(A_i,B_i,C_i,D_i)_{i\leq t}$ for some $t\leq c+1$ for which $C_t = D_t$ or $D_t = C_{t+1}$ and such that the following holds:
\begin{itemize}
    \item $A_{i+1}$ is a $\oslash$-amalgam of $A_i$ and $D_i$ over $C_i$
    \item $B_{i+1}$ is a $\oslash$-amalgam of $B_i$ and $C_{i+1}$ over $D_i$
    \item $A_i = \vect{C_ia}$ is a basic extension of $C_i$, $B_i = \vect{D_ib}$ is a basic extension of $D_i$
    \item $C_i\seq D_i\seq C_{i+1}$ and $\lev(D_i/C_i)\geq i(\mu+\nu)+\nu$ and $\lev(C_{i+1}/D_i) \geq  (i+1)(\mu+\nu)$
\end{itemize}

We already constructed $C_0,C_1,D_0,A_0,A_1$. Let $B_0 = B$. We refer to Figure \ref{fig:meierscheme} for an overall picture of what is happening.
\begin{enumerate}
    \item \textit{Construction of $B_{i+1},D_{i+1}$ from $D_i, B_i, C_{i+1},C_i$.} By the recursive construction, we have that $\lev(C_{i+1}/D_i) \geq (i+1)(\mu+\nu)>\nu$. It follows that $\rk(C_{i+1}/D_i)\prec (\nu,n)$. We have that $B_i = \vect{D_ib_s}$, so we apply the induction hypothesis $(*)$ interchanging the roles of $a$ and $b_s$ (i.e. with $A = \vect{D_ib_s}$, $B = C_{i+1}$ and $C = D_i$) to get a $\oslash$-amalgam $B_{i+1}$ of $B_i = \vect{D_i b_s}$ and $C_{i+1}$ over $D_i$, and $D_{i+1}$ extending $C_{i+1}$ such that $B_{i+1} = \vect{D_{i+1}b_s}$ is a basic extension of $D_{i+1}$ with $\lev(D_{i+1}/C_{i+1}) = \lev(b_s)+\lev(C_{i+1}/D_i)$. By recursion, $\lev(C_{i+1}/D_i)\geq (i+1)(\mu+\nu)$ hence $\lev(D_{i+1}/C_{i+1})\geq (i+1)(\mu+\nu) + \nu$.
    \item \textit{Construction of $A_{i+1},C_{i+1}$ from $A_i,D_i,C_i$.} By recursion, $\lev(D_i/C_i)\geq i(\mu+\nu)+\nu>\nu$ in particular $\rk(D_i/C_i)\prec (\nu,n)$. As $A_i = \vect{C_ia}$, by the induction hypothesis $(*)$, there exists a $\oslash$-amalgam $A_{i+1}$ of $A_i$ and $D_i$ over $C_i$ and $C_{i+1}$ containing $D_i$ such that $A_{i+1} = \vect{C_{i+1}a}$ is a basic extension of $C_{i+1}$ such that $\lev(C_{i+1}/D_i) =  \lev(a)+\lev(D_i/C_i)$. By recursion $\lev(D_i/C_i) \geq i(\mu+\nu)+\nu$ hence $\lev(C_{i+1}/D_i)\geq (i+1)(\mu+\nu)$.
\end{enumerate}

\begin{figure}
    \centering
    \begin{tikzcd}
    & A_t \otimes_{C_t} B_t & \\
 A_t = A_{t-1}\oslash_{C_{t-1}} D_{t-1} = \vect{C_t a} \ar[ur]  &    & B_t = B_{t-1}\oslash_{D_{t-1}} C_t = \vect{C_t b_s} \ar[ul]\\
    & C_t=D_t \ar[ul] \ar[ur] & B_{t-1} \ar[u]\\
 A_{t-1} \ar[uu] & D_{t-1} \ar[uul] \ar[u] \ar[ur] &  \\
  \vdots \ar[u] & C_{t-1} \ar[ul] \ar[u] & \vdots \ar[uu] \\
A_3 = A_2\oslash_{C_2} D_2 = \vect{C_3 a}  
 \ar[u] &      \vdots \ar[u]   &   \\
      &     C_3 \ar[ul] \ar[u]    &     B_2 = B_1\oslash_{D_1} C_2 = \vect{D_2 b_s} \ar[uu]  \\
A_2 =A_1\oslash_{C_1} D_1 = \vect{C_2a} \ar[uu]  &     D_2   \ar[uul] \ar[u] \ar[ur] &        \\
     &     C_2 \ar[ul] \ar[u] \ar[uur]   &     B_1 = B_0\oslash_{D_0} C_1 = \vect{D_1b_s}\ar[uu] \\
A_1 = A_0\oslash_{C_0} D_0 = \vect{C_1 a} \ar[uu]  &     D_1 \ar[uul] \ar[u] \ar[ur]   &       \\     
     &     C_1 \ar[ul] \ar[u] \ar[uur]  &    B_0 = B = \vect{D_0 b_s} \ar[uu] \\
A_0 = A = \vect{C_0a} \ar[uu] &     D_0 \ar[uul] \ar[u] \ar[ur]   &     \\
     &     C_0 = C \ar[ul] \ar[u] \ar[uur]    &   
\end{tikzcd}
    \caption{Stage II induction scheme}
    \label{fig:meierscheme}
\end{figure}
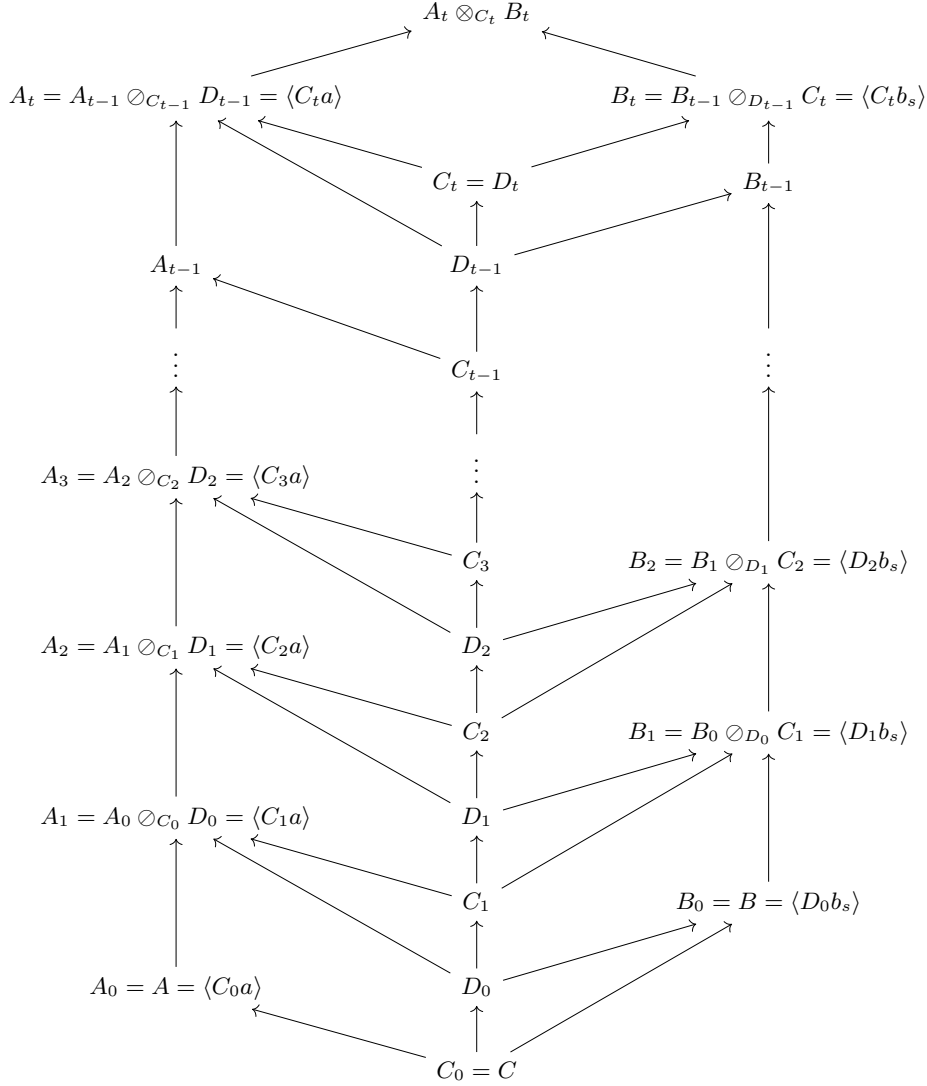

By nilpotence, there exists a smallest $t\in \N$ such that either $C_{t} = D_{t}$ or $D_t = C_{t+1}$. Assume $C_t = D_t$, the other case is treated similarly. We have that $C_{t}$ is an ideal of its basic extensions $A_t = \vect{C_{t}a}$ and $B_t = \vect{C_{t}b_s}$. In that case, there is a free amalgam $S$ of $A_t$ and $B_t$ over $C_{t}$ by Theorem \ref{thm:existence_strong_amalgam_monogenous} and Corollary \ref{cor:characterisationBaudischfreeamalgammonogeneous}. As $C = C_0\seq C_t$, the structure $S$ is an amalgam of $A$ and $B$ over $C$.

It remains to check that $S$ satisfies $(*)$ relative to $A = \vect{Ca}$ and $B$ over $C = C_0$, i.e. that $S = A\oslash_C B$. For $(*)(a)$, we prove that there exists $H\seq S$ containing $B$ such that $S = \vect{Ha}$ and $\lev(H/B)= \lev(a)+\lev(B/C)$. Let $h_1,\ldots,h_k$ be Hall monomials in $a=h_1$ and $b_s=h_2$ with $h_3 = [a,b_s]$ so that $S = \Span(C_{t},b_s,h_k,\ldots,h_3,a)$ and let $H = \vect{C_{t},b,h_k,\ldots,h_3}$. By Proposition \ref{prop:baudischamalgamhallbasisgivesmalcevbasisandmore} (3), $b_s,h_k,\ldots,h_3$ is a Malcev basis of $H$ over $C_t$ and $H = \Span(C_t,b_s,h_k,\ldots,h_3)$. By Proposition \ref{prop:baudischamalgamhallbasisgivesmalcevbasisandmore} (4), $h_k,\ldots,h_3$ is an ordered Malcev basis of $H$ over $\Span(C_{t},b_s)$. We have that $\lev(H/\vect{C_{t}b_s}) = \lev(h_3) = \lev(a)+\lev(b_s) = \mu+\nu$ and $\lev(b_s) = \lev(B/C)$. Note that $H = \Span(\vect{C_tb_s} P_{\mu+\nu}(H))$. As $B\seq \vect{C_{t}b_s}$, if $H = \Span(BP_{i}(H))$ then $H = \Span(\vect{C_tb_s} P_{i}(H))$ so $i\leq \lev(H/\vect{C_tb_s})$. It follows that $\lev(H/B)\leq \lev(H/\vect{C_tb_s}) = \mu+\nu$. Note that $B = \vect{D_0b_s}$. To get $\lev(H/B) \geq \lev(H/\vect{C_{t},b_s})$, it is enough to show that $H\seq \Span_\F(B,P_{\mu+\nu}(H))$. Recall that $H\seq \Span_\F(C_{t},b_s,h_k,\ldots,h_3)$. Now, for each $i\geq 1$ we have $\lev(C_{i}/D_{i-1})\geq \mu+\nu$ and $\lev(D_i/C_i)\geq \mu+\nu$. This implies that there is a basis of $C_{t}$ over $D_0$ in $P_{\mu+\nu}(H)$, hence $C_{t}\seq \Span(D_0 P_{\nu+\mu}(H))$. As $h_k,\ldots,h_3$ are all of degree $\geq \mu+\nu$ we also have $\Span(C_{t}h_k,\ldots,h_3)\seq \Span(D_0 P_{\nu+\mu}(H))$. Finally $\Span(C_{t}h_k,\ldots,h_3,b_s)\seq \Span(D_0b P_{\nu+\mu}(H)) = \Span(B P_{\nu+\mu}(H))$. This proves that $\lev(H/B) = \lev(a)+\lev(B/C)$. 

For $(*)(b)$, first note that $D_0,a,b_s\seq S$ hence $\vect{AB}\seq S$. Conversely, note that $C_0\seq A\seq \vect{AB}$ and $D_0\seq B\seq \vect{AB}$. By construction, $C_{i+1}\seq A_{i+1}$ and by $(\ast)(b)$, $A_{i+1} = \vect{A_i,D_i}$ hence $C_{i+1}\seq \vect{A_i,D_i}$. Similarly, $D_{i+1}\seq \vect{C_{i+1},B_i}$. By induction, $A_i,B_i\seq \vect{AB}$. We conclude $S = \vect{A_tB_t} =  \vect{AB}$, hence we proved $(*)(b)$.

In turn, we have proved that there exists an $\oslash$-amalgam of $A$ and $B$ over $C$.

\subsubsection{A $\oslash$-amalgam is strong}

\begin{theorem}\label{thm:strongamalgam}
Let $A,B,C$ be LLAs such that $A$ is a basic extension of $C$ and $B$ contains $C$. Then a $\oslash$-amalgam of $A$ and $B$ over $C$ is a strong amalgam, i.e. $A\cap B = C$ (in $A\oslash_C B$).
\end{theorem}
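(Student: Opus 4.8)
The plan is to deduce strongness directly from the explicit shape of the terminal amalgam appearing in the construction of $A\oslash_C B$, so no new induction is required. We may assume $C\subsetneq A$ and $C\subsetneq B$, since if $A=C$ or $B=C$ then $A\cap B=C$ trivially. Fix an ordered Malcev basis $b_{1},\ldots,b_{s}$ of $B$ over $C$, and recall the tower $(A_{i},B_{i},C_{i},D_{i})_{i\leq t}$ produced in the construction, where $D_{0}=\vect{Cb_{1}\ldots b_{s-1}}$, $b=b_{s}$, $A_{0}=A$, $B_{0}=B$, $C_{0}=C$, $C_{i}\seq D_{i}\seq C_{i+1}$, $A_{i}=\vect{C_{i}a}$, $B_{i}=\vect{D_{i}b}$, and (taking the terminal case $C_{t}=D_{t}$; the case $D_{t}=C_{t+1}$ is handled identically, replacing $C_{t}$ by $D_{t}$ below) $S=A_{t}\otimes_{C_{t}}B_{t}$.

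First I would record the vector-space picture of $S$. By Theorem \ref{thm:existence_strong_amalgam_monogenous} and its proof (the construction in \S\ref{subsub:baudischconstruction}), as an $\F$-vector space $S=F\oplus C_{t}$, where $F=F_{c}(X,Y,\lev(a),\lev(b))$ is spanned by its Hall basis $\HS^{\lev(a),\lev(b)}$---of which $X$ and $Y$ are two distinct, hence $\F$-linearly independent, members---the distinguished copy of $C_{t}$ inside $S$ is the second summand, and the embeddings of $A_{t}$ and $B_{t}$ into $S$ send $a\mapsto X$ and $b\mapsto Y$. Since the tower inclusions $C_{0}\seq D_{0}\seq C_{1}\seq\cdots\seq C_{t}$ are genuine subalgebra embeddings compatible with the maps into $S$, the images of $C=C_{0}$ and of $D_{0}$ in $S$ both lie inside the $C_{t}$-summand, with $C\seq D_{0}$ there.

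Next I would pin down the images of $A$ and $B$. As $A=\vect{Ca}$ is a basic extension of $C$, Lemma \ref{lm:idealspan} gives $A=\Span_{\F}(C,a)$, so the image of $A$ in $S$ is $C\oplus\F X$. Since $(b_{1},\ldots,b_{s})$ is an ordered Malcev basis, $D_{0}\lteq B$ by Remark \ref{rk:malcevtuples}.(1), so $B=\vect{D_{0}b}$ is a basic extension of $D_{0}$ and Lemma \ref{lm:idealspan} gives $B=\Span_{\F}(D_{0},b)$; hence the image of $B$ in $S$ is $D_{0}\oplus\F Y$. Now let $v\in A\cap B$ and write $v=c+\lambda X=d+\mu Y$ with $c\in C$, $d\in D_{0}$, $\lambda,\mu\in\F$. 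Comparing the two summands of $S=C_{t}\oplus F$ forces $\lambda X=\mu Y$ in $F$, whence $\lambda=\mu=0$ by linear independence of $X,Y$, and then $v=c=d\in C\cap D_{0}=C$. Thus $A\cap B\seq C$; the reverse inclusion is clear, so $A\cap B=C$.

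The only real content is the bookkeeping in the first two paragraphs: one must verify that the original generator $a$ of $A$ is the distinguished generator at every stage of the tower and is the one sent to $X$ in $S$, that $b=b_{s}$ is sent to $Y$, that $C$ and $D_{0}$ genuinely embed into the $C_{t}$-summand (tracing the amalgam embeddings up the tower), and that $A=\Span_{\F}(C,a)$ and $B=\Span_{\F}(D_{0},b)$ as vector spaces. All of these are immediate from the inductive construction of the $\oslash$-amalgam, its defining properties $(a)$--$(b)$, Theorem \ref{thm:existence_strong_amalgam_monogenous}, and Lemma \ref{lm:idealspan}; the intersection computation itself is then a one-line calculation in $C_{t}\oplus F$.
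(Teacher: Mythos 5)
Your proof is correct, and it takes a genuinely different route from the paper's. The paper proves strongness by running the same rank-induction that builds the $\oslash$-amalgam, strengthening the inductive statement $(*)$ with a third condition $(*)(c)$ (``$S$ is a strong amalgam''), and then finishes with a zig-zag argument using $A_{i}\cap D_{i}=C_{i}$ and $B_{i}\cap C_{i+1}=D_{i}$ to walk an element of $A\cap B$ down from $C_{t}$ to $C$. You bypass the inductive strengthening entirely: you take the terminal amalgam $S=A_{t}\otimes_{C_{t}}B_{t}$ produced by the construction, invoke its explicit vector-space decomposition $S=C_{t}\oplus F$ from Theorem~\ref{thm:existence_strong_amalgam_monogenous}, and observe that after the (legitimate, but worth stating carefully) bookkeeping, $A$ lands inside $C\oplus\F X$ and $B$ inside $D_{0}\oplus\F Y$, so the intersection is forced into $C$ by directness of the sum and linear independence of $X,Y$ in the Hall basis. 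Both arguments are sound. What you gain is a shorter, more transparent proof and the observation that strongness---unlike freeness, which the paper proves by the parallel strengthening $(*)(d)$---can be read off from the shape of the final step alone. What the paper's version buys is a uniform framework: maintaining $(*)(c)$ inductively means that strongness is available ``along the way'' at every intermediate amalgam, which matches the structure the authors need for their subsequent argument in Theorem~\ref{thm:freeamalgam} and keeps all three theorems of Stage~II in the same induction. In your write-up it would be worth pointing out explicitly that $D_{0}\lteq B$ (which you cite Remark~\ref{rk:malcevtuples}.(1) for) is essential, since Lemma~\ref{lm:idealspan} needs the ideal hypothesis to give $B=\Span_{\F}(D_{0},b)$ rather than merely $B=\vect{D_{0}b}$.
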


\begin{proof}
We proceed using the inductive construction of $A\oslash_C B$ via $\rk(B/C)$, by adding an extra condition in $(*)$, namely the following:

\begin{center}
$(*)$ $\begin{cases}\fbox{
\begin{minipage}{0.75\textwidth}
For all LLAs $A,B,C$ such that $A = \vect{Ca}$ is a basic extension of $C$ and $C\seq B$, there exists an amalgam $S$ of $A$ and $B$ over $C$ such that: 
\begin{enumerate}[label=(\alph*)]
    \item  there exists $H\lteq S$ containing $B$ such that $S = \vect{Ha}$ is a basic extension of $H$ and $\lev(H/B) = \lev(a)+\lev(B/C)$
    \item $S = \vect{AB}$
    \item $S$ is a strong amalgam of $A$ and $B$ over $C$
\end{enumerate}
\end{minipage}
}
\end{cases}$
\end{center}

The base case starts with any $A =\vect{Ca}$ and $B = C$, the amalgam is $S = A$ which clearly satisfies $(*)(c)$.

Now applying $(*)$ in the inductive construction of the $\oslash$-amalgam, we get: 
\begin{itemize}
    \item $A_{i+1}$ is a strong $\oslash$-amalgam of $A_i$ and $D_i$ over $C_i$
    \item $B_{i+1}$ is a strong $\oslash$-amalgam of $B_i$ and $C_{i+1}$ over $D_i$
    \item $A_i = \vect{C_ia}$ is a basic extension of $C_i$, $B_i = \vect{D_ib}$ is a basic extension of $D_i$
    \item $C_i\seq D_i\seq C_{i+1}$ and $\lev(D_i/C_i)\geq i(\mu+\nu)+\nu$ and $\lev(C_{i+1}/D_i) \geq  (i+1)(\mu+\nu)$
\end{itemize}
Again, we refer to the Figure \ref{fig:meierscheme} for an overall picture of what is happening.

Let $t\in \N$ be such that either $C_{t} = D_{t}$ or $D_t = C_{t+1}$. Assume $C_t = D_t$, the other case is treated similarly. We have that $C_{t}$ is an ideal of its basic extensions $A_t = \vect{C_{t}a}$ and $B_t = \vect{C_{t}b_s}$ and $S = \vect{C_t a}\otimes_{C_t} \vect{C_t b_s}$, in particular $S$ is a strong amalgam of $A_t$ and $B_t$ over $C_t$.

We prove that $S$ is a strong amalgam of $A$ and $B$ over $C$. We identify LLAs with their image, so every arrow in Figure \ref{fig:meierscheme} is an inclusion. In particular, $A_0\seq A_1\seq \ldots \seq A_t$ and $B_0\seq B_1\seq \ldots \seq B_t$. By $(*)(c)$, $A_{i}\cap D_i = C_i$ and $B_i\cap C_{i+1} = D_i$, as those are strongly amalgamated at each step. Let $x\in A\cap B$, so $x\in \bigcap_{i=0}^t A_i\cap B_i$. Then $x\in A_t\cap B_t = C_t$. In particular $x\in C_t\cap B_{t-1} = D_{t-1}$, hence $x\in D_{t-1}\cap A_{t-1} = C_{t-1}$. A straightforward iteration gives that $x\in C$. Hence $A\cap B = C$ which gives $(*)(c)$.
\end{proof}

\subsubsection{A $\oslash$-amalgam is free}

\begin{theorem}\label{thm:freeamalgam}
Let $A,B,C$ be LLAs such that $A$ is a basic extension of $C$ and such that $B$ contains $C$. Then the $\oslash$-amalgam of $A$ and $B$ over $C$ is a free amalgam, i.e. $A\oslash_C B \cong A\otimes_C B $. In particular the free amalgam exists.
\end{theorem}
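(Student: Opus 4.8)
The plan is to mimic the proof of Theorem \ref{thm:strongamalgam}: re-run the induction on $\rk(B/C)$ used to build the $\oslash$-amalgam, but this time carry along the freeness clause of Definition \ref{def:freeamalgambaudisch}. Conditions $(1)$ and $(2)$ of that definition are already in hand for $S = A \oslash_C B$ — clause $(*)(b)$ gives $S = \vect{AB}$ and Theorem \ref{thm:strongamalgam} gives strongness $A \cap B = C$ — so all that remains is the universal property $(3)$. Accordingly, I would strengthen the inductive statement $(*)$ with the extra clause: \emph{for every LLA $E$ and all LLA homomorphisms $f : A \to E$ and $g : B \to E$ with $f|_C = g|_C$, there is an LLA homomorphism $h : S \to E$ extending both $f$ and $g$.} The base case $B = C$, $S = A$ is trivial, taking $h = f$.

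For the inductive step I would use the tower $(A_i, B_i, C_i, D_i)_{i \leq t}$ produced by the Stage~II construction. Since $\rk(D_i/C_i) \prec \rk(B/C)$ and $\rk(C_{i+1}/D_i) \prec \rk(B/C)$ for all $i < t$, the induction hypothesis upgrades every $\oslash$-amalgam occurring in the tower to a \emph{free} amalgam, i.e. $A_{i+1} \cong A_i \otimes_{C_i} D_i$ and $B_{i+1} \cong B_i \otimes_{D_i} C_{i+1}$; and the top amalgam $S = A_t \otimes_{C_t} B_t$ (in the case $C_t = D_t$; the case $D_t = C_{t+1}$ is symmetric) is free by Theorem \ref{thm:existence_strong_amalgam_monogenous} together with Corollary \ref{cor:characterisationBaudischfreeamalgammonogeneous}. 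Given $f$ and $g$ as above, I would then climb the tower, producing compatible homomorphisms $h_i^A : A_i \to E$ and $h_i^B : B_i \to E$. Start with $f$ on $A_0 = A$ and $g|_{D_0} : D_0 \to E$ (note $D_0 \subseteq B$ and $C_0 = C \subseteq D_0$, so these agree on $C_0$): freeness of $A_1 = A_0 \otimes_{C_0} D_0$ gives $h_1^A : A_1 \to E$ extending $f$ and $g|_{D_0}$. Next, $g : B_0 \to E$ and the restriction $h_1^A|_{C_1} : C_1 \to E$ agree on $D_0$ (since $h_1^A$ extends $g|_{D_0}$ and $D_0 \subseteq C_1$), so freeness of $B_1 = B_0 \otimes_{D_0} C_1$ gives $h_1^B : B_1 \to E$ extending $g$ and $h_1^A|_{C_1}$. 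Iterating: having $h_i^A, h_i^B$ with $h_i^B$ extending $h_i^A|_{C_i}$, apply freeness of $A_{i+1} = A_i \otimes_{C_i} D_i$ to the pair $h_i^A$, $h_i^B|_{D_i}$ (which agree on $C_i$) to obtain $h_{i+1}^A$, and then freeness of $B_{i+1} = B_i \otimes_{D_i} C_{i+1}$ to the pair $h_i^B$, $h_{i+1}^A|_{C_{i+1}}$ (which agree on $D_i$) to obtain $h_{i+1}^B$ extending $h_{i+1}^A|_{C_{i+1}}$. At stage $t$, $h_t^A : A_t \to E$ and $h_t^B : B_t \to E$ agree on $C_t = D_t$, so freeness of $S = A_t \otimes_{C_t} B_t$ yields $h : S \to E$ extending $h_t^A$ and $h_t^B$; tracing the inclusions $A = A_0 \subseteq \ldots \subseteq A_t$ and $B = B_0 \subseteq \ldots \subseteq B_t$ shows $h|_A = f$ and $h|_B = g$. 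Uniqueness of $h$ follows from $S = \vect{AB}$ and the remark after Definition \ref{def:freeamalgambaudisch}. Hence $S$ satisfies all three conditions of Definition \ref{def:freeamalgambaudisch}, so it is a free amalgam of $A$ and $B$ over $C$; in particular free amalgams exist, and by the uniqueness of free amalgams $A \oslash_C B \cong A \otimes_C B$.

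The step I expect to require the most care is the bookkeeping in the tower: one must keep track of which of the two glued homomorphisms extends which, so that at each rung the pair being freely amalgamated genuinely restricts to the same map on the relevant common subalgebra ($C_i$ for the $A$-steps, $D_i$ for the $B$-steps), and so that at the end $h_t^A$ and $h_t^B$ agree over $C_t$. Once this compatibility pattern is pinned down it propagates automatically up the tower, and the symmetric case $D_t = C_{t+1}$ — where one finishes by amalgamating $A_{t+1} = A_t \otimes_{C_t} D_t$ with $B_t$ over $D_t = C_{t+1}$ — is handled verbatim.
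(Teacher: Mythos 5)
Your proposal is correct and follows essentially the same strategy as the paper's own proof: both strengthen the inductive invariant $(*)$ for the $\oslash$-amalgam construction with the freeness clause, climb the Stage II tower building compatible pairs of homomorphisms (your $h_i^A, h_i^B$ are the paper's $f_i, g_i$, with exactly the same extension/agreement pattern at each rung), and finish by invoking freeness of the top basic-extension free amalgam $A_t \otimes_{C_t} B_t$. The bookkeeping you flag as delicate is precisely what the paper records as the invariants $f_{i+1}\upharpoonright D_i = g_i\upharpoonright D_i$ and $g_{i+1}\upharpoonright C_{i+1} = f_{i+1}\upharpoonright C_{i+1}$.
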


\begin{proof}
We proceed using the inductive construction of $A\oslash_C B$ via $\rk(B/C)$, by adding an extra condition in $(*)$, namely the following:

\begin{center}
$(*)$ $\begin{cases}\fbox{
\begin{minipage}{0.75\textwidth}
For all LLAs $A,B,C$ such that $A = \vect{Ca}$ is a basic extension of $C$ and $C\seq B$, there exists an amalgam $S$ of $A$ and $B$ over $C$ such that: 
\begin{enumerate}[label=(\alph*)]
    \item  there exists $H\lteq S$ containing $B$ such that $S = \vect{Ha}$ is a basic extension of $H$ and $\lev(H/B) = \lev(a)+\lev(B/C)$
    \item $S = \vect{AB}$
    \item $S$ is a strong amalgam of $A$ and $B$ over $C$
    \item for all LLAs $L$ and for all homomorphisms $f:A\to L$ and $g:B\to L$ such that $f\upharpoonright C = g \upharpoonright C$, there exists a homomorphism $h: S\to L$ extending $f$ and $g$;
\end{enumerate}
\end{minipage}
}
\end{cases}$
\end{center}

The base case starts with any $A =\vect{Ca}$ and $B = C$, the amalgam is $S = A$ which clearly satisfies $(*)$.

We assume that $f:A\to L$ and $g:B\to L$ are LLA homomorphisms to an LLA $L$ such that $f\upharpoonright C = g\upharpoonright C$. We denote $f_0 = f$ and $g_0 = g$.

At the first stage of the induction, we have $\rk(D_0/C_0)\prec (\nu,n)$ by Remark \ref{rk:rankstuff} and $A_1 = A_0\oslash_{C_0} D_0$ hence applying the induction hypothesis$(*)(d)$ with $f_0$ and $g_0\upharpoonright D_0$ we get an LLA homomorphism $f_1:A_1\to L$ which extends $f_0$ and $g_0\upharpoonright D_0$ with $f_1\upharpoonright C_0 = g_0\upharpoonright C_0 = f\upharpoonright C_0$. 

At the second stage of the induction, we have $\rk(C_1/D_0)\prec (\nu,n)$ and $B_1 = B_0\oslash_{D_0} C_1$. As $f_1$ extends $g_0\upharpoonright D_0$, we have $f_1\upharpoonright D_0 = g_0\upharpoonright D_0$ hence applying the induction hypothesis$(*)(d)$ with $g_0$ and $f_1\upharpoonright C_1$ we get an LLA homomorphism $g_1:B_1\to L$ which extends $g_0$ and $f_1\upharpoonright C_1$ with $g_1\upharpoonright D_0 = g_0\upharpoonright D_0 = f\upharpoonright C_0$. 

Now applying $(*)$ in the inductive construction of the $\oslash$-amalgam, we get: 
\begin{itemize}
    \item $A_{i+1}$ is a strong $\oslash$-amalgam of $A_i$ and $D_i$ over $C_i$
    \item $B_{i+1}$ is a strong $\oslash$-amalgam of $B_i$ and $C_{i+1}$ over $D_i$
    \item $A_i = \vect{C_ia}$ is a basic extension of $C_i$, $B_i = \vect{D_ib}$ is a basic extension of $D_i$
    \item $C_i\seq D_i\seq C_{i+1}$ and $\lev(D_i/C_i)\geq i(\mu+\nu)+\nu$ and $\lev(C_{i+1}/D_i) \geq  (i+1)(\mu+\nu)$
    \item $f_{i+1}: A_{i+1}\to L$ such that $f_{i+1}$ extends $f_i:A_i\to L$ and $g_i\upharpoonright D_i$ with $f_{i+1} \upharpoonright C_i = f_i\upharpoonright C_i = g_i\upharpoonright C_i$
    \item $g_{i+1}: B_{i+1}\to L$ extends $g_i:B_i\to L$ and $f_{i+1}\upharpoonright C_{i+1}$ with $g_{i+1}\upharpoonright D_i = g_i\upharpoonright D_i = f_{i+1}\upharpoonright D_i$
\end{itemize}

We refer to the Figure \ref{fig:meierscheme} for an overall picture of what is happening.

Let $t\in \N$ such that either $C_{t} = D_{t}$ or $D_t = C_{t+1}$. Assume $C_t = D_t$, the other case is treated similarly. We have that $C_{t}$ is an ideal of its basic extensions $A_t = \vect{C_{t}a}$ and $B_t = \vect{C_{t}b_s}$ and $S = A_t\otimes_{C_t} B_t$. As $f_t: A_t\to L$ and $g_t: B_t\to L$ are such that $f_t\upharpoonright C_t = g_t\upharpoonright C_t$, by Corollary \ref{cor:characterisationBaudischfreeamalgammonogeneous} there exists $h: S\to L$ which extends $f_t$ and $g_t$ and such that $h\upharpoonright C_t = f_t\upharpoonright C_t = g_t \upharpoonright C_t$. As $f_{i+1}\upharpoonright D_i = g_i\upharpoonright D_i$ and $g_{i+1}\upharpoonright C_{i+1} = f_{i+1}\upharpoonright C_{i+1}$ we easily deduce from $C = C_0\seq D_0\seq C_1\seq D_1\seq \ldots $ that $f_0\upharpoonright C = g_0\upharpoonright C = h\upharpoonright C$. Also as $f_{i+1}$ extends $f_i$ for all $i$ and $f_0 = f$, we have that $h$ extends $f:A\to L$. Similarly, $h$ extends $g_0=g:B\to L$, so $S$ satisfies condition $(*)(d)$.
\end{proof}

\subsection{Stage III - From one to many}\label{subsection:stageIII}
We now describe the last stage of the construction of the amalgam.

\begin{theorem}\label{thm:freeamalgamofarbitraryLLA}
    Let $A,B,C$ with $C\seq A$ and $C\seq B$. Then there exists a free amalgam $A\otimes_C B$.
\end{theorem}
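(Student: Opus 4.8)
The plan is to reduce the general case to Stage~II (Theorem~\ref{thm:freeamalgam}) by peeling off basic extensions one at a time. First I would apply Remark~\ref{rk:malcevtuples}(4) to fix an ordered Malcev basis $a_1,\ldots,a_n$ of $A$ over $C$, producing a chain
\[
C \lteq \vect{Ca_1} \lteq \vect{Ca_1a_2} \lteq \cdots \lteq \vect{Ca_1\cdots a_n} = A
\]
in which each $\vect{Ca_1\cdots a_{i+1}}$ is a basic extension of $\vect{Ca_1\cdots a_i}$. The proof is then an induction on $n$. For $n=0$ we have $A=C$, and the free amalgam is simply $B$ itself, with $f_1$ the inclusion $C\hookrightarrow B$ and $g_1=\mathrm{id}_B$: conditions $(1)$ and $(2)$ of Definition~\ref{def:freeamalgambaudisch} are trivial, and for $(3)$, given homomorphisms $f\colon A\to D$ and $g\colon B\to D$ agreeing on $C=A$, the map $h:=g$ works.

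For the inductive step I would set $A_0 := \vect{Ca_1\cdots a_{n-1}}$, so that $A$ is a basic extension of $A_0$ (via the singleton $a_n$, and $A_0\lteq A$) while $A_0$ is obtained from $C$ by $n-1$ basic extensions. By the inductive hypothesis the free amalgam $S_0 := A_0 \otimes_C B$ exists; identifying $A_0, B, C$ with their images in $S_0$, we have $A_0\cap B = C$, $S_0 = \vect{A_0 B}$, and the freeness property over $C$. Since $A$ is a basic extension of $A_0$ and $S_0$ is an LLA containing $A_0$, Theorem~\ref{thm:freeamalgam} produces the free amalgam $S := A\otimes_{A_0} S_0$, with $A\cap S_0 = A_0$, $S = \vect{A S_0}$, and the freeness property over $A_0$. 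I then claim that $S$, with the evident inclusions of $A$ and $B$ (which agree on $C$), is a free amalgam of $A$ and $B$ over $C$.

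Verifying the claim is routine. For $(1)$: $S = \vect{A S_0} = \vect{A A_0 B} = \vect{AB}$ since $A_0\subseteq A$. For $(2)$: $A\cap B \subseteq A\cap S_0 = A_0$ because $B\subseteq S_0$, hence $A\cap B \subseteq A_0\cap B = C$, and $C\subseteq A\cap B$ is clear. For $(3)$: given $f\colon A\to D$ and $g\colon B\to D$ with $f\upharpoonright C = g\upharpoonright C$, apply the freeness of $S_0$ to the homomorphisms $f\upharpoonright A_0$ and $g$ (which agree on $C$) to obtain $g'\colon S_0\to D$ extending both; since $g'\upharpoonright A_0 = f\upharpoonright A_0$, the pair $f\colon A\to D$ and $g'\colon S_0\to D$ agree on $A_0$, so the freeness of $S$ yields $h\colon S\to D$ extending $f$ and $g'$, hence extending $f$ and $g$. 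Uniqueness of $h$ is the remark following Definition~\ref{def:freeamalgambaudisch}. (This also completes the development begun in Stage~I: together with Subsection~\ref{subsec:conclusionfraisseclass} it shows $\Lbb_{c,\F}$, $\Lbb_{c,p}$, and $\G_{c,p}$ for $p>c$ are Fra\"iss\'e classes.)

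The only delicate point—and the part I would be most careful about—is the bookkeeping of which base each amalgam is taken over: the inner amalgam $S_0$ must be built \emph{over $C$}, so that an arbitrary pair $f,g$ agreeing on $C$ can be fed into its freeness property, whereas the outer amalgam $S$ is built \emph{over $A_0$}; one must check that the homomorphism $g'$ extracted from the inner freeness property automatically restricts correctly to $A_0$ so that the outer freeness property applies. Everything else follows directly from Stage~II and the uniqueness of free amalgams.
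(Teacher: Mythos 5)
Your proof is correct and takes essentially the same route as the paper: decompose $A$ over $C$ into a chain of basic extensions via an ordered Malcev basis, then repeatedly apply Stage~II. The paper writes this as an explicit chain $S_1 = A_1 \otimes_C B$, $S_{i+1} = A_{i+1}\otimes_{A_i}S_i$ and verifies $(1)$--$(3)$ by iteration, whereas you package the first $n-1$ steps into the inductive hypothesis and perform only the final Stage~II amalgamation $A\otimes_{A_0}S_0$ explicitly; unrolled, the two constructions coincide, and your verification of generation, strongness, and the universal property is the same argument the paper gives.
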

\begin{proof}
    By Theorem \ref{thm:freeamalgam}, we know that a free amalgam exists if one of the extensions is basic. Assume now that $A,B,C$ are arbitrary LLA, we inductively construct an amalgam $S$ of $A$ and $B$ over $C$. Let $a_1,\ldots,a_n$ be an ordered Malcev basis of $A$ over $C$. In particular, for $A_i = \vect{Ca_1\ldots a_i}$, $i=1\ldots n$, we have 
    \[C\lteq A_1\lteq \ldots \lteq A_n = A\]
    and $A_{i+1}$ is a basic extension of $A_i$. We define a chain $S_1\seq \ldots\seq S_n$ such that $S_n$ is the required amalgam. Start by taking $S_1 = A_1\otimes_C B$. Then, as $A_2$ is a basic extension of $A_1$, take $S_2 = A_2\otimes_{A_1}S_1$ and recursively if $S_i$ is constructed as $S_i = A_i\otimes_{A_{i-1}} S_{i-1}$, we have $A_{i+1}$ is a basic extension of $A_i$ hence define $S_{i+1} = A_{i+1}\otimes_{A_i} S_i$, until $S := S_n = A\otimes_{A_{n-1}} S_{n-1}$. We refer to Figure \ref{fig:stageIII} for this construction. We check that $S$ is a free amalgam of $A$ and $B$ over $C$. First, we check that $S = \vect{AB}$. As $S = S_n  = A\otimes _{A_{n-1}} S_{n-1}$, we have $S_n = \vect{A,S_{n-1}}$ and more generally, $S_i = \vect{A_i S_{i-1}}$, we immediately get $S = \vect{A S_{i-1}}$ and iteratively $S = \vect{AB}$. We now prove that $S$ is a strong amalgam of $A$ and $B$ over $C$. As $S = A\otimes_{A_{n-1}} S_{n-1}$ we have $A\cap S_{n-1}  = A_{n-1}$. Also, $B\seq S_1\seq \ldots \seq S_n$, so we have $A\cap B\seq A\cap S_{n-1} \seq A_{n-1}\cap S_{n-1}$ and iteratively, $A\cap B\seq A_i \cap S_i$ until $A\cap B\seq A_1\cap B = C$. It remains to check that $S$ satisfies the freeness property. Let $f:A\to L$ and $g:B\to L$ be a homomorphism such that $f\upharpoonright C = g\upharpoonright C$. Consider $f_i = f\upharpoonright A_i$. As $f_1:A_1\to L$ and $g:B\to L$ agree on $C\seq A_1\cap B$ we use $S_1 = A_1\otimes_C B$ to get a map $j_1: S_1\to L$ that extends both $f_1 $ and $g$. Then $f_2$ and $j_1$ agree on $A_1$ hence as $S_2 = A_2\otimes_{A_1} S_1$, there exists a homomorphism $j_2:S_2\to L$ extending $f_2$ and $j_1$. A straightforward iteration yields $j_1,\ldots , j_n$ such that $j_i:S_i\to L$ extends both $f_i$ and $j_{i-1}$ and agrees on $A_{i-1}$. In the end $j_n$ extends both $f$ and $g$.
\begin{figure}
    \centering
    \begin{tikzcd}
            &       S_n = A \otimes _{A_{n-1}} S_{n-1}\\
A = A_n \ar[ur]  &     S_n = A_{n-1}\otimes_{A_{n-2}} S_{n-1} \ar[u] \\
\vdots \ar[u] & \vdots \ar[u] \\
      \vdots      &       S_2 = A_2\otimes_{A_1} S_1\ar[u] \\
 A_2 = \vect{Ca_1a_2}\ar[ur] \ar[u]         &      S_1 = A_1\otimes_C B \ar[u] \\
  A_1 = \vect{Ca_1}  \ar[ur]\ar[u]        &      B\ar[u] \\
C\ar[ur]\ar[u]    & 
\end{tikzcd}
\caption{Stage III induction scheme}
    \label{fig:stageIII}
\end{figure}
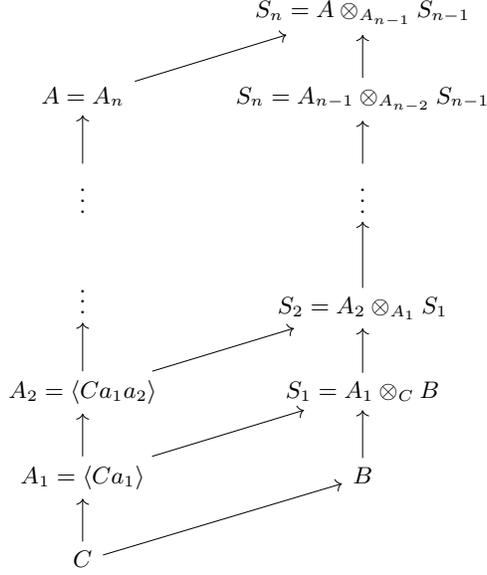
\end{proof}


\subsection{Conclusion}\label{subsec:conclusionfraisseclass}

We refer to Subsection \ref{subsec:generalitiesonfraisseclass} for generalities on Fraïssé theory. Recall the following definition.

\begin{defn}
Let $p$ be a prime number. 
\begin{enumerate}
\item Let $\Lbb_{c,\F}$ be the class of finitely generated Lazard Lie algebras over $\F$ of nilpotency class $\leq c$, in the language $\LL_{c,\F}$.
\item Let $\Lbb_{c,p}$ be the class of finite Lazard Lie algebras over $\F_p$ of nilpotency class $\leq c$, in the language $\LL_c$.
\item We write $\G_{c,p}$ for the class of finite Lazard groups of exponent $p$ and of nilpotency class $\leq c$ in the language of groups expanded by predicates for the Lazard series.
\end{enumerate}
\end{defn}

By Theorem \ref{thm:freeamalgamofarbitraryLLA}, the class $\Lbb_{c,\F}$ has the AP. Note that JEP follows from AP, since $0$ is a common substructure of all structures in the class. As HP is immediate we conclude the following.
\begin{theorem} \label{Lcf has a fraisse limit}
    For any $c\in \N^+$ and field $\F$, the class $\Lbb_{c,\F}$ is a Fraïssé class, with Fra\"iss\'e limit denoted $\mathbf{L}_{c,\mathbb{F}}$.  
\end{theorem}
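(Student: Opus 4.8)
The plan is to verify the three defining properties of a Fra\"iss\'e class from Definition \ref{fraisseclass} for $\Lbb_{c,\F}$: closure under isomorphism (immediate from the definition of the class as a class of structures), countably many isomorphism types, the hereditary property, the joint embedding property, and the amalgamation property. Most of the work has already been done: Theorem \ref{thm:freeamalgamofarbitraryLLA} supplies a free amalgam $A \otimes_C B$ for any $A, B, C \in \Lbb_{c,\F}$ with $C \subseteq A$ and $C \subseteq B$, and by Definition \ref{def:freeamalgambaudisch} this is in particular an amalgam in the sense of Subsection \ref{subsec:generalitiesonfraisseclass}; one only needs to note that $A \otimes_C B$ is again a finitely generated LLA of nilpotence class $\leq c$ over $\F$, which follows since $A \otimes_C B = \vect{A'B'}$ is generated by the (finite) union of generating sets for the images of $A$ and $B$, and its nilpotence class is bounded by the length of its distinguished Lazard series, which is $\leq c$ by construction. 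This gives AP.

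Next I would observe that JEP follows from AP applied over the trivial algebra: the zero Lie algebra $0$ (with its unique Lazard series) is an object of $\Lbb_{c,\F}$ and embeds into every member of the class, so any $A, B \in \Lbb_{c,\F}$ can be amalgamated over $0$, producing a common extension embedding both. The hereditary property HP is immediate: a substructure of a finitely generated LLA of nilpotence class $\leq c$ over $\F$ (in the language $\LL_{c,\F}$, so a Lie subalgebra closed under the scalar operations and with the induced predicates) is again finitely generated over $\F$ — indeed any substructure of a finitely generated nilpotent Lie algebra is finitely generated, since the associated graded pieces are finite-dimensional — of nilpotence class $\leq c$, and the restrictions of the predicates $(P_i)$ still form a Lazard series of length $\leq c$ because the containments $[L_i, L_j] \subseteq L_{i+j}$ and $L_{c+1} = 0$ are inherited.

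For the countability of the set of isomorphism types, the point is that a finitely generated nilpotent Lie algebra over $\F$ of class $\leq c$ is a quotient of some free Lazard Lie algebra $F_c(X,\alpha)$ (by the universal property of $F_c(X)$ and its analogue for $F_c(X,\alpha)$, together with the fact that the predicates must be realized by a Lazard series, which one arranges by choosing the generators inside the appropriate $P_i$); since $F_c(X,\alpha)$ is finite-dimensional (Remark \ref{rk:wittformula}) for each of the countably many pairs $(n,\alpha) \in \N^+ \times (\N^+)^n$, and each such space has only set-many (indeed, when $\F$ is infinite, possibly continuum-many, but in any case at most $2^{\aleph_0}$) subspaces to quotient by — wait, this needs care when $\F$ is uncountable. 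The honest route: the statement of a Fra\"iss\'e class in Definition \ref{fraisseclass} requires "countably many isomorphism types", which genuinely fails when $\F$ is uncountable; so here I would restrict the claim to the case of $\F$ countable (in particular $\F = \F_p$, the case needed for the applications), or else interpret "Fra\"iss\'e class" in the generalized sense (allowing a proper class, or set, of isomorphism types) in which Fra\"iss\'e's theorem still produces a homogeneous limit $\mathbf{L}_{c,\F}$ of the stated age. The main obstacle, then, is not any of the amalgamation work — that is entirely handled by Theorem \ref{thm:freeamalgamofarbitraryLLA} — but this bookkeeping point about cardinality of the class together with the verification that the free amalgam stays inside $\Lbb_{c,\F}$ (finitely generated, class $\leq c$); once those are settled, Fra\"iss\'e's Theorem (the Fact quoted in Subsection \ref{subsec:generalitiesonfraisseclass}) applies verbatim and yields the unique homogeneous limit $\mathbf{L}_{c,\F}$.
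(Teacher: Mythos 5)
Your proof takes essentially the same route as the paper's (which is terse: AP is cited from Theorem~\ref{thm:freeamalgamofarbitraryLLA}, JEP is noted to follow from AP since $0$ embeds in everything, and HP is declared immediate), with you supplying a bit more detail on why HP holds and why the free amalgam remains in the class. Your observation about the countability of isomorphism types is correct and flags a genuine imprecision in the theorem statement: for uncountable $\F$ the class $\Lbb_{c,\F}$ has uncountably many isomorphism types (there are moduli of finite-dimensional nilpotent Lie algebras over, say, $\R$ or $\C$), so it is not literally a Fra\"iss\'e class under the paper's own Definition~\ref{fraisseclass} — the paper does not address this, though it is harmless for the applications since Corollary~\ref{cor: limit existence} and everything downstream restricts to finite $\F$.
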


\begin{remark} \label{rk:uniformlocallyfinite}
 Assume that $A$ is a finite LLA, generated by say $a_1,\ldots,a_n$. Let $\alpha_i = \lev(a_i)$. By the universal property, there exists a surjection $F(X_1,\ldots,X_n,\alpha_1,\ldots,\alpha_n)\to A$. Using Remark \ref{rk:wittformula}, the dimension of $A$ is bounded by 
 \[\sum_{k = 1}^c \frac{1}{k} \sum_{d\mid k} \mu(d)n^{k/d}.\]
 It follows that when $\mathbb{F}$ is a finite field, $\mathbf{L}_{c,\mathbb{F}}$ is uniformly locally finite. 
\end{remark}

As $\Lbb_{c,p}$ is a particular case of $\Lbb_{c,\F}$ and using Lazard correspondence \blue{for $c<p$} (see Subsection \ref{subsec:lazardcorrespondence}) we get the following:

\begin{corollary} \label{cor: limit existence}
    When $\mathbb{F}$ is a finite field, the theory $\Th(\bfL_{c,\mathbb{F}})$ is $\omega$-categorical with quantifier elimination.  Likewise, for $c<p$, the class $\mathbb{G}_{c,p}$ is a Fra\"iss\'e class and, letting $\mathbf{G}_{c,p}$ denote the Fra\"iss\'e limit, the theory $\mathrm{Th}(\mathbf{G}_{c,p})$ is $\omega$-categorical with quantifier-elimination. 
\end{corollary}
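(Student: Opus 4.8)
The plan is to derive everything from the results already in hand, together with the general fact \cite[Corollary 6.4.2]{hodgeshorter} that, for a countably infinite structure in a finite language, being homogeneous and uniformly locally finite is equivalent to having an $\omega$-categorical theory with quantifier elimination.

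For the first assertion I would first note that when $\F$ is finite the language $\LL_{c,\F}$ is finite: besides $+,-,0$, the bracket $[\cdot,\cdot]$, and the $c+1$ predicates $P_i$, it contains only the finitely many unary function symbols $(\lambda\cdot)_{\lambda\in\F}$. By Theorem \ref{Lcf has a fraisse limit}, $\Lbb_{c,\F}$ is a Fra\"iss\'e class, so by Fra\"iss\'e's theorem its limit $\bfL_{c,\F}$ is homogeneous with age $\Lbb_{c,\F}$; it is countably infinite because $\Lbb_{c,\F}$ has only countably many isomorphism types yet contains arbitrarily large finite members (for instance abelian LLAs of every finite dimension). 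By Remark \ref{rk:uniformlocallyfinite} it is uniformly locally finite. Then \cite[Corollary 6.4.2]{hodgeshorter} immediately gives that $\Th(\bfL_{c,\F})$ is $\omega$-categorical with quantifier elimination.

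For the second assertion I would proceed in two steps. First, $\Lbb_{c,p}$ is itself a Fra\"iss\'e class: over $\F_p$ any subgroup of the additive group is automatically closed under the definable scalar multiplications $\lambda\cdot$, so substructures and embeddings computed in $\LL_c$ agree with those computed in $\LL_{c,\F_p}$, and hence $\Lbb_{c,p}$ is, up to this change of interdefinable language, just $\Lbb_{c,\F_p}$, which is Fra\"iss\'e by Theorem \ref{Lcf has a fraisse limit} (via Theorem \ref{thm:freeamalgamofarbitraryLLA}). Second, I would invoke the Lazard correspondence of Subsection \ref{subsec:lazardcorrespondence}: when $p>c$, the assignments $G\mapsto L_G$ and $L\mapsto G_L$ are mutually inverse bijections on underlying sets which, by Fact \ref{Lazard facts}, send subgroups to subalgebras, normal subgroups to ideals, and homomorphisms to homomorphisms, and which (by the remark following Fact \ref{Lazard facts}) carry a Lazard series for the group to a Lazard series for the Lie algebra and conversely. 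From this one reads off that $\G_{c,p}$ is closed under substructures and has the amalgamation property --- amalgamate the $L_{(-)}$-images inside $\Lbb_{c,p}$ and transport the amalgam back along $G_{(-)}$ --- while JEP follows from AP since the trivial group embeds everywhere; thus $\G_{c,p}$ is a Fra\"iss\'e class. Applying $G_{(-)}$ to $\bfL_{c,\F_p}$ then produces a homogeneous structure whose age is $\G_{c,p}$, so $\mathbf{G}_{c,p}\cong G_{\bfL_{c,\F_p}}$ by uniqueness of Fra\"iss\'e limits.

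Finally, to conclude that $\Th(\mathbf{G}_{c,p})$ is $\omega$-categorical with quantifier elimination, I would apply \cite[Corollary 6.4.2]{hodgeshorter} a second time: the language of groups expanded by the $c+1$ Lazard predicates is finite, $\mathbf{G}_{c,p}$ is homogeneous and countably infinite, and it is uniformly locally finite because a finite subset of $\mathbf{G}_{c,p}=\bfL_{c,\F_p}$ generates the same substructure under the group operations as under the Lie operations (Fact \ref{Lazard facts}(3), together with the matching of the Lazard predicates), so the dimension bound of Remark \ref{rk:uniformlocallyfinite} transfers verbatim. The only part of the argument that is not bookkeeping is checking that the Lazard correspondence interacts correctly with the predicate-expanded languages and with the substructure relation; but this is exactly what Fact \ref{Lazard facts} and the remark following it provide, so I expect no genuine obstacle, only the need to set up this dictionary carefully.
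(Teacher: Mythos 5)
Your proposal is correct and follows essentially the same route as the paper: combine the fact that $\Lbb_{c,\F}$ is Fra\"iss\'e (Theorem \ref{Lcf has a fraisse limit}), the uniform local finiteness noted in Remark \ref{rk:uniformlocallyfinite}, and the Hodges criterion from Section \ref{subsec:generalitiesonfraisseclass}, then transfer to groups via the Lazard correspondence of Subsection \ref{subsec:lazardcorrespondence} and Fact \ref{Lazard facts}. You have simply written out the bookkeeping (finiteness of the language, interdefinability of $\LL_c$ and $\LL_{c,\F_p}$, transport of substructures and amalgams under $L_{(-)}$ and $G_{(-)}$) that the paper leaves implicit.
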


We will write $T_{c,\mathbb{F}}$ for the theory $\mathrm{Th}(\mathbf{L}_{c,\mathbb{F}})$ when $\mathbb{F}$ is a finite field or $T_{c,p}$ in the special case that $\mathbb{F} = \mathbb{F}_{p}$.

\begin{corollary}\label{cor:freerelationisanSIR}
    In a monster model of $T_{c,p}$, the relation $\indi \otimes$ is a stationary independence relation in the sense of Tent-Ziegler \cite{tentzieglerurysohn}.
\end{corollary}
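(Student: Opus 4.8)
The plan is to reduce the whole statement to a single missing ingredient. By Proposition \ref{prop:basicpropertiesoffreeindependence}, the relation $\indi\otimes$ already satisfies symmetry, invariance, stationarity and transitivity, and it becomes a stationary independence relation in the sense of \cite{tentzieglerurysohn} the moment one knows it satisfies full existence (in the strong sense: for all $A,B,C$ there is $A'\equiv_C A$ with $A'\indi\otimes_C B$). So the entire proof should consist of verifying full existence for $\indi\otimes$ in a monster model $\mathbb{M}\models T_{c,p}$.

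To do this I would first pass to substructures, setting $\hat A=\vect{AC}$, $\hat B=\vect{BC}$, $\hat C=\vect{C}$, and invoke Theorem \ref{thm:freeamalgamofarbitraryLLA} to produce the free amalgam $S=\hat A\otimes_{\hat C}\hat B$, with its canonical embeddings $f\colon\hat A\to S$, $g\colon\hat B\to S$ agreeing on $\hat C$ and with $S=\vect{f(\hat A)g(\hat B)}$. The next step is to observe that $S$ lives in the age of $\mathbb{M}$: every finitely generated sub-LLA of $S$ is a finite Lazard Lie algebra over $\F_p$ of nilpotency class $\leq c$, hence belongs to $\Lbb_{c,p}$, and by Corollary \ref{cor: limit existence} together with Fra\"iss\'e's theorem $\Lbb_{c,p}$ is exactly the age of $\mathbb{M}$. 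Using quantifier elimination and the saturation and strong homogeneity of $\mathbb{M}$ (a routine transfinite induction, since $|S|$ is small), I would then extend the partial isomorphism $g^{-1}\colon g(\hat B)\to\hat B\seq\mathbb{M}$ to an embedding $\iota\colon S\to\mathbb{M}$, i.e.\ with $\iota\circ g=\id_{\hat B}$.

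Finally, set $\tau=\iota\circ f\colon\hat A\to\mathbb{M}$ and let $A'=\tau(A)$. Since $f$ and $g$ agree on $\hat C$, $\tau$ fixes $C$ pointwise, so by quantifier elimination $A'\equiv_C A$. For the independence assertion one chases the diagram: $\vect{A'C}=\tau(\hat A)=\iota(f(\hat A))$ and $\vect{BC}=\hat B=\iota(g(\hat B))$, whence $\vect{A'BC}=\vect{A'B}=\iota\big(\vect{f(\hat A)g(\hat B)}\big)=\iota(S)$; moreover $\iota$ restricts to an isomorphism $S\xrightarrow{\sim}\vect{A'BC}$ under which the free-amalgam embeddings $f$ and $g$ become the inclusions $\vect{A'C}\hookrightarrow\vect{A'BC}$ and $\vect{BC}\hookrightarrow\vect{A'BC}$. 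Hence $\vect{A'BC}$, together with these inclusions, is a free amalgam of $\vect{A'C}$ and $\vect{BC}$ over $\vect{C}$, i.e.\ $A'\indi\otimes_C B$. This establishes full existence, and the corollary then follows from Proposition \ref{prop:basicpropertiesoffreeindependence}.

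I expect the only genuine care needed to be the bookkeeping of that last diagram chase — making sure the free-amalgam structure is transported along $\iota$ with the two sides in their correct roles, and that the reduction from the subsets $A,B,C$ to the substructures $\hat A,\hat B,\hat C$ is harmless for the type assertion — together with the standard point that, by $\omega$-categoricity and quantifier elimination, the age of $\mathbb{M}$ is $\Lbb_{c,p}$ and small members of the age embed over small substructures. None of this is deep; the substantive mathematical content, namely the existence of free amalgams of arbitrary LLAs, is already supplied by Theorem \ref{thm:freeamalgamofarbitraryLLA}.
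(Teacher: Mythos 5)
Your proposal is correct and follows the same route as the paper: the paper's proof is simply ``Apply Proposition~\ref{prop:basicpropertiesoffreeindependence}'', leaving full existence as an implicit consequence of Theorem~\ref{thm:freeamalgamofarbitraryLLA} together with the fact that $\Lbb_{c,p}$ is the age of the monster model. You have filled in that implicit step (the embedding of $\hat A\otimes_{\hat C}\hat B$ over $\hat B$ via saturation, ultrahomogeneity, and quantifier elimination) correctly.
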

\begin{proof}
    Apply Proposition \ref{prop:basicpropertiesoffreeindependence}.
\end{proof}

As the Fraïssé limit of $\Lbb_{c,p}$ it is standard that $\bfL_{c,p}$ is \textit{$\Lbb_{c,p}$-saturated}: for any finite $A\seq \bfL_{c,p}$ (substructure as an LLA) and for any finite LLA $B$ extending $A$ there is a copy of $B$ in $L$ and extending $A$. 

\begin{proposition}\label{prop:algebraicpropertiesL}
    In $\bfL = \bfL_{c,p}$ the following holds.
    \begin{enumerate}
        \item $a\in P_{i+j}$ if and only if there exists $b\in P_i$ and $c\in P_j$ such that $a = [b,c]$.
        \item For $n\leq c-1$, if $a\in P_n\setminus P_{n+1}$ then there exists $b\in \bfL$ such that $[a,b]\in P_{n+1}\setminus P_{n+2}$.
        \item $(P_n)_{1\leq n\leq c+1}$ is the lower central series of $\mathbf{L}$, i.e. $P_{1} = \mathbf{L}$ and $P_{n+1} = [P_{n},\mathbf{L}]$ for all $1 \leq n \leq c$. 
        \item The lower central series and the upper central series of $\mathbf{L}$ coincide.
    \end{enumerate}
\end{proposition}  
    \begin{proof}
    \textit{(1)} Let $a\in P_{i+j}$. Let $A = \vect{a}$. Then as $[a,a] = 0$ $A$ is the abelian LLA on $\Span_{\F_p}(a)$ with $P_1 = P_2 =\ldots =P_{i+j} = A$. Note that anything can happen for $P_{i+j+1},P_{i+j+2},\ldots$ etc, it could be that $P_{i+j+1} = A$ or that $P_{i+j+1} = \set{0}$, depending on the type of $a$. Let $b,c$ be linearly independent over $a$ and define a Lie algebra structure on $B := \Span(a,b,c)$ by the following: $[a,b] = [a,c] = 0$ and $[b,c] = a$. We check the Jacobi identity:
    \begin{align*}
        0 = [a,a] &= [a,[b,c]]\\
        &= [[a,b],c]+[b,[a,c]]\\
        &=0+0
    \end{align*}
    The bracket thus defined is a Lie algebra which is often referred to as the Heisenberg algebra.  To define the predicates, assume $i<j$. Define $P_i = P_{i-1}=\ldots =P_1 = B$, $P_{i+1} = \Span(c,a)$ so that $b\in P_i\setminus P_{i+1}$, and similarly $P_{j+1} = \Span(a)=\ldots=P_{i+j}$, so that $b$ is of degree $i$ and $c$ is of degree $j$. Define $P_{i+j+1},P_{i+j+2}, \ldots$ as in $A$. The Lie bracket defined above is compatible with the Lazard predicates because $b\in P_i$, $c\in P_j$ and $a\in P_{i+j}$. By $\Lbb_{c,p}$-saturation, there exists a copy of $B$ in $L$ hence there is some $b',c'\in \bfL$ such that $a = [b',c']$ and $b'\in P_i(\bfL)$, $c'\in P_j(\bfL)$.

    \textit{(2)} Consider $A = \vect{a}$ then $P_1(A) =\ldots = P_n(A) = A$ and $P_{n+1}(A) = \ldots = P_c(A) = \set{0}$. Let $b,c$ be independent elements and consider $B = \Span(a,b,c)$. Define the bracket: $[a,b] = c$, $[a,c] = [b,c] = 0$. One Jacobi identity to check is enough: 
    \begin{align*}
        0 = [a,0] &= [a,[b,c]]\\
        &= [[a,b],c]+[b,[a,c]]\\
        &=[c,c]+0=0
    \end{align*}
    It remains to define the predicates: the only nontrivial relation is $[a,b] = c$ hence we may put $b\in P_1\setminus P_2$ and $c\in P_{n+1}\setminus P_{n+2}$ since $a\in P_n\setminus P_{n+1}$ (which means: $P_1 = B, P_2 = \Span(a,c) = P_3 = \ldots P_n$ then $P_{n+1}=\Span(c)$, $P_{n+2} = \set{0} = \ldots =P_c$). This defines an LLA structure on $B$ which extends the LLA $A$, hence we conclude by $\Lbb_{c,p}$-saturation.

\textit{(3)} Immediate from \textit{(1)}.

\textit{(4)} Recall that the upper central series is defined by $Z_1 = Z(\bfL)$ and
$$
Z_{i+1} = \set{a\in \bfL\mid [a,b]\in Z_i \text{ for all $b\in \bfL$}}.
$$
We show that $Z_n = P_{c-n+1}$. As $Z_1 = P_c$, by induction, we assume that $P_{c-n+2} = Z_{n-1}$. First if $a\in P_{c-n+1}$, then for all $b$, $[a,b]\in P_{c-n+2} = Z_{n-1}$ hence by definition $a\in Z_n$. Conversely assume that $a\in Z_n$ and $a\notin P_{c-n+1}$. Using \textit{(2)} above, there exists $b\in L$ such that $[a,b]\notin P_{c-n+2} = Z_{n-1}$, which contradicts $a\in Z_n$.
\end{proof}

Let $(L,+,0,[\cdot,\cdot],(P_{i})_{1\leq i\leq c+1})$ be any LLA which is a model of $T_{c,p}$. Then using the Lazard correspondence, there exist $0$-definable functions $\cdot, ^{-1}$ such that $(L,\cdot,^{-1},1,(P_i)_{1\leq i\leq c+1})$ is a $c$-nilpotent group with a Lazard series $(P_i(L))_{1\leq i\leq c+1}$ and $(L,+,0,[\cdot,\cdot])$ and $(L,\cdot,^{-1})$ are interdefinable. Thus, when considering models of $T_{c,p}$, we implicitly consider it to be equipped with both a Lie algebra and a group structure, $L = (L,+,0,[\cdot,\cdot], \cdot,^{-1},1,(P_{i})_{1\leq i\leq c+1})$.

\begin{corollary}
    Let $T_{c,p}^{\mathrm{grp}}$ be the theory of $\bfG_{c,p}$ in the language of groups $\set{\cdot,^{-1},1}$. Then $T_{c,p}^{\mathrm{grp}}$ is the model-companion of the theory of $c$-nilpotent groups of exponent $p$. Similarly if $T_{c,p}^\mathrm{Lie}$ is the reduct of $\bfL_{c,p}$ to the language of Lie rings $\set{+,-,0,[\cdot,\cdot]}$, then $T_{c,p}^\mathrm{Lie}$ is the model-companion of the theory of $c$-nilpotent Lie algebras over $\F_p$.
\end{corollary}

\begin{proof}
    We first check that $T_{c,p}^{\mathrm{Lie}}$ is model-complete. It is enough to prove that the predicates $P_i$ are both existentially and universally definable in $\set{+,-,0,[\cdot,\cdot]}$. Using Proposition \ref{prop:algebraicpropertiesL} $(3),(4)$, the following are equivalent:
    \begin{enumerate}
        \item $x\in P_n$;
        \item $\exists y_1,\ldots,y_n [y_1,\ldots, y_n] = x$;
        \item $\forall y_1,\ldots,y_{c-n+1} [y_1,\ldots,y_{c-n+1},x] = 0$.
    \end{enumerate}
    This gives that $T_{c,p}^{\mathrm{Lie}}$ is model-complete. It remains to check that $T_{c,p}^{\mathrm{Lie}}$ is a companion of the theory of $c$-nilpotent Lie algebras over $\F_p$. To see this, observe that any $c$-nilpotent Lie algebra may be equipped with predicates for a Lazard series (for instance, by interpreting the predicates to coincide with the lower central series) to get an LLA, which may then be embedded in a model of $T_{c,p}$ by standard arguments. Then by forgetting the predicates, we get the result. Using the Lazard correspondence, the same transfers to $T_{c,p}^{\mathrm{grp}}$.
\end{proof}

\begin{remark}[An explicit axiomatization of $T_{2,p}^\mathrm{grp}$]
    In \cite{saracino1979periodic}, Saracino and Woods give explicit axioms for the model companion $T^m$ of the theory of $2$-nilpotent groups of exponent $m\in \N$. By uniqueness of the model companion we conclude that $T^p = T_{2,p}^\mathrm{grp}$. In particular, their axiomatization of $T^{m}$, then, entails that the theory $T_{2,p}^\mathrm{grp}$ may be axiomatized as follows:
    \begin{itemize}
        \item the center $Z$ is infinite and equals the set of commutators;
        \item for each $a_1,\ldots,a_n$ $\F_p$-linearly independent over $Z$, for each $c_1,\ldots,c_n\in Z$ there exists $b$ such that $[a_1,b] = c_1,\ldots,[a_n,b] = c_n$.
    \end{itemize}
\end{remark}

\begin{corollary}
    For $c \geq 4$ and $c<p$, $\bfL_{2,p}$ is definable in $\bfL_{c,p}$.
\end{corollary}

\begin{proof}
Assume $c\geq 4$ and let $k = \lfloor \frac{c}{2} \rfloor$ be the floor of $c/2$. As $c\geq 4$ we have $k\geq 2$. Then $2k$ is either $c$ or $c-1$ so $P_{2k}\neq 0$. Also, $3k = 2k+k\geq c-1+k\geq c+1$ so that $[P_{k},P_{2k}] = [P_{2k},P_{2k}] = 0$. Now for $H = P_k$ and $K = P_{2k}$, we conclude that $H$ is a Lie algebra which is nilpotent of class $2$ with center contained in $K$. We equip $H$ with the Lazard series $Q_1 = H, Q_2 = K, Q_3 = 0$. We prove that $(H,(Q_i)_{1\leq i\leq 3})$ is existentially closed in the class of 2-nilpotent LLAs. Let $(A,(Q_i)_{1\leq i\leq 3})$ be a 2-nilpotent LLA extending $(H,(Q_i)_{1\leq i\leq 3})$. We define a Lazard series of length $c$ on $A$: set $P_1(A) = \ldots = P_k(A) = Q_1(A) = A$ and $P_{k+1}(A) = \ldots = P_c(A) = Q_2(A)$ and $P_{c+1}(A) = 0$. Then $(A,(P_i)_{1\leq i\leq c+1})$ is an LLA extension of $\bfL_{c,p}$. It is clear that any existential formula $\exists x\phi(x)$ in $(A,(Q_i)_{1\leq i\leq 3})$ with parameters in $H$ can be translated into an equivalent statement $(\exists x\in P_k )\Tilde{\phi}(x)$ in $\set{+,-,0,[\cdot,\cdot],(P_i)_{1\leq i\leq c+1}}$ which will be true in $\bfL_{c,p}$ and be translated back in $\set{+,-,0,[\cdot,\cdot],(Q_i)_{1\leq i\leq 3}}$ so that $H\models \exists x\phi(x)$.  As the $2$-nilpotent LLA that we defined is clearly $\aleph_{0}$-categorical and every $2$-nilpotent LLA embeds into it, we must have that it is $\mathbf{L}_{2,p}$. 
\end{proof}

\begin{remark}
    The same method should yield that $\bfL_{n,p}$ is definable in $\bfL_{c,p}$ for $c$ sufficiently larger than $n$.
\end{remark}

\begin{question}
    Is $\bfL_{2,p}$ definable/interpretable in $\bfL_{3,p}$?
\end{question}





\subsection{An extra result on Malcev sets and free amalgamation}\label{subsec:extraNSOP4}

In the following proof, we use that $\indi \otimes$ satisfies transitivity and monotonicity, hence it uses Corollary \ref{cor:freerelationisanSIR} (for monotonicity). Note that we believe that a direct proof exists without using monotonicity, via modifying $(**)(e)$ and by an induction showing that constructing the free amalgam of $a$ and $b$ over $C$ automatically yields a free amalgam of $a$ and $b$ over $E$ (under the assumptions below).
\begin{theorem}\label{thm:weaktransitivitystageII}
Let $A = \vect{Ca}$ and $B = \vect{Cb_1,\ldots, b_s}$. Let $E\seq C$. Assume that 
\begin{itemize}
    \item $a$ is Malcev over $C$ and $E$
    \item $(b_1\ldots b_s)$ is Malcev over $C$ and $E$.
\end{itemize}
If $a\indi \otimes _C b_1\ldots b_s$ then $a\indi\otimes _E b_1\ldots b_s$. 
\end{theorem}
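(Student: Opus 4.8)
The plan is to prove directly that $a\indi \otimes _{E}(b_1,\ldots,b_s)$, i.e.\ that $\vect{Eab_1\ldots b_s}\cong\vect{Ea}\otimes_E\vect{Eb_1\ldots b_s}$ canonically, by induction on $s$. First I would record what the Malcev hypotheses give. Re-index so that $(b_1,\ldots,b_s)$ is an \emph{ordered} Malcev basis (decreasing level) over $C$ and over $E$; then by Remark \ref{rk:malcevtuples} one has $\vect{Ea}=\Span_\F(Ea)$, $\vect{Eb_1\ldots b_s}=\Span_\F(Eb_1\ldots b_s)$, the ideal relations $E\lteq\vect{Ea}$ and $E\lteq\vect{Eb_1\ldots b_s}$ (since $a$, and each initial segment of $(b_1,\ldots,b_s)$, is Malcev over $E$), and, crucially, every initial segment $(b_1,\ldots,b_k)$ is again Malcev over $C$ and over $E$, while $b_s$ is Malcev over $\vect{Cb_1\ldots b_{s-1}}$ and over $\vect{Eb_1\ldots b_{s-1}}$ by Lemma \ref{lm:propertiesofmalcevsets}(2). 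Likewise $C\lteq\vect{Ca}=A$ and $C\lteq\vect{Cb_1}$, so by Theorem \ref{thm:strongamalgam} the ambient amalgam $\vect{AB}=A\otimes_C B$ is strong, $A\cap B=C$; combined with linear independence of $a$ over $C$ and of $(b_1,\ldots,b_s)$ over $C$ this gives the strongness $\vect{Ea}\cap\vect{Eb_1\ldots b_s}=E$ (part of the claim). Finally, by Corollary \ref{cor:freerelationisanSIR}, $\indi \otimes$ is a stationary independence relation, so it has monotonicity, base monotonicity and transitivity, whence the chain rule $a\indi \otimes _{E}PQ\iff a\indi \otimes _{E}P \text{ and } a\indi \otimes _{\vect{EP}}Q$.

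The base case $s=1$ is exactly Corollary \ref{cor:droppingtheEbaudischcase}: its hypotheses hold because $a,b_1$ Malcev over $C$ give $C\lteq\vect{Ca},\vect{Cb_1}$, $a,b_1$ Malcev over $E$ give $E\lteq\vect{Ea},\vect{Eb_1}$, and $a\indi \otimes _{C}b_1$ follows from $a\indi \otimes _{C}b_1\ldots b_s$ by monotonicity.

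For $s\geq 2$, apply the chain rule (peeling off $b_s$):
\[
a\indi \otimes _{E}(b_1,\ldots,b_s)\iff a\indi \otimes _{E}(b_1,\ldots,b_{s-1})\ \text{ and }\ a\indi \otimes _{\vect{Eb_1\ldots b_{s-1}}}b_s .
\]
The first conjunct is the inductive hypothesis applied to the initial segment $(b_1,\ldots,b_{s-1})$ (Malcev over $C$ and $E$ as noted), together with $a\indi \otimes _{C}(b_1,\ldots,b_{s-1})$, which follows from the hypothesis by monotonicity; this also identifies $\vect{Eab_1\ldots b_{s-1}}$ with $\vect{Ea}\otimes_E\vect{Eb_1\ldots b_{s-1}}$. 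The second conjunct $a\indi \otimes _{\vect{Eb_1\ldots b_{s-1}}}b_s$ is the remaining point: the chain rule applied over $C$ gives $a\indi \otimes _{\vect{Cb_1\ldots b_{s-1}}}b_s$ from the hypothesis, and one must descend its base from $\vect{Cb_1\ldots b_{s-1}}$ to $\vect{Eb_1\ldots b_{s-1}}$.

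This descent is the main obstacle, and the only place where one does not merely quote earlier results. The naive attempt — invoke Corollary \ref{cor:droppingtheEbaudischcase} with base $\vect{Cb_1\ldots b_{s-1}}$ — fails, because once $a$ has been freely amalgamated over $C$ the element $[a,b_1]$ is a free new element of $A\otimes_C B$ lying outside $B$, so neither $\vect{Cb_1\ldots b_{s-1}}$ nor $\vect{Eb_1\ldots b_{s-1}}$ is an ideal of its extension by $a$, and the hypotheses of that corollary are unavailable. Instead I would use the reassembly $\vect{AB}\cong(\vect{Ca}\otimes_C\vect{Cb_1\ldots b_{s-1}})\otimes_{\vect{Cb_1\ldots b_{s-1}}}B$ (valid by monotonicity and base monotonicity of $\indi \otimes$ from $a\indi \otimes _{C}B$) together with the free amalgam $\vect{Eab_1\ldots b_{s-1}}=\vect{Ea}\otimes_E\vect{Eb_1\ldots b_{s-1}}$ already produced by the first conjunct, and bridge between the two via transitivity of $\indi \otimes$, stripping the surplus generators of $\vect{Cb_1\ldots b_{s-1}}$ over $\vect{Eb_1\ldots b_{s-1}}$ by monotonicity at the end. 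The well-definedness of the comparison homomorphism that transitivity delivers reduces, through the Stage II description $\vect{AB}=\vect{Ha}$ with $B\seq H$ and $\lev(H/B)=\lev(a)+\lev(B/C)$, to the fact that the derivation $x\mapsto[a,x]$ on the sub-ideal $H\cap\vect{Eab_1\ldots b_s}$ is as free as possible over $E$ — precisely the content of free amalgamation over $C$. (A more self-contained alternative, which we believe also works and avoids monotonicity, threads this descent into the Stage II induction $(*)$ as an extra clause, carried along as the $\oslash$-amalgam is built.)
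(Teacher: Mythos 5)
Your plan --- induct on $s$, peel off $b_s$ via the chain rule, and then descend the base of $a\indi \otimes_{\cdot} b_s$ from $\vect{Cb_1\ldots b_{s-1}}$ to $\vect{Eb_1\ldots b_{s-1}}$ --- correctly locates the hard step, and your observation that Corollary~\ref{cor:droppingtheEbaudischcase} cannot be applied there (since $[a,b_1]$ is a fresh element of the free amalgam, so $\vect{Cb_1\ldots b_{s-1}}$ and $\vect{Eb_1\ldots b_{s-1}}$ fail to be ideals of their extensions by $a$) is exactly right. But the repair you propose --- ``bridge between the two via transitivity,'' underwritten by the claim that ``the derivation $x\mapsto[a,x]$ on $H\cap\vect{Eab_1\ldots b_s}$ is as free as possible over $E$'' --- is not an argument. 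It restates the target $a\indi \otimes_{\vect{Eb_1\ldots b_{s-1}}} b_s$ in suggestive language. None of the three stationary-independence axioms you invoke lowers a base: transitivity combines bases, monotonicity shrinks the right-hand side, base monotonicity raises the base. Base-lowering is precisely what this theorem asserts, so appealing to those axioms is circular. You have not exhibited a free decomposition of $\vect{Eab_1\ldots b_s}$ over $\vect{Eb_1\ldots b_{s-1}}$, and the displayed reassembly identity does not produce one.

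The paper's proof takes a genuinely different route and avoids this descent problem. It re-runs the entire Stage~II induction scheme (Figure~\ref{fig:meierscheme}) with the inductive statement strengthened to carry along, at every step, auxiliary objects $E_i,F_i,u_i,v_i$ tracking the $E$-side of the construction, together with Malcev conditions relative to the $E$-side and the auxiliary independences $a\indi \otimes_{E_i} F_i$ and $b_s\indi \otimes_{F_i} E_{i+1}$. The descent to $E$ then occurs at the top of the tower, where $C_t=D_t$ and the ideal hypotheses of Corollary~\ref{cor:droppingtheEbaudischcase} actually hold, and is pushed back down to the base $E$ by repeated transitivity using those carried-along independences. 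Your closing parenthetical --- that one could ``thread this descent into the Stage~II induction $(*)$ as an extra clause'' --- is in fact the paper's approach, and you were right to suspect that it is the route that works; but the main line of your argument has a real gap at its center, and it is exactly the one you flagged.
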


\begin{proof}
We prove the theorem by re-writing the induction in stage II with extra assumptions at each step. We refer to Figure \ref{fig:meierscheme} for a picture of the induction scheme. By Theorem \ref{thm:freeamalgam} we know that $\oslash = \otimes$. We assume that at each stage of the recursion, the $\otimes$-amalgam satisfies the following property:
\begin{center}
$(**)$ $\begin{cases}\fbox{
\begin{minipage}{0.75\textwidth}
For all LLAs $A,B,C$ such that $A = \vect{Ca}$ is a basic extension of $C$ and $C \subseteq B$, there exists an amalgam $S = A\otimes _C B$ of $A$ and $B$ over $C$ such that: 
\begin{enumerate}[label=(\alph*)]
    \item[$ $] for all $E\seq C$, if $B = \vect{Cb_1,\ldots,b_s}$ where $b = (b_1\ldots b_s)$ is ordered Malcev over both $C$ and $E$, and $A = \vect{Ca}$ where $a$ is Malcev over $C$ and $E$ (i.e. $C\lteq \vect{Ca}$, $E\lteq \vect{Ea}$), there exists a tuple $v$ such that 
    \begin{enumerate}
        \item $(a, b,  v)$ is a Malcev basis of $S$ over $C$ and $\lev(v) = \lev(a)+\lev(b)$. 
        \item $(a, b,  v)$ is Malcev over $E$. 
        \item $v$ is Malcev over $B$ and $\vect{Eb}$
        \item $a$ is Malcev over $\vect{Bv}$ and $\vect{Ebv}$
        \item $a\indi \otimes _E b$
    \end{enumerate}
\end{enumerate}
\end{minipage}
}
\end{cases}$
\end{center}

Observe first that, assuming $(**)(a)$, we may consider $H = \vect{Bv}$ which is an ideal of $S$ satisfying $S = \vect{Ha}$ and $\lev(H/B) = \lev(v) = \lev(a)+\lev(B/C)$. This gives $(*)(a)$ from the previous inductive constructions and hence we will use the induction scheme as above.

We assume that $b_1,\ldots, b_s$ is an ordered Malcev basis of $B$ over $C$ which is Malcev over $E\seq C$. In particular, by Remark \ref{rk:malcevtuples} (2) $b_1,\ldots, b_{s-1}$ is Malcev over $C$ and $E$ and $b_s$ is Malcev over $Cb_1\ldots b_{s-1}$ and $Eb_1\ldots b_{s-1}$. Let $S = A\otimes _C B$ be the $\oslash$-amalgam constructed above. We apply the induction hypothesis at each stage of the construction. Let $E_0 = E, F_0 = \vect{Eb_1\ldots b_{s-1}}$.

At the first stage of the construction, we have that $a$ is Malcev over $C = C_0$ and over $E = E_0$, and $b_1,\ldots,b_{s-1}$ is a Malcev basis of $D_0$ over $C_0$ and Malcev over $E_0$.  As $\rk(D_0/C)\prec (\nu,n)$, by the induction hypothesis, there exists a tuple $u_1$ such that 
\begin{itemize}
        \item $(a,b_1,\ldots,b_{s-1},u_1)$ is a Malcev basis of $A_1 = A_0\otimes_{C_0} D_0$ over $C_0$
        \item $u_1$ is Malcev over $D_0$ and over $F_0 = \vect{E_0b_1\ldots b_{s-1}}$
        \item $a$ is Malcev over $C_1 = \vect{D_0 u_1}$ and over $E_1 := \vect{E_0b_1\ldots b_{s-1} u_1} = \vect{F_0 u_1}$
        \item $a\indi \otimes _E b_1\ldots b_{s-1}$, i.e. $a\indi \otimes_{E_0} F_0$
\end{itemize}
Set $C_1 = \vect{D_0 u_1}$. Then we have $A_1 = \vect{C_1a}$ and $C_1\lteq A_1$.

At the second stage of the construction, we have $C_1 = \Span(D_0 u_1)$. We consider $F_0 \seq D_0$. We have that $b_s$ is Malcev over $D_0$ and over $F_0$ and $u_1$ Malcev over $D_0$ and $F_0$. As $\rk(C_1/D_0) \prec (\nu,n)$, by induction hypothesis, there exists a tuple $v_1$ such that 
\begin{itemize}
        \item $(b_s,u_1,v_1)$ is a Malcev basis of $B_1 = B_0\otimes_{D_0} C_1$ over $D_0$
        \item $v_1$ is Malcev over $C_1$ and over $E_1 = \vect{F_0u_1}$
        \item $b_s$ is Malcev over $D_1 = \vect{C_1 v_1}$ and $F_1 := \vect{E_1 v_1}$
        \item $b_s\indi \otimes _{F_0} E_1$
\end{itemize}
Set $D_1 = \vect{C_1 v_1}$ so that $B_1 = \vect{D_1 b_s}$ and $D_1\lteq B_1$.

\textit{Construction of $E_{i+1}, u_{i+1}$ from $F_i,v_i,E_i$.} 

Suppose that we have already constructed $E_i,F_i,v_i$ such that $v_i$ is a Malcev basis of $D_i$ over $C_i$ and $v_i$ is Malcev over $E_i$. We also have that $a$ is Malcev over $C_i$ and over $E_i$. By induction, there exists a tuple $u_{i+1}$ such that 
\begin{itemize}
        \item $(a,v_{i},u_{i+1})$ is a Malcev basis of $A_{i+1} = A_i\otimes_{C_i} D_i$ over $C_i$
        \item $u_{i+1}$ is Malcev $D_i$ and over $F_i$
        \item $a$ is Malcev over $C_{i+1} = \vect{D_i u_{i+1}}$ and over $E_{i+1} := \vect{F_i u_{i+1}}$
        \item $a\indi \otimes _{E_{i}} F_{i}$
\end{itemize}
Set $C_{i+1} = \vect{D_i u_{i+1}}$ so that $A_{i+1} = \vect{C_{i+1}a}$ and $C_{i+1}\lteq A_{i+1}$.

\textit{Construction of $F_{i+1},v_{i+1}$ from $E_{i+1},u_{i+1}, F_i$.}

Suppose that we have already constructed $E_{i+1},u_{i+1},F_i$ such that $u_{i+1}$ is a Malcev basis of $C_{i+1}$ over $D_i$ and $u_{i+1}$ is Malcev over $F_i$. We also have that $b_s$ is Malcev over $D_i$ and over $F_i$. By induction, there exists a tuple $v_{i+1}$ such that 
\begin{itemize}
        \item $(b_s,u_{i+1},v_{i+1})$ is a Malcev basis of $B_{i+1} = B_i\otimes_{D_i} C_{i+1}$ over $D_i$ 
        \item $v_{i+1}$ is Malcev over $C_{i+1}$ and over $E_{i+1}$
        \item $b_s$ is Malcev over $D_{i+1} = \vect{C_i v_{i+1}}$ and over $F_{i+1} := \vect{E_i v_{i+1}}$
        \item $b_s\indi \otimes _{F_{i}} E_{i+1}$
\end{itemize}
Set $D_{i+1} = \vect{C_{i+1} v_{i+1}}$ so that $B_{i+1} = \vect{D_{i+1} b_s}$ and $D_{i+1}\lteq B_{i+1}$.

As the rank drops at each stage there exists $t$ such that $C_t = D_t$ or $D_t = C_{t+1}$. Let us assume the former, the other case is treated similarly.

If $C_t = D_t$, we have that $v_t = \emptyset$, whence $E_t = F_t$ and $C_t$ is an ideal of both $\vect{C_t a}$ and $\vect{C_tb_s}$ and $S$ is given by $\langle  C_{t}a \rangle \otimes _{C_t} \langle C_{t} b_s \rangle$. Now let $u_{t+1}$ be such that $S = \vect{C_t a b_s u_{t+1}}$ as in Proposition \ref{prop:baudischamalgamhallbasisgivesmalcevbasisandmore}, i.e. such that $(a,b_s,u_{t+1}) = (h_1,\ldots ,h_k)$. Let $v = u_1v_1\ldots u_{t-1}v_{t-1}u_t u_{t+1}$. We need to check that $S$ satisfies $(**)$ with the tuple $v$. Recall that $E = E_0$ and $C = C_0$. 

We prove $(a)$. Let $v_0 = (b_1,\ldots,b_{s-1})$. By construction we have $C_i = \vect{Cv_0 u_1v_1\ldots v_{i-1} u_i}$ and $D_i = \vect{Cv_0 u_1v_1\ldots u_{i} v_i}$. We also have that $u_{i+1}$ is Malcev over $D_i$ and $v_{i+1}$ is Malcev over $C_{i+1}$. We conclude that $u_{i+1}$ is Malcev over $Cv_0 u_1v_1\ldots u_{i} v_i$ and $v_{i+1}$ is Malcev over $Cv_0 u_1v_1\ldots v_{i} u_{i+1}$. As $v_0$ is Malcev over $C$ and $u_1$ is Malcev over $Cv_0$ we have by Lemma \ref{lm:propertiesofmalcevsets} (1) that $v_0u_1$ is Malcev over $C$. As $v_1$ is Malcev over $Cv_0u_1$ we have by Lemma \ref{lm:propertiesofmalcevsets}(1) that $v_0u_1v_1$ is Malcev over $C$. An easy iteration using Lemma \ref{lm:propertiesofmalcevsets}(1) yields that $v_0u_1v_1\ldots v_{t-1} u_t$ is Malcev over $C$. Then the last amalgam yields $ab_s u_{t+1}$ is Malcev over $Cv_0\ldots u_{t-1} v_{t-1} u_t$ hence a last use of Lemma \ref{lm:propertiesofmalcevsets}(1) yields that $ab_s v_0\ldots v_{t-1}u_t u_{t+1} = a b_1\ldots b_s v$ is Malcev over $C$. The same argument yields that $(b)$ also holds. Concerning the equality in $(a)$, first, for $i\leq t$ we have $\lev(u_i)\geq \lev(a)+\lev(b)$ and $\lev(v_i)\geq \lev(a)+\lev(b)$. Also, $[a,b_s]\in u_{t+1}$ and is the element of lowest level in $u_{t+1}$, whence $\lev(v) = \lev(a)+\lev(b)$. 

We check $(c)$. First, $a$ is Malcev over $\vect{C_t b_s u_{t+1}} = \vect{C_0 b_s v_0 u_1v_1\ldots u_t u_{t+1}}$. As $b = v_0b_s$ and $v = u_1v_1\ldots u_t u_{t+1}$, we get that $a$ is Malcev over $Cbv$. As $(a,b,v)$ is Malcev over $C$, by Lemma \ref{lm:propertiesofmalcevsets}(3), we have $bv$ is Malcev over $C$. As $b v$ is Malcev over $C$ and $b$ is Malcev over $C$ we conclude by Lemma \ref{lm:propertiesofmalcevsets}(2) that $v$ is Malcev over $\vect{Cb}=B$. A similar argument yields that $v$ is also Malcev over $\vect{Eb}$.

We check $(d)$. By Proposition \ref{prop:baudischamalgamhallbasisgivesmalcevbasisandmore} (1) and (3), we have that $ab_su_{t+1}$ is Malcev over $C_t$ and $b_s u_{t+1}$ is Malcev over $C_t$, hence by Lemma \ref{lm:propertiesofmalcevsets} (2) we get that $a$ is Malcev over $\vect{C_t b_s u_{t+1}}$. As $C_t = \vect{Cv_0 u_1v_1\ldots v_{t-1} u_t}$, we have $\vect{C_t b_s u_{t+1}} = \vect{C b v}$ hence $a$ is Malcev over $Cb v$.

As for $(e)$, working in $S$, we assume that $a\indi \otimes _{C_t} b_s$. Then, $a$ is Malcev over $C_t$ and over $E_t$ and $b_s$ is Malcev over $D_t = C_t$ and over $F_t = E_t$. In other words, $E_t$ is an ideal of $\vect{E_ta}$ and $\vect{E_t b_s}$, so, by Lemma \ref{cor:droppingtheEbaudischcase}, we have $a\indi \otimes _{E_t} b$. We also have by construction that $b_s \indi \otimes _{F_i} E_{i+1}$ and $a\indi \otimes _{E_i} F_i$ for all $i = 0,\ldots, t-1$. From $a\indi \otimes _{E_t} b_s$ and $b_s\indi \otimes _{F_{t-1}} E_t$ we get $a\indi \otimes _{F_{t-1}} b_s$ by transitivity, symmetry, and monotonicity (Proposition \ref{prop:basicpropertiesoffreeindependence}). As $a\indi \otimes _{E_{t-1}} F_{t-1}$, we get $a\indi \otimes _{E_{t-1}} b_s$ by transitivity and monotonicity. A direct induction yields $a\indi \otimes _{F_0} b_s$. As $F_0 = \vect{Eb_1\ldots b_{s-1}}$ and $a\indi \otimes _E Eb_1\ldots b_{s-1}$, we use transitivity to conclude $a\indi \otimes _E b_1\ldots b_s$.
\end{proof}

The following will be crucial in order to prove that the theory of the generic LLA is NSOP$_4$.

\begin{theorem}\label{thm:weaktransitivitygeneralcase}
    Assume that $a = (a_1,\ldots,a_n)$, $b = (b_1,\ldots,b_m)$ are both Malcev over $C$ and over some sub-LLA $E$ of $C$. If $a\indi \otimes_C b$, then $a\indi \otimes_E b$.
\end{theorem}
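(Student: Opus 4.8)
The plan is to induct on $n:=\abs{a}$, the length of the left‑hand tuple, with Theorem~\ref{thm:weaktransitivitystageII} as the base case. First I would reorder $a$ by decreasing level; since the level of an element is computed in the ambient LLA, the same reordering simultaneously makes $a$ an \emph{ordered} Malcev basis of $\vect{Ca}$ over $C$ and over $E$ (Remark~\ref{rk:malcevtuples}(1)). If $n=1$, then $a$ is a singleton, and in this case $a$ being Malcev over $C$ already forces $C\lteq\vect{Ca}$: indeed $\vect{Ca}=\Span(Ca)$ by Remark~\ref{rk:malcevtuples}(3), and for $c\in C$ the bracket $[c,a]$ has level $\geq\lev(c)+\lev(a)>\lev(a)$, hence lies in $\Span\bigl(CP_{\lev(a)+1}(\vect{Ca})\bigr)=C$ by the Malcev condition; so $\vect{Ca}$ is a basic extension of $C$, $\vect{Ea}$ one of $E$, and the statement is exactly Theorem~\ref{thm:weaktransitivitystageII} applied with $b=(b_1,\dots,b_m)$.

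For the inductive step I would split $a=(a_1,a^+)$ with $a_1$ of maximal level and $a^+=(a_2,\dots,a_n)$, and set $A_1=\vect{Ca_1}$ and $A_1^E=\vect{Ea_1}$; by the observation just made these are basic extensions of $C$, $E$, and $A_1^E\seq A_1$. From $a\indi \otimes _C b$ and monotonicity, $a_1\indi \otimes _C b$, and since $b$ is Malcev over $C$ and over $E$, Theorem~\ref{thm:weaktransitivitystageII} applies to the singleton $a_1$ and yields $a_1\indi \otimes _E b$; moreover, its proof (property $(**)$, items (1) and (2)) provides a tuple $v$ such that $(a_1,b,v)$ is a Malcev basis of $\vect{Ca_1b}$ over $C$ and is also Malcev over $E$. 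Put $\beta=(b,v)$. By Remark~\ref{rk:malcevtuples}(2) the suffix $a^+$ is Malcev over $A_1$ and over $A_1^E$, and the prefix $a_1$ is Malcev over $C$ and over $E$; hence by Lemma~\ref{lm:propertiesofmalcevsets}(2) (used with $a_1$ in the role of ``$b$'' and $\beta$ in the role of ``$a$'') the tuple $\beta$ is Malcev over $A_1$ and over $A_1^E$. Also, symmetry and base monotonicity applied to $a\indi \otimes _C b$ give $b\indi \otimes _{A_1}a^+$, and since $v\seq\vect{Ca_1b}$ we have $\vect{A_1\beta}=\vect{A_1b}$, so this is, by the definition of $\indi \otimes$, the same as $a^+\indi \otimes _{A_1}\beta$. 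Now the induction hypothesis, applied to the pair $(a^+,\beta)$ over the base $A_1$ with sub‑LLA $A_1^E$, gives $a^+\indi \otimes _{A_1^E}\beta$, hence $a^+\indi \otimes _{\vect{Ea_1}}b$ by monotonicity. Finally, combining $b\indi \otimes _E a_1$ with $b\indi \otimes _{\vect{Ea_1}}a^+$ via transitivity of $\indi \otimes$ (Proposition~\ref{prop:basicpropertiesoffreeindependence}, Corollary~\ref{cor:freerelationisanSIR}) gives $b\indi \otimes _E a$, i.e.\ $a\indi \otimes _E b$.

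The step I expect to be the main obstacle is precisely the one that the passage from $b$ to $\beta=(b,v)$ is designed to circumvent: one cannot simply feed $(a^+,b)$ to the induction hypothesis over $A_1=\vect{Ca_1}$, because $b$ is in general \emph{not} Malcev over $A_1$ — forming the free amalgam $\vect{Ca_1b}$ creates new cross‑brackets of $a_1$ with the $b_j$ inside its Lazard filtration that do not lie in the span of $A_1$ together with $b$. The tuple $v$ supplied by the proof of Theorem~\ref{thm:weaktransitivitystageII} exactly reinstates the Malcev conditions needed to run the induction, while — since $v$ consists of elements of $\vect{Ca_1b}$ — it leaves all the relevant $\indi \otimes$‑statements untouched. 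The routine but somewhat delicate part is then checking that all the Malcev conditions propagate correctly through this reduction, for which the bookkeeping results Remark~\ref{rk:malcevtuples} and Lemma~\ref{lm:propertiesofmalcevsets} are the essential tools.
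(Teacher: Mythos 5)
Your proof is correct and in substance the same as the paper's: peel off generators of $a$ one at a time, invoke Theorem~\ref{thm:weaktransitivitystageII} for the basic‑extension step, and conclude by transitivity of $\indi \otimes$. However, you are more careful than the paper's terse write‑up, which iterates the Stage~III scheme and at each step reads off $a_k\indi\otimes_{\vect{Ea_1\ldots a_{k-1}}} b$ ``directly'' from Theorem~\ref{thm:weaktransitivitystageII}; taken literally this requires $b$ to be Malcev over $A_{k-1}=\vect{Ca_1\ldots a_{k-1}}$, which generally fails once the amalgam has introduced cross‑brackets of $a_1,\ldots,a_{k-1}$ with $b$ — precisely the obstacle you flag at the end of your write‑up. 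The paper silently substitutes the full Malcev basis of $S_{k-1}$ over $A_{k-1}$ (namely $b$ together with the auxiliary Hall monomials $v$ supplied by the $(**)$ bookkeeping) in place of $b$ and then descends to $b$ by monotonicity; your introduction of $\beta=(b,v)$ via $(**)$(1)--(2), with the Malcev conditions propagated through Lemma~\ref{lm:propertiesofmalcevsets}(2), makes this step explicit, and casting it as an honest induction on $|a|$ rather than an unfolded iteration keeps the hypotheses visible at each stage. Your side observation that a Malcev singleton over $C$ automatically gives a basic extension of $C$ is also correct and supplies a detail the paper leaves implicit in the base case.
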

\begin{proof}
We assume that $a_1\ldots a_n$ is an ordered Malcev basis of $B$ over $C$, so by Remark \ref{rk:malcevtuples} $(2)$, $a_{k+1},\ldots,a_n$ is Malcev over $Ca_1\ldots a_k$ and $Ea_1\ldots a_k$, for all $k = 1\ldots n$. Using Theorem \ref{thm:weaktransitivitystageII} at the first stage of Figure \ref{fig:stageIII}, we have that $a_1\indi \otimes_E b$. At the second stage, we use again Theorem \ref{thm:weaktransitivitystageII} where $\vect{Ea_1}$  plays the role of $E$ to get $a_2\indi \otimes _{\vect{Ea_1}} b$. Iterating, we get $a_k \indi \otimes_{\vect{Ea_1\ldots a_{k-1}}} b$ for all $k = 1\ldots n$. Using transitivity of $\indi \otimes $ (Proposition \ref{prop:basicpropertiesoffreeindependence}), we conclude $a_1a_2\ldots a_k \indi \otimes _E b$ for all $k = 1\ldots n$ hence $a\indi \otimes _E b$.
\end{proof}

\section{Neostability properties of $c$-nilpotent groups and Lie algebras}\label{sec:neostabilityproperties}

In this section, we analyze the model-theoretic properties of the theories of Fra\"iss\'e limits of LLAs over a finite field and, via the Lazard correspondence, deduce parallel classification results for the associated groups. For the entirety of the section, we will fix a \emph{finite} field $\mathbb{F}$, a natural number $c$, and a prime number $p$. We will denote the monster model of $T_{c,\mathbb{F}}$ by $\mathbb{M}_{c,\mathbb{F}}$ and we will write $\mathbb{M}_{c,p}$ for $\mathbb{M}_{c,\mathbb{F}}$ when $\mathbb{F} = \mathbb{F}_{p}$.  We begin with some preliminary observations on definability in these theories.  

\subsection{Definability}

\subsubsection{Flags} We define $\LL_{\mathrm{flag},c, \mathbb{F}}$ by 
$$
\LL_{\mathrm{flag},c,\mathbb{F}} = \{0,+,-, (P_{i})_{1 \leq i \leq c+1}, (\lambda \cdot )_{\lambda \in \mathbb{F}}\}.
$$
When $\mathbb{F} = \mathbb{F}_{p}$, we may omit the scalar multiplication functions from the language.  

Let $\mathbb{K}_{\mathrm{flag},c,\mathbb{F}}$ denote all finite dimensional $\mathbb{F}$-vector spaces $V$ viewed as $\LL_{\mathrm{flag},c,\mathbb{F}}$-structures in which the $\lambda \cdot$ are interpreted as the map $x \mapsto \lambda \cdot x$ and the $P_{i}(V)$s are subspaces of $V$ satisfying 
$$
V = P_{1}(V) \supseteq P_{2}(V) \supseteq \ldots \supseteq P_{c+1}(V) = 0. 
$$
The following observation is easy.

\begin{lem} \label{flags are stable}
Fix $c \geq 1$.
    \begin{enumerate}
        \item The class $\mathbb{K}_{\mathrm{flag},c,\mathbb{F}}$ is a Fra\"iss\'e class.
        \item For finite $\mathbb{F}$, the theory $T_{\mathrm{flag},c,\mathbb{F}}$ of the Fra\"iss\'e limit is $\aleph_{0}$-categorical, $\omega$-stable, and has elimination of quantifiers. 
    \end{enumerate}
\end{lem}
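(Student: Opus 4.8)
The plan is to check the Fraïssé-class axioms by hand and then extract the model-theoretic properties of the limit from general facts recorded above.

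\emph{For part (1).} Closure under isomorphism is immediate. A structure $(V,(P_i)_i)\in\mathbb{K}_{\mathrm{flag},c,\mathbb{F}}$ is determined up to isomorphism by the tuple $(\dim_{\mathbb{F}} P_i(V)/P_{i+1}(V))_{1\leq i\leq c}\in\mathbb{N}^{c}$, so there are only countably many isomorphism types. HP is clear: a substructure $W$ of $V$ is exactly an $\mathbb{F}$-subspace, with each $P_i$ interpreted by $P_i(V)\cap W$, and these still form a flag of length $c+1$ from $W$ down to $0$. For AP, given embeddings $f_0:C\to A$ and $g_0:C\to B$, I would take the pushout of $\mathbb{F}$-vector spaces $S=(A\oplus B)/\{(f_0(c),-g_0(c)):c\in C\}$, with the induced maps $f_1:A\to S$, $g_1:B\to S$, and set $P_i(S)=f_1(P_i(A))+g_1(P_i(B))$. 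One then checks: $f_1,g_1$ are injective (from injectivity of $f_0,g_0$); $P_1(S)=S$ and $P_{c+1}(S)=0$, so $S\in\mathbb{K}_{\mathrm{flag},c,\mathbb{F}}$; and — the one point needing a short computation — $f_1^{-1}(P_i(S))=P_i(A)$ and likewise for $B$, so that $f_1,g_1$ are genuine $\LL_{\mathrm{flag},c,\mathbb{F}}$-embeddings. Here one uses that embeddings in this language must reflect the predicates, so that after identifying $C$ with its images we have $P_i(A)\cap C=P_i(C)=P_i(B)\cap C$. JEP follows since the zero structure embeds into every member of the class, and AP gives a common extension. Hence $\mathbb{K}_{\mathrm{flag},c,\mathbb{F}}$ is a Fraïssé class; write $\mathbf{L}_{\mathrm{flag},c,\mathbb{F}}$ for its limit.

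\emph{For part (2).} Since $\mathbb{F}$ is finite, $\LL_{\mathrm{flag},c,\mathbb{F}}$ is a finite language, and any substructure of $\mathbf{L}_{\mathrm{flag},c,\mathbb{F}}$ generated by $n$ elements lies in an $\mathbb{F}$-subspace of dimension $\leq n$, hence has at most $|\mathbb{F}|^{n}$ elements; so $\mathbf{L}_{\mathrm{flag},c,\mathbb{F}}$ is uniformly locally finite, and being a Fraïssé limit it is homogeneous. By the Hodges fact (Corollary 6.4.2 above), $T_{\mathrm{flag},c,\mathbb{F}}$ is $\aleph_{0}$-categorical with quantifier elimination. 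For $\omega$-stability I would argue directly from QE by counting types in the monster model $\mathbb{M}$: a complete $1$-type over a countable set $A$ is determined by which quantifier-free $\LL_{\mathrm{flag},c,\mathbb{F}}$-formulas over $A$ it contains, i.e.\ by whether the realizing element lies in $\mathrm{span}_{\mathbb{F}}(A)$ (and, if so, which element it equals), together with, for each $i\in\{1,\dots,c+1\}$, whether $\{a\in\mathrm{span}_{\mathbb{F}}(A):x-a\in P_i(\mathbb{M})\}$ is empty and, if not, which coset of $P_i(\mathbb{M})\cap\mathrm{span}_{\mathbb{F}}(A)$ it is. As $\mathrm{span}_{\mathbb{F}}(A)$ is countable, this is countably much data, so $|S_1(A)|\leq\aleph_0$, and an induction on tuple length ($\mathrm{tp}(b_1,\dots,b_n/A)$ decomposes along $A\subseteq Ab_1\subseteq\cdots$) gives $|S_n(A)|\leq\aleph_0$; hence $T_{\mathrm{flag},c,\mathbb{F}}$ is totally transcendental, in particular $\omega$-stable. (Alternatively one observes that $P_c(\mathbb{M})$ is strongly minimal and $\mathbb{M}$ is assembled from $c$ such pieces by definable extensions, giving finite Morley rank; equivalently, $T_{\mathrm{flag},c,\mathbb{F}}$ is bi-interpretable with the generic multi-sorted $\mathbb{F}$-vector space $(P_i/P_{i+1})_{1\leq i\leq c}$, a module over the finite ring $\mathbb{F}^{c}$, which is $\omega$-stable.)

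Everything here is routine linear algebra and standard Fraïssé/stability bookkeeping; the only slightly delicate point is the AP step, specifically verifying that the flag on the vector-space pushout restricts back to the prescribed flags on $A$ and $B$ — which is exactly the place where one invokes that an embedding in $\LL_{\mathrm{flag},c,\mathbb{F}}$ is automatically strong with respect to the predicates $P_i$.
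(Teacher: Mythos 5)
Your proposal is correct and follows essentially the same route as the paper: for (1) the paper also forms the amalgam as the vector-space pushout (direct sum over the common substructure) with $P_i$ interpreted as the span of $P_i(A)\cup P_i(B)$, and for (2) it also argues via uniform local finiteness and the cited Hodges criterion for $\aleph_0$-categoricity and QE, followed by a count of $1$-types over a countable model via cosets of the $P_j$'s. Your extra remarks about checking $f_1^{-1}(P_i(S))=P_i(A)$ and the alternative routes to $\omega$-stability are reasonable elaborations on the same argument rather than a different approach.
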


\begin{proof}
(1) To check that $\mathbb{K}_{\mathrm{flag},c,\mathbb{F}}$ is a Fra\"iss\'e class, we note that the hereditary property is easy and joint embedding follows from the amalgamation property since the trivial subspace is a subspace of all structures in $\mathbb{K}_{\mathrm{flag},c,\mathbb{F}}$. If $A,B,C$ are structures in $\mathbb{K}_{\mathrm{flag},c,\mathbb{F}}$ with $A \subseteq B,C$ and $B \cap C = A$, then we can define an amalgam $D$ by taking $D$ to be the $\mathbb{F}$-vector space which is the direct sum of $B$ and $C$ over $A$ interpreting $P_{i}(D)$ to be the span of $P_{i}(B) \cup P_{i}(C)$ for all $1 \leq i \leq c+1$. This proves that $\mathbb{K}_{\mathrm{flag},c,\mathbb{F}}$ is a Fra\"iss\'e class.

(2) Quantifier elimination and $\aleph_{0}$-categoricity follow from (1) and the fact that $\mathbb{K}_{\mathrm{flag},c,\mathbb{F}}$ is uniformly locally finite:  a structure in $\mathbb{K}_{\mathrm{flag},c,\mathbb{F}}$ generated by $n$ elements has cardinality at most $|\mathbb{F}|^{n}$.  Then $\omega$-stability follows easily by quantifier elimination.  Over a countable model $V$, there are at most $ \aleph_{0}$ non-algebraic $1$-types $p(x)$ which assert $x \in P_{i} \setminus P_{i+1}$, determined by specifying the coset $x + P_{j}(V) = v + P_{j}(V)$ or asserting that $x + P_{j}(V) \neq v + P_{j}(V)$ for all $v \in V$ and $i < j \leq c+1$.  Thus there are at most $\aleph_{0}$ many types over any countable set.   
\end{proof}

Recall that $\mathbf{L}_{c,\mathbb{F}}$, defined in Theorem \ref{Lcf has a fraisse limit}, is the Fra\"iss\'e limit of $\mathbb{L}_{c,\mathbb{F}}$, the class of $c$-nilpotent LLAs over $\mathbb{F}$ in the language $\LL_{c,\mathbb{F}}$, which properly contains $\LL_{\mathrm{flag},c, \mathbb{F}}$. 

\begin{lem}
The reduct of $\mathbf{L}_{c,\mathbb{F}}$ to the language $\LL_{\mathrm{flag},c, \mathbb{F}}$ is the Fra\"iss\'e limit of $\mathbb{K}_{\mathrm{flag}, c,\mathbb{F}}$. 
\end{lem}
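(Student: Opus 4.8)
The plan is to verify that the reduct of $\mathbf{L}_{c,\mathbb{F}}$ to $\LL_{\mathrm{flag},c,\mathbb{F}}$ is homogeneous with age exactly $\mathbb{K}_{\mathrm{flag},c,\mathbb{F}}$, and then invoke uniqueness of Fra\"iss\'e limits (Fra\"iss\'e's Theorem). Write $\mathbf{N}$ for the $\LL_{\mathrm{flag},c,\mathbb{F}}$-reduct of $\mathbf{L}_{c,\mathbb{F}}$. First I would identify the age of $\mathbf{N}$: a finitely generated $\LL_{\mathrm{flag},c,\mathbb{F}}$-substructure of $\mathbf{N}$ is just a finite-dimensional $\mathbb{F}$-subspace of $\mathbf{L}_{c,\mathbb{F}}$ closed under the $P_i$ (there is no Lie bracket to close under), equipped with the induced flag. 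Every such structure lies in $\mathbb{K}_{\mathrm{flag},c,\mathbb{F}}$. Conversely, given any $V \in \mathbb{K}_{\mathrm{flag},c,\mathbb{F}}$, one makes $V$ into an LLA by declaring the bracket identically zero; the flag $P_\bullet(V)$ is then a Lazard series (the containments $[P_i,P_j]=0\subseteq P_{i+j}$ hold trivially), so $V \in \mathbb{L}_{c,\mathbb{F}}$, hence $V$ embeds as an LLA into $\mathbf{L}_{c,\mathbb{F}}$, and the $\LL_{\mathrm{flag},c,\mathbb{F}}$-reduct of that embedding exhibits $V$ in the age of $\mathbf{N}$. Thus $\mathrm{age}(\mathbf{N}) = \mathbb{K}_{\mathrm{flag},c,\mathbb{F}}$.

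Next I would check homogeneity of $\mathbf{N}$. Let $\sigma : U \to U'$ be an isomorphism between finitely generated $\LL_{\mathrm{flag},c,\mathbb{F}}$-substructures of $\mathbf{N}$, i.e. a flag-preserving $\mathbb{F}$-linear bijection between two finite-dimensional subspaces. In general $\sigma$ need not respect the Lie bracket of $\mathbf{L}_{c,\mathbb{F}}$, so one cannot directly apply homogeneity of $\mathbf{L}_{c,\mathbb{F}}$. Instead I would use $\aleph_0$-categoricity and quantifier elimination of $T_{c,\mathbb{F}} = \Th(\mathbf{L}_{c,\mathbb{F}})$ (Corollary \ref{cor: limit existence}) together with quantifier elimination of $T_{\mathrm{flag},c,\mathbb{F}}$ (Lemma \ref{flags are stable}): it suffices to find, for each tuple $\bar u$ from $U$, some tuple in $\mathbf{N}$ realizing the same \emph{$\LL_{\mathrm{flag},c,\mathbb{F}}$}-quantifier-free type as $\sigma(\bar u)$ with the additional property that the linear/flag isomorphism extends — equivalently, to show that the partial map $\sigma$ has the extension property in $\mathbf{N}$. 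For this I would argue directly: given $U \subseteq \mathbf{N}$ finitely generated and $b \in \mathbf{N}$, the structure $\langle Ub\rangle$ is determined up to $\LL_{\mathrm{flag},c,\mathbb{F}}$-isomorphism over $U$ by the level $\ell = \lev(b)$ of $b$ over $U$ (that is, the least $i$ with $b \in P_i + U$) and by the coset data recorded in the amalgamation of Lemma \ref{flags are stable}(1); since $\mathbf{L}_{c,\mathbb{F}}$ is itself a Fra\"iss\'e limit realizing all one-point LLA extensions, in particular it realizes the zero-bracket one-point extension of $U$ at every admissible level, which is precisely the required $\LL_{\mathrm{flag},c,\mathbb{F}}$-extension. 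Running this back and forth gives that $\mathbf{N}$ is homogeneous.

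Finally, $\mathbf{N}$ is a countable homogeneous $\LL_{\mathrm{flag},c,\mathbb{F}}$-structure whose age is the Fra\"iss\'e class $\mathbb{K}_{\mathrm{flag},c,\mathbb{F}}$, so by the uniqueness clause of Fra\"iss\'e's Theorem it is the Fra\"iss\'e limit of $\mathbb{K}_{\mathrm{flag},c,\mathbb{F}}$. I expect the main obstacle to be the homogeneity step: a flag-linear isomorphism between substructures of $\mathbf{N}$ is typically \emph{not} an LLA isomorphism, so one cannot cite homogeneity of $\mathbf{L}_{c,\mathbb{F}}$ as a black box; the cleanest route is the extension-property argument above, using that the zero-bracket one-point extensions of an LLA are themselves members of $\mathbb{L}_{c,\mathbb{F}}$ and hence realized in $\mathbf{L}_{c,\mathbb{F}}$, so that every admissible $\LL_{\mathrm{flag},c,\mathbb{F}}$-extension of a finitely generated substructure of $\mathbf{N}$ is already present in $\mathbf{N}$. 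Alternatively, one can phrase the whole argument purely in terms of quantifier elimination: by Lemma \ref{flags are stable}, $T_{\mathrm{flag},c,\mathbb{F}}$ has QE and is $\aleph_0$-categorical, and one checks that $\mathbf{N} \models T_{\mathrm{flag},c,\mathbb{F}}$ by verifying the (finitely many, by QE) extension axioms, each of which follows from realizing the corresponding zero-bracket extension inside $\mathbf{L}_{c,\mathbb{F}}$.
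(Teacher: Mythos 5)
Your proof is correct, and the key algebraic idea coincides with the paper's: every flag structure in $\mathbb{K}_{\mathrm{flag},c,\mathbb{F}}$ (and every one-point flag extension of a finitely generated flag substructure of the reduct) can be realized inside $\mathbf{L}_{c,\mathbb{F}}$ by passing through an \emph{abelian} (zero-bracket) Lie algebra extension, which is automatically an LLA and hence embeds by homogeneity. The framing, however, is genuinely different. The paper invokes an external criterion --- \cite[Lemma~2.12]{RamseyCardInv} --- which reduces the statement ``the reduct of the Fra\"iss\'e limit is the Fra\"iss\'e limit of the reduct class'' to a single lifting condition: every $\LL_{\mathrm{flag},c,\mathbb{F}}$-embedding $\pi: A \to B$ together with an LLA $C$ generated by $A$ lifts to an LLA embedding $\tilde\pi: C \to D$ for some LLA $D$ generated by $B$. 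The paper then verifies this in one stroke via the abelian direct sum $D = F \oplus E$ with $E$ central. You instead verify from first principles that the reduct $\mathbf{N}$ has the correct age and the one-point extension property, then run the Fra\"iss\'e back-and-forth; this is more elementary (no citation needed) at the cost of spelling out the extension property and the classification of one-point flag extensions by hand. Both approaches are sound, and both ultimately hinge on the observation that zero-bracket one-point extensions of the LLA $\langle U\rangle$ restrict to the desired flag extensions of the flag substructure $U$.

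Two small imprecisions worth noting. First, your parenthetical defining the level of $b$ over $U$ as ``the least $i$ with $b \in P_i + U$'' is backwards: since $P_1 = $ everything, $b \in P_1 + U$ always holds. You want the \emph{greatest} $i$ with $b \in P_i + U$ (equivalently, the maximal level of a representative of the coset $b + U$). Second, when you speak of ``the zero-bracket one-point extension of $U$,'' what is meant is the zero-bracket one-point LLA extension of $\langle U\rangle$, the LLA generated by $U$ in $\mathbf{L}_{c,\mathbb{F}}$, since $U$ itself is only a flag substructure and need not be closed under the bracket; the required flag extension of $U$ is then recovered by intersecting with $U + \mathbb{F}b'$. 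Neither issue undermines the argument, but both should be stated precisely.
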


\begin{proof}
    By \cite[Lemma 2.12]{RamseyCardInv}, we must show that if $A,B \in \mathbb{K}_{\mathrm{flag},c,\mathbb{F}}$ and $\pi: A \to B$ is an $\LL_{\mathrm{flag},c,\mathbb{F}}$-embedding, then, if $C = \langle A \rangle$ is a $c$-nilpotent Lie algebra over $\mathbb{F}$ generated by $A$, then there is some  $D = \langle B \rangle$ generated by $B$ and an LLA embedding $\tilde{\pi} : C \to D$ extending $\pi$.  Write the vector space $B$ as the direct sum of $\pi(A) \oplus E$.  Given $C$, define a vector space $D = F \oplus E$ extending $\pi(A) \oplus E$ so that $F \supseteq \pi(A)$ has the same dimension as $C$ and define $\tilde{\pi} : C \to D$ to be the vector space embedding taking $C$ to $F$ extending $\pi$. Define  a bracket on $F$ by pushing forward the structure from $C$, i.e. defining $[\pi(c),\pi(c')] = \tilde{\pi}([c,c'])$ for all $c,c' \in C$.  Then define $[d,e] = 0$ when $d \in D$ and $e \in E$, i.e. we give $E$ the structure of an abelian Lie algebra and then $D$ is the abelian direct sum of the Lie algebras $F$ and $E$.  It is clear that $D$ is an LLA and $\tilde{\pi} : C \to D$, then, is an embedding of LLAs. 
\end{proof}

\subsubsection{Algebraic closure}

\begin{lem}
    In $T_{c,\mathbb{F}}$ we have $\mathrm{acl}(A) = \mathrm{dcl}(A) = \langle A \rangle$  for all sets $A\subseteq \mathbb{M}_{c,\mathbb{F}}$. 
\end{lem}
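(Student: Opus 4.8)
The plan is to prove the two trivial inclusions $\langle A\rangle\subseteq\dcl(A)\subseteq\acl(A)$ and then the reverse inclusion $\acl(A)\subseteq\langle A\rangle$, the latter being the only substantial point. For the trivial inclusions I would just note that $\langle A\rangle$, the sub-LLA of $\mathbb{M}_{c,\mathbb{F}}$ generated by $A$, consists precisely of the values of $\LL_{c,\mathbb{F}}$-terms (built from $+,-,0$, the scalars $\lambda\cdot$, and $[\cdot,\cdot]$) evaluated at finite tuples from $A$, and every such value is fixed by all automorphisms of $\mathbb{M}_{c,\mathbb{F}}$ fixing $A$ pointwise; and $\dcl\subseteq\acl$ always.

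For $\acl(A)\subseteq\langle A\rangle$, write $C=\langle A\rangle$ and fix $b\in\mathbb{M}_{c,\mathbb{F}}\setminus C$; I would show $b\notin\acl(A)$ by producing, for each $n$, at least $n$ pairwise distinct realizations of $\tp(b/A)$. Put $D=\langle Cb\rangle$, a small LLA of nilpotence class $\leq c$ over $\mathbb{F}$ containing $C$, and let $D_n$ be the $n$-fold iterated free amalgam $D\otimes_C\cdots\otimes_C D$, which exists by repeated application of Theorem \ref{thm:freeamalgamofarbitraryLLA} and is again an LLA of class $\leq c$ over $\mathbb{F}$ (each of its finitely generated sub-LLAs lying in $\mathbb{L}_{c,\mathbb{F}}$). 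Let $b_1,\dots,b_n$ be the images of $b$ under the $n$ structural embeddings $D\to D_n$, which all agree on $C$. Since the free amalgam is strong by Theorem \ref{thm:strongamalgam} — and its iteration in the proof of Theorem \ref{thm:freeamalgamofarbitraryLLA} preserves strongness — the copies of $D$ inside $D_n$ pairwise intersect exactly in $C$; as $b\notin C$ we get $b_i\notin C$ for each $i$, hence the $b_i$ are pairwise distinct. Moreover each structural embedding restricts to an isomorphism $D\to\langle Cb_i\rangle$ fixing $C$ pointwise.

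The last step is to embed $D_n$ into $\mathbb{M}_{c,\mathbb{F}}$ over $C$. This is legitimate because $T_{c,\mathbb{F}}$ is $\aleph_0$-categorical with quantifier elimination (Corollary \ref{cor: limit existence}), so its universal theory asserts exactly that all finitely generated substructures belong to $\mathbb{L}_{c,\mathbb{F}}$; any LLA of class $\leq c$ over $\mathbb{F}$ satisfies this, hence embeds into a model of $T_{c,\mathbb{F}}$, and then, by quantifier elimination together with saturation of the monster, one realizes $D_n$ inside $\mathbb{M}_{c,\mathbb{F}}$ with its distinguished copy of $C$ identified with $C\subseteq\mathbb{M}_{c,\mathbb{F}}$. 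Then $\langle Cb_i\rangle\cong_C\langle Cb\rangle$ together with quantifier elimination give $b_i\equiv_C b$, hence $b_i\equiv_A b$ for all $i\leq n$; so $\tp(b/A)$ has at least $n$ realizations for every $n$, and therefore $b\notin\acl(A)$. Since $b$ was an arbitrary element outside $C$, this yields $\acl(A)=\dcl(A)=\langle A\rangle$.

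The main thing to be careful about is this last embedding — realizing $D_n$, with its prescribed copy of $C$, inside the monster — together with the bookkeeping that makes strongness of the iterated free amalgam genuinely force the $b_i$ to be distinct; neither point is deep, but both lean essentially on the amalgamation results of Section \ref{sec:freeamalgamation} and on quantifier elimination, so I would keep those citations explicit. (One could alternatively phrase the argument via the independence relation $\indi\otimes$ and its strongness/full-existence properties, but the direct construction above is self-contained given what has already been proved.)
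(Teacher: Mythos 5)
Your proof is correct and follows essentially the same strategy as the paper's: given $b \notin \langle A\rangle$, freely amalgamate copies of $\langle Ab\rangle$ over $\langle A\rangle$ (the paper does this inductively inside the monster; you build the iterated amalgam $D_n$ abstractly and then embed it over $C = \langle A \rangle$), invoke strongness of the free amalgam to make the resulting copies of $b$ pairwise distinct, and use quantifier elimination to see that they all realize $\tp(b/A)$. The only cosmetic difference is your explicit discussion of the embedding of the abstract amalgam into $\mathbb{M}_{c,\mathbb{F}}$ over $C$, which the paper leaves implicit.
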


\begin{proof}
    Pick $c \in \mathbb{M}_{c,\mathbb{F}} \setminus \langle A \rangle$. We let $B_{0} = \langle c,A \rangle$ and inductively pick $B_{i}$ such that $B_{i} \cong_{A} B_0$ and $\langle B_{\leq i} \rangle \cong B_{i} \otimes_{A} \langle B_{<i} \rangle$ be the amalgam over $A$. For each $i < \omega$, we can pick some isomorphism $\sigma_{i} : B_0 \to B_{i}$ and define $c_{i} = \sigma_{i}(c)$. By quantifier-elimination, we have $c_{i} \equiv_{A} c$ for all $i$, and, since each $B_{i}$ is freely amalgamated with $\langle B_{<i} \rangle$ over $A$, we get $B_{i} \cap B_{j} = \langle A \rangle$ for all $i \neq j$.  Therefore $c_{i} \neq c_{j}$ for all $i \neq j$ which shows $c \not\in \mathrm{acl}(A)$. Since $\langle A \rangle \subseteq \mathrm{dcl}(A) \subseteq \mathrm{acl}(A)$ always holds, the conclusion follows. 
\end{proof}

\subsection{SOP$_{3}$}  In this section, we will prove that the theories $T_{c,\mathbb{F}}$ have SOP$_{3}$, when $c \geq 3$.  Note that this is a marked jump in complexity from the $2$-nilpotent case analyzed in Section \ref{nil-2}, though we will show in the next subsection that $T_{c,\mathbb{F}}$ is NSOP$_{4}$ for all $c$. 
  
\begin{defn}
 Suppose $n \geq 3$.  We say a theory $T$ has SOP$_{n}$ ($n$-strong order property), if there is some type $p(x,y)$ and an indiscernible sequence $(a_{i})_{i < \omega}$ satisfying the following:
 \begin{itemize}
     \item $(a_{i},a_{j}) \models p \iff i < j$. 
     \item $p(x_{0},x_{1}) \cup p(x_{1},x_{2}) \cup \ldots \cup p(x_{n-2},x_{n-1}) \cup p(x_{n-1},x_{0})$ is inconsistent. 
 \end{itemize}
We say that $T$ is NSOP$_n$ if it does not have SOP$_n$.
\end{defn}

\begin{remark}\label{rk:NSOPnNSOPn+1}
    If $T$ is NSOP$_n$ then $T$ is NSOP$_{n+1}$. Indeed if $T$ is NSOP$_n$, then for any indiscernible sequence $(a_i)_{i<\omega}$ there are $c_1,\ldots, c_n$ such that $c_i c_{i+1}\equiv a_0a_1$ for $i< n$ and $c_n c_1\equiv a_0a_1$. Then $c_1c_2\equiv a_0a_1\equiv a_0a_2$ hence there exists $c_{\frac{3}{2}}$ such that $c_1c_{\frac{3}{2}} c_2 \equiv a_0a_1a_2$ and $c_1,c_{\frac{3}{2}}, c_2,\ldots,c_n$ witness NSOP$_{n+1}$.
\end{remark}

The following is \cite[Fact 1.3]{shelah2008more}. Note that the original third condition there of $\{\varphi(x;y), \psi(x;y)\}$ being  contradictory, is superfluous, whence we avoid it below.  

\begin{fact} \cite[Fact 1.3]{shelah2008more}
The theory $T$ has SOP$_{3}$ if and only if there are formulas $\varphi(x;y)$ and $\psi(x;y)$, as well as an indiscernible sequence $(a_{i})_{i < \omega}$ satisfying the following:
\begin{itemize}
    \item For all $k < \omega$ the set $\{\varphi(x;a_{i}) : i \leq k\} \cup \{\psi(x;a_{i}) : i > k\}$ is consistent. 
    \item For all $i < j$ the set $\{\psi(x;a_{i}), \varphi(x;a_{j})\}$ is inconsistent. 
\end{itemize}
\end{fact}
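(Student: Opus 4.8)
The plan is to prove both directions of the equivalence. For the implication from SOP$_{3}$ to the formula configuration, I would start with a type $p(x,y)$ and an indiscernible sequence $(a_i)_{i<\omega}$ with $(a_i,a_j)\models p\iff i<j$ and with $p(x_0,x_1)\cup p(x_1,x_2)\cup p(x_2,x_0)$ inconsistent. By compactness I can replace $p$ by a single formula: choose a finite $p_0\subseteq p$ such that $\{\theta(x_0,x_1),\theta(x_1,x_2),\theta(x_2,x_0)\}$ is already inconsistent, where $\theta:=\bigwedge p_0$. Then I would set $\varphi(x;y):=\theta(y,x)$ and $\psi(x;y):=\theta(x,y)$ and keep the sequence $(a_i)_{i<\omega}$. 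The inconsistency clause is then immediate, since a realization $b$ of $\{\psi(x;a_i),\varphi(x;a_j)\}$ with $i<j$ would make $(a_i,a_j,b)$ satisfy $\theta(x_0,x_1)$ (using $(a_i,a_j)\models p\supseteq p_0$), $\theta(x_1,x_2)$ and $\theta(x_2,x_0)$, contradicting the choice of $\theta$. For the consistency clause I would show that $\{\theta(a_i,x):i\le k\}\cup\{\theta(x,a_i):i>k\}$ is finitely satisfiable: given finitely many indices split by the cut as $i_1<\cdots<i_m\le k<i_{m+1}<\cdots<i_r$, insert a new element $b$ so that $(a_{i_1},\ldots,a_{i_m},b,a_{i_{m+1}},\ldots,a_{i_r})$ realizes the Ehrenfeucht--Mostowski type of an increasing $(r+1)$-tuple; every required instance of $\theta$ is then its value on some increasing pair, hence holds.

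For the converse, I would begin with $\varphi$, $\psi$ and an indiscernible sequence $(a_i)_{i<\omega}$ as in the statement, and first observe $a_i\ne a_j$ for $i\ne j$ (otherwise the sequence is constant with value $a$, and the cut-realizer at $0$ realizes $\{\varphi(x;a),\psi(x;a)\}=\{\varphi(x;a_0),\psi(x;a_1)\}$, contradicting the inconsistency clause). For each $k$ I fix in the monster an element $b_k$ realizing $\{\varphi(x;a_i):i\le k\}\cup\{\psi(x;a_i):i>k\}$ and form the pairs $d_k:=(a_k,b_k)$; note that for $n<m$ both $\varphi(b_m;a_n)$ (since $n\le m$) and $\psi(b_n;a_m)$ (since $m>n$) hold, a pattern visible on every increasing pair of $(d_k)_k$. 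Using Ramsey's theorem and compactness I would extract an indiscernible sequence $(a'_nb'_n)_{n<\omega}$ whose Ehrenfeucht--Mostowski type is realized cofinally often by increasing tuples of $(d_k)_k$; then the $\varphi/\psi$-pattern descends to it and $(a'_n,a'_m)\equiv(a_0,a_1)$ whenever $n<m$. I then put $p:=\tp(a'_0b'_0,a'_1b'_1)$, a type in two tuples of length $|a_i|+|b_k|$.

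I would then check that $p$ together with $(a'_nb'_n)_{n<\omega}$ witnesses SOP$_{3}$. That $(a'_nb'_n,a'_mb'_m)\models p\iff n<m$ holds by indiscernibility for $n<m$, by $a'_0\ne a'_1$ for $n=m$, and for $n>m$ because $p\ne\tp(a'_1b'_1,a'_0b'_0)$: equality would force (by indiscernibility) both $\varphi(b'_i;a'_j)$ and $\psi(b'_i;a'_j)$ for all $i\ne j$, so $b'_0$ would realize $\{\psi(x;a'_1),\varphi(x;a'_2)\}$, which is inconsistent by the hypothesis applied to $(a'_1,a'_2)\equiv(a_0,a_1)$. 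For the $3$-cycle, if there were $u_l=(v_l,w_l)$ for $l<3$ with $(u_0,u_1)$, $(u_1,u_2)$, $(u_2,u_0)$ each realizing $p$, then reading off coordinates gives $\varphi(w_1;v_0)$ from $(u_0,u_1)\models p$ and $\psi(w_1;v_2)$ from $(u_1,u_2)\models p$, while $(u_2,u_0)\models p$ gives $(v_2,v_0)\equiv(a_0,a_1)$; so $\{\psi(x;v_2),\varphi(x;v_0)\}$ is inconsistent, contradicting $w_1$.

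The step I expect to be the main obstacle is the reverse direction: the delicate points are choosing the pairing $d_k=(a_k,b_k)$ so that the $\varphi/\psi$-pattern is genuinely two-sided, running the Ramsey extraction while keeping track that both the pattern and the relation $(a'_n,a'_m)\equiv(a_0,a_1)$ persist, and then the bookkeeping of which coordinate of $p$ encodes which instance of $\varphi$ or $\psi$ — both when unwinding the $3$-cycle and when establishing the asymmetry $p\ne\tp(a'_1b'_1,a'_0b'_0)$. The forward direction is routine once the single formula $\theta$ has been isolated.
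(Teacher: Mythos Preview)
The paper does not prove this statement: it is recorded as a \emph{Fact} with a citation to \cite[Fact 1.3]{shelah2008more} and no argument is given. There is therefore nothing in the paper to compare your proof against.

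That said, your proof is correct and follows the standard route. In the forward direction your reduction to a single formula $\theta$ via compactness and the choice $\varphi(x;y)=\theta(y,x)$, $\psi(x;y)=\theta(x,y)$ is exactly right; the ``insert $b$ into the sequence'' step is just the observation that any increasing $(r+1)$-tuple from the (stretchable) indiscernible sequence realizes the required instances of $\theta$. In the reverse direction your pairing $d_k=(a_k,b_k)$, Ramsey extraction, and the bookkeeping on $p=\tp(a_0'b_0',a_1'b_1')$ all go through; the asymmetry argument and the $3$-cycle unwinding are handled correctly. One minor point worth making explicit: when you argue that $\{\psi(x;v_2),\varphi(x;v_0)\}$ is inconsistent, you are using that the inconsistency hypothesis transfers along the automorphism witnessing $(v_2,v_0)\equiv(a_0,a_1)$, which is immediate but perhaps deserves a word.
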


\begin{lem} \label{2-nil construction}
Let $\mathbb{F}$ be a field. Suppose $V$ and $W$ are $\mathbb{F}$-vector spaces and $[\cdot,\cdot]_{0}:V^{2} \to W$ is an alternating bilinear map.  Define a map $[\cdot,\cdot]:(V \oplus W)^{2} \to (V \oplus W)$ by 
$$
[(v,w), (v',w')] = (0,[v,v']_{0})
$$
for all $v,v' \in V$, $w,w' \in W$. Then $[\cdot,\cdot]$ gives $V \oplus W$ the structure of a $2$-nilpotent Lie algebra over $\mathbb{F}$.
\end{lem}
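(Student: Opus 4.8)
This is a routine verification lemma. The plan is to check the three defining axioms of a Lie algebra for the bracket $[\cdot,\cdot]$ on $V \oplus W$: alternativity, bilinearity, and the Jacobi identity, and then to observe that the resulting algebra is $2$-nilpotent by computing its lower central series.

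\textbf{Alternativity.} First I would check $[(v,w),(v,w)] = (0,[v,v]_0) = (0,0)$, which is immediate from the alternativity of $[\cdot,\cdot]_0$.

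\textbf{Bilinearity.} Next, bilinearity of $[\cdot,\cdot]$ follows coordinatewise from the bilinearity of $[\cdot,\cdot]_0$: for the first argument, $[(v_1,w_1)+(v_2,w_2),(v',w')] = [(v_1+v_2,w_1+w_2),(v',w')] = (0,[v_1+v_2,v']_0) = (0,[v_1,v']_0+[v_2,v']_0) = [(v_1,w_1),(v',w')]+[(v_2,w_2),(v',w')]$, and compatibility with scalar multiplication $[\mu(v,w),(v',w')] = (0,[\mu v,v']_0) = (0,\mu[v,v']_0) = \mu[(v,w),(v',w')]$; the second argument is symmetric.

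\textbf{Jacobi identity.} This is the step to watch, though here it is trivial: for any three elements of $V \oplus W$, each iterated bracket $[x,[y,z]]$ has the form $[x,(0,\ast)]$, and since the bracket only sees the $V$-component of its arguments and the $V$-component of $[y,z]$ is $0$, we get $[x,[y,z]] = (0,0)$. Hence the Jacobi sum is $(0,0)+(0,0)+(0,0) = (0,0)$. I would phrase this as: the image of $[\cdot,\cdot]$ is contained in $0 \oplus W$, and $[0 \oplus W, V \oplus W] = 0$, so all triple brackets vanish.

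\textbf{$2$-nilpotence.} Finally, to see the algebra is $2$-nilpotent (of class $\le 2$), I would compute $L_2 = [L,L] \subseteq 0 \oplus W$ and $L_3 = [L,L_2] \subseteq [V \oplus W, 0 \oplus W] = 0$, so $L_3 = 0$, giving nilpotence class at most $2$ as claimed. (If one wants class exactly $2$, one additionally assumes $[\cdot,\cdot]_0 \neq 0$, but the statement only asserts "the structure of a $2$-nilpotent Lie algebra," which I read as class $\le 2$; I would match the paper's convention here.) There is no real obstacle in this lemma — the only thing to be careful about is bookkeeping the pairs and making the observation that the image of the bracket lands in the central summand $0 \oplus W$, which makes both Jacobi and nilpotence immediate.
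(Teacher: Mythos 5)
Your proof is correct and follows essentially the same approach as the paper's: the key observation in both is that the image of the bracket lands in $0 \oplus W$, which is central, so all iterated brackets $[x,[y,z]]$ vanish, giving the Jacobi identity and $2$-nilpotence simultaneously. The paper's proof is simply a more compressed version of yours, taking bilinearity and alternativity as evidently inherited from $[\cdot,\cdot]_0$ without writing out the coordinatewise check.
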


\begin{proof}
Clearly $[\cdot,\cdot]$ is an alternating bilinear map since $[\cdot,\cdot]_{0}$ is.  Moreover, 
$$
[x,[y,z]] = 0
$$
for all $x,y,z \in V \oplus W$ so the Jacobi identity is trivially satisfied and the resulting Lie algebra is $2$-nilpotent.
\end{proof}

\begin{thm}
Assume $c \geq 3$. The theory $T_{c,\mathbb{F}}$ has SOP$_{3}$.   
\end{thm}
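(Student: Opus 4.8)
The goal is to exhibit, in some $\mathbf{L}_{c,\mathbb{F}}$ (with $c\geq 3$), a quantifier-free type $p(x,y)$ and an indiscernible sequence witnessing SOP$_3$ via the Shelah criterion with two formulas $\varphi,\psi$. Since the theory has quantifier elimination, everything can be checked at the level of finite LLAs and the amalgamation/freeness machinery of Section \ref{sec:freeamalgamation}. The basic idea is to use the third term of a Lazard series to ``store'' a bilinear-form pattern in a way that linearly orders its arguments. Concretely, I would work in the generic $3$-nilpotent LLA (or any $c\geq 3$, padding with higher levels that do nothing), and take an indiscernible sequence $(a_i)_{i<\omega}$ of tuples, where each $a_i$ is a short tuple living in $P_1$, chosen so that the brackets $[a_i,a_j]$ land in $P_2$ and the double brackets $[[a_i,a_j],a_k]$ live in $P_3$. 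The point of going to nilpotence class $3$ rather than $2$ is exactly that in the $2$-nilpotent case (Lemma \ref{2-nil construction}) all triple brackets vanish, so one cannot encode an asymmetric ``witness'' relation — this is the content of the remark that $c\geq 3$ is a genuine jump.

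The concrete construction I would try: let each $a_i = (e_i, f_i)$ be a pair of elements of level $1$, and let $z$ be a fixed element of level $1$ (or put $z$ into the base). Set the formula $\varphi(x; a_i)$ to say roughly ``$[x, e_i] = 0$'' (or that some bracket involving $x$ and $a_i$ lies in a prescribed coset of $P_3$) and $\psi(x; a_i)$ to say ``$[[x, z], f_i] = $ (something nonzero/prescribed)''. The indiscernible sequence should be set up so that a single realization $x = b_k$ can satisfy $\varphi(x;a_i)$ for $i \le k$ and $\psi(x;a_i)$ for $i > k$ simultaneously — this is where I would invoke the free amalgam: realize $b_k$ as generating a basic extension $\langle C b_k\rangle$ of the LLA $C$ generated by the whole indiscernible sequence, with the brackets $[b_k, a_i]$ prescribed exactly as needed (zero on one side of $k$, nonzero on the other), using Theorem \ref{thm:existence_strong_amalgam_monogenous} / Corollary \ref{cor:characterisationBaudischfreeamalgammonogeneous} to guarantee that such an extension is a genuine LLA of class $\le c$ and hence lies in the class. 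The consistency of $\{\varphi(x;a_i): i\le k\}\cup\{\psi(x;a_i): i>k\}$ then follows from quantifier elimination once one such finite LLA is constructed. For the inconsistency of $\{\psi(x;a_i),\varphi(x;a_j)\}$ when $i<j$: here one uses that $\varphi(x;a_j)$ forcing some bracket to be zero, together with $\psi(x;a_i)$ forcing a related bracket to be nonzero, become incompatible because of a Jacobi-identity / bilinearity relation that links the two — the ordering $i<j$ must be built into how the $a_i$'s bracket with one another (e.g. $[e_j, f_i]$ is zero precisely when $i<j$, or a triangularity condition of that shape), so that no $x$ can thread the needle.

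The steps in order: (1) fix $c\geq 3$ and describe the ambient finite LLAs; (2) write down the explicit finite LLA $A_k$ over $\mathbb{F}$ on a list of basis elements — the $e_i,f_i$ for $i$ in a finite window, a single candidate $b=b_k$, plus the brackets they generate — with brackets defined by the pattern above, and verify it is alternating, bilinear, satisfies Jacobi, and is nilpotent of class $\le c$ with the declared Lazard series (this is the ``direct construction'' the introduction promises, and it's essentially a bookkeeping check, possibly aided by realizing $A_k$ as a quotient of a free Lazard Lie algebra $F_c(X,\alpha)$ as in Stage I); (3) check that as $k$ varies the tuples $(a_i)$ can be arranged to form a single LLA $C$ with all the $A_k$'s embedding over $C$ compatibly — this uses the Fra\"iss\'e limit and free amalgamation, and automatically gives the indiscernible sequence by Ramsey + compactness + quantifier elimination, after verifying the pattern is invariant under order-automorphisms of the index set; (4) read off consistency of the ``$\varphi$ below, $\psi$ above'' set from the existence of $b_k\in A_k$; (5) verify inconsistency of $\{\psi(x;a_i),\varphi(x;a_j)\}$ for $i<j$ by a short Jacobi/bilinearity computation inside any LLA containing $a_i,a_j$.

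The main obstacle is step (2)/(5): finding the right bracket pattern so that (a) a ``threshold'' realization $b_k$ exists satisfying all the $\varphi$'s below $k$ and all the $\psi$'s above, yet (b) no single $x$ can satisfy $\psi(x;a_i)$ and $\varphi(x;a_j)$ for $i<j$. The tension is that $\varphi$ and $\psi$ must constrain \emph{overlapping} bracket data (otherwise (b) fails), but not so rigidly that (a) fails — and the asymmetry forcing $i<j$ rather than $i\neq j$ has to come from an asymmetry in how the $a_i$ bracket among themselves, which must still be realizable inside an actual class-$\le c$ LLA. Getting a pattern that is simultaneously realizable (via free amalgamation, so we need it to be a legitimate LLA configuration, which Jacobi can obstruct) and genuinely order-encoding is the delicate part; I expect the cleanest choice involves a ``staircase'' in $P_2$, e.g.\ arranging that $[e_i,e_j]$ or $[[e_i,e_j],z]$ vanishes exactly for $i<j$, and having $\varphi,\psi$ probe a double bracket $[[x,z],e_i]$ whose value is pinned down by Jacobi once $[x,e_i]$ and $[x,e_j]$ are both constrained.
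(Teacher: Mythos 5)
You have the right shape of argument in outline---an indiscernible sequence of short tuples, two formulas $\varphi,\psi$ probing brackets, a ``threshold'' realization for consistency, and a Jacobi computation for inconsistency---but your proposal stops exactly at the crux and does not supply it. Your steps (2) and (5), which you yourself flag as ``the delicate part,'' are left open: you offer a family of candidate patterns (e.g.\ ``$[e_j,f_i]$ vanishes precisely for $i<j$,'' a formula saying ``$[[x,z],f_i]$ is nonzero'') without verifying that any of them is simultaneously realizable as a legitimate LLA and genuinely order-encoding. That verification is the entire mathematical content of the proof, so as written this is a plan rather than a proof.

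The paper's construction is also cleaner than the candidates you float, and a few differences are worth noting. First, the indiscernible tuples are $c_i=(a_i,a'_i,b_i,b'_i)$ rather than pairs: the level-$2$ elements $a'_i,b'_i$ are \emph{inside} the tuple, so the formulas become single-bracket equations $\varphi(x;y,y',z,z')\equiv [x,z]=z'$ and $\psi(x;y,y',z,z')\equiv [x,y]=y'$, not the double brackets $[[x,z],f_i]$ you suggest. Second, the order is encoded in a level-$3$ element $d_{i,j}:=[a'_i,b_j]$, declared \emph{nonzero} precisely for $i<j$ (the opposite polarity from your ``zero for $i<j$'' candidate); all other brackets of basis elements vanish. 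Third---and this is the reason the construction is obviously realizable---the resulting Lie algebra $V$ is only $2$-nilpotent (built via Lemma \ref{2-nil construction}, so Jacobi holds trivially), yet it carries a length-$3$ Lazard flag with $a_i,b_i\in P_1$, $a'_i,b'_i\in P_2$, $d_{i,j}\in P_3$, placing it in $\mathbb{L}_{3,\mathbb{F}}$. Consistency of $\{\varphi(x;c_i):i\le k\}\cup\{\psi(x;c_j):j>k\}$ is then seen by adjoining $c_*$ with $[c_*,b_i]=b'_i$ ($i\le k$) and $[c_*,a_j]=a'_j$ ($j>k$), again a $2$-nilpotent extension; no invocation of Theorem \ref{thm:existence_strong_amalgam_monogenous} or the free-amalgamation machinery of Section \ref{sec:freeamalgamation} is needed. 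For inconsistency of $\{\psi(x;c_i),\varphi(x;c_j)\}$ with $i<j$: if $[d,a_i]=a'_i$ and $[d,b_j]=b'_j$, then $d_{i,j}=[a'_i,b_j]=[[d,a_i],b_j]$, which Jacobi rewrites in terms of $[b'_j,a_i]$ and $[d,[a_i,b_j]]$, both zero, forcing $d_{i,j}=0$, a contradiction. Your intuition that Jacobi forces the direction of the order is correct, but you need the nonzero element $d_{i,j}$ sitting in $P_3$ to make the contradiction land.
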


\begin{proof}
It suffices to show that $T_{3,\mathbb{F}}$ has SOP$_{3}$, since the $3$-nilpotent LLA $V$ we construct below may also be regarded as a $c$-nilpotent LLA for any $c \geq 3$ with the interpretation $P_{i}(V) = 0$ for all $c > 3$, and hence may be embedded into $\mathbb{M}_{c,\mathbb{F}}$ as well. 

Let $V$ be an $\mathbb{F}$-vector space with basis $X = \{a_{i}, a'_{i}, b_{i}, b'_{i}, d_{i,j} : i < j < \omega\}$. Define an alternating bilinear map $[\cdot,\cdot]:V^{2} \to V$ by $[a'_{i},b_{j}] =  d_{i,j}$ for $i < j$ and $[x,y] = 0$ for all $x,y \in X$ such that $\{x,y\} \neq \{a'_{i},b_{j}\}$ for all $i < j$. If we define $V_{0} = \langle a_{i},a'_{i},b_{i},b'_{i} : i < \omega \rangle$ and $V_{1} = \langle d_{i,j} : i < j < \omega\rangle$, we have $V = V_{0} \oplus V_{1}$ and we may view the map $[\cdot,\cdot] : V^{2} \to V$ as a map induced from the alternating bilinear map $[\cdot,\cdot]_{0}:V_{0}^{2} \to V_{1}$ by $[a'_{i},b_{j}]_{0} =  d_{i,j}$ for $i < j$ and $[x,y]_{0} = 0$ for $\{x,y\} \neq \{a'_{i},b_{j}\}$ for all $i < j$, by Lemma \ref{2-nil construction}.  Therefore $(V,[\cdot,\cdot])$ is a $2$-nilpotent Lie algebra.  We give $V$ a flag structure by interpreting $P_{1},\ldots, P_{4}$ by 
$$
P_{1}(V) = V \supseteq P_{2}(V) = \langle a'_{i},b'_{i},d_{i,j} : i < j < \omega \rangle \supseteq P_{3}(V) = \langle d_{i,j} : i < j < \omega \rangle,
$$
and $P_{4}(V) = 0$. It is easy to check $[P_{i}(V),P_{j}(V)] \subseteq P_{i+j}(V)$ for all $i,j$. Thus, we may regard $V$ as a substructure of $\mathbb{M}_{3,\mathbb{F}}$.  Let $c_{i} = (a_{i},a'_{i},b_{i},b'_{i})$ for all $i < \omega$. It is immediate from the quantifier elimination that $I = (c_{i})_{i < \omega}$ is an indiscernible sequence.


\begin{tikzpicture}
    \fill[airforceblue!30] (3,0.2) rectangle (9,-0.2);
    \draw[dashed, thick] (3,0) -- (9,0); 
    \node at (2.3,0) {$(d_{ij})_{i<j}$};
    \fill[purple!20] (3,0.3) rectangle (9,0.7); 
    \draw[dashed, thick] (3,0.5) -- (9,0.5);
    \node at (2.3,0.5) {$(b'_i)$};
    \coordinate[label=left:\textcolor{red}{$b'_l$}] (B) at (7,0.25);
    \fill[red] (7.15,0.5) circle (2pt);
    \fill[purple!20] (3,0.8) rectangle (9,1.2);
    \draw[dashed, thick] (3,1) -- (9,1);
    \node at (2.3,1) {$(a'_i)$};
    \coordinate[label=left:\textcolor{red}{$a'_l$}] (B) at (7,0.75);
    \fill[red] (7.15,1) circle (2pt);
    \fill[purple!20] (3,1.3) rectangle (9,1.7);
    \draw[dashed, thick] (3,1.5) -- (9,1.5);
    \node at (2.3,1.5) {$(b_i)$};
    \coordinate[label=left:\textcolor{red}{$b_l$}] (B) at (7,1.25);
    \fill[red] (7.15,1.5) circle (2pt);
    \fill[purple!20] (3,1.8) rectangle (9,2.2);
    \draw[dashed, thick] (3,2) -- (9,2);
    \node at (2.3,2) {$(a_i)$};
    \coordinate[label=left:\textcolor{red}{$a_l$}] (B) at (7,1.75);
    \fill[red] (7.15,2) circle (2pt);
    \draw[red,thick] (7.15,1.25) ellipse (8pt and 30pt);
    \node at (7.15,2.5) {\textcolor{red}{$\bar{c_l}$}}; 
    \draw [decorate, decoration = {brace}, thick, purple!50] (1.7,0.3) -- (1.7,2.2);
    \draw [decorate, decoration = {brace}, thick, airforceblue] (1.7,-0.2) -- (1.7,0.2);
    \node at (1.3, 1.25) {\textcolor{purple!50}{$V_0$}};
    \node at (1.3, 0.65) {$\oplus$};
    \node at (1.3, 0) {\textcolor{airforceblue}{$V_1$}};
    \draw [decorate, decoration = {brace}, thick] (1.1,-0.2) -- (1.1,1.5);
    \node at (0.5, 0.65) {$V = $};
    \draw[decorate, thick] (9.2, -0.2) -- (9.2, 0.2);
    \node at (9.45, 0) {$F_3$};
    \draw[decorate, thick] (9.7, -0.2) -- (9.7, 1.2);
    \node at (9.95, 0.5) {$F_2$};
    \draw[decorate, thick] 
    (10.2, -0.2) -- (10.2, 2.2);
    \node at (10.45, 1) {$F_1$};
    \coordinate (a) at (5.5,1);
    \fill[green!60!black] (5.5,1) circle (2pt);
    \coordinate (a) at (6.3,1.5);
    \fill[green!60!black] (6.3,1.5) circle (2pt);
    \coordinate[label=below:\textcolor{green!60!black}{$a'_i$}] (a) at (3.5,1);
    \fill[green!60!black] (3.5,1) circle (2pt);
    \coordinate[label=above:\textcolor{green!60!black}{$b_j$}] (a) at (4.9,1.5);
    \fill[green!60!black] (4.9,1.5) circle (2pt);
    \coordinate (a) at (8,1);
    \fill[green!60!black] (8,1) circle (2pt);
    \coordinate (a) at (8.7,1.5);
    \fill[green!60!black] (8.7,1.5) circle (2pt);
    \draw[green!60!black, thick] (3.5,1) -- (4.9,1.5);
    \draw[green!60!black, thick] (5.5,1) -- (6.3,1.5);
    \draw[green!60!black, thick] (8,1) -- (8.7,1.5);
    \draw[->, green!60!black, thick] (4.2, 1.25) -- (4.2, 0.1);
    \coordinate[label=below:\textcolor{green!60!black}{$d_{ij}=[a'_i,b_j]$}] (a) at (4.2,0);
    \fill[green!60!black] (4.2,0) circle (2pt);
    \draw[->, green!60!black, thick] (5.9, 1.25) -- (5.9, 0.1);
    \fill[green!60!black] (5.9,0) circle (2pt);
    \draw[->, green!60!black, thick] (8.36, 1.25) -- (8.35, 0.1);
    \fill[green!60!black] (8.35,0) circle (2pt);
    \end{tikzpicture}

We will define two formulas $\varphi(x;y,y',z,z') = [x,z] = z'$ and $\psi(x,y,y',z,z') = [x,y] = y'$. We will show that $\varphi$, $\psi$, and $I$ witness SOP$_{3}$. 

\textbf{Claim 1}:  For all $k < \omega$, 
$$
\{\varphi(x;c_{i}) : i \leq k\} \cup \{\psi(x;c_{j}) : j > k\}
$$
is consistent. 

\emph{Proof of Claim}:  Let $W = (W,[\cdot,\cdot],W_{1},\ldots, W_{4})$ be the substructure generated by
$$
\{b_{i},b'_{i} : i \leq k\} \cup \{a_{j},a'_{j} : j > k\}.
$$
Note that, by the construction of $V$, $W$ is just the span of these vectors together with a trivial Lie bracket (i.e. $[x,y] = 0$ for all $x,y \in W$). 

Let $W_{*}$ be an $\mathbb{F}$-vector space spanned by $\{b_{i} : i \leq k\} \cup \{a_{j} : j > k\} \cup \{c_{*}\}$ where $c_{*}$ is a new basis element. Let $W_{**}$ be the $\mathbb{F}$-vector space spanned by $\{b_{i}': i \leq k\} \cup \{a'_{j} : j > k\}$. Define an alternating bilinear map $[\cdot,\cdot]_{*} : W_{*} \to W_{**}$ by 
$$
\begin{matrix}
[c_{*},b_{i}]_{*} &=& b'_{i} & \text{ for }i \leq k \\
[c_{*},a_{i}]_{*} &=& a'_{i} & \text{ for } i > k \\
[x,y]_{*} &=& 0 & \text{ for } x,y \in \{b_{i} : i \leq k\} \cup \{a_{i} : i > k\}.
\end{matrix}
$$
Form $\tilde{W} = W_{*} \oplus W_{**}$ and let $\tilde{[\cdot,\cdot]}$ be the Lie algebra induced by $[\cdot,\cdot]_{*}$, via application of Lemma \ref{2-nil construction}.  Note that $\tilde{W}$ may be naturally viewed as an extension of $W$, with the flag defined by $P_{1}(\tilde{W}) = \tilde{W}$ and $P_{i}(\tilde{W}) = P_{i}(W)$ for $i = 2,3,4$. By quantifier elimination, we may embed $\tilde{W}$ into $\mathbb{M}_{3,\mathbb{F}}$, so we may likewise assume $\tilde{W}$ is a substructure of $\mathbb{M}_{3,\mathbb{F}}$. \\

\begin{tikzpicture}
    \draw[dashed, thick] (3,0) -- (9,0); 
    \node at (2.3,0) {$(d_{ij})_{i<j}$};
    \fill[airforceblue!30] (3,0.3) rectangle (5.6,0.7); 
    \draw[dashed, thick] (3,0.5) -- (9,0.5);
   \node at (2.3,0.5) {$(b'_i)$};
    \fill[airforceblue!30] (5.7,0.8) rectangle (9,1.2);
    \draw[dashed, thick] (3,1) -- (9,1);
    \node at (2.3,1) {$(a'_i)$};
    \fill[purple!20] (3,1.3) rectangle (5.6,1.7);
    \draw[dashed, thick] (3,1.5) -- (9,1.5);
    \node at (2.3,1.5) {$(b_i)$};
    \fill[purple!20] (5.7,1.8) rectangle (9,2.2);
    \draw[dashed, thick] (3,2) -- (9,2);
    \node at (2.3,2) {$(a_i)$};
    \fill[purple!30] (3,2.7) rectangle (3.4,2.3);
    \coordinate[label=left:\textcolor{purple}{$c_*\;$}] (B) at (3.2,2.5);
    \fill[black] (3.2,2.5) circle (2pt);
    \draw[ultra thick, blue!60!black] (5.6, 2.2) -- (5.6, -0.2);
    \node at (5.6, 2.5) {\textcolor{blue!60!black}{$\mathbf{k}$}};
    \draw [decorate, decoration = {brace}, thick, purple!50] (1.7,1.3) -- (1.7,2.7);
    \draw [decorate, decoration = {brace}, thick, airforceblue] (1.7,0.3) -- (1.7,1.2);
    \node at (1.1, 2) {\textcolor{purple!50}{$W_*$}};
    \node at (1.1, 1.375) {$\oplus$};
    \node at (1.1, 0.75) {\textcolor{airforceblue}{$W_{**}$}};
    \draw [decorate, decoration = {brace}, thick] (0.7,0.55) -- (0.7,2.2);
    \node at (0, 1.375) {$\Tilde{W} = $};
    \coordinate (a) at (3.6,1.5);
    \fill[green!60!black] (3.6,1.5) circle (2pt);
    \coordinate[label=above:\textcolor{green!60!black}{$b_j$}] (a) at (5,1.5);
    \fill[green!60!black] (5,1.5) circle (2pt);
    \coordinate[label=above:\textcolor{green!60!black}{$a_i$}] (a) at (7,2);
    \fill[green!60!black] (7,2) circle (2pt);
    \coordinate (a) at (8.5,2);
    \fill[green!60!black] (8.5,2) circle (2pt);
    \draw[green!60!black, thick] (3.2,2.5) -- (3.6,1.5);
    \draw[green!60!black, thick] (3.2,2.5) -- (5,1.5);
    \draw[green!60!black, thick] (3.2,2.5) -- (7,2);
    \draw[green!60!black, thick] (3.2,2.5) -- (8.5,2);
    \draw[->, green!60!black, thick] (5, 1.5) -- (5, 0.6);
    \coordinate[label=below:\begin{tiny}\textcolor{green!60!black}{$b'_j=[c_*,b_j]$}\end{tiny}] (a) at (4.7,0.5);
    \fill[green!60!black] (5,0.5) circle (2pt);
    \draw[->, green!60!black, thick] (3.6, 1.5) -- (3.6, 0.6);
    \fill[green!60!black] (3.6,0.5) circle (2pt);
    \draw[->, green!60!black, thick] (7, 2) -- (7, 1.1);
    \coordinate[label=below:\begin{tiny}\textcolor{green!60!black}{$a'_i=[c_*,a_i]$}\end{tiny}] (a) at (7,1);
    \fill[green!60!black] (7,1) circle (2pt);
    \draw[->, green!60!black, thick] (8.5, 2) -- (8.5, 1.1);
    \fill[green!60!black] (8.5,1) circle (2pt);
    \end{tikzpicture}

By construction, we have 
\begin{eqnarray*}
c_* &\models& \left(\bigwedge_{j\leq k} [x,b_j]=b'_j\right)\wedge\left(\bigwedge_{i>k} [x,a_i]=a'_i\right)\text{ and thus }\\
c_{*} &\models& \{\varphi(x;c_{i}) : i \leq k\} \cup \{\psi(x;c_{j}) : j > k\},
\end{eqnarray*}
so this set of formulas is consistent. \qed

Now to establish SOP$_{3}$, we have to prove the following:

\textbf{Claim 2}:  If $i < j$, 
$$
\{\psi(x;c_{i}) , \varphi(x;c_{j})\}
$$
is inconsistent.

\emph{Proof of Claim}:  Suppose towards contradiction that there is some $d$ realizing these two formulas, i.e. some $d$ with $[d,a_{i}] = a'_{i}$ and $[d,b_{j}] = b'_{j}$. Then we recall
$$
d_{i,j} = [a_{i}',b_{j}] = [[d,a_{i}],b_{j}]. 
$$
Applying the Jacobi identity to the expression on the right yields 
\begin{eqnarray*}
d_{i,j} &=& [[d,a_{i}],b_{j}] \\
&=& - [b_{j},[d,a_{i}]] \\
&=& - ([[b_{j},d],a_{i}] + [d,[b_{j}, a_{i}]])\\
&=& -([-b'_{j},a_{i}]+ 0)\\
&=& 0
\end{eqnarray*}
where the  last two lines follow from $[a_{i},b_{j}] = [a_{i},b'_{j}] = 0$. Since $d_{i,j} \neq 0$, this yields a contradiction, so we conclude this pair of formulas is inconsistent. \qed\\


\noindent\begin{minipage} {0.6\textwidth} 
    \begin{tikzpicture}
    \draw[dashed, thick] (3,0) -- (8,0); 
    \node at (2.3,0) {$(d_{ij})_{i<j}$};
    \draw[dashed, thick] (3,0.5) -- (8,0.5);
   \node at (2.3,0.5) {$(b'_i)$};
    \draw[dashed, thick] (3,1) -- (8,1);
    \node at (2.3,1) {$(a'_i)$};
    \draw[dashed, thick] (3,1.5) -- (8,1.5);
    \node at (2.3,1.5) {$(b_i)$};
    \draw[dashed, thick] (3,2) -- (8,2);
    \node at (2.3,2) {$(a_i)$};
    \coordinate[label=above:\textcolor{green!60!black}{$b_j$}] (a) at (7,1.5);
    \fill[green!60!black] (7,1.5) circle (2pt);
    \coordinate[label=above:\textcolor{green!60!black}{$a_i$}] (a) at (4,2);
    \fill[green!60!black] (4,2) circle (2pt);
    \draw[->, red!60!black, thick] (4, 2) -- (4, 1.1);
    \node at (4.3,1.75) {\begin{tiny}\textcolor{red!60!black}{$[d,\cdot]$}\end{tiny}};
    \coordinate[label=below:\begin{tiny}\textcolor{green!60!black}{$b'_j=$\textcolor{red!60!black}{$[d,b_j]$}}\end{tiny}] (a) at (7,0.5);
    \fill[green!60!black] (7,0.5) circle (2pt);
    \coordinate[label=below:\begin{tiny}\textcolor{green!60!black}{$a'_i=$\textcolor{red!60!black}{$[d,a_i]$}}\end{tiny}] (a) at (4,1);
    \fill[green!60!black] (4,1) circle (2pt);
    \draw[->, red!60!black, thick] (7, 1.5) -- (7, 0.6);
    \node at (7.3,1.25) {\begin{tiny}\textcolor{red!60!black}{$[d,\cdot]$}\end{tiny}};
    \draw[green!60!black, thick] (4,1) -- (7,1.5);
    \draw[->, green!60!black, thick] (5.5, 1.25) -- (5.5, 0.1);
    \coordinate[label=below:\begin{tiny}\textcolor{green!60!black}{$d_{ij}=[a'_i,b_j]$}\end{tiny}] (a) at (5.5,0);
    \fill[green!60!black] (5.5,0) circle (2pt);
    \end{tikzpicture}
    \end{minipage}
    \hfill
    \begin{minipage}{0.5\textwidth}\begin{small}
    If $\textcolor{red}{d}\models\{\psi(x,c_i),\varphi(x,c_j)\}$ for $i<j$,
    then
    \begin{eqnarray*}
    d_{ij}=[a'_i,b_j]&=&[[\textcolor{red}{d},a_i],b_j]\\
    &=&[[\textcolor{red}{d},b_j],a_i]+[\textcolor{red}{d},[a_i,b_j]]\\
    &=&[b'_j,a_i]+[\textcolor{red}{d},0]\\
    &=&0+0=0.
    \end{eqnarray*} \end{small}
    \end{minipage}\\

We have proved that $\varphi$, $\psi$, and $I$ witness the two-formula version of SOP$_{3}$, completing the proof. 
\end{proof}

\subsection{NSOP$_{4}$}\label{subsec:thetheoryisNSOP4}


In this section, we argue that the theory $T_{c,\mathbb{F}}$ is NSOP$_{4}$.  To begin, we will establish two general model-theoretic lemmas.  Their statements concern an arbitrary complete theory $T$.  Recall that if $M \models T$, then coheir independence $a \ind^{u}_{M} b$ means $\mathrm{tp}(a/Mb)$ is finitely satisfiable in $M$ and heir independence $a \ind^{h}_{M} b$ means $b \ind^{u}_{M} a$.  The following is certainly well-known but, for lack of a precise reference, we give a proof.

\begin{lem} \label{heir sequence lemma}
Suppose $I = (a_{i})_{i < \omega}$ is an indiscernible sequence.  Then there is a model $M$ such that $a_{i} \indi{h}_{M} a_{<i}$ for all $i < \omega$. 
\end{lem}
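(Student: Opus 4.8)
The plan is to prepend to the given sequence a long initial block and then take for $M$ a Skolem hull of that block. Fix a Skolemization $\LL' \supseteq \LL$ of the ambient theory $T$ (with Skolem theory $T'$) and fix a regular cardinal $\lambda > |\LL| + \aleph_{0}$. By the modeling property of indiscernibles (Ramsey plus compactness), there is an $\LL'$-indiscernible sequence $(c_{i})_{i \in \lambda + \omega}$ (ordinal sum, the copy of $\lambda$ first) locally based on $(a_{i})_{i<\omega}$; its $\LL$-reduct is then $\LL$-indiscernible with the same $\LL$-EM-type over $\emptyset$ as $(a_{i})_{i<\omega}$, so in particular the final $\omega$-block $(c_{\lambda + n})_{n<\omega}$ has, as an $\omega$-indexed tuple, the same complete $\LL$-type over $\emptyset$ as $(a_{i})_{i<\omega}$. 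Since $\indi{h}$ is automorphism-invariant, it suffices to prove the lemma for $(c_{\lambda+n})_{n<\omega}$ and then transport the witnessing model by an $\LL$-automorphism of the monster; so I rename and assume $a_{n} = c_{\lambda+n}$, and write $A = \{c_{i} : i < \lambda\}$.

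Next, let $M := \dcl_{\LL'}(A)$ be the $\LL'$-Skolem hull of $A$. Because $\LL'$ has Skolem functions, $M$ is an elementary substructure of the monster in $\LL'$, hence also in $\LL$, so $M \models T$ (and $|M| \leq \lambda$, though this is not needed). The crucial feature is that every element of $M$ has the form $\tau(c_{i_{1}},\dots,c_{i_{k}})$ for some $\LL'$-term $\tau$ and finitely many indices $i_{1} < \dots < i_{k} < \lambda$, all of which lie strictly below $\lambda$, i.e.\ strictly before the block containing the $a_{n}$'s.

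It remains to verify that for each $n<\omega$ we have $a_{<n} \indi{u}_{M} a_{n}$, i.e.\ that $\tp(a_{<n}/Ma_{n})$ is finitely satisfiable in $M$. A finite fragment of this type is captured by a single formula $\varphi(\bar x; \bar m, a_{n})$ with $\bar m$ a finite tuple from $M$ and $\models \varphi(a_{<n}; \bar m, a_{n})$. Writing $\bar m = \bar\tau(c_{i_{1}},\dots,c_{i_{k}})$ with $i_{1} < \dots < i_{k} < \lambda$, and recalling $a_{<n} = (c_{\lambda},\dots,c_{\lambda+n-1})$ and $a_{n} = c_{\lambda+n}$, this says that the $\LL'$-formula $\varphi(\bar x; \bar\tau(\bar y), z)$ holds of the increasing tuple $(c_{i_{1}},\dots,c_{i_{k}},c_{\lambda},\dots,c_{\lambda+n-1},c_{\lambda+n})$ of $(c_{i})_{i \in \lambda+\omega}$. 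Choosing any indices $i_{k} < j_{0} < \dots < j_{n-1} < \lambda$ (which exist since $\lambda$ is an infinite ordinal above $i_{k}$), $\LL'$-indiscernibility yields that the same formula holds of $(c_{i_{1}},\dots,c_{i_{k}},c_{j_{0}},\dots,c_{j_{n-1}},c_{\lambda+n})$, that is, $\models \varphi\big((c_{j_{0}},\dots,c_{j_{n-1}}); \bar m, a_{n}\big)$. Since $c_{j_{0}},\dots,c_{j_{n-1}} \in A \subseteq M$, this finite fragment is realized in $M$, as required; transporting back by an $\LL$-automorphism completes the proof.

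The one real point is the setup rather than the computation: one must prepend (not append) the long block, so that a single model $M$ can sit "to the left" of every pair $a_{<n},a_{n}$ simultaneously and the witnesses for $a_{<n}$ can be slid back into the prepended block while keeping $a_{n}$ and the parameters fixed; and one must take $M$ to be a Skolem hull so that it is a genuine model all of whose elements are $\LL'$-terms in the prepended elements. Once these choices are made, the verification is the one-line indiscernibility argument above. (If the given sequence is indiscernible over a base $B$ instead of over $\emptyset$, run the whole argument in $\LL'(B)$ and take $M = \dcl_{\LL'}(B \cup A)$.)
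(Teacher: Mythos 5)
Your proof is correct and takes essentially the same route as the paper: Skolemize, stretch to a long $\LL'$-indiscernible sequence with a long initial block followed by an $\omega$-block, take $M$ to be the Skolem hull of the initial block, slide witnesses back into that block by indiscernibility, and transport the result by an automorphism. The only cosmetic difference is that you index the initial block by a large regular cardinal $\lambda$ while the paper simply uses $\omega + \omega$; the extra length plays no role since any infinite initial block already supplies enough slots for the finitely many witnesses needed at each stage.
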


\begin{proof}
 Expand the monster model to have Skolem functions and denote the resulting expansion $\mathbb{M}^{\mathrm{Sk}}$.  By Ramsey and compactness, we can extract an $L^{\mathrm{Sk}}$-indiscernible sequence $I'$ from $I$ and then stretch $I'$ so that $I' = (a'_{i})_{i < \omega + \omega}$.  Let $M = \mathrm{dcl}_{L^{\mathrm{Sk}}}(a'_{<\omega})$.  Now we claim $a'_{\omega+i} \indi{h}_{M} a'_{\omega}\ldots a'_{\omega + i-1}$ for all $i < \omega$.  Any formula in $\text{tp}_{L}(a'_{\omega}\ldots a'_{\omega + i-1}/Ma'_{\omega + i})$ can be written as $\varphi(x_{0}, \ldots, x_{i-1};a'_{\omega+i}, t(a'_{<N}))$ for an $L$-formula $\varphi$, a natural number $N$, and some Skolem term $t$. By $L^{\mathrm{Sk}}$-indiscernibility, we have 
 $$
 \mathbb{M}^{\mathrm{Sk}} \models \varphi(a'_{N},\ldots, a'_{N+i-1};a'_{\omega+i}, t(a'_{<N})),
 $$
 Working now in $\mathbb{M}$, we have shown $\text{tp}_{L}(a'_{\omega}\ldots a'_{\omega + i-1}/Ma'_{\omega + i})$ is finitely satisfiable in $M$, or, in other words, $a'_{\omega+i} \indi{h}_{M} a'_{\omega}\ldots a'_{\omega + i-1}$. Choose an automorphism $\sigma \in \mathrm{Aut}(\mathbb{M})$, with $\sigma(a'_{\omega + i}) = a_{i}$ for all $i$. Then $M_{*} = \sigma(M)$ is a model such that $a_{i} \indi{h}_{M_{*}} a_{<i}$ for all $i$, as desired. 
\end{proof}

The following result is a variant of the results of \cite{conant2017axiomatic} and \cite{mutchnik2022conant} yielding NSOP$_4$ via the existence of an independence relation with certain properties. See also \cite{johnson2023curve} for a similar approach.

\begin{theorem}\label{thm:criterionNSOP4}
Let $\ind$ be an invariant relation satisfying symmetry, full existence, stationarity over models, and the following ``weak transitivity" over models:
    \[a\ind_{Md} b \text{ and } a\indi h _M d \text{ and } b\indi u _M d  \implies a\ind_M b\]
    for all finite tuples $a,b,d$ and small model $M$. Then $T$ is NSOP$_4$.
\end{theorem}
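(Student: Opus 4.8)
The strategy is the standard ``SOP$_4$ via stationary independence'' argument, as in Conant's work on free amalgamation and in the treatments by Mutchnik and by Johnson-Ye: we assume toward a contradiction that $T$ has SOP$_4$, fix a formula and an indiscernible sequence witnessing it, and then use the hypotheses on $\ind$ to produce a point realizing an inconsistent set of formulas. Recall that $T$ has SOP$_4$ iff there is a formula $\varphi(x,y)$ and tuples $(a_i)_{i<\omega}$ such that $\{\varphi(x_i,x_{i+1}) : i < 3\} \cup \{\varphi(x_3,x_0)\}$ is consistent (the ``$4$-cycle'' is realizable in every finite approximation but the infinite chain-with-closure is not); equivalently, working with an indiscernible sequence $(a_i)_{i<\omega}$, the partial type $\bigwedge_i \varphi(x_i, x_{i+1})$ together with $\varphi$-edges closing arbitrarily long cycles leads to a contradiction. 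The first step is therefore to set up the combinatorial skeleton: take an indiscernible sequence $(a_ib_i)_{i<\omega}$ with $b_i = a_{i+1}$ (reindexing so consecutive pairs realize the SOP$_4$-witnessing type $p(x,y)$), and aim to build a single realization of $p(a_0, c) \wedge p(c, a_1)$ that also glues back, contradicting the $4$-inconsistency.

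The heart of the argument uses Lemma \ref{heir sequence lemma} and the weak transitivity hypothesis. First, apply Lemma \ref{heir sequence lemma} to obtain a model $M$ with $a_i \indi{h}_M a_{<i}$ for all $i$; note that by symmetry of heir/coheir (heir of one side is coheir of the other) we simultaneously get the coheir statements we need on the other side. Now I would use full existence and stationarity over $M$ to build, step by step, a ``zig-zag'' configuration: realizations $c_0, c_1, \dots$ such that each $c_j$ is $\ind$-independent over $M$ (or over $Md$ for the appropriate $d$) from the relevant previously-constructed data, and such that each consecutive pair $(c_j, c_{j+1})$ has the same type over $M$ as $(a_0, a_1)$ — this is where invariance and stationarity over models do the work, letting us transport the type of a single SOP$_4$-edge along the amalgamation. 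The weak transitivity axiom
\[
a \ind_{Md} b \ \text{and}\ a \indi{h}_M d\ \text{and}\ b \indi{u}_M d \implies a \ind_M b
\]
is then invoked to collapse a length-two independence into a length-one independence, which is exactly what lets the zig-zag configuration close into a cycle of the forbidden SOP$_4$ shape while still being realized: at the final step we obtain some $c$ with $p(a_0, c)$, $p(c, a_2)$, and — via the transitivity collapse applied to the chain through the intermediate elements — also the edge $p$ back to $a_0$, producing a realized $4$-cycle in $\varphi$, contradicting SOP$_4$.

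More precisely, I would follow the template: assume SOP$_4$, get $\varphi$ and indiscernible $(a_i)$; by Lemma \ref{heir sequence lemma} fix $M$ with the heir-sequence property; use full existence to find $a_1' \equiv_M a_1$ with $a_1' \ind_M a_0$; use stationarity to see $\operatorname{tp}(a_0 a_1'/M)$ is the unique $\ind$-free extension, hence by invariance it matches $\operatorname{tp}(a_0 a_1/M)$ up to automorphism; iterate to build a long $\ind$-independent-over-$M$ chain all of whose consecutive pairs are $\varphi$-edges and which, crucially, admits an additional $\varphi$-edge between the endpoints (this extra edge is what weak transitivity guarantees survives the independence collapse). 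The main obstacle — and the step deserving real care — is verifying the exact bookkeeping of which elements are heir- vs. coheir-independent from which, so that the hypothesis of weak transitivity is genuinely applicable at the collapsing step; the asymmetry between $\indi h$ and $\indi u$ in that axiom means one must thread the indiscernible sequence's one-sided finite-satisfiability through the construction in the correct orientation. Everything else (full existence giving the extensions, stationarity + invariance giving the type transport, and the final contradiction with the inconsistency of the SOP$_4$ partial type) is routine once the orientation is pinned down.
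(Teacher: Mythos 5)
Your high-level plan is pointed in the right direction — you correctly identify that the argument goes through Lemma \ref{heir sequence lemma} (to get a model $M$ over which the sequence is indiscernible and satisfies the heir condition), full existence, stationarity, and the weak-transitivity collapse. But the proposal is a sketch with the load-bearing step described incorrectly, and you explicitly acknowledge that you have not done the bookkeeping that makes the argument work; that bookkeeping is the whole content of the proof.

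Concretely, you propose to ``use full existence to find $a_1' \equiv_M a_1$ with $a_1' \ind_M a_0$.'' This is the wrong application of full existence. The hypothesis of weak transitivity is $a \ind_{Md} b$ together with $a \indi{h}_M d$ and $b \indi{u}_M d$, so you must produce an $\ind$-independence over an \emph{extended} base $Md$ and then collapse it to $M$. The paper's move is to apply full existence over $Ma_1$ to get $a_0^* \equiv_{Ma_1} a_0$ with $a_0^* \ind_{Ma_1} a_2$; one then needs the observation that $a_0^* \indi{u}_M a_1$ (because $a_0^* \equiv_{Ma_1} a_0$ and $\tp(a_0/Ma_1)$ is finitely satisfiable in $M$, i.e.\ $a_0 \indi{u}_M a_1$ follows from $a_1 \indi{h}_M a_0$), together with $a_2 \indi{h}_M a_1$, to apply weak transitivity and conclude $a_2 \ind_M a_0^*$. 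Your version (full existence over $M$) never produces the hypothesis of weak transitivity, so the collapse step you describe cannot be invoked. You also describe ``iterating to build a long chain'' and closing cycles of arbitrary length, but the actual argument is not an iteration: after the collapse one uses stationarity over $M$ once, to symmetrize and obtain $a_0^* a_2 \equiv_M a_2 a_0^*$, and then a single automorphism produces $a_3^*$ closing the 4-cycle $(a_0^*, a_1, a_2, a_3^*)$. In short, you have the right ingredients and the right shape, but the one step that you flag as ``deserving real care'' — which element is $\indi{h}$ vs.\ $\indi{u}$ from which, and over what base full existence is invoked — is exactly where your sketch goes wrong, and without it the argument does not go through.
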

\begin{proof}
    Let $(a_i)_{i<\omega}$ be an indiscernible sequence and $p(x,y) = \tp(a_0,a_1)$. We show that 
    \[p(x_0,x_1)\cup p(x_1,x_2)\cup p(x_2,x_3)\cup p(x_3,x_0)\]
    is a consistent partial type. By Lemma \ref{heir sequence lemma}, $a_i \indi h _M a_{<i}$ for all $i<\omega$ and some small model $M$.

    By full existence, there exists $a_0^*\equiv_{Ma_1} a_0$ such that $a_0^*\ind_{Ma_1} a_2$. By symmetry, we have $a_2\ind_{Ma_1} a_0^*$. As $a_2\indi h _M a_1$ and $a_0^*\indi u _M a_1$, we conclude $a_2\ind_{M} a_0^*$ using the weak transitivity assumption.

    We have $a_0^*\equiv_M a_2$ and $a_0^*\ind_M a_2$. Let $a$ be such that $a_0^*a_2\equiv_M a_2a$. Then by invariance, we have $a_2\ind_M a$ and by symmetry we obtain $a \ind_M a_2$. Now stationarity yields $a\equiv_{Ma_2}a_0^*$, whence $aa_2\equiv_M a_0^*a_2$, and thus $a_0^*a_2\equiv_M a_2a_0^*$. 
    
    Then, there exists $a_3^*$ such that $a_0^*a_2a_1\equiv_M a_2a_0^*a_3^*$. We claim that $(a_0^*,a_1,a_2,a_3^*)$ satisfies the type above.
    First, $a_0^*a_1\equiv a_0a_1$ hence $p(a_0^*,a_1)$. By indiscernibility, $a_0a_1\equiv_M a_1a_2$ hence $p(a_1,a_2)$. By choice, $a_2a_3^*\equiv_M a_0^*a_1$ hence $p(a_2,a_3^*)$. Finally $a_3^*a_0^*\equiv_M a_1a_2$ hence $p(a_3^*,a_0^*)$.
\end{proof}

We will use Theorem \ref{thm:criterionNSOP4} to prove that the theory $T_{c,\mathbb{F}}$ is NSOP$_4$.

\begin{lemma}\label{lm:heircoheirnicebases}
    Let $A,B,C$ be LLAs with $C\seq A\cap B$. If $A\indi u_C B$ or $A\indi h _C B$ then for every $a_1,\ldots, a_n\in A$ if
    \[B\lteq \vect{Ba_1} \lteq \vect{Ba_1 a_2}\lteq \ldots \lteq \vect{Ba_1\ldots a_n}\]
    then 
    \[C\lteq \vect{Ca_1} \lteq \vect{Ca_1 a_2}\lteq \ldots \lteq \vect{Ca_1\ldots a_n}.\]
\end{lemma}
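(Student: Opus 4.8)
The plan is to use either independence hypothesis only to extract the purely algebraic fact that $A\cap B = C$, and then to propagate the chain of ideals from $B$ down to $C$ by an elementary induction, using the description of such chains via $\F$-spans (Lemma \ref{lm:idealspan}).

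First I would prove $A\cap B = C$. Since $C\seq A\cap B$ is given, it suffices to see $A\cap B\seq C$. Assume $A\indi u_C B$, so $\tp(A/CB)$ is finitely satisfiable in $C$; the case $A\indi h_C B$ is handled identically via $\tp(B/CA)$, which is finitely satisfiable in $C$. Given $z\in A\cap B$, the formula $y=z$ has its only parameter $z$ lying in $B\seq CB$, and it is satisfied by $z$ itself, which occurs in the enumeration of $A$; hence $y=z$ belongs to $\tp(A/CB)$, and finite satisfiability in $C$ produces $c\in C$ with $c=z$, so $z\in C$.

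Next I would induct on $k\in\{0,\ldots,n\}$ to show $C\lteq\vect{Ca_1}\lteq\cdots\lteq\vect{Ca_1\ldots a_k}$, the case $k=0$ being trivial. Granting the statement for $k$, iterating Lemma \ref{lm:idealspan} along the chain over $C$ (the inductive hypothesis) and along the given chain over $B$ yields $\vect{Ca_1\ldots a_k}=\Span_\F(C,a_1,\ldots,a_k)$ and $\vect{Ba_1\ldots a_k}=\Span_\F(B,a_1,\ldots,a_k)$. Combining these with the identity $A\cap B=C$ gives the crucial fact $A\cap\vect{Ba_1\ldots a_k}=\vect{Ca_1\ldots a_k}$: writing an element of the left side as $z=b_0+\sum_{i\le k}\lambda_i a_i$ with $b_0\in B$, $\lambda_i\in\F$, we get $b_0=z-\sum_i\lambda_i a_i\in A\cap B=C$, whence $z\in\Span_\F(C,a_1,\ldots,a_k)=\vect{Ca_1\ldots a_k}$; the reverse inclusion is clear. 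Then, since $\vect{Ca_1\ldots a_k}\seq\vect{Ba_1\ldots a_k}$ and $\vect{Ca_1\ldots a_k}\cup\{a_{k+1}\}\seq A$, the hypothesis $\vect{Ba_1\ldots a_k}\lteq\vect{Ba_1\ldots a_{k+1}}$ gives
\[
[\vect{Ca_1\ldots a_k},a_{k+1}]\ \seq\ [\vect{Ba_1\ldots a_k},a_{k+1}]\cap A\ \seq\ \vect{Ba_1\ldots a_k}\cap A\ =\ \vect{Ca_1\ldots a_k}.
\]
A short direct computation (using alternativity and that $\vect{Ca_1\ldots a_k}$ is a subalgebra) then shows $\Span_\F(\vect{Ca_1\ldots a_k},a_{k+1})$ is closed under the bracket, hence equals $\vect{Ca_1\ldots a_{k+1}}$, and that $[\vect{Ca_1\ldots a_k},\vect{Ca_1\ldots a_{k+1}}]\seq\vect{Ca_1\ldots a_k}$; that is, $\vect{Ca_1\ldots a_k}\lteq\vect{Ca_1\ldots a_{k+1}}$, which closes the induction.

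The one idea that matters is isolating $A\cap B=C$ first; after that everything is formal bookkeeping with Lemma \ref{lm:idealspan}. A naive attempt to transfer idealness directly — expressing $[x,a_{k+1}]$ with $x\in\vect{Ca_1\ldots a_k}$ as a Lie polynomial $R(\bar b,\bar a_{\le k})$ over $B$ and feeding the resulting equation to finite satisfiability — fails, since finite satisfiability in $C$ replaces the entire $A$-tuple $\bar a$ by a tuple from $C$ and severs the connection to the original generators $a_1,\ldots,a_n$.
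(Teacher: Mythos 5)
Your proof is correct, and it is genuinely different from the one in the paper. The paper's argument invokes the coheir/heir hypothesis at every step of the induction: for each $m$ and each $v\in\vect{Ca_1\ldots a_m}$ it writes $[a_{m+1},v]=b+\sum\mu_ia_i$ with $b\in B$, encodes this as a formula $\phi(\bar x,c,b)$ in $\tp(a_1,\ldots,a_{m+1}/B)$, and then runs finite satisfiability (coheir case) or the heir property directly to land the right-hand side in $\vect{Ca_1\ldots a_m}$. You instead squeeze the entire model-theoretic content into the single fact $A\cap B=C$ (a standard and easy consequence of $\ind^u$ or $\ind^h$) and make the remainder purely algebraic: since $[a_{m+1},v]\in A$ automatically (both factors lie in $A$) and $[a_{m+1},v]\in\vect{Ba_1\ldots a_m}$ by the hypothesis on the $B$-chain, the identity $A\cap\vect{Ba_1\ldots a_m}=\vect{Ca_1\ldots a_m}$ (itself just $A\cap B=C$ plus Lemma \ref{lm:idealspan}) gives the conclusion immediately. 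Your version buys a cleaner separation of concerns and in fact shows that the lemma holds under the strictly weaker algebraic hypothesis $A\cap B=C$; the paper's version, by contrast, applies coheir/heir in a way that would be slightly more flexible if one only had finite satisfiability for certain restricted formulas, but that flexibility is not used here. Your closing remark about the failure of the ``naive'' direct transfer is apt: this is essentially the obstruction the paper's proof also has to work around by keeping the entire tuple $(a_1,\ldots,a_{m+1})$ as the variable slot of $\phi$.
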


\begin{proof}
    Let $a_1,\ldots, a_n$ be as in the hypothesis. By Lemma \ref{lm:idealspan}, we have $\vect{Ba_1,\ldots, a_m} = \Span_\F(Ba_1\ldots a_m)$ for all $m\leq n$. Fix $m\leq n$ and assume by induction that $\vect{C a_1,\ldots, a_{i-1}}$ is an ideal of $\vect{C a_1,\ldots, a_{i}}$ for all $i\leq m$. Again by Lemma \ref{lm:idealspan} we have $\vect{C a_1,\ldots, a_{m}} = \Span_\F(C a_1\ldots a_m)$. In order to prove that $\vect{Ca_1\ldots a_m}$ is an ideal of $\vect{Ca_1\ldots a_{m+1}}$, it is enough to prove that $[a_{m+1},v]\in \vect{Ca_1\ldots a_m}$ for all $v\in \vect{Ca_1\ldots a_m}$. Let $v = c+\sum_{i=1}^m \lambda_i a_i\in \vect{Ca_1\ldots a_m}$. As $\vect{Ba_1\ldots a_m}$ is an ideal of $\vect{Ba_1\ldots a_{m+1}}$, $[a_{m+1},v]\in \vect{Ba_1\ldots a_{m}}$ hence there exists $b\in B$ and $\mu_1,\ldots,\mu_{m}\in\mathbb{F}$ such that $[a_{m+1},v] = b+\sum_{i = 1}^m \mu_i a_i$. It follows that the formula $\phi(x_1,\ldots,x_{m+1}, c,b)$ defined by
    \[[x_{m+1},c+\sum_{i = 1}^m \lambda_i x_i] = b+\sum_{i = 1}^m \mu_i x_i\]
    is in $p(x_1,\ldots,x_{m+1}) = \tp(a_1,\ldots,a_{m+1}/B)$.

Assume that $A\indi u _C B$. Then $\phi(x_1,\ldots,x_{m+1},c,b)$ is satisfiable by a tuple $(c_1,\ldots c_{m+1})$ from $C$ and it follows that $b\in C$. Then $[a_{m+1},v] = b+\sum_{i = 1}^m \mu_i a_i\in \vect{Ca_1\ldots a_m}$ and we conclude.

Assume that $A\indi h _C B$. In that case, there exists $c'\in C$ such that $\models \phi(a_1,\ldots, a_{m+1},c,c')$ hence $[a_{m+1},v] = c'+\sum_{i = 1}^m \mu_i a_i\in \vect{Ca_1\ldots a_m}$ and we conclude.
\end{proof}

Recall Definition \ref{def:Malcev} of a tuple being Malcev.

\begin{corollary}\label{cor:malcevbasesheircoheir}
    Let $A,B,C$ be LLAs over $\mathbb{F}$ with $C\seq A\cap B$. If $A\indi u_C B$ or $A\indi h _C B$ then for every $a = (a_1,\ldots, a_n)$ from $A$ if $a$ is (ordered) Malcev over $B$, then it is also (ordered) Malcev over $C$.
\end{corollary}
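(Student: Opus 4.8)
\textbf{Plan for the proof of Corollary \ref{cor:malcevbasesheircoheir}.}

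The plan is to reduce the Malcev condition over $C$ to the Malcev condition over $B$ by exploiting the filtration-compatibility established in Lemma \ref{lm:heircoheirnicebases}. Recall that, by Remark \ref{rk:malcevtuples}(1), once we know $a$ is ordered Malcev over $B$ — say with $\lev(a_i) \geq \lev(a_{i+1})$ — we automatically have the chain of ideals
\[
B \lteq \vect{Ba_1} \lteq \vect{Ba_1a_2} \lteq \ldots \lteq \vect{Ba_1\ldots a_n}.
\]
So the hypothesis of Lemma \ref{lm:heircoheirnicebases} is met, and under either $A \indi u _C B$ or $A \indi h _C B$ the lemma hands us the corresponding chain over $C$:
\[
C \lteq \vect{Ca_1} \lteq \vect{Ca_1a_2} \lteq \ldots \lteq \vect{Ca_1\ldots a_n}.
\]

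First I would observe that the chain of ideals over $C$ is exactly condition (1) of Remark \ref{rk:malcevtuples} for an ordered tuple, once we check the level-partition is the right one. The substantive point is to verify that the span condition in Definition \ref{def:Malcev} holds, i.e. that $\Span_\F(CP_i(\vect{Ca})) = \Span_\F(CP_i(a))$ for all $i$. One direction is always true (Remark \ref{rk:malcevtuples}). For the other, I would argue levelwise: by the chain of ideals over $C$ together with Lemma \ref{lm:idealspan}, we have $\vect{Ca_1\ldots a_n} = \Span_\F(Ca_1\ldots a_n)$ as a vector space, so any $x \in P_i(\vect{Ca})$ can be written as $x = c + \sum_j \lambda_j a_j$ with $c \in C$. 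Now use that $a$ is Malcev over $B$: the same $x$ lies in $P_i(\vect{Ba})$, and the Malcev-over-$B$ span condition forces all the $a_j$ appearing with $\lev(a_j) < i$ to be expressible modulo $B$-stuff in the higher-level part, which after subtracting off the $C$-component and using the chain over $C$ shows $x \in \Span_\F(CP_i(a))$. This is the routine bookkeeping step and is where one has to be slightly careful about the indexing: the subtuple $P_i(a)$ consists of exactly those $a_j$ with $\lev(a_j) \geq i$, and this subtuple is the same whether we compute levels in $\vect{Ca}$ or in $\vect{Ba}$ because, by the chain of ideals over $C$ (hence $a_j \in P_{\lev(a_j)}$ in both), the level of each $a_j$ is determined by the Lazard predicates, which are absolute.

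The main obstacle I anticipate is making the last step clean rather than circular: a priori $\lev(a_j)$ computed inside $\vect{Ca}$ could differ from $\lev(a_j)$ computed inside $\vect{Ba}$, since $\lev$ refers to membership in the ambient $P_k$. The way around this is to note that $P_k$ is a predicate of the ambient LLA $L$ and $a_j \in P_k(L)$ is an absolute fact; what varies is $P_k(\vect{Ca})$ versus $P_k(\vect{Ba})$ as \emph{subsets}, but for the elements $a_j$ themselves membership is unambiguous. Once this is pinned down, the statement for the \emph{unordered} Malcev notion follows from the ordered one exactly as in Remark \ref{rk:malcevtuples}(1): reindex $a$ so that levels are non-increasing, apply the ordered case, and observe that Definition \ref{def:Malcev} is insensitive to the ordering of $a$. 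I would present the proof in three short moves — (i) invoke Lemma \ref{lm:heircoheirnicebases} to get the $C$-chain, (ii) upgrade the $C$-chain plus the $B$-Malcev span condition to the $C$-Malcev span condition levelwise, (iii) pass between ordered and unordered via Remark \ref{rk:malcevtuples} — keeping the calculation in (ii) to a couple of lines.
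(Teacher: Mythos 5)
Your plan is correct and follows the same route as the paper's proof: invoke Lemma \ref{lm:heircoheirnicebases} (with Lemma \ref{lm:idealspan}) to write $x\in\Span_\F(CP_i(\vect{Ca}))$ as $c+\sum_j\lambda_j a_j$, then compare with the $B$-Malcev expansion $x=b+\sum_{\lev(a_j)\geq i}\mu_j a_j$ and equate coefficients. The one imprecision is in the concluding move of your step (ii): what kills the low-level coefficients $\lambda_j$ is linear independence of $a$ over $B$, not ``the chain over $C$''; aside from that you are, if anything, more careful than the paper about the ordered/unordered reindexing needed before applying Lemma \ref{lm:heircoheirnicebases}.
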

\begin{proof}
    Let $a = (a_1,\ldots,a_n)$ be in $A$ and Malcev over $B$. We prove that $a$ is Malcev over $C$, that is $\Span_{\mathbb{F}}(CP_i(\vect{Ca})) = \Span_{\mathbb{F}}(CP_i(a))$. First, using Lemma \ref{lm:heircoheirnicebases} we have $\vect{Ca_1,\ldots,a_n} = \Span_{\mathbb{F}}(Ca_1\ldots a_n)$. If $x\in \Span_{\mathbb{F}}(CP_i(\vect{Ca}))$ then there exist $\lambda_1,\ldots,\lambda_n\in \F$ such that $x = c+\sum_j\lambda_j a_j$. By assumption, $\Span_{\mathbb{F}}(BP_i(\vect{aB})) = \Span_{\mathbb{F}}(BP_i(a))$, whence there exist $\mu_1,\ldots \mu_k$ such that for $a_{i_1},\ldots,a_{i_k}\in P_i(a)$ we have $x = b+\sum_{\ell} \mu_{\ell} a_{i_{\ell}}$. It follows that $\sum_{\ell} \mu_{\ell} a_{i_{\ell}} - \sum_j\lambda_j a_j\in B$ which implies that $\lambda_j = \mu_{i_{\ell}}$ if $i = i_{\ell}$ and $\lambda_j = 0$ if $j\notin \set{i_1,\ldots,i_k}$ so we conclude that $x\in \Span_{\mathbb{F}}(CP_i(a))$. 
\end{proof}

\begin{thm} \label{NSOP4 theorem}
The theory $T_{c,\mathbb{F}}$ is NSOP$_4$.
\end{thm}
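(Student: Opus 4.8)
The plan is to apply Theorem \ref{thm:criterionNSOP4} with the relation $\ind = \indi{\otimes}$. We have already collected almost all the needed ingredients: by Proposition \ref{prop:basicpropertiesoffreeindependence} and Corollary \ref{cor:freerelationisanSIR}, the relation $\indi{\otimes}$ is an invariant, symmetric, stationary (over models, indeed over all algebraically closed sets, since $\acl = \dcl = \langle \cdot \rangle$ here) independence relation, and full existence holds because $\indi{\otimes}$ is a stationary independence relation in the sense of Tent--Ziegler. So the only axiom of Theorem \ref{thm:criterionNSOP4} that remains to be verified is the weak transitivity: for a model $M$ (which, since $\acl(M) = M = \langle M \rangle$, is just an LLA), given tuples $a$ and $b$ with $a \indi{\otimes}_{Md} b$, $a \indi{h}_M d$, and $b \indi{u}_M d$, we must deduce $a \indi{\otimes}_M b$.

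The route to weak transitivity is Theorem \ref{thm:weaktransitivitygeneralcase}: if $a$ and $b$ are both Malcev over $C$ and over a sub-LLA $E \seq C$, then $a \indi{\otimes}_C b$ implies $a \indi{\otimes}_E b$. So first I would reduce to the situation where the hypotheses of that theorem are met. Set $E = M$ and $C = \langle Md \rangle$. We need $a$ and $b$ to be Malcev over both $C$ and $E$. A priori they need not be, but this is harmless: replace $a$ by an ordered Malcev basis $a'$ of $\langle Md\,a \rangle$ over $\langle Md \rangle$ and similarly $b$ by an ordered Malcev basis $b'$ of $\langle Md\,b \rangle$ over $\langle Md \rangle$; since $\langle Md\,a' \rangle = \langle Md\,a \rangle$ and $\langle Md\,b' \rangle = \langle Md\,b \rangle$, the relation $a \indi{\otimes}_{Md} b$ is equivalent to $a' \indi{\otimes}_{Md} b'$, and $a \indi{\otimes}_M b$ would follow from $a' \indi{\otimes}_M b'$. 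So assume $a, b$ are ordered Malcev over $C = \langle Md \rangle$. Now Corollary \ref{cor:malcevbasesheircoheir} applies: since $a \in \langle Ma \rangle$ and $\langle Ma \rangle \indi{h}_M \langle Md \rangle$ (this is what $a \indi{h}_M d$ gives after closing under the LLA operations), the tuple $a$, being Malcev over $\langle Md \rangle$, is also Malcev over $M$; symmetrically, since $b \indi{u}_M d$, i.e. $\langle Mb \rangle \indi{u}_M \langle Md \rangle$, the tuple $b$ is Malcev over $M$. Thus $a$ and $b$ are both Malcev over $C = \langle Md \rangle$ and over $E = M \seq C$, and $a \indi{\otimes}_C b$ holds by hypothesis, so Theorem \ref{thm:weaktransitivitygeneralcase} delivers $a \indi{\otimes}_M b$. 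This establishes weak transitivity, and hence, by Theorem \ref{thm:criterionNSOP4}, that $T_{c,\mathbb{F}}$ is NSOP$_4$.

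The main obstacle is essentially bookkeeping rather than a deep difficulty: one must be careful that the reduction to Malcev tuples is compatible with all three independence hypotheses simultaneously, and that the $\indi{h}$/$\indi{u}$ hypotheses about the tuples $a, d$ and $b, d$ transfer correctly to the generated subalgebras $\langle Ma \rangle, \langle Md \rangle, \langle Mb \rangle$ (which requires noting that heir/coheir independence over a model is preserved when passing to the definable closure on the relevant side, a standard fact, together with $\acl = \langle \cdot \rangle$). The genuinely hard work — proving Theorem \ref{thm:weaktransitivitygeneralcase}, which rests on the elaborate Stage II induction of Section \ref{sec:freeamalgamation} — has already been done; here it is only invoked. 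I would therefore present the proof as: recall the ingredients, reduce to Malcev tuples via Remark \ref{rk:malcevtuples} and Corollary \ref{cor:malcevbasesheircoheir}, apply Theorem \ref{thm:weaktransitivitygeneralcase} to get weak transitivity, and conclude by Theorem \ref{thm:criterionNSOP4}.
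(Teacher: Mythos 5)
Your proposal is correct and follows essentially the same route as the paper: verify the hypotheses of Theorem \ref{thm:criterionNSOP4} for $\indi{\otimes}$, using Corollary \ref{cor:freerelationisanSIR} for the stationary-independence axioms, and establish weak transitivity by reducing to ordered Malcev bases, transferring the Malcev property from $\vect{Md}$ down to $M$ via Corollary \ref{cor:malcevbasesheircoheir}, and then invoking Theorem \ref{thm:weaktransitivitygeneralcase}. The paper's own proof is terser on the reduction-to-Malcev-bases step and on the transfer of the $\indi{h}/\indi{u}$ hypotheses to the generated subalgebras, but the underlying argument and all the ingredients are the same as in your write-up.
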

\begin{proof}
By Corollary \ref{cor:freerelationisanSIR} the relation $\indi \otimes$ is a stationary independence relation, hence by Theorem \ref{thm:criterionNSOP4} it suffices to prove the ``weak transitivity" property. Let $a,b,d$ be finite tuples and $E$ be an LLA such that for $C = \vect{Ed}$ we have $a\indi \otimes_C b$, $a\indi h_E d$ and $b\indi u _E d$. We may assume that $a$ and $b$ are Malcev bases of $\vect{Ca}$ and $\vect{Cb}$ over $C$. As $a\indi h _E C$ and $b\indi u _E C$, by Corollary \ref{cor:malcevbasesheircoheir} we have that $a$ and $b$ are Malcev over $E$ hence $a\indi \otimes_E b$ by Theorem \ref{thm:weaktransitivitygeneralcase}.
\end{proof}

Restricting to $\mathbb{F} = \mathbb{F}_{p}$ and $c < p$, we obtain a corresponding result for groups, via Lazard correspondence.

\begin{cor} \label{nsop4cor}
For all $c < p$, the theory $\mathrm{Th}(\mathbf{G}_{c,p})$ is NSOP$_{4}$.  
\end{cor}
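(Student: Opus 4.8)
The final statement is Corollary \ref{nsop4cor}: for all $c < p$, the theory $\mathrm{Th}(\mathbf{G}_{c,p})$ is NSOP$_4$. This should follow almost immediately from Theorem \ref{NSOP4 theorem} (that $T_{c,\mathbb{F}}$ is NSOP$_4$ for finite fields $\mathbb{F}$, in particular for $\mathbb{F} = \mathbb{F}_p$) combined with the Lazard correspondence machinery set up in Subsection \ref{subsec:lazardcorrespondence}.

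The plan: First I would recall that Theorem \ref{NSOP4 theorem} gives us that $T_{c,p} = \mathrm{Th}(\mathbf{L}_{c,p})$ is NSOP$_4$. The key point is the uniform bi-definability between $\Lbb_{c,p}$ and $\G_{c,p}$ established via the Lazard correspondence, as recorded in the discussion of Subsection \ref{subsec:lazardcorrespondence} and Fact \ref{Lazard facts}. Since $c < p$ and $p$ is a prime (and we may assume $p$ is odd — note for $c \geq 1$ and $c < p$ with $p$ prime, if $p = 2$ then $c$ would have to be $1$, in which case the group is abelian, hence stable and trivially NSOP$_4$; so the interesting case has $p$ odd and the Lazard correspondence applies), the correspondence provides, in fact, a bi-interpretation — indeed a bijective one, with the same underlying set — between $\mathbf{G}_{c,p}$ and $\mathbf{L}_{c,p}$ which respects the Lazard predicates. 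In particular $\mathbf{G}_{c,p}$ is interpretable in $\mathbf{L}_{c,p}$.

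The main step is then to invoke the general model-theoretic fact that NSOP$_4$ is preserved under interpretation: if $N$ is interpretable in $M$ and $\mathrm{Th}(M)$ is NSOP$_4$, then $\mathrm{Th}(N)$ is NSOP$_4$. This is because any configuration witnessing SOP$_4$ in $\mathrm{Th}(N)$ — an indiscernible sequence and a type $p(x,y)$ with the relevant (in)consistency pattern around a $4$-cycle — pulls back through the interpretation to a configuration of the same kind in $\mathrm{Th}(M)$ (replacing tuples of $N$ by the tuples of $M$ coding them, and the type $p$ by its pullback), and one extracts indiscernibility in $M$ by Ramsey and compactness; so SOP$_4$ of $\mathrm{Th}(N)$ would force SOP$_4$ of $\mathrm{Th}(M)$. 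Applying this with $M = \mathbf{L}_{c,p}$ and $N = \mathbf{G}_{c,p}$ yields that $\mathrm{Th}(\mathbf{G}_{c,p})$ is NSOP$_4$.

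I do not anticipate a serious obstacle here; the content is entirely in Theorem \ref{NSOP4 theorem}. The only minor care needed is to make sure the interpretation is genuinely an interpretation in the model-theoretic sense — this is exactly what Fact \ref{Lazard facts} (especially that the group operation $a*b = H(a,b)$ is given by a fixed $\mathbb{F}_p$-linear combination of Lie monomials, a quantifier-free definable map, and conversely for $+$ and $[\cdot,\cdot]_L$ in terms of the group operation) guarantees, together with the remark that Lazard series correspond to Lazard series. So the proof is a two-line deduction: $\mathbf{G}_{c,p}$ is interpretable (indeed bi-interpretable) in $\mathbf{L}_{c,p}$ via the Lazard correspondence, $\mathrm{Th}(\mathbf{L}_{c,p})$ is NSOP$_4$ by Theorem \ref{NSOP4 theorem}, and NSOP$_4$ passes to interpreted structures, hence $\mathrm{Th}(\mathbf{G}_{c,p})$ is NSOP$_4$.

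\begin{proof}
By Theorem \ref{NSOP4 theorem}, applied with $\mathbb{F} = \mathbb{F}_{p}$, the theory $T_{c,p} = \mathrm{Th}(\mathbf{L}_{c,p})$ is NSOP$_{4}$. Since $c < p$, if $p = 2$ then $c = 1$ and $\mathbf{G}_{c,p}$ is abelian, hence stable and in particular NSOP$_{4}$; so we may assume $p$ is an odd prime with $c < p$, and the Lazard correspondence of Subsection \ref{subsec:lazardcorrespondence} applies.

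As explained in Subsection \ref{subsec:lazardcorrespondence} and Fact \ref{Lazard facts}, the Lazard correspondence is a uniform bi-definability between the class $\G_{c,p}$ and the class $\Lbb_{c,p}$: the group $\mathbf{G}_{c,p}$ and the Lie algebra $\mathbf{L}_{c,p}$ have the same underlying set, the Lie algebra operations $+$ and $[\cdot,\cdot]_{L}$ are quantifier-free definable from the group operation (and the scalar actions from powers), and conversely the group operation $a * b = H(a,b)$ is a fixed $\mathbb{F}_{p}$-linear combination of Lie monomials in $a$ and $b$, hence quantifier-free definable from the Lie algebra structure. Moreover, by the remark following Fact \ref{Lazard facts}, a sequence is a Lazard series for $\mathbf{G}_{c,p}$ if and only if it is one for $\mathbf{L}_{c,p}$, so the Lazard predicates are matched as well. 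In particular, $\mathbf{G}_{c,p}$ is interpretable in $\mathbf{L}_{c,p}$.

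Finally, NSOP$_{4}$ is preserved under interpretation: if $N$ is interpretable in $M$ and some type $p(x,y)$ together with an indiscernible sequence witnesses SOP$_{4}$ in $\mathrm{Th}(N)$, then pulling back $p$ and the sequence along the interpretation (replacing each element of $N$ by the tuple of $M$ coding it) and extracting an indiscernible sequence via Ramsey and compactness produces a type and an indiscernible sequence witnessing SOP$_{4}$ in $\mathrm{Th}(M)$. Hence $\mathrm{Th}(M)$ NSOP$_{4}$ implies $\mathrm{Th}(N)$ NSOP$_{4}$. Applying this with $M = \mathbf{L}_{c,p}$ and $N = \mathbf{G}_{c,p}$, we conclude that $\mathrm{Th}(\mathbf{G}_{c,p})$ is NSOP$_{4}$.
\end{proof}
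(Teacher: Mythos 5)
Your proposal is correct and takes essentially the same route as the paper: Theorem \ref{NSOP4 theorem} for $T_{c,p} = \mathrm{Th}(\mathbf{L}_{c,p})$ plus bi-interpretability with $\mathbf{G}_{c,p}$ via the Lazard correspondence (Fact \ref{Lazard facts}). The paper's proof is a one-liner and leaves implicit both the preservation of NSOP$_4$ under interpretation and the trivial $p=2$ corner case, which you spell out; these are helpful clarifications but not a different argument.
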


\begin{proof}
    This follows from Theorem \ref{NSOP4 theorem}, since $T_{c,p} = \mathrm{Th}(\mathbf{L}_{c,p})$ and Fact  \ref{Lazard facts} which states that $\mathbf{L}_{c,p}$ and $\mathbf{G}_{c,p}$ are bi-interpretable.  
\end{proof}

\subsection{$c\,$-dependence} 

In this subsection, we will show that $T_{c,\mathbb{F}}$ is $c$-dependent and $(c-1)$-independent (see Definition \ref{IPk def}). Via the Lazard correspondence, it will follow as a corollary that the theories $\mathrm{Th}(\mathbf{G}_{c,p})$ for $c < p$ furnish examples of groups showing the strictness of the NIP$_{k}$ hierarchy.   

\begin{lem} \label{term description}
 Every term $t(\overline{x})$ of $\LL_{c,\mathbb{F}}$ is equal modulo $T_{c,\mathbb{F}}$ to an $\mathbb{F}$-linear combination of Lie monomials.  
\end{lem}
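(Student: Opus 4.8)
The plan is to prove this by induction on the structure of the term $t(\overline{x})$. The terms of $\LL_{c,\mathbb{F}}$ are built from variables using $0$, $+$, $-$, the scalar multiplications $\lambda\cdot$ for $\lambda\in\mathbb{F}$, and the binary Lie bracket $[\cdot,\cdot]$ (the predicates $P_i$ are relation symbols, not function symbols, so they do not enter term formation). First I would handle the base cases: a variable $x_i$ is already a Lie monomial (of degree $1$), and $0$ is the empty $\mathbb{F}$-linear combination. For the inductive step, suppose $t(\overline{x})$ and $s(\overline{x})$ are each equal modulo $T_{c,\mathbb{F}}$ to $\mathbb{F}$-linear combinations of Lie monomials, say $t = \sum_k \mu_k m_k$ and $s = \sum_l \nu_l n_l$. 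Then $t+s$, $t-s$, and $\lambda\cdot t$ are immediately of the required form by rearranging and scaling coefficients. The only case requiring work is $[t,s]$.

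For the bracket case, the key point is bilinearity of $[\cdot,\cdot]$: modulo $T_{c,\mathbb{F}}$ (indeed in any Lie algebra) we have
$$
[t,s] = \left[\sum_k \mu_k m_k, \sum_l \nu_l n_l\right] = \sum_{k,l} \mu_k\nu_l\,[m_k,n_l].
$$
Now each $[m_k,n_l]$ is a bracket of two Lie monomials, hence is itself a Lie monomial by the inductive definition of $M(X)$ recalled in the subsection on free nilpotent Lie algebras: if $P$ and $Q$ are monomials then so is $[P,Q]$. Thus $[t,s]$ is again an $\mathbb{F}$-linear combination of Lie monomials, completing the induction. I would also remark that since $T_{c,\mathbb{F}}$ axiomatizes $c$-nilpotent Lie algebras over $\mathbb{F}$, any Lie monomial of degree $>c$ is identically zero modulo $T_{c,\mathbb{F}}$, so one may further reduce to a linear combination of Lie monomials of degree $\leq c$; this refinement is optional and is not needed for the bare statement.

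There is no real obstacle here — the lemma is essentially just the observation that term formation in $\LL_{c,\mathbb{F}}$ uses only operations under which the class of $\mathbb{F}$-linear combinations of Lie monomials is closed (sums, differences, scalar multiples by bilinearity and linearity, and brackets by the closure of $M(X)$ under bracketing). The mild subtlety worth stating explicitly is that one works modulo $T_{c,\mathbb{F}}$ precisely so that the Lie axioms (bilinearity, in particular) may be invoked to distribute the bracket over the linear combinations; over an arbitrary structure in the signature $\LL_{c,\mathbb{F}}$ this would fail, but every model of $T_{c,\mathbb{F}}$ is a Lie algebra, so the identity $[t,s]=\sum_{k,l}\mu_k\nu_l[m_k,n_l]$ holds in all models. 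This makes the proof a routine structural induction, and I would present it as such.
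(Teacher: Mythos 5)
Your proof is correct and follows essentially the same route as the paper's: structural induction on terms, with the bracket case handled by bilinearity so that $[t,s]=\sum_{k,l}\mu_k\nu_l[m_k,n_l]$ and each $[m_k,n_l]$ is itself a Lie monomial. You are slightly more explicit than the paper in covering the $-$ and $\lambda\cdot$ cases and in noting the optional reduction to monomials of degree $\leq c$, but there is no substantive difference.
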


\begin{proof}
    This is an easy induction on terms. Clearly the constant $0$ and the variables are of the required form.  Suppose it has been established for terms $s(\overline{x})$ and $t(\overline{x})$.  Then $s(\overline{x}) + t(\overline{x})$ is of the required form.  If we have 
    \begin{eqnarray*}
        s(\overline{x}) &=& \sum_{i <k} \alpha_{i} m_{i}(\overline{x}) \\
        t(\overline{x}) &=& \sum_{j <l} \beta_{j} m'_{j}(\overline{x}),
    \end{eqnarray*}
    for scalars $\alpha_{i}, \beta_{j} \in \mathbb{F}$ and Lie monomials $m_{i}(\overline{x}), m'_{j}(\overline{x})$ for all $i < k$, $j < l$, then, by bilinearity, we have 
    \begin{eqnarray*}
        [s(\overline{x}),t(\overline{x})] &=& \left[ \sum_{i <k} \alpha_{i} m_{i}(\overline{x}), \sum_{j <l} \beta_{j} m'_{j}(\overline{x})\right] \\
        &=& \sum_{i< k,j < l} \alpha_{i}\beta_{j} [m_{i}(\overline{x}),m'_{j}(\overline{x})],
    \end{eqnarray*}
    and each $[m_{i}(\overline{x}), m'_{j}(\overline{x})]$ is a Lie monomial. This completes the induction. 
\end{proof}

\begin{cor} \label{formula description}
    Every formula $\varphi(\overline{x})$ of $\LL_{c,\mathbb{F}}$ is equivalent modulo $T_{c,\mathbb{F}}$ to one of the form $\psi(m_{0}(\overline{x}), \ldots, m_{k-1}(\overline{x}))$ where $\psi(y_{0},\ldots, y_{k-1})$ is a quantifier-free $\LL_{\mathrm{flag},c,\mathbb{F}}$-formula and each $m_{i}(\overline{x})$ is a Lie monomial.
\end{cor}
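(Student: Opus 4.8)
The plan is to combine Lemma~\ref{term description} with the quantifier elimination for $T_{c,\mathbb{F}}$ recorded in Corollary~\ref{cor: limit existence}. First I would invoke quantifier elimination: since $\mathbf{L}_{c,\mathbb{F}}$ is uniformly locally finite and ultrahomogeneous (Theorem~\ref{Lcf has a fraisse limit} together with Remark~\ref{rk:uniformlocallyfinite} and the fact quoted in Subsection~\ref{subsec:generalitiesonfraisseclass}), the theory $T_{c,\mathbb{F}}$ has quantifier elimination in the language $\LL_{c,\mathbb{F}}$. Hence every formula $\varphi(\overline{x})$ is equivalent modulo $T_{c,\mathbb{F}}$ to a Boolean combination of atomic $\LL_{c,\mathbb{F}}$-formulas.

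Next I would analyze the atomic formulas. An atomic $\LL_{c,\mathbb{F}}$-formula is either an equality $t(\overline{x}) = s(\overline{x})$ between terms or an expression of the form $P_{i}(t(\overline{x}))$ for some term $t$ and some $1\leq i\leq c+1$ (there are no other relation symbols in $\LL_{c,\mathbb{F}}$ apart from the predicates $P_i$; the vector-space and Lie operations are function symbols). By Lemma~\ref{term description}, each term $t(\overline{x})$ is equal modulo $T_{c,\mathbb{F}}$ to an $\mathbb{F}$-linear combination $\sum_{j} \alpha_{j} m_{j}(\overline{x})$ of Lie monomials $m_j$. So the equality $t(\overline{x}) = s(\overline{x})$ becomes $\sum_j \alpha_j m_j(\overline{x}) - \sum_l \beta_l m_l'(\overline{x}) = 0$, which, listing all the Lie monomials $m_0,\dots,m_{k-1}$ occurring on either side, is of the form $\psi_0(m_0(\overline{x}),\dots,m_{k-1}(\overline{x}))$ for the quantifier-free $\LL_{\mathrm{flag},c,\mathbb{F}}$-formula $\psi_0(y_0,\dots,y_{k-1})$ expressing $\sum_j \gamma_j y_j = 0$ for the appropriate scalars $\gamma_j\in\mathbb{F}$. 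Similarly $P_i(t(\overline{x}))$ becomes $P_i\!\left(\sum_j \alpha_j m_j(\overline{x})\right)$, which is $\psi_1(m_0(\overline{x}),\dots,m_{k-1}(\overline{x}))$ where $\psi_1(y_0,\dots,y_{k-1})$ is the $\LL_{\mathrm{flag},c,\mathbb{F}}$-formula $P_i\!\left(\sum_j \alpha_j y_j\right)$. In both cases $\psi_0$ and $\psi_1$ are quantifier-free $\LL_{\mathrm{flag},c,\mathbb{F}}$-formulas in the new variables $y_j$ (indeed atomic ones), since $\LL_{\mathrm{flag},c,\mathbb{F}}$ contains $0,+,-$, the scalar multiplications, and the predicates $P_i$.

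Finally I would assemble the pieces. Given the quantifier-free normal form from the first step, $\varphi(\overline{x})$ is a Boolean combination of atomic formulas $\theta_1,\dots,\theta_r$, and each $\theta_s$ has been rewritten as $\psi_s(m_{s,0}(\overline{x}),\dots)$ for a quantifier-free $\LL_{\mathrm{flag},c,\mathbb{F}}$-formula $\psi_s$ in Lie monomials. Taking the union $m_0(\overline{x}),\dots,m_{k-1}(\overline{x})$ of all Lie monomials appearing across all the $\theta_s$ (padding each $\psi_s$ with dummy variables it ignores), the same Boolean combination applied to the $\psi_s$ yields a single quantifier-free $\LL_{\mathrm{flag},c,\mathbb{F}}$-formula $\psi(y_0,\dots,y_{k-1})$ with $\varphi(\overline{x}) \leftrightarrow \psi(m_0(\overline{x}),\dots,m_{k-1}(\overline{x}))$ modulo $T_{c,\mathbb{F}}$, as required.

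I do not anticipate a serious obstacle here: the only point that needs a little care is making sure the scalar-multiplication function symbols and the predicates $P_i$ used in $\psi$ are all available in $\LL_{\mathrm{flag},c,\mathbb{F}}$ — which they are, by its definition at the start of Subsection~\ref{subsec:thetheoryisNSOP4} — and that the rewriting of atomic formulas via Lemma~\ref{term description} is performed uniformly, i.e. the substituted terms really are Lie monomials and not arbitrary terms. Everything else is bookkeeping.
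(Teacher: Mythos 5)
Your proposal is correct and follows the same route as the paper, which simply states "Immediate by quantifier elimination and Lemma \ref{term description}"; you have filled in exactly the intended details (atomic formulas are equalities or $P_i$-assertions, rewrite terms via Lemma \ref{term description}, pass to a Boolean combination over the pooled monomials). The only blemish is cosmetic: the language $\LL_{\mathrm{flag},c,\mathbb{F}}$ is introduced in Subsection 5.1, not in the NSOP$_4$ subsection you cite.
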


\begin{proof}
    Immediate by quantifier elimination, Corollary \ref{cor: limit existence}, and Lemma \ref{term description}. 
\end{proof}

The following theorem of Chernikov and Hempel gives a key criterion for establishing that a theory is NIP$_{k}$.  It is the $k$-ary analogue of the earlier cited Fact \ref{2dep composition lemma}.  The proof of this theorem has not yet been disseminated, so we note that our Theorem \ref{cdep theorem} below is conditional on results which are not yet publicly available. 

\begin{fact} \label{composition lemma} \cite{ChernikovHempel3}
Let $M$ be an $\LL'$-structure such that its reduct to a language $\LL \subseteq \LL'$ is NIP.  Let $d,k$ be natural numbers and $\varphi(x_{1}, \ldots, x_{d})$ be an $\LL$-formula. Let further $y_{0}, \ldots, y_{k}$ be arbitrary $(k+1)$-tuples of variables.  For each $1 \leq t \leq d$, let 
$0 \leq i_{t,1}, \ldots, i_{t,k} \leq k$ be arbitrary and let $f_{t} : M_{y_{i_{t,1}}} \times \ldots \times M_{y_{i_{t,k}}} \to M_{x_{t}}$ be an arbitrary $k$-ary function.  Then the formula 
$$
\psi(y_{0}; y_{1}, \ldots, y_{k}) = \varphi(f_{1}(y_{i_{1,1}}, \ldots, y_{i_{1,k}}), \ldots, f_{d}(y_{i_{d,1}}, \ldots, y_{i_{d,k}}))
$$
is $k$-dependent. 
\end{fact}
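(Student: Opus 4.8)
This final statement is the \emph{Composition Lemma} of Chernikov--Hempel \cite{ChernikovHempel3}; I sketch the strategy I would use to prove it. First I would expand $M$ by the functions $f_{1},\ldots,f_{d}$ as new function symbols, obtaining a structure $M^{+}$ whose $\LL$-reduct is unchanged, hence still NIP, and in which each $f_{t}$ is now $\emptyset$-definable. The essential combinatorial input is the \textbf{omit-a-block principle}: the term $c_{t}(y_{0},\ldots,y_{k}) := f_{t}(y_{i_{t,1}},\ldots,y_{i_{t,k}})$ involves only the blocks of variables indexed by $\{i_{t,1},\ldots,i_{t,k}\}\subseteq\{0,1,\ldots,k\}$, a set of size at most $k<k+1$, so each $c_{t}$ \emph{omits} at least one of the $k+1$ blocks $y_{0},\ldots,y_{k}$. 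Thus $\psi(y_{0};y_{1},\ldots,y_{k}) = \varphi(c_{1},\ldots,c_{d})$, where $\varphi$ is an NIP $\LL$-formula and each $c_{t}$ depends on at most $k$ of the $k+1$ blocks, and the task reduces to showing that such a composition of an NIP formula with block-respecting terms is $k$-dependent.

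The plan is to combine two ingredients. The first is the characterization of $k$-dependence by collapse of indiscernibles: $\psi(y_{0};y_{1},\ldots,y_{k})$ is $k$-dependent iff for every suitably (mutually/strongly) indiscernible array $(\bar b_{j,i})_{i<\omega,\,1\le j\le k}$ and every $\bar a$ for $y_{0}$, the truth value of $\psi(\bar a;\bar b_{1,i_{1}},\ldots,\bar b_{k,i_{k}})$ is eventually constant, i.e.\ constant on $\{\bar i : i_{j}\ge N\ \text{for all }j\}$ for some $N$ (a formula with IP$_{k}$ fails this, since an IP$_{k}$ witness thins to an array realizing, say, the ``diagonal'' pattern). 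The second is the \textbf{uniform alternation bound} for an NIP $\LL$-formula $\theta(\bar u;\bar v)$: there is a finite $\mathrm{alt}(\theta)$ bounding the number of alternations of $\theta(\bar p;\bar q_{i})$ along any indiscernible sequence $(\bar q_{i})_{i}$, independently of $\bar p$. Applying this to $\varphi$ with the partition of its variables into ``fed by block $j$'' versus ``not fed by block $j$'': freezing all coordinates of $\bar i$ except $i_{j}$, mutual indiscernibility makes $(\bar b_{j,i})_{i}$ indiscernible over the frozen data, so the truth value of $\psi$ alternates at most $\mathrm{alt}_{j}<\omega$ times in the $j$-th coordinate, with $\mathrm{alt}_{j}$ depending only on $\varphi$ and the block-structure of $\psi$, not on the frozen data. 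One then assembles these one-dimensional bounds into eventual constancy on the whole box: extend each sequence by a ``point at infinity'' $\bar b_{j,\infty}$ (possible by compactness, preserving mutual indiscernibility) and argue, by induction with the omit-a-block structure, that $\psi(\bar a;\bar b_{1,i_{1}},\ldots,\bar b_{k,i_{k}})$ agrees with $\psi(\bar a;\bar b_{1,\infty},\ldots,\bar b_{k,\infty})$ once every $i_{j}$ is large -- the point being that omitting a block lets a specialization of some coordinates to $\infty$ leave a residual formula of the same shape but depending on fewer blocks.

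The \textbf{main obstacle} is precisely this assembly step. The naive one-dimensional bounds only give ``finitely many alternations along each coordinate, uniformly'', which does \emph{not} by itself force eventual constancy on the box (e.g.\ $\bar i\mapsto [i_{1}=i_{2}]$ alternates at most twice in each direction yet is nowhere eventually constant; such patterns are nonetheless incompatible with mutual indiscernibility). What genuinely rules them out is the conjunction of (i) the omit-a-block structure -- in particular, any term $c_{t}$ that depends on \emph{all} $k$ sides necessarily does not depend on the object $y_{0}$, so it contributes only a ``background'' pattern useless for coding -- and (ii) a sufficiently rich array of mutual/strong indiscernibility, glued by an induction whose complexity measure is engineered to strictly decrease when a side is frozen while keeping the alternation bounds uniform. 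Carrying out this bookkeeping correctly, in particular handling the terms that use all $k$ sides (which a naive induction on $k$ does not see, because freezing a side does not lower the arity of a term that omits it), is the technical heart of the argument; the remainder is formal. Since every $\LL$-formula $\varphi$ of the NIP reduct and every choice of block-respecting $f_{t}$ thereby yields a $k$-dependent $\psi$, the Lemma follows.
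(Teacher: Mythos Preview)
The paper does not prove this statement at all: it is stated as a \texttt{Fact} with a bare citation to \cite{ChernikovHempel3} and is used as a black box in the proof of Theorem~\ref{cdep theorem}. So there is no ``paper's own proof'' to compare your proposal against.

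Your sketch is a reasonable outline of the Chernikov--Hempel argument (omit-a-block structure, indiscernible-array characterization of $k$-dependence, uniform alternation bounds from NIP), and you are candid that the assembly step is where the real work lies. As a proof, however, it is incomplete exactly where you say it is: you have identified the obstacle but not overcome it. For the purposes of this paper that is fine---the authors themselves defer entirely to the cited reference---but if you intend this as a self-contained proof you would need to actually carry out the induction you describe, with the correct complexity measure and the handling of terms using all $k$ sides, rather than asserting that ``the remainder is formal.''
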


\begin{thm} \label{cdep theorem}
 The theory $T_{c,\mathbb{F}}$ is $c$-dependent and $(c-1)$-independent. 
\end{thm}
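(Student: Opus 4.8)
The plan is to prove the two halves of Theorem \ref{cdep theorem} separately, with the $c$-dependence being the more substantial half and the $(c-1)$-independence being a direct construction.

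\textbf{$c$-dependence.} First I would use Corollary \ref{formula description} to reduce to formulas of the form $\psi(m_0(\overline{x}),\ldots,m_{d-1}(\overline{x}))$ where $\psi$ is a quantifier-free $\LL_{\mathrm{flag},c,\mathbb{F}}$-formula and each $m_i$ is a Lie monomial. To apply the Composition Lemma (Fact \ref{composition lemma}) with $k = c$, I need to write each Lie monomial $m_i$ appearing in the formula as a $c$-ary function of the tuples of variables $y_0,\ldots,y_c$ that the monomial's arguments are distributed among. The key point is that a Lie monomial of degree $\geq c+1$ vanishes identically in $T_{c,\mathbb{F}}$, so every $m_i$ that survives has degree $\leq c$ and therefore involves at most $c$ (occurrences of) variables. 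Thus if $\varphi(x; y_0, \ldots, y_c)$ is the partitioned formula we wish to analyze — with the convention that $x$ is the ``first'' variable block — each monomial $m_i$ is a function of at most $c$ of the blocks, and hence can be presented as a $c$-ary function $f_i$ of (some $c$-subtuple of) $y_0,\ldots,y_c$, possibly after padding with dummy arguments. Since the reduct of $\mathbf{L}_{c,\mathbb{F}}$ to $\LL_{\mathrm{flag},c,\mathbb{F}}$ is $\omega$-stable (hence NIP) by Lemma \ref{flags are stable}, and $\psi$ is an $\LL_{\mathrm{flag},c,\mathbb{F}}$-formula, Fact \ref{composition lemma} applies directly (with $\LL = \LL_{\mathrm{flag},c,\mathbb{F}}$, $\LL' = \LL_{c,\mathbb{F}}$) and gives that $\varphi$ is $c$-dependent. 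As this holds for every formula, $T_{c,\mathbb{F}}$ is $c$-dependent. The one subtlety to be careful about: in a $(c+1)$-partitioned formula $\varphi(y_0;y_1,\ldots,y_c)$, a monomial of degree exactly $c$ could in principle involve all $c+1$ blocks only if some block contributes zero occurrences — so in fact each surviving monomial genuinely depends on at most $c$ of the $c+1$ blocks, which is exactly what the Composition Lemma needs; I would spell this bookkeeping out carefully since it is the crux of why $k = c$ and not $k = c+1$ is the right parameter.

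\textbf{$(c-1)$-independence.} Here I would exhibit a single formula with IP$_{c-1}$. The natural candidate is built from an iterated bracket of degree $c$: working inside $\mathbf{L}_{c,\mathbb{F}}$ (or a suitable substructure embedded into it via quantifier elimination), take a free LLA on generators arranged in $c-1$ ``parameter'' families plus one ``solution'' variable, so that the degree-$c$ Lie monomial $[x, y_{0}, y_{1}, \ldots, y_{c-2}]$ (left-normed bracket) realizes a prescribed nonzero element of $P_c$ exactly along a chosen subset of indices. Concretely, for families $(a^{(t)}_i)_{i<\omega}$, $t = 0,\ldots,c-2$, and distinguished target elements $e_{\overline{i}} \in P_c$ indexed by $\overline{i} \in \omega^{c-1}$, I would construct a $c$-nilpotent LLA in which one can find, for each $X \subseteq \omega^{c-1}$, an element $b_X$ with $[b_X, a^{(0)}_{i_0}, \ldots, a^{(c-2)}_{i_{c-2}}] = e_{\overline{i}}$ if $(i_0,\ldots,i_{c-2}) \in X$ and $= 0$ otherwise — this is arranged by letting the target space $P_c$ have a basis indexed by $\omega^{c-1}$ and setting the relevant structure constants accordingly, checking (as in Lemma \ref{2-nil construction} but one degree higher, or directly via freeness) that the Jacobi identity holds and the flag is respected. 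Then the formula $\varphi(x; y_0,\ldots,y_{c-2}) \equiv ([x,y_0,\ldots,y_{c-2}] = e)$ for an appropriate fixed parameter, or more carefully a formula encoding ``$[x,y_0,\ldots,y_{c-2}] \neq 0$'' with the $e_{\overline{i}}$ absorbed, has IP$_{c-1}$; quantifier elimination lets us transport the constructed structure into the monster model. The main work is verifying that such a structure is a legitimate LLA of class $\leq c$ — i.e., checking the Jacobi identity and that the bracket lands in the prescribed flag levels — which is a bounded computation, and confirming the Boolean pattern, which is immediate from the definition of the structure constants.

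The main obstacle I anticipate is the degree-counting argument in the $c$-dependence half: making precise that, in a $(c+1)$-partition of the variables, every non-vanishing Lie monomial in a term depends on at most $c$ of the parts, so that each can legitimately be fed to the Composition Lemma as a $c$-ary (not $(c+1)$-ary) function. Once that is pinned down, both halves are essentially applications of machinery already in the paper (Fact \ref{composition lemma} and Lemma \ref{flags are stable} on one side, a hands-on Lie-algebra construction plus quantifier elimination on the other).
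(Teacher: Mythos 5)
Your plan follows the same two-pronged strategy as the paper, and the $c$-dependence half is essentially identical to the paper's argument: reduce via Corollary~\ref{formula description} to $\psi(m_0(\bar x),\ldots,m_{d-1}(\bar x))$ with $\psi$ an $\LL_{\mathrm{flag},c,\mathbb{F}}$-formula and $m_i$ Lie monomials, observe that Lie monomials in a $c$-nilpotent Lie algebra are at most $c$-ary, and invoke Lemma~\ref{flags are stable} plus Fact~\ref{composition lemma}. Your ``subtlety'' about a degree-$c$ monomial possibly touching all $c+1$ blocks is a non-issue: a Lie monomial of degree $\leq c$ has at most $c$ argument slots, and the Composition Lemma allows the $c$ slots of $f_t$ to be filled by any (possibly repeated) choice of the $c+1$ blocks; no careful bookkeeping about distinct blocks is required, just the arity bound. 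So this half is fine and matches the paper.

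The $(c-1)$-independence half has the right shape — build a $c$-nilpotent LLA whose degree-$c$ left-normed bracket encodes an arbitrary subset of $\omega^{c-1}$, then embed it via quantifier elimination — but your description of the construction has a genuine gap. You propose to ``set the relevant structure constants accordingly'' so that $[b_X, a^{(0)}_{i_0},\ldots,a^{(c-2)}_{i_{c-2}}]$ lands on prescribed basis vectors of $P_c$, and then check Jacobi. This is not a bounded or routine verification for $c > 2$: the degree-$c$ brackets are not free parameters, because they are determined by the brackets at intermediate degrees, and those are heavily constrained by Jacobi (for instance, $[b_X,a,a',a'']$ and $[b_X,a,a'',a']$ must differ by $[[b_X,a],[a',a'']]$). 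You would need to specify the whole tower of intermediate brackets and verify consistency, and moreover establish that the resulting degree-$c$ elements are linearly independent so that the ones you have not killed really are nonzero — and you never address this linear-independence point at all. The paper finesses all of this by working inside the free $c$-nilpotent Lie algebra on generators $(b_X)$ and $(a_{i,j})_{i<c-1,j<\omega}$, choosing the ordering so that the left-normed monomials $[b_X,a_{0,j_0},\ldots,a_{c-2,j_{c-2}}]$ are Hall basis elements (hence automatically linearly independent), noting that they have degree $c$ and therefore lie in the center, so that their span is automatically an ideal $I$, and then passing to $L/I$. No Jacobi check, no hand-defined structure constants, and linear independence is free. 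Your passing mention of ``directly via freeness'' gestures toward this, but without the Hall-basis observation you have not actually produced the witnessing structure; that observation is the crux of why the construction works.
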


\begin{proof}
 By Lemma \ref{flags are stable}, we know that $T_{\mathrm{flag},c,\mathbb{F}}$ is stable.  Moreover, in a $c$-nilpotent Lie algebra, each Lie monomial $m(\overline{x})$ is at most $c$-ary.  Therefore, the $c$-dependence follows by Fact \ref{composition lemma}. 

 Now we argue that this theory is $(c-1)$-independent.  Let $L$ denote the free $c$-nilpotent Lie algebra over $\mathbb{F}$ with generators $(b_{X})_{X \subseteq \omega^{c-1}}$ and $(a_{i,j})_{i < c-1, j < \omega}$. Let $\prec$ denote an arbitrary linear order of the monomials in these generators such that $a_{i,j} \prec a_{i',j'}$ if $(i,j) <_{lex} (i',j')$ and such that $a_{i,j} \prec b_{X}$ for all $(i,j)$ and $X$.  Then the monomials $[b_{X},a_{0,j_{0}},a_{1,j_{1}}, \ldots, a_{c-2,j_{c-2}}]$ are in the Hall basis (defined with respect to the ordering $\prec$) for all $X \subseteq \omega^{c-1}$ and $(j_{0}, \ldots, j_{c-2}) \in \omega^{c-1}$. Define $I \subseteq L$ to be the vector space spanned by 
 $$
 \left\{[b_{X},a_{0,j_{0}},a_{1,j_{1}}, \ldots, a_{c-2,j_{c-2}}] : X \subseteq \omega^{c-1}, (j_{0}, \ldots, j_{c-2}) \in X\right\}.  
 $$
 Since $L$ is $c$-nilpotent, if $x \in L$ and $y \in I$, then $[x,y] = 0$ and therefore $I$ is an ideal.  Moreover, since terms in the Hall basis are linearly independent, we know 
 $$[b_{X},a_{0,j_{0}},a_{1,j_{1}}, \ldots, a_{c-2,j_{c-2}}] \not\in I$$ when $(j_{0}, \ldots, j_{c-2}) \not\in X$.  

 It follows, then, that in the $c$-nilpotent Lie algebra $\overline{L} = L/I$,
 $$
 [b_{X},a_{0,j_{0}}, \ldots, a_{c-2,j_{c-2}}]  = 0
 $$
 if and only if $(j_{0}, \ldots, j_{c-1}) \in X$.  Since $\mathrm{Age}(\overline{L}) \subseteq \mathrm{Age}(\mathbb{M}_{c,\mathbb{F}})$, we may assume $\overline{L}$ is embedded in $\mathbb{M}$.  This shows that $T$ has the $(c-1)$-independence property. 
\end{proof}

\begin{cor} \label{groupc-dependence}
    Assume $p$ is an odd prime and $c > p$. Then the theory $\mathrm{Th}(\mathbf{G}_{c,p})$ is $c$-dependent and has the $(c-1)$-independence property. In particular, $T_{\mathbf{G}}$, defined in Section \ref{nil-2}, is $2$-dependent.
\end{cor}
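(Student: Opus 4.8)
\textbf{Proof plan for Corollary \ref{groupc-dependence}.}

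The statement splits into two parts. The first part — that $\mathrm{Th}(\mathbf{G}_{c,p})$ is $c$-dependent and has the $(c-1)$-independence property when $p$ is an odd prime and $c < p$ — follows directly from Theorem \ref{cdep theorem} together with the Lazard correspondence. Concretely, I would invoke Fact \ref{Lazard facts} (and the remark immediately after it about Lazard series being preserved), which gives that $\mathbf{L}_{c,p}$ and $\mathbf{G}_{c,p}$ are uniformly bi-interpretable, indeed bi-definable with the same underlying set: the group operation is a term in the Lie operations and vice versa, and the Lazard predicates $P_i$ are shared. Since $n$-dependence and the $n$-independence property are invariant under bi-interpretation, $\mathrm{Th}(\mathbf{G}_{c,p})$ inherits $c$-dependence and $(c-1)$-independence from $T_{c,p} = \mathrm{Th}(\mathbf{L}_{c,p})$. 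The one point to be careful about is that the hypothesis in the statement is written ``$c > p$'' whereas the Lazard correspondence requires $c < p$; this looks like a typo for $c < p$ (matching Corollary \ref{cor: limit existence} and Corollary \ref{nsop4cor}), and I would state it as $c < p$.

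For the ``in particular'' clause, recall that $T_{\mathbf{G}}$ from Section \ref{nil-2} is the theory of the Fra\"iss\'e limit $\mathbf{G}$ of $\mathbb{K}^{P}_{2,p}$, the finite $2$-nilpotent groups of exponent $p$ with a predicate $P$ for a subgroup with $[G,G] \subseteq P(G) \subseteq Z(G)$. The plan is to identify $\mathbf{G}$ with $\mathbf{G}_{2,p}$ once we choose the prime $p$ odd. The subtlety is that the class $\mathbb{G}_{2,p}$ in Definition \ref{def:baudischdefvarietiesgroupsetc} uses a single predicate $P$ with $[G,G]\subseteq P(G)\subseteq Z(G)$, whereas $\mathbb{G}_{2,p}$ (the class from Section \ref{sec:freeamalgamation}) uses predicates $(P_i)_{1\le i\le 3}$ for a Lazard series $G = H_1 \unrhd H_2 \unrhd H_3 = 1$ with $[H_i,H_j]\subseteq H_{i+j}$; for $c=2$ this forces $H_2 \subseteq Z(G)$ and $[G,G] = [H_1,H_1]\subseteq H_2$, so $H_2$ ranges exactly over subgroups $P$ with $[G,G]\subseteq P\subseteq Z(G)$. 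Hence the two classes of finite structures are term-by-term the same (the extra predicates $H_1 = G$ and $H_3 = 1$ are trivially definable), so their Fra\"iss\'e limits coincide and $T_{\mathbf{G}} = \mathrm{Th}(\mathbf{G}_{2,p})$ after this identification of languages. Applying the first part with $c = 2$ (legitimate since $2 < p$ for any odd prime $p$) yields that $T_{\mathbf{G}}$ is $2$-dependent, as claimed.

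The main obstacle is essentially bookkeeping rather than mathematics: one must make sure the bi-interpretation respects not only the group/Lie structure but also the Lazard predicates, and that the language comparison in the $c=2$ case is airtight. Both are handled by Fact \ref{Lazard facts} and the discussion of languages in Subsection \ref{subsec:lazardcorrespondence}, which explicitly states that the uniform bi-definability ``applies both to the pure languages of groups and Lie algebras and to their respective expansions to languages with predicates for Lazard series.'' So I would write: by Fact \ref{Lazard facts}, $\mathbf{L}_{c,p}$ and $\mathbf{G}_{c,p}$ are bi-definable (with Lazard predicates matching), hence $\mathrm{Th}(\mathbf{G}_{c,p})$ is $c$-dependent and $(c-1)$-independent by Theorem \ref{cdep theorem}; and for $c = 2$, $\mathbf{G}_{2,p}$ is the structure $\mathbf{G}$ of Section \ref{nil-2} (up to the trivial reparametrization of predicates), so $T_{\mathbf{G}}$ is $2$-dependent.
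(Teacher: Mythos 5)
Your proof is correct and follows essentially the same route as the paper: invoke the bi-interpretability of $\mathbf{L}_{c,p}$ and $\mathbf{G}_{c,p}$ from Fact \ref{Lazard facts}, apply Theorem \ref{cdep theorem}, and then identify $T_{\mathbf{G}}$ with $\mathrm{Th}(\mathbf{G}_{2,p})$ by comparing the (trivially interdefinable) languages. You have also correctly caught two slips in the paper itself: the hypothesis should read $c < p$ rather than $c > p$, and the paper's proof cites Theorem \ref{NSOP4 theorem} where it should cite Theorem \ref{cdep theorem}.
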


\begin{proof}
    As in Corollary \ref{nsop4cor}, this follows from Theorem \ref{NSOP4 theorem}, since $T_{c,p} = \mathrm{Th}(\mathbf{L}_{c,p})$ and $\mathbf{L}_{c,p}$ and $\mathbf{G}_{c,p}$ are bi-interpretable by Fact \ref{Lazard facts}. Finally, the `in particular' part of the statement follows from the fact that $\mathbb{G}_{2,p}$ and $\mathbb{G}$ are easily seen to be bidefinable (the only difference in languages is that $\mathbb{G}$ has a single predicate while $\mathbb{G}_{2,p}$ has two additional predicates, one for the trivial subgroup and one for the entire subgroup).    
\end{proof}

\begin{rem}
    By Proposition \ref{prop:algebraicpropertiesL}, the predicates for the terms of the Lazard series, interpreted in $\mathbf{G}_{c,p}$, are definable in the language of groups.  Therefore, after taking the reducts to the language of groups, the theories $\Th(\mathbf{G}_{c,p})$ as $c$ varies give examples of pure groups witnessing the strictness of the NIP$_{k}$ hierarchy. 
\end{rem}

\subsection*{Acknowledgements}

Much of this work was completed at the Institut Henri Poincar\'e when the four authors were hosted as part of a `Research in Paris' program. The authors would also like to thank the Institut Henri Poincar\'e for their hospitality.  We would like to thank Gabriel Conant, Berthold Maier, Dugald Macpherson, and Erik Walsberg for valuable conversations and correspondence on this project. We would also like to thank Amador Martin-Pizarro and Martin Ziegler for helpful suggestions on the presentation of this article.

\bibliographystyle{plain}
\bibliography{biblio.bib}{}

\end{document}